\definecolor{cite}{rgb}{0.30,0.60,1.00}
\definecolor{url}{rgb}{0.00,0.00,0.80}
\definecolor{link}{rgb}{0.40,0.10,0.20}
\DeclareSymbolFont{cyrletters}{OT2}{wncyr}{m}{n}
\DeclareMathSymbol{\Sha}{\mathalpha}{cyrletters}{"58}
\providecommand*{\Dashv}{%
  \mathrel{%
    \mathpalette\@Dashv\vDash
  }%
}
\newcommand*{\@Dashv}[2]{%
  \reflectbox{$\m@th#1#2$}%
}
\let\orgdescriptionlabel\descriptionlabel
\renewcommand*{\descriptionlabel}[1]{%
  \let\orglabel\label
  \let\label\@gobble
  \phantomsection
  \edef\@currentlabel{#1\unskip}%
  \let\label\orglabel
  \orgdescriptionlabel{#1}%
}
\numberwithin{equation}{section}
\theoremstyle{plain}
\newtheorem{proposition}{Proposition}[section]
\newtheorem{conjecture}[proposition]{Conjecture}
\newtheorem{corollary}[proposition]{Corollary}
\newtheorem{lem}[proposition]{Lemma}
\newtheorem{theorem}[proposition]{Theorem}
\theoremstyle{definition}
\newtheorem{definition}[proposition]{Definition}
\newtheorem{construction}[proposition]{Construction}
\newtheorem{notation}[proposition]{Notation}
\newtheorem{assumption}[proposition]{Assumption}
\theoremstyle{remark}
\newtheorem{remark}[proposition]{Remark}
\renewcommand{\b}[1]{\mathbf{#1}}
\renewcommand{\c}[1]{\mathcal{#1}}
\renewcommand{\d}[1]{\mathbb{#1}}
\newcommand{\f}[1]{\mathfrak{#1}}
\renewcommand{\r}[1]{\mathrm{#1}}
\newcommand{\s}[1]{\mathscr{#1}}
\renewcommand{\sf}[1]{\mathsf{#1}}
\renewcommand{\(}{\left(}
\renewcommand{\)}{\right)}
\newcommand{\res}{\mathbin{|}}
\newcommand{\ol}[1]{\overline{#1}{}}
\newcommand{\ang}[1]{\langle{#1}\rangle}
\renewcommand{\leq}{\leqslant}
\renewcommand{\geq}{\geqslant}
\newcommand{\bM}{\b M}
\newcommand{\bP}{\b P}
\newcommand{\bQ}{\b Q}
\newcommand{\bV}{\b V}
\newcommand{\bi}{\b i}
\newcommand{\cC}{\c C}
\newcommand{\cD}{\c D}
\newcommand{\cE}{\c E}
\newcommand{\cF}{\c F}
\newcommand{\cI}{\c I}
\newcommand{\cM}{\c M}
\newcommand{\cN}{\c N}
\newcommand{\cO}{\c O}
\newcommand{\cR}{\c R}
\newcommand{\cT}{\c T}
\newcommand{\cX}{\c X}
\newcommand{\dA}{\d A}
\newcommand{\dB}{\d B}
\newcommand{\dC}{\d C}
\newcommand{\dE}{\d E}
\newcommand{\dF}{\d F}
\newcommand{\dK}{\d K}
\newcommand{\dL}{\d L}
\newcommand{\dQ}{\d Q}
\newcommand{\dT}{\d T}
\newcommand{\dZ}{\d Z}
\newcommand{\fB}{\f B}
\newcommand{\fL}{\f L}
\newcommand{\fN}{\f N}
\newcommand{\fP}{\f P}
\newcommand{\fS}{\f S}
\newcommand{\fT}{\f T}
\newcommand{\fU}{\f U}
\newcommand{\fX}{\f X}
\newcommand{\fa}{\f a}
\newcommand{\fc}{\f c}
\newcommand{\fe}{\f e}
\newcommand{\ff}{\f f}
\newcommand{\fk}{\f k}
\newcommand{\fl}{\f l}
\newcommand{\fm}{\f m}
\newcommand{\fn}{\f n}
\newcommand{\fp}{\f p}
\newcommand{\fr}{\f r}
\newcommand{\fs}{\f s}
\newcommand{\rB}{\r B}
\newcommand{\rD}{\r D}
\newcommand{\rE}{\r E}
\newcommand{\rF}{\r F}
\newcommand{\rG}{\r G}
\newcommand{\rH}{\r H}
\newcommand{\rI}{\r I}
\newcommand{\rJ}{\r J}
\newcommand{\rK}{\r K}
\newcommand{\rL}{\r L}
\newcommand{\rM}{\r M}
\newcommand{\rP}{\r P}
\newcommand{\rQ}{\r Q}
\newcommand{\rR}{\r R}
\newcommand{\rU}{\r U}
\newcommand{\rV}{\r V}
\newcommand{\rd}{\r d}
\newcommand{\rk}{\r k}
\newcommand{\rt}{\r t}
\newcommand{\ru}{\r u}
\newcommand{\sC}{\s C}
\newcommand{\sD}{\s D}
\newcommand{\sE}{\s E}
\newcommand{\sF}{\s F}
\newcommand{\sG}{\s G}
\newcommand{\sM}{\s M}
\newcommand{\sN}{\s N}
\newcommand{\sO}{\s O}
\newcommand{\sR}{\s R}
\newcommand{\sS}{\s S}
\newcommand{\sT}{\s T}
\newcommand{\sU}{\s U}
\newcommand{\sV}{\s V}
\newcommand{\sW}{\s W}
\newcommand{\sX}{\s X}
\newcommand{\sY}{\s Y}
\newcommand{\sZ}{\s Z}
\newcommand{\sfB}{\sf B}
\newcommand{\sfG}{\sf G}
\newcommand{\sfH}{\sf H}
\newcommand{\sfK}{\sf K}
\newcommand{\sfR}{\sf R}
\newcommand{\sfT}{\sf T}
\newcommand{\sfU}{\sf U}
\newcommand{\tC}{\mathtt{C}}
\newcommand{\tI}{\mathtt{I}}
\newcommand{\tR}{\mathtt{R}}
\newcommand{\tT}{\mathtt{T}}
\newcommand{\tU}{\mathtt{U}}
\newcommand{\tV}{\mathtt{V}}
\newcommand{\tc}{\mathtt{c}}
\newcommand{\te}{\mathtt{e}}
\newcommand{\tj}{\mathtt{j}}
\newcommand{\blambda}{\boldsymbol{\lambda}}
\newcommand{\bkappa}{\boldsymbol{\kappa}}
\newcommand{\bbt}{\boldsymbol{t}}
\newcommand{\pres}[2]{\prescript{#1}{}{#2}}
\newcommand{\ab}{\r{ab}}
\newcommand{\cosp}{\r{cosp}}
\newcommand{\et}{{\acute{\r{e}}\r{t}}}
\newcommand{\FL}{\r{FL}}
\renewcommand{\graph}{\triangle}
\newcommand{\inc}{\r{inc}}
\newcommand{\Inc}{\r{Inc}}
\newcommand{\loc}{\r{loc}}
\newcommand{\ordi}{\r{ord}}
\newcommand{\ram}{\r{ram}}
\newcommand{\rec}{\r{rec}}
\newcommand{\reg}{\r{reg}}
\newcommand{\sing}{\r{sing}}
\newcommand{\temp}{\r{temp}}
\newcommand{\St}{\r{St}}
\newcommand{\tor}{\r{tor}}
\newcommand{\univ}{\r{univ}}
\newcommand{\unr}{\r{unr}}
\newcommand{\ur}{\r{ur}}
\newcommand{\eff}{\mathrm{eff}}
\newcommand{\smooth}{\r{sm}}
\newcommand{\PP}{\lozenge}
\newcommand{\finite}{\sphericalangle}
\renewcommand{\prec}{\to}
\newcommand{\defin}{\r{def}}
\newcommand{\indef}{\r{indef}}
\DeclareMathOperator{\Ann}{Ann}
\DeclareMathOperator{\Aut}{Aut}
\DeclareMathOperator{\Char}{char}
\DeclareMathOperator{\coker}{coker}
\DeclareMathOperator{\Div}{Div}
\DeclareMathOperator{\End}{End}
\DeclareMathOperator{\Ext}{Ext}
\DeclareMathOperator{\Fil}{Fil}
\DeclareMathOperator{\Gal}{Gal}
\DeclareMathOperator{\GL}{GL}
\DeclareMathOperator{\Gr}{Gr}
\DeclareMathOperator{\Hom}{Hom}
\DeclareMathOperator{\ind}{ind}
\DeclareMathOperator{\Ker}{ker}
\DeclareMathOperator{\length}{length}
\DeclareMathOperator{\Nm}{Nm}
\DeclareMathOperator{\rank}{rank}
\DeclareMathOperator{\Sh}{Sh}
\DeclareMathOperator{\Sp}{Sp}
\DeclareMathOperator{\SU}{SU}
\DeclareMathOperator{\Spec}{Spec}
\DeclareMathOperator{\Spf}{Spf}
\DeclareMathOperator{\supp}{supp}
\DeclareMathOperator{\Tor}{Tor}
\DeclareMathOperator{\val}{val}
\begin{document}

\title{Bessel periods and Selmer groups over ordinary Rankin--Selberg eigenvariety}

\author{Yifeng Liu}
\address{Institute for Advanced Study in Mathematics, Zhejiang University, Hangzhou 310058, China}
\email{liuyf0719@zju.edu.cn}

\date{\today}
\subjclass[2020]{11F33, 11G05, 11G18, 11G40, 11R34}

\begin{abstract}
  We introduce the notion of ordinary distributions for unitary groups and their Rankin--Selberg products, based on which we (re)define the ordinary eigenvarieties. In both definite and indefinite cases, we construct the Bessel period on the Rankin--Selberg eigenvariety as an ordinary distribution and as an element in the Selmer group of ordinary distributions, respectively. We then propose an Iwasawa type conjecture relating the vanishing divisor of the Bessel period and the characteristic divisor of the Selmer group of the associated Rankin--Selberg Galois module over the eigenvariety, and prove one side of the divisibility under certain conditions.
\end{abstract}

\maketitle

\tableofcontents

\section{Introduction}
\label{ss:1}

In the 1980s, Beilinson, Bloch, and Kato in a series of articles (mainly \cites{Bei87,BK90}) proposed a vast generalization of the famous Birch and Swinnerton-Dyer conjecture to motives over number fields, which is nowadays referred as the Beilinson--Bloch--Kato conjecture. It, in particular, predicts a close relation between the central vanishing order of the $L$-function associated with a motive of symplectic type and the rank of the (Bloch--Kato) Selmer group of such a motive. In \cite{LTXZZ}, the authors, for the first time, provided strong evidence toward this conjecture for a series of motives with ranks (or dimensions) tending to infinity. More precisely, they show that for certain complex-conjugate self-dual automorphic motives of $\GL_n\times\GL_{n+1}$ over a CM field $F$, the nonvanishing of the central $L$-value implies that the rank of the Selmer group is zero when the root number is $1$; and the nonvanishing of a certain class in the Selmer group implies that the rank of the Selmer group is one when the root number is $-1$. Later, in \cite{LTX}, the authors updated these results to the level of Iwasawa algebra. More precisely, they consider twists of the motive by Dirichlet characters of $\tilde{F}/F$ for $\tilde{F}$ varying in a certain anticyclotomic $\dZ_p^d$-extension of $F$, extending the $L$-function and the Selmer group to objects lying over the corresponding Iwasawa algebra; then they formulate the corresponding Iwasawa's main conjecture and prove one side of the divisibility (\emph{upper} bounding the Selmer group) for both root numbers. Later, in \cites{JNS,LS24}, the authors also prove the similar results in the rank $1$ case (that is, for root number $-1$) using a different method.

In this article, we further upgrade the Iwasawa theory in \cite{LTX} to the level of ordinary eigenvariety, by noting that twisting anticyclotomic Dirichlet characters is just a certain direction in the space of ordinary deformations. We introduce the notion of ordinary distributions for unitary groups and their Rankin--Selberg products, based on which we (re)define the ordinary eigenvarieties. In both definite and indefinite cases (parallel to cases of root number $1$ and $-1$), we construct the Bessel period on the Rankin--Selberg eigenvariety as an ordinary distribution and as an element in the Selmer group of ordinary distributions, respectively. We then propose an Iwasawa type conjecture relating the vanishing divisor of the Bessel period and the characteristic divisor of the Selmer group of the associated Rankin--Selberg Galois module over the eigenvariety, and prove one side of the divisibility under certain conditions. Now we describe our results in more details.

Throughout the article, we fix a CM extension $F/F^+$, a finite set $\spadesuit^+$ of nonarchimedean places of $F^+$ containing all of those whose underlying rational primes ramify in $F$, a rational prime $p$ not underlying $\spadesuit^+$ (hence unramified in $F$), a (possibly empty) set $\PP^+$ of $p$-adic places of $F^+$, a power $q$ of $p$ that is a power of the residue cardinality of $F_w$ for every $p$-adic place $w$ of $F$. The ordinary $p$-adic family will be deformed at places in $\PP^+$.

To simply the introduction, we only focus on the \emph{definite} case. Take a positive integer $n\geq 1$. We start from a pair $\xi=(\xi^n,\xi^{n+1})$ of dominant hermitian weights of rank $n$ and $n+1$ (see Definition \ref{de:weight0}) that are trivial at places in $\PP^+$, and a collection $\bV=(\rV_n,\rV_{n+1};\rK_n,\rK_{n+1};\sfB)$ in which
\begin{itemize}
  \item $\rV_n$ is a totally definite hermitian space over $F$ of rank $n$ that is split at every place in $\PP^+$, and $\rV_{n+1}=\rV_n\oplus F\cdot 1$, where $1$ has norm $1$ (put $\sfG_N\coloneqq\rU(\rV_N)$ for $N\in\{n,n+1\}$ and $\sfG\coloneqq\sfG_n\times\sfG_{n+1}$);

  \item $\rK$ is a decomposable neat open compact subgroup of $\sfG(\dA_{F^+}^{\infty,\PP^+})$ that is relatively hyperspecial at places away from $\spadesuit^+$;

  \item $\sfB$ is a Borel subgroup of $\sfG\otimes_{F^+}F^+_{\PP^+}$ that has \emph{trivial} intersection with $\sfH\otimes_{F^+}F^+_{\PP^+}$, where $\sfH$ is the graph of the natural homomorphism $\sfG_n\to\sfG_{n+1}$.
\end{itemize}
See \S\ref{ss:product} for the accurate description of the data. Let $\sfT$ be the Levi quotient of $\sfB$ and $\rI^0$ the maximal open compact subgroup of $\sfT(F^+_{\PP^+})$. We also denote by $\dT$ the spherical Hecke algebra of $\sfG$ away from $p$ and rational primes underlying $\spadesuit^+$.

In \cite{LS}*{\S2.5}, the authors introduced, for every subgroup $\rI<\rI^0$ of finite index (always below), a partially ordered set $\fk_\rI$ of open compact subgroups $\rk$ of $\sfG(F^+_{\PP^+})$ satisfying that the image of $\rk_\sfB\coloneqq\rk\cap\sfB(F^+_{\PP^+})$ in $\sfT(F^+_{\PP^+})$ coincides with $\rI$, with the partial order $\rk\to\rk'$ if
\[
\rk_\sfB\subseteq\rk'_\sfB,\qquad
\rk'\subseteq\rk\cdot\rk'_\sfB
\]
hold simultaneously. There is a natural functor from $\fk_\rI^{\r{op}}$ to the category of $\dT\otimes\dZ_q$-modules sending $\rk$ to $\Gamma(\Sh(\sfG,\rK\rk),\dZ_\xi)$ -- the set of $\dZ_q$-valued ``automorphic forms'' on the definite Shimura set
\[
\Sh(\sfG,\rK\rk)=\sfG(F^+)\backslash\sfG(\dA_{F^+}^\infty)/\rK\rk
\]
of weight $\xi$.

Take a subgroup $\rJ<\rI^0$, not necessarily of finite index. We define the space of \emph{$\rJ$-invariant ordinary distributions} to be
\[
\cD_\rJ(\bV,\xi)\coloneqq\varprojlim_{\rJ<\rI<\rI^0}
\(\Hom_{\dZ_q}\(\varprojlim_{\fk_\rI^{\r{op}}}\Gamma(\Sh(\sfG,\rK\rk),\dZ_\xi),\dZ_q\)\),
\]
which is naturally a topological module over $\dT\otimes\Lambda_{\rI^0/\rJ,\dZ_q}$, where
\[
\Lambda_{\rI^0/\rJ,\dZ_q}\coloneqq\varprojlim_{\rJ<\rI<\rI^0}\dZ_q[\rI^0/\rI]
\]
is the Iwasawa algebra of $\rI^0/\rJ$ with coefficients in $\dZ_q$. Though not so obvious, $\cD_\rJ(\bV,\xi)$ is indeed the continuous Pontryagin dual of the $\dZ_q$-module of all $\rJ$-invariant (nearly) $\sfB$-ordinary $\dZ_q$-valued ``automorphic forms'' of $\sfG$ of away-from-$\PP^+$ level $\rK$ and weight $\xi$ \cite{Hid98}; see Lemma \ref{le:ordinary}. One upshot of this formulation is that one only needs to choose a Borel subgroup of $\sfG$ at places in $\PP^+$, not even any open compact subgroup of $\sfG(F^+_{\PP^+})$.

Now we introduce the residual Galois representation from which we form the ordinary family, which is a pair of hermitian homomorphisms
\begin{align*}
\gamma=(\gamma_n,\gamma_{n+1})\colon\Gamma_{F^+}\to\sG_n(\dF_q)\times\sG_{n+1}(\dF_q)
\end{align*}
(see Definition \ref{de:hermitian}) that is unramified away from $\spadesuit^+$ and $p$-adic places. Let $\cD_\rJ(\bV,\xi)_\gamma$ be the localization of $\cD_\rJ(\bV,\xi)$ at the maximal ideal of $\dT$ that is the kernel of the Hecke character associated with $\gamma$. Let $\cE_\rJ(\bV,\xi)_\gamma$ be the $\Lambda_{\rI^0/\rJ,\dZ_q}$-subring of $\End_{\Lambda_{\rI^0/\rJ,\dZ_q}}\(\cD_\rJ(\bV,\xi)_\gamma\)$ generated by the image of $\dT$, which is reduced, local, and of (pure) dimension $1+\rank_{\dZ_p}\rI^0/\rJ$ (Proposition \ref{pr:eigen_2}). We regard $\Spec\cE_\rJ(\bV,\xi)_\gamma$ as the \emph{integral $\rJ$-invariant ordinary eigenvariety} (for the residual Galois representation $\gamma$).

From now on, we assume that the $\Gamma_F$-representation $\gamma_n^\natural\otimes\gamma_{n+1}^\natural$ (Definition \ref{de:hermitian}) is absolutely irreducible. Then one can associate, via the theory of pseudo-characters, a continuous Galois representation of $\Gamma_F$ on a free $\cE_\rJ(\bV,\xi)_\gamma$-module $\cR_\rJ(\bV,\xi)_\gamma$ of rank $n(n+1)$ which interpolates Rankin--Selberg products of standard automorphic Galois representations (see Remark \ref{re:galois}). For both $\cR_\rJ(\bV,\xi)_\gamma$ and its Cartier dual $\cR_\rJ(\bV,\xi)_\gamma^*(1)$, we may define their Selmer groups $\rH^1_f(F,\cR_\rJ(\bV,\xi)_\gamma)$ and $\rH^1_f(F,\cR_\rJ(\bV,\xi)_\gamma^*(1))$ (see \S\ref{ss:selmer}), which are naturally finitely generated and cofinitely generated modules over $\cE_\rJ(\bV,\xi)_\gamma$, respectively. Then the continuous Pontryagin dual of $\rH^1_f(F,\cR_\rJ(\bV,\xi)_\gamma^*(1))$, denoted by $\cX_\rJ(\bV,\xi)_\gamma$, is a finitely generated $\cE_\rJ(\bV,\xi)_\gamma$-module. Finally, put
\[
\sE_\rJ(\bV,\xi)_\gamma\coloneqq\cE_\rJ(\bV,\xi)_\gamma\otimes_{\dZ_q}\dQ_q,\qquad
\sR_\rJ(\bV,\xi)_\gamma\coloneqq\cR_\rJ(\bV,\xi)_\gamma\otimes_{\dZ_q}\dQ_q,\qquad
\sX_\rJ(\bV,\xi)_\gamma\coloneqq\cX_\rJ(\bV,\xi)_\gamma\otimes_{\dZ_q}\dQ_q,
\]
and then $\rH^1_f(F,\sR_\rJ(\bV,\xi)_\gamma)\coloneqq\rH^1_f(F,\cR_\rJ(\bV,\xi)_\gamma)\otimes_{\dZ_q}\dQ_q$.

To characterize a coherent sheaf $\sT$ over (the reduced scheme) $\Spec\sE_\rJ(\bV,\xi)_\gamma$, a coarse invariant would be its generic rank (as a function on the set of irreducible components of $\Spec\sE_\rJ(\bV,\xi)_\gamma$), which can be regarded as an invariant of ``codimension zero''. We now introduce a finer invariant of ``codimension one'', which we call the \emph{characteristic divisor}, as follows: for an irreducible component $\sE'$ of the normal locus of $\Spec\sE_\rJ(\bV,\xi)_\gamma$ on which $\sT$ is torsion, we define the \emph{characteristic divisor} of $\sT$ along $\sE'$ to be
\[
\Char_{\sE'}(\sT)\coloneqq\sum_{z}\length_{\cO_{\sE',z}}(\sT_z)\cdot z\in\Div(\sE'),
\]
where $z$ runs over all closed points of $\sE'$ of codimension one. This notion generalizes the well-known notion of the characteristic ideal of a torsion module over (the generic fiber of) a regular local $\dZ_q$-ring, like the Iwasawa algebra.

We now propose a generalization of Iwasawa's main conjecture to the realm of eigenvarieties in this particular setting, using the so-called \emph{Bessel period} as an ordinary distribution when the weight $\xi$ is \emph{interlacing} (Definition \ref{de:weight}(1)). Specifically, in \S\ref{ss:bessel_definite}, we define an element
\[
\blambda_\rJ(\bV)\in\cD_\rJ(\bV,\xi)_\gamma^\sfH,
\]
where $\cD_\rJ(\bV,\xi)_\gamma^\sfH$ denotes the subspace of ``$\sfH$-invariant'' distributions (see Definition \ref{de:distinction} for the precise meaning), which is essentially just a compatible family of period sums over the diagonal subgroup $\sfH$. It is not hard to show that the generic rank of $\cD_\rJ(\bV,\xi)_\gamma^\sfH$ over $\cE_\rJ(\bV,\xi)_\gamma$ is at most one.

\begin{conjecture}[Iwasawa's main conjecture, special case of Conjecture \ref{co:iwasawa}]
Let $\xi$, $\bV$, and $\gamma$ be as above, and consider a subgroup $\rJ<\rI^0$. Suppose that
\begin{itemize}
  \item the $\Gamma_F$-representation $\gamma_n^\natural\otimes\gamma_{n+1}^\natural$ is absolutely irreducible;

  \item $\xi$ is interlacing.
\end{itemize}
Let $\sE'$ be an irreducible component of the normal locus of $\Spec\sE_\rJ(\xi,\bV)_\gamma$. If $\blambda_\rJ(\bV)$ is nonzero on $\sE'$ (so that $\sD_\rJ(\xi,\bV)_\gamma^\sfH/\blambda_\rJ(\bV)$ is torsion over $\sE'$), then
\begin{enumerate}[label=(\alph*)]
  \item $\rH^1_f(F,\sR_\rJ(\xi,\bV)_\gamma)$ vanishes over $\sE'$;

  \item $\sX_\rJ(\xi,\bV)_\gamma$ is torsion over $\sE'$;

  \item we have
      \[
      2\Char_{\sE'}\(\sD_\rJ(\xi,\bV)_\gamma^\sfH/\blambda_\rJ(\bV)\)=\Char_{\sE'}\(\sX_\rJ(\xi,\bV)_\gamma\).
      \]
\end{enumerate}
\end{conjecture}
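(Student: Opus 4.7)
The plan is to prove the claimed equality in (c) by establishing two opposite divisibilities: the \emph{upper bound}
\[
\Char_{\sE'}\(\sX_\rJ(\xi,\bV)_\gamma\)\mid 2\,\Char_{\sE'}\(\sD_\rJ(\xi,\bV)_\gamma^\sfH/\blambda_\rJ(\bV)\)
\]
and its reverse. The upper bound forces $\sX_\rJ(\xi,\bV)_\gamma$ to be torsion on $\sE'$, yielding (b), while (a) follows by a Poitou--Tate global Euler characteristic computation for the conjugate self-dual representation $\sR_\rJ(\xi,\bV)_\gamma$ once the root-number sign $+1$ of the definite case is invoked.

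For the upper bound, I would port the Euler-system machinery of \cites{LTXZZ,LTX} from the anticyclotomic direction to the full ordinary eigenvariety. Three ingredients are needed. First, working on the \emph{indefinite sister} datum $\bV^\flat$ of $\bV$, obtained by switching signatures at one archimedean place of $F^+$, interpolate the $p$-adic \'etale Abel--Jacobi map over the ordinary eigenvariety and produce a compatible family of classes in $\rH^1_f(F,\sR_\rJ(\xi,\bV)_\gamma)$ built from arithmetic cycles on the associated unitary Shimura variety (the Galois representation $\sR_\rJ$ depends only on $\gamma$, so lives identically on both sides). Second, prove a first explicit reciprocity law at a Jacquet--Rallis level-raising prime $v$ identifying the singular localization at $v$ of such a class with the pullback of $\blambda_\rJ(\bV)$. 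Third, run a Kolyvagin-style derivative argument using Chebotarev density and the absolute irreducibility of $\gamma_n^\natural\otimes\gamma_{n+1}^\natural$ to conclude that $\blambda_\rJ(\bV)$ annihilates $\sX_\rJ(\xi,\bV)_\gamma$ up to the stated factor $2$; the factor $2$ is the bookkeeping price paid by the Poitou--Tate self-pairing relating $\sR_\rJ$ and its Tate dual.

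For the reverse divisibility the natural route is an Eisenstein-congruence argument in the Mazur--Wiles / Skinner--Urban spirit. Realize $\sfG_n\times\sfG_{n+1}$ as the Levi of a parabolic in an ambient unitary group (e.g., $\rU(n+1,n+1)/F^+$), construct an ordinary Klingen--Eisenstein family on the corresponding eigenvariety whose constant term along that parabolic interpolates $\blambda_\rJ(\bV)$, and show that the congruence ideal between the Eisenstein component and the cuspidal spectrum is divisible by $\Char_{\sE'}\(\sD_\rJ(\xi,\bV)_\gamma^\sfH/\blambda_\rJ(\bV)\)$. A Ribet-style lattice construction applied to the big Galois representation carried by the cuspidal part then manufactures enough classes in $\rH^1_f\(F,\sR_\rJ(\xi,\bV)_\gamma^*(1)\)$ to account for the remaining part of $\Char_{\sE'}(\sX_\rJ(\xi,\bV)_\gamma)$, matching the upper bound and completing (c).

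The principal obstacle is unambiguously this Eisenstein side. Building the Klingen--Eisenstein family over the full ordinary eigenvariety (as opposed to a single one-parameter subfamily), verifying that its constant term truly realizes the $p$-adic Bessel period $\blambda_\rJ(\bV)$ defined in \S\ref{ss:bessel_definite}, and running the lattice construction under the mild hypothesis that $\gamma_n^\natural\otimes\gamma_{n+1}^\natural$ is absolutely irreducible are all far beyond what the literature supplies for $n\geq 2$. In practice I expect the most one can establish unconditionally is the upper bound, which already yields (a), (b), and the divisibility half of (c); the full equality in (c) would, for the time being, only be accessible after restriction to anticyclotomic slices of $\Spec\sE_\rJ(\xi,\bV)_\gamma$ where \cites{JNS,LS24} apply, and then propagated by Zariski density across the eigenvariety using the good behaviour of $\Char_{\sE'}$ under specialization.
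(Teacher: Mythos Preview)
The statement you are addressing is a \emph{conjecture} in the paper, not a theorem; the paper does not prove it. What the paper establishes is Theorem~\ref{th:iwasawa}, which under several additional hypotheses (Assumption~\ref{as:galois} in full, $p>2(n_0+1)$, Fontaine--Laffaille regularity of $\xi$ and $\xi^{n_0}$, and restriction to the tempered locus rather than the whole normal locus) gives only the upper-bound divisibility, i.e.\ that
\[
2\Char_{\sE'}\(\sD_\rJ(\xi,\bV)_\gamma^\sfH/\blambda_\rJ(\bV)\)-\Char_{\sE'}\(\sX_\rJ(\xi,\bV)_\gamma\)
\]
is effective, together with (a) and (b). The full equality in (c) is left open.

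Your proposal anticipates this correctly: you split into an upper and a lower bound, identify the lower bound as the inaccessible half, and explicitly say you expect only the upper bound to be reachable. That diagnosis matches the paper exactly. Your sketch of the upper bound is broadly in line with the paper's method, though two points deserve correction. First, the paper does not use a single reciprocity law plus a Kolyvagin derivative; it builds a \emph{bipartite} Euler system over the eigenvariety, alternating between definite and indefinite data $\bV^\fn$ at level-raising primes via \emph{both} a first explicit reciprocity law (Theorem~\ref{th:first}, relating $\partial\loc_{w(\fl)}\bkappa$ on the indefinite side to $\blambda$ on the definite side) and a second one (Theorem~\ref{th:second}, relating $\blambda$ to $\loc_{w(\fl)}\bkappa$). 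The bound is then extracted by a Howard-style bipartite argument (Proposition~\ref{pr:iwasawa}) combined with control theorems for specialization (Propositions~\ref{pr:control} and~\ref{pr:specialize}) and a density argument (Lemma~\ref{le:iwasawa4}). Second, parts (a) and (b) are not obtained by a separate Poitou--Tate/root-number computation; they fall out of the same bipartite Euler-system bound at generic points.

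Your Eisenstein-congruence strategy for the reverse divisibility is not touched in the paper, and you are right that carrying it out over the full ordinary eigenvariety (rather than a one-parameter anticyclotomic slice) is well beyond current technology. The paper makes no attempt in that direction and simply records the equality as a conjecture.
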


What we can prove in this article is essentially half of the above conjecture, under further assumptions.

\begin{theorem}[special case of Theorem \ref{th:iwasawa}]\label{th:iwasawa0}
Let $\xi$, $\bV$, and $\gamma$ be as above, and consider a subgroup $\rJ<\rI^0$. Suppose that
\begin{enumerate}[label=(\roman*)]
  \item $\gamma$ satisfies all of Assumption \ref{as:galois};

  \item $p>2(n_0+1)$, where $n_0$ is the even member in $\{n,n+1\}$;

  \item $\xi$ is interlacing and Fontaine--Laffaille regular (Definition \ref{de:weight});

  \item $\xi^{n_0}$ is Fontaine--Laffaille regular (Definition \ref{de:weight0}).
\end{enumerate}
Let $\sE'$ be an irreducible component of the tempered locus of $\Spec\sE_\rJ(\xi,\bV)_\gamma$. If $\blambda_\rJ(\bV)$ is nonzero on $\sE'$, then
\begin{enumerate}[label=(\alph*)]
  \item $\rH^1_f(F,\sR_\rJ(\xi,\bV)_\gamma)$ vanishes over $\sE'$;

  \item $\sX_\rJ(\xi,\bV)_\gamma$ is torsion over $\sE'$;

  \item the divisor
      \[
      2\Char_{\sE'}\(\sD_\rJ(\xi,\bV)_\gamma^\sfH/\blambda_\rJ(\bV)\)-\Char_{\sE'}\(\sX_\rJ(\xi,\bV)_\gamma\)
      \]
      of $\sE'$ is effective.
\end{enumerate}
\end{theorem}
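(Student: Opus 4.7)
The plan is to follow the Jochnowitz-congruence / level-raising strategy of \cites{LTXZZ,LTX}, upgraded from the anticyclotomic Iwasawa algebra to the ordinary eigenvariety. I first reduce the divisibility statement (c) -- which immediately implies (a) and (b) by standard rank-and-torsion arguments in commutative algebra -- to a local statement at each codimension-one closed point $z$ of $\sE'$ at which $\cO_{\sE',z}$ is a discrete valuation ring (the contribution of any remaining non-regular codimension-one points can be absorbed into the overall effectivity statement by flatness / approximation). At such $z$, writing $\varpi_z$ for a uniformizer, the desired bound becomes
\[
\length_{\cO_{\sE',z}}\bigl(\sX_\rJ(\xi,\bV)_{\gamma,z}\bigr)\;\leq\;2\,\ord_z\bigl(\blambda_\rJ(\bV)\bigr).
\]

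At a fixed $z$, the strategy is to construct, by repeated level-raising at so-called ``allowable'' primes of $F$, enough cohomology classes with prescribed local behaviour to bound the length of $\sX_{\gamma,z}$. This rests on two \emph{explicit reciprocity laws} adapted from \cite{LTXZZ} to the ordinary eigenvariety setting: a \emph{first} one, asserting that after level-raising at a single allowable prime $\lambda$ the level-raised Bessel period produces a cohomology class whose singular component at $\lambda$ is explicitly controlled by that period; and a \emph{second} one, expressing the product of two level-raised Bessel periods as a local Tate pairing of the two resulting classes. An induction on $N$ combines these laws to produce, for each $N$, a finite family of classes in $\rH^1(F,\sR_\rJ(\xi,\bV)_\gamma)$ whose local images cut out a quotient of $\sX_{\gamma,z}$ of colength bounded by $2\,\ord_z(\blambda_\rJ(\bV))$; a Mittag-Leffler passage to the limit then yields the inequality above.

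The key technical inputs to be established, or invoked, are threefold: a Chebotarev-type production modulo $\varpi_z^N$ of a sufficient supply of allowable level-raising primes, which uses the absolute irreducibility of $\gamma_n^\natural\otimes\gamma_{n+1}^\natural$ together with the big-image hypotheses packaged in Assumption \ref{as:galois}; a Fontaine--Laffaille comparison identifying the ordinary part of the $p$-adic \'etale cohomology of the indefinite-side Shimura variety at the auxiliary level-raised levels with the corresponding space of ordinary distributions, which is where the conditions (ii), (iii), (iv) -- the bound $p>2(n_0+1)$ and Fontaine--Laffaille regularity of $\xi$ and of $\xi^{n_0}$ -- are consumed; and ordinary control theorems allowing the finite-level, characteristic-zero reciprocity identities to be interpolated into equalities of elements of $\cO_{\sE',z}$.

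The hard part, as already in \cite{LTX}, will be the second explicit reciprocity law in this generality. At finite level it rests on a delicate geometric computation on the supersingular locus of the indefinite Shimura variety modulo a level-raising prime; here it must additionally be integrated with the ordinary $\Lambda$-adic formalism so that both sides of the identity are interpreted as elements of $\cO_{\sE',z}$. The tempered hypothesis on $\sE'$ is needed to suppress contributions from non-tempered Arthur packets when passing between the global identity and its finite-level shadows, while the Fontaine--Laffaille regularity of $\xi$ and $\xi^{n_0}$ together with $p>2(n_0+1)$ ensure that the relevant ordinary-part comparison is an honest isomorphism, without which the interpolation of the second reciprocity law would fail.
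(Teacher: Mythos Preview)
Your high-level architecture---level-raising primes, two explicit reciprocity laws, a bipartite Euler system, and a reduction to codimension one---matches the paper's. However, several concrete pieces are misdescribed, and your reduction differs from the paper in a way that matters.

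First, your description of the second reciprocity law is incorrect. It does not express a product of two Bessel periods as a local Tate pairing. In this setting the two laws bounce between definite and indefinite data along an arrow $\fn\to\fn\fl$: the \emph{first} law (the harder one here, involving arithmetic level-raising and weight spectral sequences, and where the Fontaine--Laffaille regularity of $\xi^{n_0}$ and the R=T input are actually consumed) identifies the singular localization $\partial_{w(\fl)}\loc_{w(\fl)}\bkappa_\rJ(\bV')$ with $\blambda_\rJ(\bV)$ modulo a congruence operator; the \emph{second} law identifies the unramified localization $\loc_{w(\fl)}\bkappa_\rJ(\bV)$ with the image of $\blambda_\rJ(\bV')$ under an explicit map. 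Neither involves a Tate pairing of two Selmer classes, and you have inverted which one is the delicate geometric step.

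Second, your reduction departs from the paper, and it is not clear it works as stated. You propose to fix a height-one point $z$, work directly over the DVR $\cO_{\sE',z}$, and run the Euler-system bound there. The paper instead works entirely at closed points $x$ of $\sE^\temp$: it proves a pointwise version of (a)--(c) over $\dZ_x$ via the bipartite Euler system, and then deduces the divisor inequality at $z$ by choosing a sequence of closed points $x_m$ approximating $z$ and comparing $\cX_\rJ(\xi,\bV)_\gamma/\fP_{x_m}$ with $\rH^1_\ff(F,(\cR_\rJ(\bV)_{/x_m})^*(1))^*$ through a control theorem. Running the Euler-system machinery directly over $\cO_{\sE',z}$ would require you to control the congruence modules and reciprocity maps over a DVR that is not a finite $\dZ_q$-algebra, whereas the Chebotarev and level-raising inputs are formulated modulo the Artinian ideals $\fp^k$ of the integral eigenvariety and do not localize directly at $z$. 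Your ``Mittag--Leffler passage to the limit'' is hiding exactly this issue.

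Third, the role of the tempered hypothesis is not to suppress non-tempered Arthur packets. It is a Galois-side condition excluding the support of $(\Gr^0_w\sR_\rJ(\xi,\bV)_\gamma)_{\Gamma_{F_w}}$ for $w\in\PP$, which is what makes the description of $\rH^1_f(F_w,\sR_\rJ(\xi,\bV)_\gamma)$ via the ordinary filtration valid in codimension one and hence allows the specialization of Selmer structures.

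Finally, (c) does not literally imply (a): $\rH^1_f(F,\sR_\rJ(\xi,\bV)_\gamma)$ and $\sX_\rJ(\xi,\bV)_\gamma$ are related by global duality but are not interchangeable without further input. The paper establishes (a) directly from pointwise vanishing at a Zariski-dense set of closed points.
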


Here, the tempered locus (Definition \ref{de:tempered}) is an open dense subset of the normal locus, possibly strict; but their difference does not contain any classical point and in many cases is just empty (Remark \ref{re:tempered}). Note that we do not require places in $\PP^+$ to be split in $F$ in the above theorem.

\begin{remark}
Theorem \ref{th:iwasawa0} (and more generally, Theorem \ref{th:iwasawa}) recover previous results in \cite{LTXZZ} and \cite{LTX}.
\begin{enumerate}
  \item If one takes $\PP^+$ to be the empty set and take $\xi$ to be the trivial weight, then one recovers \cite{LTXZZ}*{Theorem~1.1.1~\&~Theorem~1.1.7} (and their rank one analogues). In other words, if one only takes $\PP^+$ to be the empty set, then Theorem \ref{th:iwasawa0} (and more generally, Theorem \ref{th:iwasawa}) extends \cite{LTXZZ}*{Theorem~1.1.1~\&~Theorem~1.1.7} (and their rank one analogues) to more general interlacing weights $\xi$ rather than the trivial one.

  \item A variant of a special case of Theorem \ref{th:iwasawa0} (and more generally, Theorem \ref{th:iwasawa}), namely, Theorem \ref{th:iwasawa_bis}, recovers \cite{LTX}*{Theorem~1.2.3}. See Remark \ref{re:recover} for more explanation.
\end{enumerate}

\end{remark}

\begin{remark}\label{re:conditional}
In order to simplify the exposition, we will assume \cite{LTXZZ}*{Hypothesis~2.2.5} (for all dominant weights, not necessarily the minimal one) for $N=n,n+1$ throughout this article. In particular, the results (Theorem \ref{th:iwasawa0}, Theorem \ref{th:iwasawa}, Theorem \ref{th:iwasawa_bis}) are still conditional when $F^+=\dQ$ and $n>2$.
\end{remark}

In this article, all rings are commutative and unital; and ring homomorphisms preserve units. For a (topological) ring $L$, a (topological) $L$-ring is a (topological) ring $R$ together with a (continuous) ring homomorphism from $L$ to $R$. However, we use the word \emph{algebra} in the general sense, which is not necessarily commutative or unital. If a base ring is not specified in the tensor operation $\otimes$, then it is $\dZ$.

\subsubsection*{Acknowledgements}

The author would like to thank Dongwen~Liu and Yichao~Tian for useful comments and discussion, and Dongwen~Liu and Binyong~Sun for sharing their early manuscript \cite{LS}. The author is supported by the National Key R\&D Program of China No.~2022YFA1005300.

\section{Some preliminaries}

In this section, we fix some general notation that will be frequently used in the entire article. We also recall the construction of algebraic representations of unitary groups in Construction \ref{co:weight}.

\begin{notation}
Throughout the article, we use the following notation.
\begin{itemize}
  \item Let $\tc\in\Aut(\dC/\dQ)$ be the complex conjugation.

  \item We fix a \emph{subfield} $F\subseteq\dC$ that is a CM number field.

  \item Let $F^+\subseteq F$ be the maximal subfield on which $\tc$ acts by the identity.

  \item Denote by $\Sigma$ and $\Sigma^+$ the sets of places of $F$ and $F^+$, respectively. For every $w\in\Sigma$, denote by $w^\tc\in\Sigma$ its complex conjugation.

  \item Let $\ol{F}$ be \emph{the} Galois closure of $F$ in $\dC$ and denote by $F^\ab$ the maximal abelian extension of $F$ in $\ol{F}$. Put $\Gamma_\cF\coloneqq\Gal(\ol{F}/\cF)$ for every subfield $\cF\subseteq\ol{F}$.

  \item For finitely many places $w_1,w_2,\dots$ of $\dQ$, denote by $\Sigma_{w_1,w_2,\dots}$ and $\Sigma^+_{w_1,w_2,\dots}$ the sets of places of $F$ and $F^+$ above the set $\{w_1,w_2,\dots\}$, respectively.

  \item Denote by $\spadesuit$ and $\PP$ the sets of places of $F$ above $\spadesuit^+$ and $\PP^+$, respectively.

  \item For every set $\fm$ of places of $F^+$, denote by $\ang{\fm}$ the set of places of $F^+$ whose underlying rational places underly $\spadesuit^+\cup\Sigma^+_{\infty,p}\cup\fm$.

  \item For every place $v$ of $F^+$, we
      \begin{itemize}
        \item denote by $\|v\|$ the residual cardinality of $F^+_v$ when it is nonarchimedean;

        \item put $F_v\coloneqq F\otimes_{F^+}F^+_v$;

        \item fix an algebraic closure $\ol{F}^+_v$ of $F^+_v$ containing $\ol{F}$;

        \item put $\Gamma_{F^+_v}\coloneqq\Gal(\ol{F}^+_v/F^+_v)$ as a subgroup of $\Gamma_{F^+}$;

        \item denote by $w(v)$ the unique place of $F$ above $v$ induced by the fixed embedding $F\hookrightarrow\ol{F}^+_v$.
      \end{itemize}

  \item For every nonarchimedean place $w$ of $F$, we
      \begin{itemize}
        \item identify the Galois group $\Gamma_{F_w}$ with $\Gamma_{F^+_v}\cap\Gamma_F$ (resp.\ $\tc(\Gamma_{F^+_v}\cap\Gamma_F)\tc$), where $v$ is the underlying place of $F^+$, if $w=w(v)$ (resp.\ $w\neq w(v)$);

        \item let $\rI_{F_w}\subseteq\Gamma_{F_w}$ be the inertia subgroup;

        \item let $\kappa_w$ be the residue field of $F_w$, and identify its Galois group $\Gamma_{\kappa_w}$ with $\Gamma_{F_w}/\rI_{F_w}$;

        \item denote by $\phi_w\in\Gamma_{F_w}$ a lifting of the \emph{arithmetic} Frobenius element in $\Gamma_{\kappa_w}$.
      \end{itemize}

  \item Denote by $\Upsilon^+$ and $\Upsilon$ the set of all embeddings from $F^+$ and $F$ to $\dQ_q$, respectively. For $\tau\in\Upsilon$ and $w\in\Sigma_p$, we write $\tau\mid w$ if $\tau$ induces $w$. Similarly, for $\sigma\in\Upsilon^+$ and $v\in\Sigma^+_p$, we write $\sigma\mid v$ if $\sigma$ induces $v$.

  \item Denote by $\epsilon_p\colon\Gamma_\dQ\to\dZ_p^\times$ the $p$-adic cyclotomic character, and $\eta_{F/F^+}\colon\Gamma_{F^+}\to\{\pm 1\}$ the quadratic character associated with $F/F^+$.
\end{itemize}
\end{notation}

\begin{notation}\label{no:complete}
Let $G$ be an abelian group.
\begin{itemize}
  \item Put $G[p^\infty]\coloneqq\bigcup_m G[p^m]\subseteq G$.

  \item We write $H\finite G$ for a subgroup $H$ of $G$ such that $G/H$ is finite.

  \item Put $\widehat{G}\coloneqq\varprojlim_{H\finite G}G/H$ as the its profinite completion, regarded as a topological group.

  \item Put
      \[
      \Lambda_{G,\dZ_q}\coloneqq\dZ_q[[G]]=\varprojlim_{H\finite G}\dZ_q[G/H]
      \]
      and $\Lambda_{G,\dQ_q}\coloneqq\Lambda_{G,\dZ_q}\otimes_{\dZ_q}\dQ_q$, as topological rings.

  \item If $G$ is a topological $\dZ_q$-module, we denote by
     \[
     G^*\coloneqq\Hom_{\dZ_q}^{\r{ct}}(G,\dQ_q/\dZ_q)
     \]
     its continuous ($\dZ_q$-linear) Pontryagin dual.
\end{itemize}
\end{notation}

\begin{definition}[Hermitian space]
Let $R$ be an $O_{F^+}[(\spadesuit^+)^{-1}]$-ring. A \emph{hermitian space} over $O_F\otimes_{O_{F^+}}R$ of rank $N$ is a projective $O_F\otimes_{O_{F^+}}R$-module $\rV$ of rank $N$ together with a perfect pairing
\[
(\;,\;)_\rV\colon\rV\times\rV\to O_F\otimes_{O_{F^+}}R
\]
that is $O_F\otimes_{O_{F^+}}R$-linear in the first variable and $(O_F\otimes_{O_{F^+}}R,\tc\otimes 1_R)$-linear in the second variable, and satisfies $(x,y)_\rV=(y,x)_\rV^\tc$ for $x,y\in\rV$. We denote by $\rU(\rV)$ the group of $O_F\otimes_{O_{F^+}}R$-linear isometries of $\rV$, which is a reductive group over $R$. Moreover, we denote by $\rV_\sharp$ the hermitian space $\rV\oplus O_F\otimes_{O_{F^+}}R\cdot 1$, where $1$ has norm $1$.
\end{definition}

\begin{definition}[\cite{LTXZZ}*{Definition~3.2.1}]\label{de:standard_hermitian_space}
Let $\rV$ be a hermitian space over $F$ of rank $N$.
\begin{enumerate}
  \item We say that $\rV$ is \emph{standard definite} if it has signature $(N,0)$ at every place in $\Sigma^+_\infty$.

  \item We say that $\rV$ is \emph{standard indefinite} if it has signature $(N-1,1)$ at the default real place of $F^+$ and $(N,0)$ at other places in $\Sigma^+_\infty$.

  \item We say that $\rV$ is \emph{standard} if it is either standard definite or indefinite.
\end{enumerate}
For a standard hermitian space $\rV$, we have the system of Shimura sets or varieties $\Sh(\sfG,\rK)$, according to whether $\rV$ is definite or indefinite, indexed by neat open compact subgroups $\rK$ of $\sfG(\dA_{F^+})$, where $\sfG\coloneqq\rU(\rV)$ (see \cite{LTXZZ}*{\S2}).\footnote{In \cite{LTXZZ}*{\S2}, it was denoted by $\Sh(\rV,\rK)$; but we slightly change the notation here.} In what follows, we will uniformly call the system $\Sh(\sfG,\rK)$ as \emph{Shimura spaces}.
\end{definition}

\begin{construction}\label{co:weight}
Consider an $O_{F^+}[(\spadesuit^+)^{-1}]$-ring $R$, a homomorphism $\sigma\colon R\to\dL$ to a ring $\dL$, such that $\Sigma\coloneqq\{\tau\colon O_F\otimes_{O_{F^+}}R\to\dL\res \tau\res_R=\sigma\}$ is nonempty (hence a set of two elements). Let $\rV$ be a hermitian space over $O_F\otimes_{O_{F^+}}R$ of rank $N\geq1$, with $G\coloneqq\rU(\rV)$ a reductive group over $R$. For every tuple
\[
\xi=\((\xi_{\tau,0},\dots,\xi_{\tau,N-1})\)_{\tau\in\Sigma}\in(\dZ^N)^\Sigma
\]
satisfying $\xi_{\tau,i}+\xi_{\tau',N-1-i}=0$ for $\tau'\neq\tau$ and $0\leq i\leq N-1$, we will construct a representation $\dL_\xi$ of $G(R)$ with coefficients in $\dL$.

Choose a Borel subgroup $B$ of $G$ over $R$, with the Levi quotient $T$. Choose an element $\tau\in\Sigma$. Then the natural map $G\otimes_{R,\sigma}\dL\to\GL(\rV_\tau)$ is an isomorphism, where $\rV_\tau\coloneqq\rV\otimes_{O_F\otimes_{O_{F^+}}R,\tau}\dL$. Let
\[
0=\rV_\tau^0\subseteq \rV_\tau^1\subseteq\cdots\subseteq \rV_\tau^N=\rV_\tau
\]
be the unique complete filtration of free $\dL$-submodules which $B\otimes_{R,\sigma}\dL$ stabilizes. We obtain an isomorphism
\begin{align}\label{eq:weight}
T\otimes_{R,\sigma}\dL\simeq\prod_{i=0}^{N-1}\Aut_\dL(\rV_\tau^{i+1}/\rV_\tau^i)=\prod_{i=0}^{N-1}\GL_{1,\dL}
\end{align}
of tori over $\dL$. The assignment
\[
t=(t_0,\dots,t_{N-1})\mapsto \prod_{i=0}^{N-1}t_i^{\xi_{\tau,i}}
\]
induces a character
\[
\chi_\xi\colon T(R)\to\dL^\times
\]
that is independent of the choice of $\tau$. We also regard $\chi_\xi$ as a character of $B(R)$ via inflation. Finally, define
\[
\dL_\xi\coloneqq
\left\{f\in R[G]\otimes_{R,\sigma}\dL\left| f(bg)=\chi_\xi(b)\cdot f(g)\text{ for all }b\in B(R),g\in G(R)\right.\right\}
\]
as a representation of $G(R)$ via right translation; its isomorphism class does not depend on the choice of $B$.

When $\dL=\dZ_q,\dQ_q$, we simply write $\dZ_\xi,\dQ_\xi$ for $(\dZ_q)_\xi,(\dQ_q)_\xi$, respectively.
\end{construction}

\begin{definition}[Abstract unitary Hecke algebra]\label{de:abstract_hecke}
For every place $v$ of $F^+$ not in $\spadesuit^+$, we have the \emph{local spherical Hecke algebra}
\[
\dT_{N,v}\coloneqq\dZ[\rU_{N,v}(O_{F^+_v})\backslash\rU_{N,v}(F^+_v)/\rU_{N,v}(O_{F^+_v})],
\]
where $\rU_{N,v}$ denotes the unique up to isomorphism unitary group over $O_{F^+_v}$ (with respect to the \'{e}tale extension $(O_F\otimes_{O_{F^+}}O_{F^+_v})/O_{F^+_v}$) in $N$ variables, which is a ring with the unit element $1_{\rU_{N,v}(O_{F^+_v})}$.

For a finite set $\Box$ of places of $F^+$ containing $\spadesuit^+$, we define the \emph{abstract unitary Hecke algebra away from $\Box$} to be the restricted tensor product
\[
\dT_N^{\Box}\coloneqq{\bigotimes_v}'\dT_{N,v}
\]
over all places $v\in\Sigma^+\setminus\Box$ with respect to unit elements.
\end{definition}

\begin{definition}\label{de:hermitian}
We recall the group scheme $\sG_N$ from \cite{CHT08}*{\S1}. Put
\[
\sG_N\coloneqq(\GL_N\times\GL_1)\rtimes\{1,\fc\}
\]
with $\fc^2=1$ and
\[
\fc(g,\mu)\fc=(\mu\pres{\rt}{g}^{-1},\mu)
\]
for $(g,\mu)\in\GL_N\times\GL_1$.

We say that a homomorphism $\gamma\colon\Gamma_{F^+}\to\sG_N(R)$, in which $R$ is a topological $\dZ_p$-ring, is \emph{hermitian} if it is continuous, satisfies $\gamma^{-1}(\GL_N(R)\times R^\times)=\Gamma_F$ and $\nu\circ\gamma=\eta_{F/F^+}^N\epsilon_p^{1-N}$. For a hermitian homomorphism $\gamma\colon\Gamma_{F^+}\to\sG_N(R)$, we denote by $\gamma^\natural\colon\Gamma_F\to\GL_N(R)$ the composition of $\gamma\res_{\Gamma_F}$ with the projection $\GL_N(R)\times R^\times\to\GL_N(R)$.
\end{definition}

\section{Ordinary distributions and ordinary eigenvariety}
\label{ss:eigenvariety}

In this section, we introduce the notion of ordinary distribution and ordinary eigenvariety for the unitary group of a single hermitian space. The main properties of these objects are summarized in Proposition \ref{pr:eigen_1}. However, in order to reduce the complexity of the discussion in this section where several new constructions appear for the first time, we do not introduce the localized version. As a price to pay, Proposition \ref{pr:eigen_1} is only stated in the definite case (but whose proof works in more general situation).

Let $N\geq 1$ be an integer with $r\coloneqq\lfloor\tfrac{N}{2}\rfloor$.

\begin{definition}\label{de:weight0}
We denote by $\Xi^N$ the set of \emph{hermitian weights} of rank $N$, that is, tuples
\[
\xi^N=\(\xi^N_\tau=(\xi^N_{\tau,0},\dots,\xi^N_{\tau,N-1})\in\dZ^N\)_{\tau\in\Upsilon}
\]
satisfying $\xi^N_{\tau,i}+\xi^N_{\tau\tc,N-1-i}=0$ for every $\tau\in\Upsilon$ and every $0\leq i\leq N-1$. It is naturally an abelian group under the componentwise addition.
\begin{enumerate}
  \item We say that a hermitian weight $\xi^N$ of rank $N$ is
      \begin{itemize}
        \item \emph{neutral}, if $\sum_{i=0}^{N-1}\xi^N_{\tau,i}=0$ for every $\tau\in\Upsilon$;

        \item \emph{dominant}, if $\xi^N_{\tau,0}\leq\cdots\leq\xi^N_{\tau,N-1}$ holds for every $\tau\in\Upsilon$;

        \item \emph{Fontaine--Laffaille regular}, if it is dominant and satisfies
            \[
            \max_{\tau|w}\left\{\xi^N_{\tau,N-1}\right\}-\min_{\tau|w}\left\{\xi^N_{\tau,0}\right\}+N-1\leq p-2
            \]
            for every $w\in\Sigma_p$;

        \item \emph{$\blacklozenge^+$-trivial} for a subset $\blacklozenge^+\subseteq\Sigma^+_p$, if $\xi^N_\tau=0^N$ for all $\tau$ that induces places in $\blacklozenge^+$.
      \end{itemize}

  \item There is an involution on $\Xi^N$ sending $\xi^N$ to $(\xi^N)^\vee$ defined by the formula
      \[
      (\xi^N)^\vee_{\tau,i}=-\xi^N_{\tau,N-1-i},
      \]
      which preserves the subgroups of neutral, dominant, Fontaine--Laffaille regular, $\blacklozenge^+$-trivial elements, respectively.

  \item Finally, for every element $\sigma\in\Upsilon^+$, write
      \[
      \xi^N_\sigma\coloneqq\(\xi^N_\tau\)_{\tau\res_{F^+}=\sigma}.
      \]
\end{enumerate}
\end{definition}

We consider a $\PP^+$-trivial dominant hermitian weight $\xi^N\in\Xi^N$ and a standard hermitian space $\rV_N$ of rank $N$ over $F$ that is split at every place in $\Sigma_p^+$ (and put $\sfG_N\coloneqq\rU(\rV_N)$ as a reductive group over $F^+$), together with
\begin{itemize}
  \item a decomposable neat open compact subgroup $\rK_N$ of $\sfG_N(\dA_{F^+}^{\infty,\PP^+})$ such that $(\rK_N)_v$ is the stabilizer of a self-dual lattice in $\rV_{N,v}$ for $v\in\Sigma^+_p\setminus\PP^+$;

  \item a Borel subgroup $\sfB_N$ of $\sfG_N\otimes_{F^+}F^+_{\PP^+}$, with $\sfT_N$ its Levi quotient;

  \item a finite set $\Box$ of places of $F^+$ containing $\ang{\emptyset}$ such that for every $v\in\Sigma^+\setminus\Box$, $\rK_{N,v}$ is hyperspecial maximal.
\end{itemize}
Denote by $\rI_N^0$ the maximal open compact subgroup of $\sfT_N(F^+_{\PP^+})$.

For $v\in\Sigma^+_p\setminus\PP^+$ and $\sigma\in\Upsilon^+$ inducing $v$, we have a representation $\dZ_{\xi^N_\sigma}$ of $(\rK_N)_v$ hence of $\rK_N\sfG_N(F^+_{\PP^+})$ from Construction \ref{co:weight} (with $R=O_{F^+_v}$ and $\dL=\dZ_q$). For $v\in\PP^+$ and $\sigma\in\Upsilon^+$ inducing $v$, we regard $\dZ_{\xi^N_\sigma}=\dZ_q$ as the trivial representation of $\rK_N\sfG_N(F^+_{\PP^+})$. Put
\[
\dZ_{\xi^N}\coloneqq\bigotimes_{v\in\Sigma^+_p}
\(\bigotimes_{\sigma\in\Upsilon^+,\sigma|v}\dZ_{\xi^N_\sigma}\)
\]
as a representation of $\rK_N\sfG_N(F^+_{\PP^+})$ with coefficients in $\dZ_q$ (here the tensor product is taken over $\dZ_q$). Since $\xi^N$ is dominant, $\dZ_{\xi^N}$ is a nontrivial finite free $\dZ_q$-module.

Let $\rk$ be an open compact subgroup of $\sfG_N(F^+_{\PP^+})$. The representation $\dZ_{\xi^N}$ induces a local system by the same notation on the Shimura space $\Sh(\sfG_N,\rK_N\rk)$. Put
\begin{align*}
\rH(\Sh(\sfG_N,\rK_N\rk),\dZ_{\xi^N}/p^m)\coloneqq
\begin{dcases}
\Gamma(\Sh(\sfG_N,\rK_N\rk),\dZ_{\xi^N}/p^m), &\text{when $\rV_N$ is standard definite,}\\
\rH^{N-1}_{\et}(\Sh(\sfG_N,\rK_N\rk)_{\ol{F}},\dZ_{\xi^N}/p^m(N-1-r)), &\text{when $\rV_N$ is standard indefinite,}
\end{dcases}
\end{align*}
for $1\leq m\leq \infty$ ($\dZ_{\xi^N}/p^\infty$ is understood as $\dZ_{\xi^N}$); it is naturally a $\dZ_q[(\rK_N)_{\spadesuit^+}\backslash\sfG_N(F^+_{\spadesuit^+})/(\rK_N)_{\spadesuit^+}]
\otimes\dT_N^\Box$-module, and admits a
$\dZ_q[(\rK_N)_{\spadesuit^+}\backslash\sfG_N(F^+_{\spadesuit^+})/(\rK_N)_{\spadesuit^+}]
\otimes\dT_N^\Box$-linear continuous action of $\Gamma_F$ (which is trivial if $\rV_N$ is definite).

\begin{construction}\label{co:liusun}
We recall a key construction from \cite{LS}*{\S2.5}. For every subgroup $\rI\finite\rI_N^0$, let $\fk_\rI$ be the set of open compact subgroups $\rk$ of $\sfG_N(F^+_{\PP^+})$ satisfying that the image of $\rk\cap\sfB_N(F^+_{\PP^+})$ in $\sfT_N(F^+_{\PP^+})$ coincides with $\rI$. There is a partial order on $\bigcup_\rI\fk_\rI$: For two elements $\rk,\rk'\in\bigcup_\rI\fk_\rI$, we impose $\rk\prec\rk'$ if
\[
\rk\cap\sfB_N(F^+_{\PP^+})\subseteq\rk'\cap\sfB_N(F^+_{\PP^+}),\qquad
\rk'\subseteq\rk\cdot\(\rk'\cap\sfB_N(F^+_{\PP^+})\)
\]
hold simultaneously.

For $\rk\prec\rk'$ in $\bigcup_\rI\fk_\rI$, we have the transfer map
\[
\rho_{\rk',\rk}\colon\rH(\Sh(\sfG_N,\rK_N\rk'),\dZ_{\xi^N}/p^m)\to\rH(\Sh(\sfG_N,\rK_N\rk),\dZ_{\xi^N}/p^m)
\]
as the composition of the pullback map $\rho_{\rk',\rk'\cap\rk}$ and the pushforward map $\rho_{\rk'\cap\rk,\rk}$. By \cite{LS}*{Lemma~2.14~\&~Lemma~3.3}, for $\rk_1\prec\rk_2\prec\rk_3$, we have $\rho_{\rk_3,\rk_1}=\rho_{\rk_2,\rk_1}\circ\rho_{\rk_3,\rk_2}$. In other words, we have a functor from $\(\bigcup_\rI\fk_\rI\)^{\r{op}}$ to the category of modules over $\dZ_q[(\rK_N)_{\spadesuit^+}\backslash\sfG_N(F^+_{\spadesuit^+})/(\rK_N)_{\spadesuit^+}]
\otimes\dT_N^\Box$.
\end{construction}

Take a subgroup $\rI\finite\rI_N^0$. Construction \ref{co:liusun} allows us to put
\[
\rH(\Sh(\sfG_N,\rK_N\rI),\dZ_{\xi^N}/p^m)\coloneqq
\varprojlim_{\fk_\rI^{\r{op}}}\rH(\Sh(\sfG_N,\rK_N\rk),\dZ_{\xi^N}/p^m).
\]
For every element $t\in\sfT_N(F^+_{\PP^+})$, there is a unique endomorphism $[t]$ of $\rH(\Sh(\sfG_N,\rK_N\rI),\dZ_{\xi^N}/p^m)$ such that the diagram
\[
\xymatrix{
\rH(\Sh(\sfG_N,\rK_N\rI),\dZ_{\xi^N}/p^m) \ar[r]^-{[t]} \ar[d] &
\rH(\Sh(\sfG_N,\rK_N\rI),\dZ_{\xi^N}/p^m) \ar[d] \\
\rH(\Sh(\sfG_N,\rK_N\rk),\dZ_{\xi^N}/p^m) \ar[r] &
\rH(\Sh(\sfG_N,\rK_N(t\rk t^{-1})),\dZ_{\xi^N}/p^m)
}
\]
commutes for every $\rk\in\fk_\rI$, where the bottom map is the pullback map along the right translation morphism $\Sh(\sfG_N,\rK_N(t\rk t^{-1}))\to\Sh(\sfG_N,\rK_N\rk)$ by $t$ between the Shimura spaces. It follows that $\rH(\Sh(\sfG_N,\rK_N\rI),\dZ_{\xi^N}/p^m)$ is a module over $\dZ[(\rK_N)_{\spadesuit^+}\backslash\sfG_N(F^+_{\spadesuit^+})/(\rK_N)_{\spadesuit^+}]
\otimes\dT_N^\Box\otimes\Lambda_{\sfT_N(F^+_{\PP^+})/\rI,\dZ_q}$. We endow
\[
\Hom_{\dZ_q}\(\rH(\Sh(\sfG_N,\rK_N\rI),\dZ_{\xi^N}/p^m),\dZ_q/p^m\)
\]
the $\dZ[(\rK_N)_{\spadesuit^+}\backslash\sfG_N(F^+_{\spadesuit^+})/(\rK_N)_{\spadesuit^+}]
\otimes\dT_N^\Box\otimes\Lambda_{\sfT_N(F^+_{\PP^+})/\rI,\dZ_q}$-module structure via the \emph{dual} action (in particular, the $\Lambda_{\sfT_N(F^+_{\PP^+})/\rI,\dZ_q}$-module structure is the usual one twisted by the inverse of $\sfT_N(F^+_{\PP^+})/\rI$).

We now relate $\rH(\Sh(\sfG_N,\rK_N\rI),\dZ_{\xi^N}/p^m)$ to the classical notion of (nearly) ordinary cohomology. For an open compact subgroup $\rk$ of $\sfG_N(F^+_{\PP^+})$, put
\[
\fB_\rk\coloneqq\left\{\left.b\in\sfB_N(F^+_{\PP^+})\right| \rk\to b\rk b^{-1}\right\},
\]
which is a submonoid of $\sfB_N(F^+_{\PP^+})$ by Lemma \ref{le:monoid} below. For $b\in\fB_\rk$ and $1\leq m\leq\infty$, we define $\tU_b$ to be the composite map
\[
\rH(\Sh(\sfG_N,\rK_N\rk),\dZ_{\xi^N}/p^m)
\to\rH(\Sh(\sfG_N,\rK_N(b\rk b^{-1})),\dZ_{\xi^N}/p^m)
\xrightarrow{\rho_{b\rk b^{-1},\rk}}\rH(\Sh(\sfG_N,\rK_N\rk),\dZ_{\xi^N}/p^m)
\]
in which the first map is the pullback map along the right translation morphism by $b$ between the Shimura spaces. By \cite{LS}*{Lemma~2.14~\&~Proposition~3.14}, the assignment $b\mapsto\tU_b$ defines a homomorphism
\[
\tU_\bullet\colon\fB_\rk\to\End_{\dZ_q}\(\rH(\Sh(\sfG_N,\rK_N\rk),\dZ_{\xi^N}/p^m)\)
\]
of monoids.

\begin{lem}\label{le:monoid}
For an open compact subgroup $\rk$ of $\sfG_N(F^+_{\PP^+})$, the subset $\fB_\rk\subseteq\sfB_N(F^+_{\PP^+})$ is a submonoid.
\end{lem}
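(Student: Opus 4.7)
The plan is to give a direct verification that $\fB_\rk$ is closed under multiplication (the identity element is immediate, since $\rk \prec \rk$), by reducing to two structural facts about the relation $\prec$ introduced in Construction \ref{co:liusun}: namely, (a) $\prec$ is transitive, and (b) $\prec$ is preserved by conjugation by elements of $\sfB_N(F^+_{\PP^+})$. Given $b_1, b_2 \in \fB_\rk$, I would combine (a) and (b) as follows: from $b_2 \in \fB_\rk$ we have $\rk \prec b_2 \rk b_2^{-1}$, and conjugating this by $b_1$ using (b) yields
\[
b_1 \rk b_1^{-1} \prec (b_1 b_2)\rk (b_1 b_2)^{-1};
\]
combining with $\rk \prec b_1 \rk b_1^{-1}$ and applying transitivity (a) gives $\rk \prec (b_1 b_2)\rk(b_1 b_2)^{-1}$, which is exactly $b_1 b_2 \in \fB_\rk$.

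It remains to verify (a) and (b), both of which are quick set-theoretic manipulations. For transitivity, suppose $\rk_1 \prec \rk_2$ and $\rk_2 \prec \rk_3$; writing $\rk_i^\sfB \coloneqq \rk_i \cap \sfB_N(F^+_{\PP^+})$, one chains the two inclusions $\rk_1^\sfB \subseteq \rk_2^\sfB \subseteq \rk_3^\sfB$, and computes
\[
\rk_3 \subseteq \rk_2 \cdot \rk_3^\sfB \subseteq \rk_1 \cdot \rk_2^\sfB \cdot \rk_3^\sfB \subseteq \rk_1 \cdot \rk_3^\sfB,
\]
using $\rk_2^\sfB \subseteq \rk_3^\sfB$ in the last step. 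For (b), if $b \in \sfB_N(F^+_{\PP^+})$, then conjugation by $b$ preserves $\sfB_N(F^+_{\PP^+})$, so $(b\rk b^{-1}) \cap \sfB_N(F^+_{\PP^+}) = b \rk^\sfB b^{-1}$, and both defining conditions of $\prec$ transport through conjugation by $b$ verbatim.

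I do not expect any serious obstacle; the only subtlety is keeping track that the inclusion $\rk_2^\sfB \subseteq \rk_3^\sfB$ is needed in the transitivity step to absorb the intermediate Borel factor, but this is exactly the first half of the definition of $\prec$, so everything fits together cleanly. The argument takes place entirely at the level of the poset structure on open compact subgroups and does not require any of the deeper representation-theoretic content from \cite{LS}.
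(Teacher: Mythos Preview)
Your proof is correct and follows essentially the same approach as the paper's: the paper also uses (implicitly) that $\prec$ is preserved under $\sfB_N$-conjugation to pass from $\rk\prec b'\rk b'^{-1}$ to $b\rk b^{-1}\prec (bb')\rk(bb')^{-1}$, and then chains the inclusions in exactly the same way you do for transitivity. The only difference is cosmetic---you isolate transitivity and conjugation-equivariance as two general facts before applying them, whereas the paper carries out the same computations inline for the specific subgroups $\rk$, $b\rk b^{-1}$, $(bb')\rk(bb')^{-1}$.
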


\begin{proof}
For every subgroup $\rk'$ of $\sfG_N(F^+_{\PP^+})$, put $\rk'_\sfB\coloneqq\rk'\cap\sfB_N(F^+_{\PP^+})$. Clearly, the identity belongs to $\fB_\rk$. It remains to show that $\fB_\rk$ is closed under multiplication. Take $b,b'\in\fB_\rk$. As $\rk\to b\rk b^{-1}$, we have
\[
\rk_\sfB\subseteq (b\rk b^{-1})_\sfB,\qquad b\rk b^{-1}\subseteq \rk\cdot (b\rk b^{-1})_\sfB.
\]
On the other hand, as $\rk\to b'\rk b^{\prime-1}$, we have $b\rk b^{-1}\to (bb')\rk (bb')^{-1}$, so that
\[
(b\rk b^{-1})_\sfB\subseteq((bb')\rk (bb')^{-1})_\sfB,\qquad (bb')\rk (bb')^{-1}\subseteq (b\rk b^{-1})\cdot((bb')\rk (bb')^{-1})_\sfB.
\]
Together, we have
\[
\rk_\sfB\subseteq (b\rk b^{-1})_\sfB\subseteq((bb')\rk (bb')^{-1})_\sfB,
\]
and
\[
(bb')\rk (bb')^{-1}\subseteq (b\rk b^{-1})\cdot((bb')\rk (bb')^{-1})_\sfB
\subseteq \rk\cdot (b\rk b^{-1})_\sfB\cdot((bb')\rk (bb')^{-1})_\sfB
=\rk\cdot((bb')\rk (bb')^{-1})_\sfB.
\]
It follows that $\rk\to(bb')\rk (bb')^{-1}$, that is, $bb'\in\fU_\rk$. The lemma is proved.
\end{proof}

Denote by $\fk_\rI^\dag$ the subset of $\fk_\rI$ of elements $\rk$ such that the image of $\fB_\rk$ in $\sfT_N(F^+_{\PP^+})$ generates the entire group. For every $\rk\in\fk_\rI^\dag$ and every $1\leq m\leq\infty$, we define
\[
\rH^\ordi(\Sh(\sfG_N,\rK_N\rk),\dZ_{\xi^N}/p^m)\subseteq\rH(\Sh(\sfG_N,\rK_N\rk),\dZ_{\xi^N}/p^m)
\]
to be the common image of $\tU_b$ for every $b\in\fB_\rk$.

\begin{lem}\label{le:ordinary}
Let $\rk$ be an element of $\fk_\rI^\dag$.
\begin{enumerate}
  \item The subset $\{b\rk b^{-1}\res b\in\fB_\rk\}$ is cofinal in $\fk_\rI$.

  \item For every $1\leq m\leq \infty$, the natural map
     \[
     \rH(\Sh(\sfG_N,\rK_N\rI),\dZ_{\xi^N}/p^m)\to\rH(\Sh(\sfG_N,\rK_N\rk),\dZ_{\xi^N}/p^m)
     \]
     is injective with image $\rH^\ordi(\Sh(\sfG_N,\rK_N\rk),\dZ_{\xi^N}/p^m)$.

  \item For every other element $\rk'\in\fk_\rI^\dag$ satisfying $\rk\to\rk'$ and every $1\leq m\leq \infty$, the map $\rho_{\rk',\rk}$ in Construction \ref{co:liusun} induces an isomorphism
      \[
      \rH^\ordi(\Sh(\sfG_N,\rK_N\rk'),\dZ_{\xi^N}/p^m)\xrightarrow\sim\rH^\ordi(\Sh(\sfG_N,\rK_N\rk),\dZ_{\xi^N}/p^m).
      \]
\end{enumerate}
\end{lem}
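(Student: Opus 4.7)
The plan is to prove the three parts in order, with the geometric cofinality assertion (1) doing the heavy lifting and (2), (3) following by standard arguments once (1) is in hand. The main obstacle will be (1): conjugation by elements of $\fB_\rk$ must be shown to exhaust $\fk_\rI$ in the partial order $\to$, which is the Iwahori-style ``ordinary contraction'' lemma underlying Hida theory.

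For (1), I would take an arbitrary $\rk'' \in \fk_\rI$ and look for $b \in \fB_\rk$ with $\rk'' \to b\rk b^{-1}$. Since $\rk$ and $\rk''$ both have Borel intersections projecting onto the same $\rI$ in $\sfT_N(F^+_{\PP^+})$, the required inclusions $\rk''_\sfB \subseteq (b\rk b^{-1})_\sfB$ and $b\rk b^{-1} \subseteq \rk'' \cdot (b\rk b^{-1})_\sfB$ reduce, via the Iwahori decomposition of $\sfG_N(F^+_{\PP^+})$ along $\sfB_N$ and its opposite unipotent radical, to contracting $\rk$ against the negative roots into $\rk''$. The defining hypothesis $\rk \in \fk_\rI^\dag$ says that the image of $\fB_\rk$ in $\sfT_N(F^+_{\PP^+})$ generates the full torus; combined with the monoidal structure from Lemma \ref{le:monoid}, this produces arbitrarily dominant products $b \in \fB_\rk$ whose adjoint action contracts $\rk$ along every negative root direction as much as required, yielding both inclusions. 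I expect this to be a careful unwinding of the Iwahori factorization in the style of \cite{LS}*{\S2.5}.

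With (1) granted, part (2) follows because right translation by $b \in \fB_\rk$ induces an isomorphism of Shimura spaces $\Sh(\sfG_N, \rK_N(b\rk b^{-1})) \xrightarrow\sim \Sh(\sfG_N, \rK_N\rk)$ compatible with $\dZ_{\xi^N}$, under which the transfer map $\rho_{b\rk b^{-1}, \rk}$ becomes the Hecke operator $\tU_b$. Combined with cofinality, this identifies
\[
\rH(\Sh(\sfG_N, \rK_N\rI), \dZ_{\xi^N}/p^m) \;=\; \varprojlim_{b \in \fB_\rk} \rH(\Sh(\sfG_N, \rK_N\rk), \dZ_{\xi^N}/p^m),
\]
the transition maps being the $\tU_b$'s. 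Since the target is a Noetherian $\dZ_q$-module (finite for $m < \infty$, finitely generated for $m = \infty$), the descending chains $\IM(\tU_b^n)$ stabilize for each $b$; and the hypothesis that the projection of $\fB_\rk$ generates $\sfT_N(F^+_{\PP^+})$ forces the common stable image to be $\bigcap_{b \in \fB_\rk} \IM(\tU_b) = \rH^\ordi(\Sh(\sfG_N, \rK_N\rk), \dZ_{\xi^N}/p^m)$ by definition, with the natural map being the inclusion of this submodule. Part (3) is then formal: for $\rk \to \rk'$ with both in $\fk_\rI^\dag$, part (2) canonically identifies both ordinary parts with the common limit $\rH(\Sh(\sfG_N, \rK_N\rI), \dZ_{\xi^N}/p^m)$, and the transitivity $\rho_{\rk_3, \rk_1} = \rho_{\rk_2, \rk_1} \circ \rho_{\rk_3, \rk_2}$ from Construction \ref{co:liusun} forces $\rho_{\rk', \rk}$ to correspond to the identity on that limit, hence to be an isomorphism on ordinary parts.
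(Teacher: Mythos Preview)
Your proposal is correct and follows essentially the same approach as the paper. For (1), the paper likewise fixes an opposite unipotent $\sfU_N^-$, writes $\rk\subseteq\ru^-\cdot\rk_\sfB$ for some compact open $\ru^-\subseteq\sfU_N^-(F^+_{\PP^+})$, and uses that the image of $\fB_\rk$ generates $\sfT_N(F^+_{\PP^+})$ to find a $b\in\fB_\rk$ normalizing $\sfU_N^-$ with $b\ru^-b^{-1}$ small enough and $b\rk_\sfB b^{-1}$ large enough --- exactly your contraction along negative roots; for (2) and (3) the paper simply says ``immediately follows'' from the preceding part, so your expanded limit/Fitting argument is a faithful unpacking of what is left implicit there.
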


\begin{proof}
For (1), choose a maximal unipotent subgroup $\sfU_N^-$ of $\sfG_N\otimes_{F^+}F^+_{\PP^+}$ that is opposite to $\sfB_N$, that is, the natural map $\sfU_N^-\to(\sfG_N\otimes_{F^+}F^+_{\PP^+})/\sfB_N$ is injective with Zariski dense image. Choose an open compact subgroup $\ru^-$ of $\sfU_N^-(F^+_{\PP^+})$ such that $\rk$ is contained in $\ru^-\cdot\rk_\sfB$ (as a subset of $\sfG_N(F^+_{\PP^+})$). Now for another element $\rk'\in\fk_\rI$, choose an open compact subgroup $\ru^{-\prime}$ of $\sfU_N(F^+_{\PP^+})$ that is contained in $\rk'$. Then $\ru^{-\prime}\cdot\rk'_\sfB$ (as a subset of $\sfG_N(F^+_{\PP^+})$) is contained in $\rk'$. Since $\fB_\rk$ generates $\sfT_N(F^+_{\PP^+})$, we may find an element $b\in\fB_\rk$ that normalizes $\sfU_N^-$ satisfying $\rk'_\sfB\subseteq b\rk_\sfB b^{-1}$ and $b\ru^- b^{-1}\subseteq\ru^{-\prime}$. It follows that $\rk'\to b\rk b^{-1}$. Part (1) is proved.

Part (2) immediately follows from part (1); and part (3) immediately follows from part (2).
\end{proof}

\begin{remark}\label{re:supersingular}
Take an element $\rk\in\fk_\rI^\dag$. Put
\begin{align*}
\rH^{\r{ss}}(\Sh(\sfG_N,\rK_N\rk),\dZ_{\xi^N}/p^m)&\coloneqq\sum_{b\in\fB_\rk}
\Ker\(\tU_b\colon\rH(\Sh(\sfG_N,\rK_N\rk),\dZ_{\xi^N}/p^m)\to\rH(\Sh(\sfG_N,\rK_N\rk),\dZ_{\xi^N}/p^m)\) \\
&\subseteq\rH(\Sh(\sfG_N,\rK_N\rk),\dZ_{\xi^N}/p^m)
\end{align*}
for $1\leq m<\infty$ and
\begin{align*}
\rH^{\r{ss}}(\Sh(\sfG_N,\rK_N\rk),\dZ_{\xi^N})&\coloneqq\varprojlim_{m}\rH^{\r{ss}}(\Sh(\sfG_N,\rK_N\rk),\dZ_{\xi^N}/p^m) \\
&\subseteq\varprojlim_{m}\rH(\Sh(\sfG_N,\rK_N\rk),\dZ_{\xi^N}/p^m)=\rH(\Sh(\sfG_N,\rK_N\rk),\dZ_{\xi^N}).
\end{align*}
The the natural map
\[
\rH^\ordi(\Sh(\sfG_N,\rK_N\rk),\dZ_{\xi^N}/p^m)\oplus\rH^{\r{ss}}(\Sh(\sfG_N,\rK_N\rk),\dZ_{\xi^N}/p^m)
\to\rH(\Sh(\sfG_N,\rK_N\rk),\dZ_{\xi^N}/p^m)
\]
is an isomorphism for every $1\leq m\leq\infty$.
\end{remark}

For subgroups $\rI\finite\rI'\finite\rI_N^0$, we have a unique map
\[
\rho_{\rI',\rI}\colon\rH(\Sh(\sfG_N,\rK_N\rI'),\dZ_{\xi^N}/p^m)\to\rH(\Sh(\sfG_N,\rK_N\rI),\dZ_{\xi^N}/p^m)
\]
such that the diagram
\[
\xymatrix{
\rH(\Sh(\sfG_N,\rK_N\rI'),\dZ_{\xi^N}/p^m) \ar[r]^-{\rho_{\rI',\rI}} \ar[d] &
\rH(\Sh(\sfG_N,\rK_N\rI),\dZ_{\xi^N}/p^m) \ar[d] \\
\rH(\Sh(\sfG_N,\rK_N\rk'),\dZ_{\xi^N}/p^m) \ar[r]^-{\rho_{\rk',\rk}} &
\rH(\Sh(\sfG_N,\rK_N\rk),\dZ_{\xi^N}/p^m)
}
\]
commutes for every $\rk\in\fk_\rI$ and $\rk'\in\fk_{\rI'}$ with $\rk\prec\rk'$. For subgroups $\rI_1\finite\rI_2\finite\rI_3\finite\rI_N^0$, we have $\rho_{\rI_3,\rI_1}=\rho_{\rI_2,\rI_1}\circ\rho_{\rI_3,\rI_2}$.

\begin{definition}\label{de:distribution}
Let $\rJ<\rI_N^0$ be a subgroup.
\begin{enumerate}
  \item We define the space of \emph{$\rJ$-invariant ordinary distributions} to be
      \[
      \cD_\rJ(\xi^N,\rV_N,\rK_N)\coloneqq\varprojlim_{\rJ<\rI\finite\rI_N^0}
      \Hom_{\dZ_q}\(\rH(\Sh(\sfG_N,\rK_N\rI),\dZ_{\xi^N}),\dZ_q\),
      \]
      which is a module over $\dZ[(\rK_N)_{\spadesuit^+}\backslash\sfG_N(F^+_{\spadesuit^+})/(\rK_N)_{\spadesuit^+}]
      \otimes\dT_N^\Box\otimes\Lambda_{\sfT_N(F^+_{\PP^+})/\rJ,\dZ_q}$ (and also naturally a topological module over $\Lambda_{\sfT_N(F^+_{\PP^+})/\rJ,\dZ_q}$), admitting a continuous action by $\Gamma_F$ (which is trivial when $\rV_N$ is definite). Here, the inverse limit is taken with respect to the dual of $\rho_{\rI',\rI}$.

  \item We denote by $\cE_\rJ(\xi^N,\rV_N,\rK_N)$ the $\Lambda_{\sfT_N(F^+_{\PP^+})/\rJ,\dZ_q}$-subalgebra of
      \[
      \End_{\Lambda_{\sfT_N(F^+_{\PP^+})/\rJ,\dZ_q}}\(\cD_\rJ(\xi^N,\rV_N,\rK_N)\)
      \]
      generated by the image of $\dT_N^\Box$, which is a ring. Finally, put
      \begin{align*}
      \sD_\rJ(\xi^N,\rV_N,\rK_N)&\coloneqq\cD_\rJ(\xi^N,\rV_N,\rK_N)\otimes_{\dZ_q}\dQ_q,\\
      \sE_\rJ(\xi^N,\rV_N,\rK_N)&\coloneqq\cE_\rJ(\xi^N,\rV_N,\rK_N)\otimes_{\dZ_q}\dQ_q,
      \end{align*}
      as a module and a ring over $\dZ[(\rK_N)_{\spadesuit^+}\backslash\sfG_N(F^+_{\spadesuit^+})/(\rK_N)_{\spadesuit^+}]
      \otimes\dT_N^\Box\otimes\Lambda_{\sfT_N(F^+_{\PP^+})/\rJ,\dQ_q}$, respectively.
\end{enumerate}
We suppress $\rJ$ in all the subscripts when it is the trivial subgroup.
\end{definition}

\begin{definition}\label{de:classical}
We define a \emph{classical point} of $\Spec\Lambda_{\rI_N^0/\rJ,\dQ_q}$ to be a closed point $x$ (with the residue field $\dQ_x$) such that the corresponding character $\chi_x\colon\rI_N^0\to\dZ_x^\times$ (where $\dZ_x$ is the ring of integers of $\dQ_x$) satisfying the following property: There exists a (necessarily unique) $\Sigma_p^+\setminus\PP^+$-trivial dominant hermitian weight $\zeta^N_x\in\Xi^N$ such that
\[
\chi_x^\smooth\coloneqq\chi_x\cdot\chi_{(\zeta^N_x)^\vee}\res_{\rI_N^0}
\]
is a smooth (that is, locally constant) character of $\rI_N^0$, where
\[
\chi_{(\zeta^N_x)^\vee}\coloneqq\bigotimes_{v\in\PP^+}\prod_{\sigma\in\Upsilon^+,\sigma|v}\chi_{(\zeta^N_x)^\vee_v}
\]
in which $\chi_{(\zeta^N_x)^\vee_v}\colon\sfT_N(F^+_v)\to\dQ_q^\times$ is the character introduced in Construction \ref{co:weight} (with $R=F^+_v$ and $\dL=\dQ_q$).

We define a \emph{classical point} of $\Spec\sE_\rJ(\xi^N,\rV_N,\rK_N)$ to be a closed point whose image in $\Spec\Lambda_{\rI_N^0/\rJ,\dQ_q}$ is classical.
\end{definition}

Now take a classical point $x$ of $\Spec\sE_\rJ(\xi^N,\rV_N,\rK_N)$, which gives rise to a character $\chi_x\colon\rI_N^0\to\dZ_x^\times$ as in Definition \ref{de:classical} and also a character
\[
\phi_x\colon\dT_N^\Box\to\dQ_x.
\]
We have the representation
\[
\dQ_{\xi^N+\zeta^N_x}\coloneqq\bigotimes_{v\in\Sigma^+_p}
\(\bigotimes_{\sigma\in\Upsilon^+,\sigma|v}\dQ_{(\xi^N+\zeta^N_x)_\sigma}\)
\]
of $\sfG_N(F^+_p)$ with coefficients in $\dQ_q$ from Construction \ref{co:weight}, which is also regarded as a local system on the Shimura space $\Sh(\sfG_N,\rK_N\rk)$ for every open compact subgroup $\rk$ of $\sfG_N(F^+_{\PP^+})$. Similar to $\rH(\Sh(\sfG_N,\rK_N\rk),\dZ_{\xi^N})$, put
\begin{align*}
\rH(\Sh(\sfG_N,\rK_N\rk),\dQ_{\xi^N+\zeta^N_x})\coloneqq
\begin{dcases}
\Gamma(\Sh(\sfG_N,\rK_N\rk),\dQ_{\xi^N+\zeta^N_x}), &\text{when $\rV_N$ is standard definite,}\\
\rH^{N-1}_{\et}(\Sh(\sfG_N,\rK_N\rk)_{\ol{F}},\dQ_{\xi^N+\zeta^N_x}(N-1-r)), &\text{when $\rV_N$ is standard indefinite,}
\end{dcases}
\end{align*}
which is naturally a $\dQ_q[(\rK_N)_{\spadesuit^+}\backslash\sfG_N(F^+_{\spadesuit^+})/(\rK_N)_{\spadesuit^+}]
\otimes\dT_N^\Box$-module.

For every $\rI\finite\rI_N^0$ and every $\rk\in\fk_\rI^\dag$, we denote by
\begin{align*}
\rH^\ordi(\Sh(\sfG_N,\rK_N\rk),\dQ_{\xi^N+\zeta^N_x})\subseteq
\rH(\Sh(\sfG_N,\rK_N\rk),\dQ_{\xi^N+\zeta^N_x})
\end{align*}
the maximal subspace on which the operator $\chi_{-(\zeta^N_x)^\vee}(b)^{-1}\cdot\tU_b$ is volume-preserving for every element $b\in\fB_\rk$, which is a $\dQ_q[(\rK_N)_{\spadesuit^+}\backslash\sfG_N(F^+_{\spadesuit^+})/(\rK_N)_{\spadesuit^+}]
\otimes\dT_N^\Box$-submodule. Then this module does not depend on the choice of $\rk\in\fk_\rI^\dag$ in the same sense as in Lemma \ref{le:ordinary}(3), which we denote by $\rH^\ordi(\Sh(\sfG_N,\rK_N\rI),\dQ_{\xi^N+\zeta^N_x})$. Put
\[
\rD_\rJ(\xi^N,\rV_N,\rK_N)_x\coloneqq\varprojlim_{\rJ<\rI\finite\rI_N^0}
\Hom\(\rH^\ordi(\Sh(\sfG_N,\rK_N\rI),\dQ_{\xi^N+\zeta^N_x}),\dQ_x\)_{\chi_x^\smooth,\phi_x},
\]
where the transfer maps are the dual maps of (those similarly defined as) $\rho_{\rI',\rI}$.

\begin{proposition}\label{pr:eigen_1}
Let $\rJ<\rI_N^0$ be a subgroup. Suppose that $\rV_N$ is definite.
\begin{enumerate}
  \item For every intermediate group $\rJ<\rJ'<\rI_N^0$, the natural map
      \[
      \cD_\rJ(\xi^N,\rV_N,\rK_N)\otimes_{\Lambda_{\rI_N^0/\rJ,\dZ_q}}\Lambda_{\rI_N^0/\rJ',\dZ_q}
      \to\cD_{\rJ'}(\xi^N,\rV_N,\rK_N)
      \]
      is an isomorphism.

  \item The $\Lambda_{\rI_N^0/\rJ,\dZ_q}$-module $\cD_\rJ(\xi^N,\rV_N,\rK_N)$ is locally finite free.

  \item The natural map
      \[
      \cE_\rJ(\xi^N,\rV_N,\rK_N)\to\varprojlim_{\rJ<\rI\finite\rI_N^0}\cE_\rI(\xi^N,\rV_N,\rK_N)
      \]
      is an isomorphism.

  \item The ring $\cE_\rJ(\xi^N,\rV_N,\rK_N)$ is reduced and of pure dimension $1+\rank_{\dZ_p}\rI_N^0/\rJ$.

  \item The natural morphism $\Spec\cE_\rJ(\xi^N,\rV_N,\rK_N)\to\Spec\Lambda_{\rI_N^0/\rJ,\dZ_q}$ is finite and generically flat.

  \item For every classical point $x$ of $\Spec\sE_\rJ(\xi^N,\rV_N,\rK_N)$, we have an isomorphism
      \[
      \sD_\rJ(\xi^N,\rV_N,\rK_N)\res_x\simeq\rD_\rJ(\xi^N,\rV_N,\rK_N)_x
      \]
      of $\dQ_x[(\rK_N)_{\spadesuit^+}\backslash\sfG_N(F^+_{\spadesuit^+})/(\rK_N)_{\spadesuit^+}]$-modules; and moreover, the ring $\cO_{\sE_\rJ(\xi^N,\rV_N,\rK_N),x}$ is regular if and only if it is flat over $\Lambda_{\rI_N^0/\rJ,\dQ_q}$.

  \item The $\sE_\rJ(\xi^N,\rV_N,\rK_N)$-module $\sD_\rJ(\xi^N,\rV_N,\rK_N)$ is finitely generated, and locally torsion free over the normal locus of $\Spec\sE_\rJ(\xi^N,\rV_N,\rK_N)$.
\end{enumerate}
\end{proposition}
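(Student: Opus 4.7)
The plan is to deduce all seven properties from a Hida-type control theorem and classicality theorem for ordinary automorphic forms on the totally definite unitary group $\sfG_N$, together with standard commutative algebra applied to the resulting finite $\Lambda$-algebra $\cE_\rJ(\xi^N,\rV_N,\rK_N)$, where I abbreviate $\Lambda\coloneqq\Lambda_{\rI_N^0/\rJ,\dZ_q}$. The essential starting point is that since $\rV_N$ is definite, each $\Sh(\sfG_N,\rK_N\rk)$ is a finite set, so $\rH(\Sh(\sfG_N,\rK_N\rk),\dZ_{\xi^N})$ is finite free over $\dZ_q$; combined with Lemma \ref{le:ordinary}(2), the module $\rH(\Sh(\sfG_N,\rK_N\rI),\dZ_{\xi^N})\simeq\rH^\ordi(\Sh(\sfG_N,\rK_N\rk),\dZ_{\xi^N})$ for any $\rk\in\fk_\rI^\dag$ is also finite free.

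First I would prove (1) and (2) together by establishing the Hida control isomorphism: for $\rJ<\rJ'<\rI_N^0$, the dual of the transfer map $\rho_{\rI',\rI}$, which is compatible with the ordinary idempotents by Lemma \ref{le:ordinary}(3), identifies $\cD_\rJ\otimes_\Lambda \Lambda_{\rI_N^0/\rJ',\dZ_q}$ with $\cD_{\rJ'}$, yielding (1); then a standard topological Nakayama argument shows that a compact $\Lambda$-module whose quotients by a cofinal system of open ideals are finite free of locally bounded rank is itself locally finite free. This is the central technical input, and it is the \emph{main obstacle}, since it must be carried out in the generality where the Borel $\sfB_N$ is only specified at $\PP^+$-places, which need not split in $F$, and where $\xi^N$ is nontrivial away from $\PP^+$. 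The argument leans crucially on the monoid structure of Lemma \ref{le:monoid} and on the order-theoretic machinery of $\fk_\rI$ developed in \cite{LS}*{\S2--3} to ensure that the ordinary idempotent is well-defined, independent of auxiliary choices, and compatible with the transfer maps across all $\rI$.

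Given (1) and (2), part (3) follows because $\cE_\rJ\twoheadrightarrow\cE_{\rI}$ is surjective (both being images of $\dT_N^\Box$) with kernel the annihilator of $\cD_{\rI}$, and Mittag-Leffler on the resulting inverse system gives the claimed isomorphism. Parts (4) and (5) then follow from faithfulness of the action of $\cE_\rJ\hookrightarrow\End_\Lambda(\cD_\rJ)$ on the locally free $\Lambda$-module $\cD_\rJ$: this makes $\cE_\rJ$ finite over $\Lambda$, hence purely of dimension $1+\rank_{\dZ_p}\rI_N^0/\rJ=\dim\Lambda$; reducedness and generic flatness then follow because the generic points of $\Spec\Lambda\otimes_{\dZ_q}\dQ_q$ are approximated by a Zariski-dense set of classical points of $\Spec\sE_\rJ$ at which the Hecke action is diagonalizable with pairwise distinct Satake parameters.

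Finally, for (6) I would specialize the control statement (1) at the classical point $x$: the factorization $\chi_x=\chi_x^\smooth\cdot\chi_{(\zeta^N_x)^\vee}\res_{\rI_N^0}$ and Construction \ref{co:weight} together identify $\sD_\rJ(\xi^N,\rV_N,\rK_N)\res_x$ with the dual of $\rH^\ordi(\Sh(\sfG_N,\rK_N\rI),\dQ_{\xi^N+\zeta^N_x})_{\chi_x^\smooth,\phi_x}$, where the weight shift by $\zeta^N_x$ encodes the algebraic part of the $p$-adic deformation; the regularity-versus-flatness equivalence at $x$ is then standard commutative algebra for a finite extension of a regular local ring. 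For (7), finite generation of $\sD_\rJ$ over $\sE_\rJ$ is automatic from finite generation over $\Lambda$, while torsion-freeness on the normal locus follows by combining Cohen--Macaulayness of $\sD_\rJ$ as an $\sE_\rJ$-module --- inherited from its locally free $\Lambda$-module structure of (2) via the finite equidimensional map $\Lambda\to\sE_\rJ$ --- with the fact that any Cohen--Macaulay module over a normal Noetherian ring is torsion-free.
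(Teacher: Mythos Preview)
Your overall strategy matches the paper's, but there is a real gap in step (2). You correctly note that topological Nakayama requires the quotients to be ``finite free of locally bounded rank,'' yet you do not say where the rank bound comes from. The machinery you cite (Lemma \ref{le:monoid}, the poset $\fk_\rI$, Lemma \ref{le:ordinary}) only produces a well-defined ordinary projector and shows each $\cD_\rI$ is finite free over $\dZ_q$; it says nothing about constancy of rank as $\rI$ shrinks, and it does not by itself make $\cD_\rI$ free over $\dZ_q[\rI_N^0/\rI]$.

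The paper supplies the missing ingredient via a \emph{weight-independence} lemma (Lemma \ref{le:weight}): for any $\Sigma_p^+\setminus\PP^+$-trivial dominant $\zeta^N$ one has an isomorphism
\[
\rH^\ordi(\Sh(\sfG_N,\rK_N^{c,l}),\dZ_{\xi^N+\zeta^N}/p^m)\xrightarrow{\sim}\rH^\ordi(\Sh(\sfG_N,\rK_N^{c,l}),\dZ_{\xi^N}/p^m)
\]
twisting the $\rI_N^0$-action by $\chi_{-(\zeta^N)^\vee}$. This is precisely why the paper runs the proof in the order $(1,3,6,2,5,4,7)$: the classicality isomorphism \eqref{eq:weight2} underlying (6) is what allows one to compare $\dim_{\dQ_q}\sD_{\chi_y}$ at an arbitrary classical point $y$ with $\dim_{\dF_q}\cD\otimes_{\Lambda}\Lambda/\fm$, and only then does Nakayama close the loop for (2). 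In your ordering $(1,2)\to(3,4,5)\to(6,7)$, step (2) has no access to this input. You do invoke the weight shift $\zeta^N_x$ in your sketch of (6), so you have the right object in hand; the point is that it must be fed back into (2), not treated as a downstream consequence.

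Your treatments of (1), (3)--(5), and (7) are otherwise in line with the paper's; the Cohen--Macaulay phrasing for (7) is a legitimate variant of the paper's more direct argument that torsion-freeness over $\Lambda_{\rI_N^0/\rJ,\dQ_q}$ propagates to the normal locus of $\Spec\sE_\rJ$.
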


The proof of the proposition requires the analogue of $\cD(\xi^N,\rV_N,\rK_N)$ for general dominant weights, a key step known as weight independence in Hida's theory (see also \cite{Ger19}*{\S2}). We start from some preparation.

For every place $v\in\PP^+$, we fix a self-dual $O_{F_v}$-lattice $\Lambda_{N,v}$ in $\rV_{N,v}$ such that $\sfB_{N,v}$ induces a Borel subgroup of $\rU(\Lambda_{N,v})$. Using this lattice, we regard $\sfG_{N,v}$, $\sfB_{N,v}$ and $\sfT_{N,v}$ as reductive groups defined over $O_{F^+_v}$. We denote by $\sfU_{N,v}\subseteq\sfB_{N,v}$ the unipotent radical and fix a section $\sfT_{N,v}\to\sfB_{N,v}$ (over $O_{F^+_v}$). Let $\sfB^-_{N,v}\subseteq\sfG_{N,v}$ be the Borel subgroup satisfying $\sfB^-_{N,v}\cap\sfB_{N,v}=\sfT_{N,v}$, with $\sfU^-_{N,v}$ its unipotent radical. For integers $0\leq c\leq l$, we denote by $\rI_{N,v}^c$ the subgroup of $\sfT_{N,v}(O_{F^+_v})$ of elements whose reduction modulo $p^c$ is the identity, and by $\rk_v^{c,l}$ the subgroup of $\sfG_{N,v}(O_{F^+_v})$ of elements whose reduction modulo $p^l$ belong to $\rI_{N,v}^c\sfU^-_{N,v}(O_{F^+_v})$. Put
\[
\rI_N^c\coloneqq\prod_{v\in\PP^+}\rI_{N,v}^c,\quad
\rk^{c,l}\coloneqq\prod_{v\in\PP^+}\rk_v^{c,l},\quad
\rK_N^{c,l}\coloneqq\rK_N\rk^{c,l}
\]
so that $\rI_N^0$ is the one we denoted before. It is clear that $\rk^{c,l}$ belongs to $\fk_{\rI_N^c}$.

It is straightforward to check that $\rk^{c,l}$ belongs to $\fk_{\rI_N^c}^\dag$ if $l\geq 2$;\footnote{It is even true for $l=1$ if all places in $\PP^+$ split in $F$.} and that $\sfT_N(F^+_{\PP^+})\cap\fB_{\rk^{c,l}}$ is independent of $c,l\geq 2$, which we denote by $\rI_N^\infty$, a monoid containing $\rI_N^0$. Fix a finite subset $\Delta$ of $\rI_N^\infty$ that reduces to a basis of the free abelian group $\sfT_N(F^+_{\PP^+})/\rI_N^0$.

Let $\zeta^N\in\Xi^N$ be a $\Sigma_p^+\setminus\PP^+$-trivial dominant hermitian weight. Denote by $[\zeta^N,-(\zeta^N)^\vee]$ the finite set of $\Sigma_p^+\setminus\PP^+$-trivial hermitian weights $\zeta$ such that both $\zeta^N-\zeta$ and $\zeta+(\zeta^N)^\vee$ are neutral and dominant (Definition \ref{de:weight0}). For every $\zeta\in[\zeta^N,-(\zeta^N)^\vee]$, we have a character $\chi_{\zeta}\colon\sfT_N(F^+_{\PP^+})\to\dQ_q^\times$ defined similarly as in Definition \ref{de:classical}. We have
\begin{enumerate}
  \item for every $t\in\rI_N^\infty$,
      \[
      \frac{\chi_\zeta(t)}{\chi_{-(\zeta^N)^\vee}(t)}\in\dZ_q;
      \]

  \item for every $\zeta\in[\zeta^N,-(\zeta^N)^\vee]\setminus\{-(\zeta^N)^\vee\}$,
      \[
      \prod_{t\in\Delta}\frac{\chi_\zeta(t)}{\chi_{-(\zeta^N)^\vee}(t)}\in p\dZ_q.
      \]
\end{enumerate}
Consider the representation
\[
\dZ_{\zeta^N}\coloneqq\bigotimes_{v\in\PP^+}\(\bigotimes_{\sigma\in\Upsilon^+,\sigma|v}\dZ_{\zeta^N_\sigma}\)
\]
of $\rk^{0,0}=\prod_{v\in\PP^+}\rU(\Lambda_{N,v})$ with coefficients in $\dZ_q$; it has a unique decomposition
\begin{align}\label{eq:weight1}
\dZ_{\zeta^N}=\bigoplus_{\zeta\in[\zeta^N,-(\zeta^N)^\vee]}\dL_{\zeta},
\end{align}
where $\dL_\zeta$ is a free $\dZ_q$-module of rank one on which $\rI_N^0$ acts by $\chi_{\zeta}$.

We now define a homomorphism
\[
\tV_\bullet\colon\rI_N^\infty\to
\End_{\dZ_q[(\rK_N)_{\spadesuit^+}\backslash\sfG_N(F^+_{\spadesuit^+})/(\rK_N)_{\spadesuit^+}]
\otimes\dT_N^\Box}\(\rH(\Sh(\sfG_N,\rK_N^{c,l}),\dZ_{\xi^N+\zeta^N}/p^m)\)
\]
of monoids for $1\leq m\leq \infty$, where
\[
\rH(\Sh(\sfG_N,\rK_N^{c,l}),\dZ_{\xi^N+\zeta^N}/p^m)\coloneqq\Gamma(\Sh(\sfG_N,\rK_N^{c,l}),\dZ_{\xi^N+\zeta^N}/p^m).
\]
Consider first $m<\infty$. In this case, put
\[
\rH(\Sh(\sfG_N,\rK_N),\dZ_{\xi^N})\coloneqq\varinjlim_\rk\Gamma(\Sh(\sfG_N,\rK_N\rk),\dZ_{\xi^N}),
\]
where the colimit is taken over the poset of open compact subgroups of $\sfG_N(F^+_{\PP^+})$ with respect to inclusion, on which $\sfG_N(F^+_{\PP^+})$ acts by pullback maps along right translations; and we have
\[
\rH(\Sh(\sfG_N,\rK_N^{c,l}),\dZ_{\xi^N+\zeta^N}/p^m)=
\(\rH(\Sh(\sfG_N,\rK_N),\dZ_{\xi^N})\otimes_{\dZ_q}\dZ_{\zeta^N}/p^m\)^{\rk^{c,l}}
\]
with respect to the diagonal action of $\rk^{c,l}$. Take an element $t\in\rI_N^\infty$. Denote by $\ang{t}$ the action on $\dZ_{\zeta^N}/p^m$ whose restriction to $\dL_\zeta$ under the decomposition \eqref{eq:weight1} is given by the multiplication by $\frac{\chi_\zeta(t)}{\chi_{-(\zeta^N)^\vee}(t)}\in\dZ_q$ (by property (1)). Write $\rk^{c,l}t\rk^{c,l}=\coprod_{k\in K_t} k t\rk^{c,l}$ with a (finite) subset $K_t\subseteq\rk^{c,l}$. It is easy to see that the endomorphism
\[
\sum_{k\in K_t} \tR_{kt}\otimes(k\circ\ang{t})
\]
of $\rH(\Sh(\sfG_N,\rK_N),\dZ_{\xi^N})\otimes_{\dZ_q}\dZ_{\zeta^N}/p^m$, where $\tR_{kt}$ denotes the pullback map along the right translation by $kt$, preserves the $\rk^{c,l}$-invariant submodule; and its induced action on $\rH(\Sh(\sfG_N,\rK_N^{c,l}),\dZ_{\xi^N+\zeta^N}/p^m)$ does not depend on the choice of $K_t$, which we denote by $\tV_t$. Passing to the limit, we also have the action $\tV_t$ on $\rH(\Sh(\sfG_N,\rK_N^{c,l}),\dZ_{\xi^N+\zeta^N})$ (the case for $m=\infty$) and hence on $\rH(\Sh(\sfG_N,\rK_N^{c,l}),\dQ_{\xi^N+\zeta^N})$. Then
\begin{enumerate}\setcounter{enumi}{2}
  \item when $l\geq 2$, the action of $\tV_t$ on $\rH(\Sh(\sfG_N,\rK_N^{c,l}),\dZ_{\xi^N}/p^m)$ coincides with $\tU_t$ for $t\in\rI_N^\infty$;

  \item when $l\geq 2$, the action of $\tV_t$ on $\rH(\Sh(\sfG_N,\rK_N^{c,l}),\dQ_{\xi^N+\zeta^N})$ coincides with $\chi_{-(\zeta^N)^\vee}(t)^{-1}\cdot\tU_t$ for $t\in\rI_N^\infty$.
\end{enumerate}
For $l\geq 2$, we have the operator
\begin{align*}
\te\coloneqq\lim_{M\to\infty}\(\prod_{t\in\Delta}\tV_t\)^{M!}
\end{align*}
on $\rH(\Sh(\sfG_N,\rK_N^{c,l}),\dZ_{\xi^N+\zeta^N}/p^m)$, which is an idempotent; and put
\[
\rH^\ordi(\Sh(\sfG_N,\rK_N^{c,l}),\dZ_{\xi^N+\zeta^N}/p^m)\coloneqq
\te\rH(\Sh(\sfG_N,\rK_N^{c,l}),\dZ_{\xi^N+\zeta^N}/p^m).
\]
When $\zeta^N=0$, it coincides with the one we have defined previously by (3).

Fix a map $\delta\colon\dZ_{\zeta^N}\to\dZ_q$ with kernel $\bigoplus_{\zeta\in[\zeta^N,-(\zeta^N)^\vee]\setminus\{-(\zeta^N)^\vee\}}\dL_{\zeta}$ in the decomposition \eqref{eq:weight1}. Then $\delta$ is invariant under the action of $\prod_{v\in\PP^+}\sfU_{N,v}^-(O_{F^+_v})$ on the source, so that its base change
\[
1\otimes\delta\colon\rH(\Sh(\sfG_N,\rK_N),\dZ_{\xi^N})\otimes_{\dZ_q}\dZ_{\zeta^N}/p^m\to
\rH(\Sh(\sfG_N,\rK_N),\dZ_{\xi^N})\otimes_{\dZ_q}\dZ_q/p^m
\]
induces a map
\[
\mu\colon\rH(\Sh(\sfG_N,\rK_N^{c,l}),\dZ_{\xi^N+\zeta^N}/p^m)\to\rH(\Sh(\sfG_N,\rK_N^{c,l}),\dZ_{\xi^N}/p^m)
\]
as long as $m\leq c\leq l$.

\begin{lem}\label{le:weight}
Suppose that $l\geq 2$. The map $\mu$ restricts to an isomorphism
\[
\rH^\ordi(\Sh(\sfG_N,\rK_N^{c,l}),\dZ_{\xi^N+\zeta^N}/p^m)\to\rH^\ordi(\Sh(\sfG_N,\rK_N^{c,l}),\dZ_{\xi^N}/p^m)
\]
of $\dZ_q[(\rK_N)_{\spadesuit^+}\backslash\sfG_N(F^+_{\spadesuit^+})/(\rK_N)_{\spadesuit^+}]
\otimes\dT_N^\Box$-modules, which satisfies
\[
\mu(t.x)=\chi_{-(\zeta^N)^\vee}(t)\cdot t.\mu(x)
\]
for every $x\in\rH(\Sh(\sfG_N,\rK_N^{c,l}),\dZ_{\xi^N+\zeta^N}/p^m)$ and $t\in\rI_N^0$.
\end{lem}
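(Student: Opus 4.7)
The plan is to verify the twisting formula by direct inspection and then establish the isomorphism assertion by exhibiting $\mu$ as the middle map in a long exact sequence whose outer terms are annihilated by the ordinary projector $\te$.

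For the twisting formula, I note that for $t\in\rI_N^0$ the action on $\rH(\Sh(\sfG_N,\rK_N^{c,l}),\dZ_{\xi^N+\zeta^N}/p^m)$ is the pullback $\tR_t$ along right translation tensored with the coefficient action of $t$ on $\dZ_{\zeta^N}$, and this coefficient action multiplies each $\dL_\zeta$ by $\chi_\zeta(t)$. Since $\delta$ factors through the projection to $\dL_{-(\zeta^N)^\vee}\simeq\dZ_q$, the identity $\mu(t.x)=\chi_{-(\zeta^N)^\vee}(t)\cdot t.\mu(x)$ is then immediate.

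For the isomorphism claim, the first step is to verify that under the assumption $m\le c\le l$, the map $\delta\colon\dZ_{\zeta^N}/p^m\to\dZ_q/p^m$ is $\rk^{c,l}$-equivariant for the trivial action on the target. This uses the $\sfU_{N}^-$-invariance of $\delta$ together with the fact that algebraic characters of $\rI_N^0$ restrict trivially to $\rI_N^c$ modulo $p^c$, hence modulo $p^m$. Setting $K\coloneqq\ker(\delta)$, this yields a short exact sequence of $\rk^{c,l}$-modules
\[
0\longrightarrow K/p^m\longrightarrow \dZ_{\zeta^N}/p^m\xrightarrow{\delta}\dZ_q/p^m\longrightarrow 0.
\]
Tensoring with $\rH(\Sh(\sfG_N,\rK_N),\dZ_{\xi^N})/p^m$ and taking $\rk^{c,l}$-invariants produces a Hecke-equivariant long exact sequence placing $\mu$ as its middle map. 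It therefore suffices to show that $\te$ annihilates the outer terms $(\rH(\Sh(\sfG_N,\rK_N),\dZ_{\xi^N})\otimes K/p^m)^{\rk^{c,l}}$ and $\rH^1(\rk^{c,l},\rH(\Sh(\sfG_N,\rK_N),\dZ_{\xi^N})\otimes K/p^m)$.

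To achieve this vanishing I would filter $K/p^m$ by $\rk^{c,l}$-stable submodules whose associated graded pieces are the $\dL_\zeta/p^m$ for $\zeta\in[\zeta^N,-(\zeta^N)^\vee]\setminus\{-(\zeta^N)^\vee\}$, obtained from the weight filtration of the $B^-$-representation $\dZ_{\zeta^N}$. By d\'evissage it then suffices to show that $\te$ kills the analogous cohomology with $K$ replaced by any such $\dL_\zeta$; and on these groups the operator $\tV_t$ is $\tU_t$ scaled by $\chi_\zeta(t)/\chi_{-(\zeta^N)^\vee}(t)$, so $\prod_{t\in\Delta}\tV_t$ is divisible by $\prod_{t\in\Delta}\chi_\zeta(t)/\chi_{-(\zeta^N)^\vee}(t)\in p\dZ_q$ by property (2). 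Iterating the factorial power then yields an operator divisible by $p^{M!}$ on $p^m$-torsion, which vanishes for $M!\ge m$. The hardest part will be this scaling claim on graded pieces: it demands a careful unwinding of the double-coset formula $\tV_t=\sum_k\tR_{kt}\otimes(k\circ\langle t\rangle)$ and a verification that $\rk^{c,l}t\rk^{c,l}$ interacts compatibly with the weight filtration modulo $p^m$.
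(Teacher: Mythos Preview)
Your approach is correct in outline but genuinely different from the paper's. The paper does not use the long exact sequence of $\rk^{c,l}$-cohomology at all. Instead it writes down an explicit map $\nu$ in the opposite direction, built from the section $\epsilon\colon\dZ_q\to\dL_{-(\zeta^N)^\vee}\subset\dZ_{\zeta^N}$ and the coset sum for $\bbt^m=\prod_{t\in\Delta}t^m$, and then checks by hand that $\nu\circ\mu$ and $\mu\circ\nu$ both coincide with $\tV_{\bbt^m}$ (the key identity being $\epsilon\circ\delta=\langle\bbt^m\rangle$ on $\dZ_{\zeta^N}/p^m$, which is exactly property~(2)). Since $\tV_{\bbt^m}$ is an automorphism of each $\rH^\ordi$, the restriction of $\mu$ is an isomorphism immediately. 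This is shorter and entirely elementary: no group cohomology beyond degree zero, no d\'evissage, no need to extend $\tV_t$ or $\te$ to $\rH^1(\rk^{c,l},-)$.

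Your route works too, and the scaling claim on graded pieces is actually easier than you suggest: once you observe that $\rk^{c,l}$ acts trivially on each $\dL_\zeta/p^m$ (because modulo $p^m\le p^l$ its elements lie in $\rI_N^c\sfU_N^-$, with $\sfU_N^-$ trivial on graded weight lines and $\rI_N^c$ trivial modulo $p^c\ge p^m$), the operator $\sum_k\tR_{kt}\otimes(k\circ\langle t\rangle)$ reduces on that piece to $(\chi_\zeta(t)/\chi_{-(\zeta^N)^\vee}(t))\cdot\tU_t$ on the nose. The genuinely delicate point in your argument is rather the one you pass over: making the long exact sequence $\te$-equivariant requires extending $\tV_t$ to $\rH^1(\rk^{c,l},-)$ via the restriction--twist--corestriction formalism for Hecke pairs and checking compatibility with the connecting maps. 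This is standard but needs to be said. The paper's construction of $\nu$ sidesteps this entirely, which is what it buys.
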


\begin{proof}
It suffices to show that $\mu$ restricts to a bijection between $\rH^\ordi$ as the remaining claims follow from the definition. We first construct a map
\[
\nu\colon\rH(\Sh(\sfG_N,\rK_N^{c,l}),\dZ_{\xi^N}/p^m)\to\rH(\Sh(\sfG_N,\rK_N^{c,l}),\dZ_{\xi^N+\zeta^N}/p^m)
\]
in the opposite direction. Write $\bbt\coloneqq\prod_{t\in\Delta}t$. Let $\epsilon\colon\dZ_q\to\dZ_{\zeta^N}$ be the unique section of $\delta$ with image $\dL_{-(\zeta^N)^\vee}$. Then we have $h\circ\epsilon=\epsilon$ for every $h\in\rk^{c,l}\cap\bbt^m\rk^{c,l}\bbt^{-m}$.

Choose a set $K$ of representatives for the (finite) coset $\rk^{c,l}/(\rk^{c,l}\cap\bbt^m\rk^{c,l}\bbt^{-m})$. We claim that the map
\[
\sum_{k\in K}\tR_{k\bbt^m}\otimes(k\circ\epsilon)\colon
\rH(\Sh(\sfG_N,\rK_N),\dZ_{\xi^N})\otimes_{\dZ_q}\dZ_q/p^m\to
\rH(\Sh(\sfG_N,\rK_N),\dZ_{\xi^N})\otimes_{\dZ_q}\dZ_{\zeta^N}/p^m
\]
preserves $\rk^{c,l}$-invariants. Indeed, take an element $h\in\rk^{c,l}$. For every $k\in K$, write $h\cdot k=k'\cdot h_k$ for unique elements $k'\in K$ and $h_k\in\rk^{c,l}\cap\bbt^m\rk^{c,l}\bbt^{-m}$. Then after restricting to the submodule
\[
\(\rH(\Sh(\sfG_N,\rK_N),\dZ_{\xi^N})\otimes_{\dZ_q}\dZ_q/p^m\)^{\rk^{c,l}}
=\rH(\Sh(\sfG_N,\rK_N),\dZ_{\xi^N})^{\rk^{c,l}}\otimes_{\dZ_q}\dZ_q/p^m,
\]
we have
\begin{align*}
h\circ\sum_{k\in K}\tR_{k\bbt^m}\otimes(k\circ\epsilon)
&=\sum_{k\in K}\tR_{hk\bbt^m}\otimes(hk\circ\epsilon)
=\sum_{k\in K}\tR_{k'h_k\bbt^m}\otimes(k'h_k\circ\epsilon) \\
&=\sum_{k\in K}\tR_{k'\bbt^m(\bbt^{-m}h_k\bbt^m)}\otimes(k'\circ\epsilon)
=\sum_{k\in K}\tR_{k'\bbt^m}\otimes(k'\circ\epsilon)
=\sum_{k\in K}\tR_{k\bbt^m}\otimes(k\circ\epsilon).
\end{align*}
Thus, $\sum_{k\in K}\tR_{k\bbt^m}\otimes(k\circ\epsilon)$ induces a map
\[
\nu\colon\rH(\Sh(\sfG_N,\rK_N^{c,l}),\dZ_{\xi^N}/p^m)\to\rH(\Sh(\sfG_N,\rK_N^{c,l}),\dZ_{\xi^N+\zeta^N}/p^m),
\]
whose independence of the choice of $K$ can be easily seen.

We now compute $\nu\circ\mu$ and $\mu\circ\nu$. By definition, $\nu\circ\mu$ is the restriction of the map
\[
\sum_{k\in K}\tR_{k\bbt^m}\otimes(k\circ\epsilon\circ\delta)
\]
to the $\rk^{c,l}$-invariants. As $\epsilon\circ\delta=\ang{\bbt^m}$ by the property (2) above, we conclude that $\nu\circ\mu$ coincides with the operator $\tV_{\bbt^m}$ on $\rH(\Sh(\sfG_N,\rK_N^{c,l}),\dZ_{\xi^N+\zeta^N}/p^m)$. By definition, $\mu\circ\nu$ is the restriction of the map
\[
\sum_{k\in K}\tR_{k\bbt^m}\otimes(\delta\circ k\circ\epsilon)
\]
to the $\rk^{c,l}$-invariants. As $\delta\circ k\circ\epsilon$ is the identity map for $k\in\rk^{c,l}$, we conclude that $\mu\circ\nu$ coincides with the operator $\tV_{\bbt^m}$ on $\rH(\Sh(\sfG_N,\rK_N^{c,l}),\dZ_{\xi^N}/p^m)$.

Now the lemma follows since $\rH^\ordi(\Sh(\sfG_N,\rK_N^{c,l}),\dZ_{\xi^N}/p^m)$ (resp.\ $\rH^\ordi(\Sh(\sfG_N,\rK_N^{c,l}),\dZ_{\xi^N+\zeta^N}/p^m)$) is just the image of $\tV_{\bbt^m}$; and $\tV_{\bbt^m}$ induces an automorphism of $\rH^\ordi(\Sh(\sfG_N,\rK_N^{c,l}),\dZ_{\xi^N}/p^m)$ (resp.\ $\rH^\ordi(\Sh(\sfG_N,\rK_N^{c,l}),\dZ_{\xi^N+\zeta^N}/p^m)$).
\end{proof}

\begin{proof}[Proof of Proposition \ref{pr:eigen_1}]
We prove the proposition in the order (1,3,6,2,5,4,7).

For (1), note that for subgroups $\rI\finite\rI'\finite\rI_N^0$, the natural map
\[
\cD_\rI(\xi^N,\rV_N,\rK_N)\otimes_{\Lambda_{\rI_N^0/\rI,\dZ_q}}\Lambda_{\rI_N^0/\rI',\dZ_q}
\to\cD_{\rI'}(\xi^N,\rV_N,\rK_N)
\]
is an isomorphism; in particular, the natural map $\cD_\rI(\xi^N,\rV_N,\rK_N)\to\cD_{\rI'}(\xi^N,\rV_N,\rK_N)$ is surjective. It follows that the natural map
\begin{align*}
\cD_\rJ(\xi^N,\rV_N,\rK_N)\otimes_{\Lambda_{\rI_N^0/\rJ,\dZ_q}}\Lambda_{\rI_N^0/\rJ',\dZ_q}
&=\(\varprojlim_{\rJ<\rI\finite\rI_N^0}\cD_\rI(\xi^N,\rV_N,\rK_N)\)
\otimes_{\Lambda_{\rI_N^0/\rJ,\dZ_q}}\Lambda_{\rI_N^0/\rJ',\dZ_q}  \\
&\to\varprojlim_{\rJ<\rI\finite\rI_N^0}
\(\cD_\rI(\xi^N,\rV_N,\rK_N)\otimes_{\Lambda_{\rI_N^0/\rJ,\dZ_q}}\Lambda_{\rI_N^0/\rJ',\dZ_q}\)
\end{align*}
is an isomorphism. Then (1) follows as
\[
\varprojlim_{\rJ<\rI\finite\rI_N^0}
\(\cD_\rI(\xi^N,\rV_N,\rK_N)\otimes_{\Lambda_{\rI_N^0/\rJ,\dZ_q}}\Lambda_{\rI_N^0/\rJ',\dZ_q}\)
=\varprojlim_{\rJ'<\rI'\finite\rI_N^0}\cD_{\rI'}(\xi^N,\rV_N,\rK_N)
=\cD_{\rJ'}(\xi^N,\rV_N,\rK_N).
\]

For (3), the map in question is clearly injective by definition. Thus, it suffices to show that the reduction
\begin{align}\label{eq:eigen1}
\cE_\rJ(\xi^N,\rV_N,\rK_N)_{\dF_p}
\to\varprojlim_{\rJ<\rI\finite\rI_N^0}\cE_\rI(\xi^N,\rV_N,\rK_N)_{\dF_p}
\end{align}
of that map is surjective. Note that both sides of \eqref{eq:eigen1} are closed subspaces of the topological $\dF_q$-vector space $V\coloneqq\End_{\Lambda_{\rI_N^0/\rJ,\dZ_q}}\(\cD_\rJ(\xi^N,\rV_N,\rK_N)_{\dF_p}\)$. For every intermediate subgroup $\rJ<\rI\finite\rI_N^0$, denote by $V_\rI$ the kernel of the reduction map $V\to\End_{\Lambda_{\rI_N^0/\rI,\dZ_q}}\(\cD_\rI(\xi^N,\rV_N,\rK_N)_{\dF_p}\)$. Then $\{V_\rI\}_{\rJ<\rI\finite\rI_N^0}$ forms an open basis of the topological space $V$. By (1), the natural map $\cE_\rJ(\xi^N,\rV_N,\rK_N)_{\dF_p}\to\cE_\rI(\xi^N,\rV_N,\rK_N)_{\dF_p}$ is surjective for every $\rJ<\rI\finite\rI_N^0$, which implies that
\[
\varprojlim_{\rJ<\rI\finite\rI_N^0}\cE_\rI(\xi^N,\rV_N,\rK_N)_{\dF_p}\subseteq
\bigcap_{\rJ<\rI\finite\rI_N^0}\(\cE_\rJ(\xi^N,\rV_N,\rK_N)_{\dF_p}\oplus V_\rI\)
=\cE_\rJ(\xi^N,\rV_N,\rK_N)_{\dF_p}.
\]
Thus, \eqref{eq:eigen1} follows.

For (6), we first construct the isomorphism
\begin{align}\label{eq:weight3}
\sD_\rJ(\xi^N,\rV_N,\rK_N)\res_x\simeq\rD_\rJ(\xi^N,\rV_N,\rK_N)_x.
\end{align}
It suffices to consider the case where $\rJ=1$ by (1). Take a classical point $y$ of $\Spec\Lambda_{\rI_N^0}$. By Lemma \ref{le:ordinary}(2) and Lemma \ref{le:weight}, we have a natural isomorphism
\[
\rH^\ordi(\Sh(\sfG_N,\rK_N^{c,c}),\dZ_{\xi^N+\zeta^N_y}/p^m)[(\chi_y^\smooth)^{-1}]\simeq
\rH(\Sh(\sfG_N,\rK_N\rI_N^c),\dZ_{\xi^N}/p^m)[\chi_y^{-1}]
\]
of $\dZ_q[(\rK_N)_{\spadesuit^+}\backslash\sfG_N(F^+_{\spadesuit^+})/(\rK_N)_{\spadesuit^+}]
\otimes\dT_N^\Box$-modules, functorial in $c$ and $m$ as long as $2\leq m\leq c$. Applying the functor
\[
\Hom_{\dZ_q}\(\varprojlim_m\varinjlim_c-,\dQ_q\),
\]
the right-hand side becomes
\begin{align*}
&\quad\Hom_{\dZ_q}\(\varprojlim_m\varinjlim_c\rH(\Sh(\sfG_N,\rK_N\rI_N^c),\dZ_{\xi^N}/p^m)[\chi_y^{-1}],\dQ_q\) \\
&=\Hom_{\dZ_q}\(\varprojlim_m\varinjlim_c\rH(\Sh(\sfG_N,\rK_N\rI_N^c),\dZ_{\xi^N}/p^m),\dQ_q\)_{\chi_y} \\
&=\(\Hom_{\dZ_q}\(\varprojlim_m\varinjlim_c\rH(\Sh(\sfG_N,\rK_N\rI_N^c),\dZ_{\xi^N}/p^m),\dZ_q\)\otimes_{\dZ_q}\dQ_q\)_{\chi_y} \\
&=\(\varprojlim_m\Hom_{\dZ_q}\(\varinjlim_c\rH(\Sh(\sfG_N,\rK_N\rI_N^c),\dZ_{\xi^N}/p^m),\dZ/p^m\)\otimes_{\dZ_q}\dQ_q\)_{\chi_y} \\
&=\(\varprojlim_m\varprojlim_c\Hom_{\dZ_q}\(\rH(\Sh(\sfG_N,\rK_N\rI_N^c),\dZ_{\xi^N}/p^m),\dZ/p^m\)\otimes_{\dZ_q}\dQ_q\)_{\chi_y} \\
&=\(\varprojlim_c\varprojlim_m\Hom_{\dZ_q}\(\rH(\Sh(\sfG_N,\rK_N\rI_N^c),\dZ_{\xi^N}/p^m),\dZ/p^m\)\otimes_{\dZ_q}\dQ_q\)_{\chi_y} \\
&=\(\varprojlim_c\Hom_{\dZ_q}\(\rH(\Sh(\sfG_N,\rK_N\rI_N^c),\dZ_{\xi^N}),\dZ_q\)\otimes_{\dZ_q}\dQ_q\)_{\chi_y} \\
&=\sD(\xi^N,\rV_N,\rK_N)_{\chi_y};
\end{align*}
and the left-hand side becomes
\begin{align*}
&\quad\Hom_{\dZ_q}
\(\varprojlim_m\varinjlim_c\rH^\ordi(\Sh(\sfG_N,\rK_N^{c,c}),\dZ_{\xi^N+\zeta^N_y}/p^m)[(\chi_y^\smooth)^{-1}],\dQ_q\) \\
&=\Hom_{\dZ_q}\(\varinjlim_c\rH^\ordi(\Sh(\sfG_N,\rK_N^{c,c}),\dZ_{\xi^N+\zeta^N_y}),\dQ_q\)_{\chi_y^\smooth} \\
&=\varprojlim_{c}\Hom\(\rH^\ordi(\Sh(\sfG_N,\rK_N\rI_N^c),\dQ_{\xi^N+\zeta^N_y}),\dQ_q\)_{\chi_y^\smooth}
\end{align*}
using property (4). Thus, we obtain an isomorphism
\begin{align}\label{eq:weight2}
\sD(\xi^N,\rV_N,\rK_N)_{\chi_y}\simeq
\Hom\(\rH^\ordi(\Sh(\sfG_N,\rK_N\rI_N^c),\dQ_{\xi^N+\zeta^N_y}),\dQ_q\)_{\chi_y^\smooth}
\end{align}
of $\dQ_q[(\rK_N)_{\spadesuit^+}\backslash\sfG_N(F^+_{\spadesuit^+})/(\rK_N)_{\spadesuit^+}]
\otimes\dT_N^\Box$-modules for every $c\geq 0$ such that $\chi_y^\smooth$ is trivial on $\rI_N^c$. Then the isomorphism \eqref{eq:weight3} is obtained from \eqref{eq:weight2} by taking $y$ to be the image of $x$ and then taking $\phi_x$-quotients on both sides.

For the remaining statement, let $y$ be the image of $x$ in $\Spec\Lambda_{\rI_N^0/\rJ,\dQ_q}$, which is a classical point. If $\cO_{\sE_\rJ(\xi^N,\rV_N,\rK_N),x}$ is regular, then it is flat over $\Lambda_{\rI_N^0/\rJ,\dQ_q}$ by the miracle flatness criterion over regular local rings \cite{Mat89}*{Theorem~23.1}. Conversely, if $\cO_{\sE_\rJ(\xi^N,\rV_N,\rK_N),x}$ is flat over $\Lambda_{\rI_N^0/\rJ,\dQ_q}$, then \eqref{eq:weight3} implies that $\cO_{\sE_\rJ(\xi^N,\rV_N,\rK_N),x}\otimes_{\Lambda_{\rI_N^0/\rJ,\dQ_q}}\dQ_y$ is a field, where $\dQ_y$ denotes the residue field of $y$. Since $(\Lambda_{\rI_N^0/\rJ,\dQ_q})_y$ is regular and has the same dimension as $\cO_{\sE_\rJ(\xi^N,\rV_N,\rK_N),x}$, it follows that $\cO_{\sE_\rJ(\xi^N,\rV_N,\rK_N),x}$ is regular as well. Part (6) has been proved.

For (2), it suffices to consider the case where $\rJ=1$ by (1). Choose an arbitrary maximal ideal $\fm$ of $\Lambda_{\rI_N^0}$ and consider the $\Lambda_{\rI_N^0,\fm}$-module $\cD(\xi^N,\rV_N,\rK_N)_\fm$, which is equivalent to choose a character $\chi_\fm\colon\rI_N^0/\rI_N^1\to\dZ_q^\times$. Let $r_\fm$ be the $\dF_q$-dimension of $\cD(\xi^N,\rV_N,\rK_N)\otimes_{\Lambda_{\rI_N^0}}\Lambda_{\rI_N^0}/\fm$. By Nakayama's lemma, it suffices to show that the generic rank of the $\Lambda_{\rI_N^0,\fm}$-module $\cD(\xi^N,\rV_N,\rK_N)_\fm$ is $r_\fm$. Assume the converse. Then there exists a $\Sigma_p^+\setminus\PP^+$-trivial dominant hermitian weight $\zeta^N\in\Xi^N$ such that $\chi_\fm\cdot\chi_{-(\zeta^N)^\vee}$ corresponds to a closed point $y$ of $\Spec(\Lambda_{\rI_N^0,\fm}\otimes_{\dZ_q}\dQ_q)$ satisfying that $\dim_{\dQ_q}\sD(\xi^N,\rV_N,\rK_N)_{\chi_y}<r_\fm$. By \eqref{eq:weight2}, we have an isomorphism
\[
\sD(\xi^N,\rV_N,\rK_N)_{\chi_y}\simeq
\Hom\(\rH^\ordi(\Sh(\sfG_N,\rK_N\rI_N^1),\dQ_{\xi^N+\zeta^N_y}),\dQ_q\)_{\chi_\fm}
\]
of $\dQ_q$-vector spaces. Since $\rH^\ordi(\Sh(\sfG_N,\rK_N\rI_N^1),\dZ_{\xi^N+\zeta^N_y})$ is a free $\dZ_q$-module and $p\nmid|\rI_N^0/\rI_N^1|$, we have
\begin{align*}
\dim_{\dQ_q}\sD(\xi^N,\rV_N,\rK_N)_{\chi_y}
&=\rank_{\dZ_q}\rH^\ordi(\Sh(\sfG_N,\rK_N\rI_N^1),\dZ_{\xi^N+\zeta^N_y})[\chi_\fm] \\
&=\dim_{\dF_q}\rH^\ordi(\Sh(\sfG_N,\rK_N\rI_N^1),\dZ_{\xi^N+\zeta^N_y}/p)[\chi_\fm],
\end{align*}
which by Lemma \ref{le:weight} equals
\[
\dim_{\dF_q}\rH^\ordi(\Sh(\sfG_N,\rK_N\rI_N^1),\dZ_{\xi^N}/p)[\chi_\fm]
=\dim_{\dF_q}\cD(\xi^N,\rV_N,\rK_N)\otimes_{\Lambda_{\rI_N^0}}\Lambda_{\rI_N^0}/\fm=r_\fm.
\]
This is a contradiction hence (2) is proved.

For (5), it follows directly from (2).

For (4), we know by (5) that $\cE_\rJ(\xi^N,\rV_N,\rK_N)$ is of pure dimension $1+\rank_{\dZ_p}\rI_N^0/\rJ$ (which is the dimension of $\Lambda_{\rI_N^0/\rJ}$). For being reduced, by (3), we may assume that $\rJ=\rI\finite\rI_N^0$. Then $\cE_\rI(\xi^N,\rV_N,\rK_N)$ is a finite free $\dZ_q$-module. Since the action of $\dT_N^\Box$ on $\sD_\rI(\xi^N,\rV_N,\rK_N)=\Hom_{\dQ_q}\(\rH(\Sh(\sfG_N,\rK_N\rI),\dQ_{\xi^N}),\dZ_q\)$ is semisimple (via global Arthur packets), $\sE_\rI(\xi^N,\rV_N,\rK_N)$ is a finite product of fields. It follows that $\cE_\rI(\xi^N,\rV_N,\rK_N)$ is reduced. Thus, (4) is proved.

For (7), we know by (2) that $\sD_\rJ(\xi^N,\rV_N,\rK_N)$ is finitely generated over $\sE_\rJ(\xi^N,\rV_N,\rK_N)$ and is locally torsion free over (the regular ring) $\Lambda_{\rI_N^0/\rJ,\dQ_q}$; it follows that $\sD_\rJ(\xi^N,\rV_N,\rK_N)$ is also locally torsion free over the normal locus of $\Spec\sE_\rJ(\xi^N,\rV_N,\rK_N)$ since $\sE_\rJ(\xi^N,\rV_N,\rK_N)$ is a finite locally torsion free $\Lambda_{\rI_N^0/\rJ,\dQ_q}$-module.

\end{proof}

\section{Rankin--Selberg product}
\label{ss:product}

In this section, we present the Rankin--Selberg version of the material in the previous section, with the additional discussion on localization. We also introduce the standard Galois representation on the integral eigenvariety.

From now on, we fix an integer $n\geq 1$ and a pair $\xi=(\xi^n,\xi^{n+1})\in\Xi^n\times\Xi^{n+1}$ of $\PP^+$-trivial dominant hermitian weights. Denote by $n_0$ and $n_1$ the unique even and odd numbers in $\{n,n+1\}$, respectively. Write $n_0=2r_0$ and $n_1=2r_1+1$, so that $n=r_0+r_1$.

\begin{definition}\label{de:weight}
We say that a pair $\zeta=(\zeta^n,\zeta^{n+1})\in\Xi^n\times\Xi^{n+1}$ of dominant hermitian weights is
\begin{itemize}
  \item \emph{interlacing}, if
      \[
      \zeta^{n+1}_{\tau,0}\leq\zeta^n_{\tau\tc,0}\leq\zeta^{n+1}_{\tau,1}\leq\zeta^n_{\tau\tc,1}
      \leq\cdots\leq\zeta^{n+1}_{\tau,n-1}\leq\zeta^n_{\tau\tc,n-1}\leq \zeta^{n+1}_{\tau,n}
      \]
      holds for every $\tau\in\Upsilon$;

  \item \emph{Fontaine--Laffaille regular}, if
      \[
      \max_{\tau|w}\left\{\zeta^{n}_{\tau,n-1}+\zeta^{n+1}_{\tau,n}\right\}
      -\min_{\tau|w}\left\{\zeta^n_{\tau,0}+\zeta^{n+1}_{\tau,0}\right\}+2n-1\leq p-2
      \]
      holds for every $w\in\Sigma_p$.
\end{itemize}
\end{definition}

\begin{definition}\label{de:datum}
A \emph{definite/indefinite datum} is a collection $\bV=(\fm;\rV_n,\rV_{n+1};\Lambda_n,\Lambda_{n+1};\rK_n,\rK_{n+1};\sfB)$ in which
\begin{itemize}
  \item $\fm$ is a (possibly empty) finite set of nonarchimedean places of $F^+$ not in $\langle\emptyset\rangle$ that are inert in $F$;

  \item $\rV_n$ is a standard definite/indefinite hermitian space over $F$ (Definition \ref{de:standard_hermitian_space}) of rank $n$ that is split at every place in $\PP^+$, and $\rV_{n+1}=(\rV_n)_\sharp$;

  \item $\Lambda_n$ is a $\prod_{v\not\in\Sigma^+_\infty\cup\spadesuit^+\cup\PP^+}O_{F_v}$-lattice in $\rV_n\otimes_F\dA_F^{\Sigma_\infty\cup\spadesuit\cup\PP}$ satisfying $\Lambda_n\subseteq(\Lambda_n)^\vee$ and that $(\Lambda_{n,v})^\vee/\Lambda_{n,v}$ has length one (resp.\ zero) when $v\in\fm$ (resp.\ $v\not\in\fm$), and $\Lambda_{n+1}=(\Lambda_n)_\sharp$;

  \item $(\rK_n,\rK_{n+1})$ is a pair of decomposable neat open compact subgroups of $\sfG_n(\dA_{F^+}^{\infty,\PP^+})$ and $\sfG_{n+1}(\dA_{F^+}^{\infty,\PP^+})$, respectively, of the form
      \begin{align*}
      \rK_N=\prod_{v\in\spadesuit^+}(\rK_N)_v\times
      \prod_{v\not\in\Sigma^+_\infty\cup\spadesuit^+\cup\PP^+}\rU(\Lambda_N)(O_{F^+_v})
      \end{align*}
      for $N\in\{n,n+1\}$;

  \item $\sfB$ is a Borel subgroup of $\sfG\otimes_{F^+}F^+_{\PP^+}$ that has \emph{trivial} intersection with $\sfH\otimes_{F^+}F^+_{\PP^+}$; here we put $\sfG\coloneqq\sfG_n\times\sfG_{n+1}$, and denote by $\sfH$ the graph of the natural homomorphism $\sfG_n\to\sfG_{n+1}$, which is a closed subgroup of $\sfG$.
\end{itemize}
\end{definition}

Choose a definite/indefinite datum as in the above definition.
\begin{itemize}
  \item Put $\dT^{\ang{\fm}}\coloneqq\dT_n^{\ang{\fm}}\otimes\dT_{n+1}^{\ang{\fm}}$.

  \item Put $\rK\coloneqq\rK_n\times\rK_{n+1}$. For every open compact subgroup $\rk$ of $\sfG(F^+_{\PP^+})$, we have the map of Shimura spaces
      \[
      \graph\colon\Sh(\sfH,\rK_\sfH\rk_\sfH)\to\Sh(\sfG,\rK\rk),
      \]
      in which
      \[
      \rK_\sfH\coloneqq\rK\cap\sfH(\dA_{F^+}^{\infty,\PP^+}),\qquad
      \rk_\sfH\coloneqq\rk\cap\sfH(F^+_{\PP^+}),
      \]
      together with the representation $\dZ_\xi=\dZ_{\xi^n}\boxtimes\dZ_{\xi^{n+1}}$ of $\rK\sfG(F^+_{\PP^+})$ with coefficients in $\dZ_q$, which gives rise to a $\dZ_q$-local system (with the same notation) on $\Sh(\sfG,\rK\rk)$.

  \item Denote by $\sfT=\sfT_n\times\sfT_{n+1}$ the Levi quotient of $\sfB=\sfB_n\times\sfB_{n+1}$. Put $\rI^0\coloneqq\rI_n^0\times\rI_{n+1}^0$.

  \item For every subgroup $\rI\finite\rI^0$, we have the set $\fk_\rI$ of open compact subgroups $\rk$ of $\sfG(F^+_{\PP^+})$ satisfying that the image of $\rk_\sfB\coloneqq\rk\cap\sfB(F^+_{\PP^+})$ in $\sfT(F^+_{\PP^+})$ coincides with $\rI$. There is a natural poset structure on $\bigcup_\rI\fk_\rI$ analogous to the one in Construction \ref{co:liusun}.
\end{itemize}

For $1\leq m\leq \infty$ and an open compact subgroup $\rk$ of $\sfG(F^+_{\PP^+})$, put
\begin{align*}
\rH(\Sh(\sfG,\rK\rk),\dZ_\xi/p^m)\coloneqq
\begin{dcases}
\Gamma(\Sh(\sfG,\rK\rk),\dZ_\xi/p^m), &\text{when $\bV$ is definite,}\\
\rH^{2n-1}_{\et}(\Sh(\sfG,\rK\rk)_{\ol{F}},\dZ_\xi/p^m(n-1)), &\text{when $\bV$ is indefinite,}
\end{dcases}
\end{align*}
which is naturally a $\dZ_q[\rK_{\spadesuit^+}\backslash\sfG(F^+_{\spadesuit^+})/\rK_{\spadesuit^+}]
\otimes\dT^{\ang{\fm}}$-module. For every subgroup $\rI\finite\rI^0$, we define
\[
\rH(\Sh(\sfG,\rK\rI),\dZ_{\xi^N}/p^m)\coloneqq
\varprojlim_{\fk_\rI^{\r{op}}}\rH(\Sh(\sfG,\rK\rk),\dZ_{\xi^N}/p^m)
\]
as a module over $\dZ[\rK_{\spadesuit^+}\backslash\sfG(F^+_{\spadesuit^+})/\rK_{\spadesuit^+}]
\otimes\dT^{\ang{\fm}}\otimes\Lambda_{\sfT(F^+_{\PP^+})/\rI,\dZ_q}$. We endow
\[
\Hom_{\dZ_q}\(\rH(\Sh(\sfG,\rK\rI),\dZ_{\xi^N}/p^m),\dZ_q/p^m\)
\]
the $\dZ[\rK_{\spadesuit^+}\backslash\sfG(F^+_{\spadesuit^+})/\rK_{\spadesuit^+}]
\otimes\dT^{\ang{\fm}}\otimes\Lambda_{\sfT(F^+_{\PP^+})/\rI,\dZ_q}$-module structure via the \emph{dual} action (in particular, the $\Lambda_{\sfT(F^+_{\PP^+})/\rI,\dZ_q}$-module structure is the usual one twisted by the inverse of $\sfT(F^+_{\PP^+})/\rI$).

\begin{remark}\label{re:ordinary}
For every open compact subgroup $\rk$ of $\sfG(F^+_{\PP^+})$, we have similarly the submonoid $\fB_\rk\subseteq\sfB(F^+_{\PP^+})$ and the homomorphism
\[
\tU_\bullet\colon\fB_\rk\to\End_{\dZ_q}\(\rH(\Sh(\sfG,\rK\rk),\dZ_\xi/p^m)\)
\]
of monoids for every $1\leq m\leq\infty$, as in the previous section.

For a subgroup $\rI\finite\rI^0$, denote by $\fk_\rI^\dag$ the subset of $\fk_\rI$ of elements $\rk$ such that the image of $\fB_\rk$ in $\sfT(F^+_{\PP^+})$ generates the entire group. Then for every $\rk\in\fk_\rI^\dag$ and every $1\leq m\leq\infty$, we have a similar decomposition
\begin{align*}
\rH(\Sh(\sfG,\rK\rk),\dZ_\xi/p^m)=
\rH^\ordi(\Sh(\sfG,\rK\rk),\dZ_\xi/p^m)\oplus
\rH^{\r{ss}}(\Sh(\sfG,\rK\rk),\dZ_\xi/p^m),
\end{align*}
such that the natural map
\[
\rH(\Sh(\sfG,\rK\rI),\dZ_\xi/p^m)\to\rH(\Sh(\sfG,\rK\rk),\dZ_\xi/p^m)
\]
is injective with image $\rH^\ordi(\Sh(\sfG,\rK\rk),\dZ_\xi/p^m)$.
\end{remark}

Similar to Definition \ref{de:distribution}, for every subgroup $\rJ<\rI^0$, put
\[
\cD_\rJ(\xi,\bV)\coloneqq\varprojlim_{\rJ<\rI\finite\rI^0}
\Hom_{\dZ_q}\(\rH(\Sh(\sfG,\rK\rI),\dZ_{\xi^N}),\dZ_q\)
\]
with respect to the dual of $\rho_{\rI',\rI}$, as a module over $\dZ[\rK_{\spadesuit^+}\backslash\sfG(F^+_{\spadesuit^+})/\rK_{\spadesuit^+}]\otimes
\dT^{\ang{\fm}}\otimes\Lambda_{\sfT(F^+_{\PP^+})/\rJ,\dZ_q}$ (and a topological module over $\Lambda_{\sfT(F^+_{\PP^+})/\rJ,\dZ_q}$), admitting an action by $\Gamma_F$ when $\bV$ is indefinite. Let $\cE_\rJ(\xi,\bV)$ be the $\Lambda_{\sfT(F^+_{\PP^+})/\rJ,\dZ_q}$-subalgebra of
\[
\End_{\Lambda_{\sfT(F^+_{\PP^+})/\rJ,\dZ_q}}\(\cD_\rJ(\xi,\bV)\)
\]
generated by the image of $\dT^{\ang{\fm}}$. Put
\begin{align*}
\sD_\rJ(\xi,\bV)\coloneqq\cD_\rJ(\xi,\bV)\otimes_{\dZ_q}\dQ_q,\qquad
\sE_\rJ(\xi,\bV)\coloneqq\cE_\rJ(\xi,\bV)\otimes_{\dZ_q}\dQ_q,
\end{align*}
as a module and a ring over $\dZ[\rK_{\spadesuit^+}\backslash\sfG(F^+_{\spadesuit^+})/\rK_{\spadesuit^+}]
\otimes\dT^{\ang{\fm}}\otimes\Lambda_{\sfT(F^+_{\PP^+})/\rJ,\dQ_q}$, respectively. We suppress $\rJ$ in all the subscripts when it is the trivial subgroup.

The following three definitions are exclusive for the Rankin--Selberg case, which will be frequently used later.

\begin{definition}
For every subgroup $\rI\finite\rI^0$, define $\fk_\rI^\sfH$ to be the subset of $\fk_\rI$ of elements $\rk$ satisfying $\rk=\rk_\sfH\cdot\rk_\sfB$, which is cofinal.
\end{definition}

\begin{definition}\label{de:distinction}
A \emph{$\xi$-distinction} is a $(\rK_\sfH)_{\Sigma^+_p\setminus\PP^+}$-invariant surjective homomorphism $\dZ_\xi\to\dZ_q$. A $\xi$-distinction exists if and only if $\xi$ is interlacing (Definition \ref{de:weight}); and it is unique up to a scalar in $\dZ_q^\times$ if exists.
\end{definition}

\begin{definition}
For a $\dZ[\rK_{\spadesuit^+}\backslash\sfG(F^+_{\spadesuit^+})/\rK_{\spadesuit^+}]$-module $\cM$, we denote by $\cM^\sfH$ the maximal submodule on which $\dZ[(\sfK_\sfH)_{\spadesuit^+}\backslash\sfH(F^+_{\spadesuit^+})/(\sfK_\sfH)_{\spadesuit^+}]$ acts by the degree map.
\end{definition}

From now on, we fix a pair of hermitian homomorphisms (Definition \ref{de:hermitian})
\begin{align}\label{eq:gamma}
\gamma=(\gamma_n,\gamma_{n+1})\colon\Gamma_{F^+}\to\sG_n(\dF_q)\times\sG_{n+1}(\dF_q)
\end{align}
that is unramified away from $\spadesuit^+\cup\Sigma^+_p$. Associated with $\gamma$, we have the Satake homomorphisms
\[
\phi_{\gamma_N}\colon\dT^{\ang{\emptyset}}_N\to\dF_q
\]
for $N=n,n+1$ (which is, for example, constructed above \cite{LTXZZ1}*{Theorem~3.38}). Put
\[
\phi_\gamma\coloneqq\phi_{\gamma_n}\otimes\phi_{\gamma_{n+1}}\colon\dT^{\ang{\emptyset}}
\coloneqq\dT^{\ang{\emptyset}}_n\otimes\dT^{\ang{\emptyset}}_{n+1}\to\dF_q.
\]

\begin{assumption}\label{as:galois}
We impose the following assumptions which will be used later.
\begin{description}
  \item[(G1)] The $\Gamma_F$-representation $\gamma_n^\natural\otimes\gamma_{n+1}^\natural$ is absolutely irreducible.

  \item[(G2)] Both $\phi_{\gamma_n}$ and $\phi_{\gamma_{n+1}}$ are cohomologically generic in the sense of \cite{LTXZZ}*{Definition~D.1.1}.

  \item[(G3)] Either one of the following two assumptions holds:
            \begin{itemize}
              \item $(\gamma_n^\natural\otimes\gamma_{n+1}^\natural)(\Gamma_F)$ contains a nontrivial scalar element;

              \item $p\geq\max\{2n(n+1)-1,5\}$; and we have
                  \[
                  \Hom_{\dF_q[\Gamma_F]}
                  \(\End(\gamma_0^\natural\otimes\gamma_1^\natural),\gamma_0^\natural\otimes\gamma_1^\natural\)=0.
                  \]
            \end{itemize}

  \item[(G4)] The image of the restriction of the homomorphism
            \[
            (\gamma_n,\gamma_{n+1},\epsilon_p)\colon\Gamma_{F^+}\to
            \sG_n(\dF_q)\times\sG_{n+1}(\dF_q)\times\dF_q^\times
            \]
            to $\Gal(\ol{F}/F^+_{\r{rflx}})$ (see \cite{LTXZZ}*{Definition~3.3.2} for $F^+_{\r{rflx}}$) contains an element $(g_n,g_{n+1},s)$ satisfying
            \begin{itemize}
              \item $\prod_{i=1}^{n_0}(1-(-s)^i)$ is invertible in $\dF_q$;

              \item for $N=n,n+1$, $g_N$ belongs to $(\GL_N(\dF_q)\times\dF_q^\times,\fc)$ with order coprime to $p$;

              \item if we denote by $h_N$ the $\GL_N(\dF_q)$-component of $g_N^2$ for $N=n,n+1$, then the eigenvalues of each of $h_n$, $h_{n+1}$, and $h_n\otimes h_{n+1}$ contain $1$ exactly once; $h_{n_0}$ does not have eigenvalue $-1$; and $h_{n_1}$ does not have eigenvalue $-s$.
            \end{itemize}

  \item[(G5)] For $N=n,n+1$ and every $v$ in $\spadesuit^+$, every lifting of $(\gamma_N)_v$ is minimally ramified \cite{LTXZZ}*{Definition~3.4.8}.

  \item[(G6)] The restriction $\gamma_{n_0}^\natural\res_{\Gal(\ol{F}/F(\zeta_p))}$ is absolutely irreducible.

  \item[(G7)] For every $w\in\PP$, $(\gamma_{n_0}^\natural)_w$ is ordinary and crystalline with regular Fontaine--Laffaille weights $\{0,1,\dots,n_0-1\}$. In particular, this implicitly requires $p>n_0$ if $\PP\neq\emptyset$, and implies that there is a (necessarily) unique increasing $\Gamma_F$-stable filtration
      \[
      0=\Fil^{-1}(\gamma_{n_0}^\natural)_w\subseteq\Fil^0(\gamma_{n_0}^\natural)_w\subseteq
      \cdots\subseteq\Fil^{n_0-1}(\gamma_{n_0}^\natural)_w=(\gamma_{n_0}^\natural)_w
      \]
      such that $\Gr^i(\gamma_{n_0}^\natural)_w\coloneqq\Fil^i(\gamma_{n_0}^\natural)_w/\Fil^{i-1}(\gamma_{n_0}^\natural)_w$ is one-dimensional on which $\rI_{F_w}$ acts by $\epsilon_p^{-i}$ for $0\leq i\leq n_0-1$.

  \item[(G8)] Under (G7), the induced (unramified) characters of $\Gamma_{F_w}$ on the set
      \[
      \left\{\left.(\Gr^i(\gamma_{n_0}^\natural)_w)(i)\right| 0\leq i\leq n_0-1\right\}
      \]
      are distinct.
\end{description}
\end{assumption}

\begin{remark}\label{re:generic}
Assumption \ref{as:galois}(G2) implies that for $N=n,n+1$, every standard indefinite hermitian space $\rV_N$ of rank $N$ over $F$ with $\sfG_N\coloneqq\rU(\rV_N)$, every neat open compact subgroup $\rL$ of $\sfG_N(\dA_{F^+}^\infty)$ that is hyperspecial away from a finite set $\Box$ of places of $F^+$ containing $\ang{\emptyset}$, and every local system $\dL$ given by a finitely generated $\dZ_q$-module on which $\rL$ acts, we have
\[
\rH^i_{\et}(\Sh(\sfG_N,\rL)_{\ol{F}},\dL)_{\ker\phi_{\gamma_N}\cap\dT_N^\Box}=0,\qquad i\neq N;
\]
and that $\rH^N_{\et}(\Sh(\sfG_N,\rL)_{\ol{F}},\dL)_{\ker\phi_{\gamma_N}\cap\dT_N^\Box}$ is a finite free $\dZ_q$-module when $\dL$ is.
\end{remark}

For every open compact subgroup $\rk$ of $\sfG(F^+_{\PP^+})$, put
\[
\rH(\Sh(\sfG,\rK\rk),\dZ_\xi/p^m)_\gamma\coloneqq\rH(\Sh(\sfG,\rK\rk),\dZ_\xi/p^m)_{\ker\phi_\gamma\cap\dT^{\ang{\fm}}},
\]
which is naturally a direct summand of $\rH(\Sh(\sfG,\rK\rk),\dZ_\xi/p^m)$, and similarly for $\rH(\Sh(\sfG,\rK\rI),\dZ_\xi/p^m)_\gamma$. For every subgroup $\rJ<\rI^0$, put
\[
\cD_\rJ(\xi,\bV)_\gamma\coloneqq\varprojlim_{\rJ<\rI\finite\rI^0}
\Hom_{\dZ_q}\(\rH(\Sh(\sfG,\rK\rI),\dZ_\xi)_\gamma,\dZ_q\)
\]
with respect to the dual of $\rho_{\rI',\rI}$, which is naturally a $\dZ[\rK_{\spadesuit^+}\backslash\sfG(F^+_{\spadesuit^+})/\rK_{\spadesuit^+}]
\otimes\dT^{\ang{\fm}}\otimes\Lambda_{\sfT(F^+_{\PP^+})/\rJ,\dZ_q}$-linear direct summand of $\cD_\rJ(\xi,\bV)$. We define similarly the $\dZ[\rK_{\spadesuit^+}\backslash\sfG(F^+_{\spadesuit^+})/\rK_{\spadesuit^+}]
\otimes\dT^{\ang{\fm}}\otimes\Lambda_{\sfT(F^+_{\PP^+})/\rJ,\dZ_q}$-ring $\cE_\rJ(\xi,\bV)_\gamma$ as the $\Lambda_{\sfT(F^+_{\PP^+})/\rJ,\dZ_q}$-subalgebra of
\[
\End_{\Lambda_{\sfT(F^+_{\PP^+})/\rJ,\dZ_q}}\(\cD_\rJ(\xi,\bV)_\gamma\)
\]
generated by the image of $\dT^{\ang{\fm}}$. Finally, put
\begin{align*}
\sD_\rJ(\xi,\bV)_\gamma&\coloneqq\cD_\rJ(\xi,\bV)_\gamma\otimes_{\dZ_q}\dQ_q,\\
\sE_\rJ(\xi,\bV)_\gamma&\coloneqq\cE_\rJ(\xi,\bV)_\gamma\otimes_{\dZ_q}\dQ_q,
\end{align*}
as a module and a ring over $\dZ[\rK_{\spadesuit^+}\backslash\sfG(F^+_{\spadesuit^+})/\rK_{\spadesuit^+}]
\otimes\dT^{\ang{\fm}}\otimes\Lambda_{\sfT(F^+_{\PP^+})/\rJ,\dQ_q}$, respectively. We suppress $\rJ$ in all the subscripts when it is the trivial subgroup.

\begin{definition}\label{de:classical2}
We define a \emph{classical point} of $\Spec\Lambda_{\rI^0/\rJ,\dQ_q}$ to be a closed point whose natural image in $\Spec\Lambda_{\rI_N^0,\dQ_q}$ is classical (Definition \ref{de:classical}) for both $N=n,n+1$. We define a \emph{classical point} of $\Spec\sE_\rJ(\xi,\bV)_\gamma$ to be a closed point whose image in $\Spec\Lambda_{\sfT(F^+_{\PP^+})/\rJ,\dQ_q}$ is. A classical point $x$ of $\Spec\sE_\rJ(\xi,\bV)_\gamma$ (with the residue field $\dQ_x$) gives rise to a pair of $\Sigma_p^+\setminus\PP^+$-trivial dominant hermitian weights $\zeta_x=(\zeta^n_x,\zeta^{n+1}_x)\in\Xi^n\times\Xi^{n+1}$ and characters
\[
\chi_x\colon\rI^0/\rJ\to\dZ_x^\times,\qquad
\chi_x^\smooth\colon\rI^0/\rJ\to\dZ_x^\times,\qquad
\phi_x\colon\dT^{\ang{\fm}}\to\dQ_x.
\]

We say that a classical point $x$ of $\Spec\sE_\rJ(\xi,\bV)_\gamma$ is \emph{interlacing} if $\zeta_x$ is interlacing in the sense of Definition \ref{de:weight}.
\end{definition}

Similar to the previous section, for a classical point $x$ of $\Spec\sE_\rJ(\xi,\bV)_\gamma$, we have the $\dQ_q[\rK_{\spadesuit^+}\backslash\sfG(F^+_{\spadesuit^+})/\rK_{\spadesuit^+}]
\otimes\dT^{\ang{\fm}}$-module $\rH^\ordi(\Sh(\sfG,\rK\rI),\dQ_{\xi+\zeta_x})$ for every $\rI\finite\rI^0$; and we put
\[
\rD_\rJ(\xi,\bV)_x\coloneqq\varprojlim_{\rJ<\rI\finite\rI^0}
\Hom\(\rH^\ordi(\Sh(\sfG,\rK\rI),\dQ_{\xi+\zeta_x}),\dQ_x\)_{\chi_x^\smooth,\phi_x}.
\]

\begin{proposition}\label{pr:eigen_2}
Let $\rJ<\rI^0$ be a subgroup. Assume Assumption \ref{as:galois}(G2) when $\bV$ is indefinite.
\begin{enumerate}
  \item For every intermediate group $\rJ<\rJ'<\rI^0$, the natural map
      \[
      \cD_\rJ(\xi,\bV)_\gamma\otimes_{\Lambda_{\rI^0/\rJ,\dZ_q}}\Lambda_{\rI^0/\rJ',\dZ_q}
      \to\cD_{\rJ'}(\xi,\bV)_\gamma
      \]
      is an isomorphism.

  \item The $\Lambda_{\rI^0/\rJ,\dZ_q}$-module $\cD_\rJ(\xi,\bV)_\gamma$ is locally finite free.

  \item The natural map
      \[
      \cE_\rJ(\xi,\bV)_\gamma\to\varprojlim_{\rJ<\rI\finite\rI^0}\cE_\rI(\xi,\bV)_\gamma
      \]
      is an isomorphism.

  \item The ring $\cE_\rJ(\xi,\bV)_\gamma$ is reduced, local, and of pure dimension $1+\rank_{\dZ_p}\rI^0/\rJ$.

  \item The natural morphism $\Spec\cE_\rJ(\xi,\bV)_\gamma\to\Spec\Lambda_{\rI^0/\rJ,\dZ_q}$ is finite and generically flat.

  \item For every classical point $x$ of $\Spec\sE_\rJ(\xi,\bV)_\gamma$, we have a ($\Gamma_F$-equivariant when $\bV$ is indefinite) isomorphism
      \[
      \sD_\rJ(\xi,\bV)_\gamma\res_x\simeq\rD_\rJ(\xi,\bV)_x
      \]
      of $\dQ_x[\rK_{\spadesuit^+}\backslash\sfG(F^+_{\spadesuit^+})/\rK_{\spadesuit^+}]$-modules; and moreover, the ring $\cO_{\sE_\rJ(\xi,\bV)_\gamma,x}$ is regular if and only if it is flat over $\Lambda_{\rI^0/\rJ,\dQ_q}$.

  \item The $\sE_\rJ(\xi,\bV)_\gamma$-module $\sD_\rJ(\xi,\bV)_\gamma$ is finitely generated, and locally torsion free over the normal locus of $\Spec\sE_\rJ(\xi,\bV)_\gamma$.
\end{enumerate}
\end{proposition}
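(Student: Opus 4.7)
The proof will follow the strategy and the ordering $(1,3,6,2,5,4,7)$ of the proof of Proposition \ref{pr:eigen_1}, by developing the Rankin--Selberg analogues of all ingredients of Section \ref{ss:eigenvariety}: the monoid $\fB_\rk\subseteq\sfB(F^+_{\PP^+})$ for $\rk\subseteq\sfG(F^+_{\PP^+})$, the operators $\tU_b$, the ordinary direct summand $\rH^\ordi(\Sh(\sfG,\rK\rk),\dZ_\xi/p^m)$ (already sketched in Remark \ref{re:ordinary}), and the weight-independence machinery consisting of the operators $\tV_t$, the idempotent $\te$, and the comparison isomorphism of Lemma \ref{le:weight}. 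All of these rest only on the choice of a Borel of the ambient reductive group together with a self-dual lattice at each $v\in\PP^+$; they do \emph{not} require the Borel to be a product of Borels of the factors. Therefore Lemma \ref{le:monoid}, Lemma \ref{le:ordinary}, and Lemma \ref{le:weight} go through verbatim with $\sfG,\sfB,\sfT$ in place of $\sfG_N,\sfB_N,\sfT_N$, using the lattices $\Lambda_n$ and $\Lambda_{n+1}$ at places of $\PP^+$, which exist because $\rV_n$ and hence $\rV_{n+1}$ split at every $v\in\PP^+$.

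The new input is Remark \ref{re:generic}, which requires Assumption \ref{as:galois}\textbf{(G2)} in the indefinite case. Via K\"{u}nneth applied to $\sfG=\sfG_n\times\sfG_{n+1}$ it guarantees that after localizing at $\ker\phi_\gamma\cap\dT^{\ang{\fm}}$, the \'{e}tale cohomology $\rH^i_{\et}(\Sh(\sfG,\rK\rk)_{\ol{F}},\dL)$ is concentrated in the middle degree $i=2n-1$ and is finite free over $\dZ_q$ whenever $\dL$ is. After this concentration, the $\gamma$-localized functor $\rk\mapsto\rH(\Sh(\sfG,\rK\rk),\dZ_\xi/p^m)_\gamma$ is exact in the coefficient system, so the Hida operators, the transfer maps $\rho_{\rk',\rk}$, and the proof of Lemma \ref{le:weight} function in the indefinite case exactly as in the definite case, making the remaining argument uniform.

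With these tools assembled, parts (1), (3), (5), (7) are purely formal and identical to the corresponding parts of the proof of Proposition \ref{pr:eigen_1}; part (6) is the same limit manipulation, now carried out for the Rankin--Selberg product after $\gamma$-localization, followed by the miracle flatness criterion over $\Lambda_{\rI^0/\rJ,\dQ_q}$; part (2) is Nakayama's lemma combined with the weight-independence identification of generic rank with mod-$p$ fiber dimension supplied by the Rankin--Selberg Lemma \ref{le:weight}. Reducedness and pure dimension in (4) follow as before, from semisimplicity of the Hecke action at finite level (via Arthur's classification) and from (2). The only statement beyond Proposition \ref{pr:eigen_1} is that $\cE_\rJ(\xi,\bV)_\gamma$ is \emph{local}: by construction of the $\gamma$-localization every residual Hecke eigensystem on $\cD_\rJ(\xi,\bV)_\gamma$ equals $\phi_\gamma$, so $\cE_\rJ(\xi,\bV)_\gamma$ admits a unique maximal ideal. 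The one technical point I expect to require care is checking that the Borel $\sfB$, chosen \emph{transversal} to $\sfH\otimes_{F^+}F^+_{\PP^+}$ and hence not a product inside $\sfG_n\times\sfG_{n+1}$, still carries an integral model over the chosen lattices; this holds since $\sfG$ is split at every $v\in\PP^+$, so any Borel of $\sfG\otimes_{F^+}F^+_v$ extends to one over $O_{F^+_v}$ and yields the data needed to apply the constructions of Section \ref{ss:eigenvariety} factorwise.
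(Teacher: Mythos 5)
Your proposal is correct and follows essentially the same route as the paper: the paper's own proof of Proposition~\ref{pr:eigen_2} simply states that all statements except the locality in (4) are proved exactly as in Proposition~\ref{pr:eigen_1}, invoking Assumption~\ref{as:galois}(G2) through Remark~\ref{re:generic} to obtain middle-degree concentration and $p$-torsion-freeness in the indefinite case, and then handles locality by reducing to the finite-index case via (3) and observing that the only maximal ideal is the one generated by $\Ker\phi_\gamma$. Your treatment of locality is a slight repackaging of this (once one has finiteness of $\cE_\rJ(\xi,\bV)_\gamma$ over the complete local ring $\Lambda_{\rI^0/\rJ,\dZ_q}$, the maximal ideals are in bijection with those of the Artinian quotient $\cE_\rJ(\xi,\bV)_\gamma/\fm_\Lambda$, and $\gamma$-localization forces there to be exactly one), which is fine.

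One misstatement worth flagging: you write that $\sfB$, ``chosen transversal to $\sfH\otimes_{F^+}F^+_{\PP^+}$,'' is ``hence not a product inside $\sfG_n\times\sfG_{n+1}$.'' This is false. Borel subgroups of a product of reductive groups are \emph{always} products of Borels of the factors, and indeed the paper explicitly writes $\sfB=\sfB_n\times\sfB_{n+1}$ with Levi quotient $\sfT=\sfT_n\times\sfT_{n+1}$. The transversality with $\sfH$ is an extra condition on the \emph{pair} $(\sfB_n,\sfB_{n+1})$---that $\iota^{-1}(\sfB_{n+1})\cap\sfB_n=\{1\}$ where $\iota\colon\sfG_n\to\sfG_{n+1}$ is the natural embedding---not a statement that $\sfB$ fails to factor. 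The worry you raise (integral models, applying the \S\ref{ss:eigenvariety} machinery ``factorwise'') is therefore a non-issue, since everything factors on the nose; your conclusion is correct, but for a cleaner reason than the one you give. This slip does not affect the validity of the proof, but do fix the reasoning.
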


\begin{proof}
Except the locality property in (4), the proofs of the rest are exactly same as those of Proposition \ref{pr:eigen_1}, using Assumption \ref{as:galois}(G2) (via Remark \ref{re:generic}) when $\bV$ is indefinite. For the locality, we may assume that $\rI^0/\rJ$ is finite by (3); then there is only one maximal ideal of $\cE_\rJ(\xi,\bV)_\gamma$, namely, the one generated by the image of $\Ker\phi_\gamma$ under the natural homomorphism $\dT^{\ang{\fm}}\to\cE_\rJ(\xi,\bV)_\gamma$.
\end{proof}

\begin{remark}\label{re:flat}
Write $\sY_\rJ(\xi,\bV)_\gamma$ for the locus of $\Spec\sE_\rJ(\xi,\bV)_\gamma$ that is not flat over $\Spec\Lambda_{\rI^0/\rJ,\dQ_q}$, which is a Zariski closed subset. Then Proposition \ref{pr:eigen_2}(2,5) implies that $\sY_\rJ(\xi,\bV)_\gamma$ is of codimension at least two. In particular, by Proposition \ref{pr:eigen_2}(6), classical points are nowhere dense in the complement of the normal locus of $\Spec\sE_\rJ(\xi,\bV)_\gamma$.

In fact, We conjecture that under Assumption \ref{as:galois}(G1), and when $\bV$ is indefinite (G2), $\sE_\rJ(\xi,\bV)_\gamma$ is flat over $\Lambda_{\rI^0/\rJ,\dQ_q}$ at all classical points. In particular, by Proposition \ref{pr:eigen_2}, the normal locus of $\Spec\sE_\rJ(\xi,\bV)_\gamma$ contains all classical points.
\end{remark}

For $N=n,n+1$, we have similarly the $\dZ[(\rK_N)_{\spadesuit^+}\backslash\sfG_N(F^+_{\spadesuit^+})/(\rK_N)_{\spadesuit^+}]
\otimes\dT_N^{\ang{\fm}}\otimes\Lambda_{\sfT_N(F^+_{\PP^+}),\dZ_q}$-module of localized ordinary distributions $\cD(\xi^N,\rV_N,\rK_N)_{\gamma_N}$ (together with $\cE(\xi^N,\rV_N,\rK_N)_{\gamma_N}$). Projection maps induce continuous homomorphisms $\Lambda_{\sfT(F^+_{\PP^+}),\dZ_q}\to\Lambda_{\sfT_N(F^+_{\PP^+}),\dZ_q}$ for $N=n,n+1$. By construction, the exterior cup product induces an isomorphism
\[
\cD(\xi^n,\rV_n,\rK_n)_{\gamma_n}\otimes_{\Lambda_{\sfT(F^+_{\PP^+}),\dZ_q}}\cD(\xi^{n+1},\rV_{n+1},\rK_{n+1})_{\gamma_{n+1}}
\simeq\cD(\xi,\bV)_\gamma
\]
of $\dZ[\rK_{\spadesuit^+}\backslash\sfG(F^+_{\spadesuit^+})/\rK_{\spadesuit^+}]
\otimes\dT^{\ang{\fm}}\otimes\Lambda_{\sfT(F^+_{\PP^+}),\dZ_q}$-modules, which further induces an isomorphism
\[
\cE(\xi^n,\rV_n,\rK_n)_{\gamma_n}\otimes_{\Lambda_{\sfT(F^+_{\PP^+}),\dZ_q}}\cE(\xi^{n+1},\rV_{n+1},\rK_{n+1})_{\gamma_{n+1}}
\simeq\cE(\xi,\bV)_\gamma
\]
of $\dZ[\rK_{\spadesuit^+}\backslash\sfG(F^+_{\spadesuit^+})/\rK_{\spadesuit^+}]
\otimes\dT^{\ang{\fm}}\otimes\Lambda_{\sfT(F^+_{\PP^+}),\dZ_q}$-rings.

\begin{remark}\label{re:galois}
Assume Assumption \ref{as:galois}(G1), and when $\bV$ is indefinite (G2). Take a rank $N\in\{n,n+1\}$. By the theory of pseudo-characters in \cite{BC09}, we have a unique up to isomorphism free $\cE(\xi^N,\rV_N,\rK_N)_{\gamma_N}$-module $\cR(\xi^N,\rV_N,\rK_N)_{\gamma_N}$ of rank $N$ with a continuous action of $\Gamma_F$ lifting $(\gamma_N^\natural)^\tc$ such that for every closed point $x$ of $\sE(\xi^N,\rV_N,\rK_N)_{\gamma_N}$, the $\dQ_x[\Gamma_F]$-module $\cR(\xi^N,\rV_N,\rK_N)_{\gamma_N}\res_x$ is the (residually irreducible) Galois representation associated with $\Pi_x^\vee$ in the sense of \cite{LTXZZ1}*{Proposition~2.9(1)}, where $\Pi_x$ denotes the cuspidal automorphic representation of $\GL_N(\dA_F)$ whose Satake parameter is given by the base change of $\phi_x$.

By taking limits over $\rI\finite\rI_N^0$, we obtain for every $w\in\PP$ a unique $\Gamma_F$-stable complete free filtration
\[
0=\Fil_w^{-1}\cR(\xi^N,\rV_N,\rK_N)_{\gamma_N}\subseteq\Fil_w^0\cR(\xi^N,\rV_N,\rK_N)_{\gamma_N}
\subseteq\cdots\subseteq\Fil^{N-1}_w\cR(\xi^N,\rV_N,\rK_N)_{\gamma_N}=\cR(\xi^N,\rV_N,\rK_N)_{\gamma_N}
\]
of $\cE(\xi^N,\rV_N,\rK_N)_{\gamma_N}$-modules such that for $0\leq i\leq N-1$,
\[
\Gr^i_w\cR(\xi^N,\rV_N,\rK_N)_{\gamma_N}\coloneqq
\frac{\Fil_w^i\cR(\xi^N,\rV_N,\rK_N)_{\gamma_N}}{\Fil_w^{i-1}\cR(\xi^N,\rV_N,\rK_N)_{\gamma_N}}
\]
is a free $\cE(\xi^N,\rV_N,\rK_N)_{\gamma_N}$-module of rank $1$ on which $\Gamma_{F_w}$ acts by the character $\psi_{N,w,i}^{-1}\cdot\epsilon_p^{-i}$. Here, for every $w\in\PP$ and every $0\leq i\leq N-1$, $\psi_{N,w,i}$ is the character
\[
\Gamma_{F_w}\xrightarrow{\rec_w}\widehat{F_w^\times}\xrightarrow{\inc_i}\widehat{\sfT_N(F_w)}
\hookrightarrow\widehat{\sfT_N(F_\PP)}\xrightarrow{\Nm_{F/F^+}}\widehat{\sfT_N(F^+_{\PP^+})}
\]
in which $\inc_i$ is the inclusion of the factor in \eqref{eq:weight} indexed by $i$ (with $\sigma$ the natural embedding $F^+_v\to F_w$).
\end{remark}

Put
\[
\cR_\rJ(\xi,\bV)_\gamma\coloneqq
\(\(\cR(\xi^n,\rV_n,\rK_n)_{\gamma_n}\boxtimes\cR(\xi^{n+1},\rV_{n+1},\rK_{n+1})_{\gamma_{n+1}}\)
\otimes_{\cE(\xi,\bV)_\gamma}\cE_\rJ(\xi,\bV)_\gamma\)(n),
\]
equipped with an increasing free filtration
\[
\Fil_w^i\cR_\rJ(\xi,\bV)_\gamma=
\(\sum_{j+j'=i+n}\Fil^j_w\cR(\xi^n,\rV_n,\rK_n)_{\gamma_n}\boxtimes\Fil^{j'}_w\cR(\xi^{n+1},\rV_{n+1},\rK_{n+1})_{\gamma_{n+1}}\)
\otimes_{\cE(\xi,\bV)_\gamma}\cE_\rJ(\xi,\bV)_\gamma
\]
of $\cE(\xi^N,\rV_N,\rK_N)_{\gamma_N}$-modules for every $w\in\PP$. It follows that
\[
\Gr^i_w\cR_\rJ(\xi,\bV)_\gamma\coloneqq
\frac{\Fil^i_w\cR_\rJ(\xi,\bV)_\gamma}{\Fil^{i-1}_w\cR_\rJ(\xi,\bV)_\gamma}
\]
is a free $\cE_\rJ(\xi,\bV)_\gamma$-module of rank $\max\{n+1+i,0\}$ (resp.\ $\max\{n-i,0\}$) for $i<0$ (resp.\ $i\geq 0$).

Finally, we put
\[
\sR_\rJ(\xi,\bV)_\gamma\coloneqq\cR_\rJ(\xi,\bV)_\gamma\otimes_{\dZ_q}\dQ_q,
\]
equipped with the induced filtration for every $w\in\PP$.

\begin{remark}\label{re:pure}
For every classical point $x$ of $\Spec\sE_\rJ(\xi,\bV)_\gamma$, the $\dQ_x[\Gamma_F]$-module $\sR_\rJ(\xi,\bV)_\gamma\res_x$ is pure of weight $-1$ at every prime $w$ of $F$ by \cite{Car14}*{Theorem~1.1} (for $w\mid p$) and \cite{LTXZZ}*{Lemma~8.1.6} (for $w\nmid p$).
\end{remark}

Put
\[
\cD_\rJ^\flat(\xi,\bV)_\gamma\coloneqq
\begin{dcases}
\cD_\rJ(\xi,\bV)_\gamma, &\text{when $\bV$ is definite,}\\
\Hom_{\cE_\rJ(\xi,\bV)_\gamma[\Gamma_F]}
\(\cR_\rJ(\xi,\bV)_\gamma,\cD_\rJ(\xi,\bV)_\gamma\), &\text{when $\bV$ is indefinite,}
\end{dcases}
\]
as a $\dZ[\rK_{\spadesuit^+}\backslash\sfG(F^+_{\spadesuit^+})/\rK_{\spadesuit^+}]
\otimes\cE_\rJ(\xi,\bV)_\gamma$-module; and put
\[
\sD_\rJ^\flat(\xi,\bV)_\gamma\coloneqq\cD_\rJ^\flat(\xi,\bV)_\gamma\otimes_{\dZ_q}\dQ_q.
\]

\begin{lem}\label{le:flat}
Suppose that $\bV$ is indefinite and assume Assumption \ref{as:galois}(G1,G2). The tautological map
\[
\cD_\rJ^\flat(\xi,\bV)_\gamma\otimes_{\cE_\rJ(\xi,\bV)_\gamma}\cR_\rJ(\xi,\bV)_\gamma
\to\cD_\rJ(\xi,\bV)_\gamma
\]
is an isomorphism of $\dZ[\rK_{\spadesuit^+}\backslash\sfG(F^+_{\spadesuit^+})/\rK_{\spadesuit^+}]
\otimes\cE_\rJ(\xi,\bV)_\gamma[\Gamma_F]$-modules.
\end{lem}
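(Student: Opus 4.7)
The plan is to reduce the lemma to the standard multiplicity-module formalism for residually absolutely irreducible Galois representations. Set $R\coloneqq\cE_\rJ(\xi,\bV)_\gamma$, $V\coloneqq\cR_\rJ(\xi,\bV)_\gamma$, and $M\coloneqq\cD_\rJ(\xi,\bV)_\gamma$, so that the map in question is the evaluation
\[
V\otimes_R\Hom_{R[\Gamma_F]}(V,M)\to M.
\]
By Proposition \ref{pr:eigen_2}(4), $R$ is a reduced local $\dZ_q$-ring; by construction (Remark \ref{re:galois}) $V$ is a free $R$-module of rank $n(n+1)$, and its residual Galois representation is, up to Tate twist, $(\gamma_n^\natural\otimes\gamma_{n+1}^\natural)^\tc$, which is absolutely irreducible by Assumption \ref{as:galois}(G1).

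The first step is to verify that the pseudo-character of $\Gamma_F$ acting on $M$ over $R$ is a scalar multiple of $\tr V$. For this, I would check the identity on residue fields at classical points and then spread it out. At a classical point $x$, Proposition \ref{pr:eigen_2}(6) identifies $M|_x$ with $\rD_\rJ(\xi,\bV)_x$, which is the Pontryagin dual of the ordinary part of the middle-degree étale cohomology of $\Sh(\sfG,\rK\rI)$ localized at the Hecke character $\phi_x$. Assumption \ref{as:galois}(G2) via Remark \ref{re:generic} forces the cohomology to be concentrated in degree $2n-1$ and be free, so Matsushima's decomposition together with local–global compatibility for the Rankin--Selberg automorphic Galois representation $\Pi_x^\vee$ (which is how $\cR_n$ and $\cR_{n+1}$ are characterized in Remark \ref{re:galois}) expresses $M|_x$ as a direct sum of copies of $V|_x$ as a $\dQ_x[\Gamma_F]$-module. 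Since classical points are Zariski-dense in every irreducible component of $\Spec\sE_\rJ(\xi,\bV)_\gamma$ (by Proposition \ref{pr:eigen_2}(5) together with the density of classical points in $\Spec\Lambda_{\rI^0/\rJ,\dQ_q}$ and reducedness), the trace identity extends to an identity of pseudo-characters on all of $R$ after inverting $p$, and then lifts to $R$ because $R$ is reduced and $p$-torsion free in the relevant quotients.

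The second step invokes the formalism of \cite{BC09}*{\S1}: given a local ring $R$ whose residue field is of characteristic $p$, a free $R[\Gamma_F]$-module $V$ whose residual representation is absolutely irreducible, and an $R[\Gamma_F]$-module $M$ whose pseudo-character is a multiple of $\tr V$, the natural map $V\otimes_R\Hom_{R[\Gamma_F]}(V,M)\to M$ is an isomorphism. The essential ingredients are: (i) residual absolute irreducibility implies $\End_{R[\Gamma_F]}(V)=R$ via a completion/Nakayama argument together with the Schur lemma for $V/\fm V$, so that the multiplicity module $\Hom_{R[\Gamma_F]}(V,M)$ behaves as expected; and (ii) the pseudo-character match guarantees that there is no other Jordan--Hölder constituent appearing in the semisimplification of $M$ after reduction modulo any power of $\fm$.

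The main obstacle is the first step, namely establishing the pseudo-character compatibility at the full integral level on $R$, rather than just generically on $\sE_\rJ(\xi,\bV)_\gamma$. The subtlety is that the ring $R$ may have embedded behavior outside the normal locus (see Remark \ref{re:flat}), and Proposition \ref{pr:eigen_2}(7) only guarantees local torsion-freeness on the normal locus. I would handle this by passing through the Zariski-dense set of classical points to get the pseudo-character equality on $R\otimes_{\dZ_q}\dQ_q$ (using reducedness to promote from a dense set to all), and then using the fact that both pseudo-characters take values in $R$ itself (not just in its total ring of fractions) because $V$ is $R$-free and the $\Gamma_F$-action on $M$ is $R$-linear; the equality of two continuous $R$-valued pseudo-characters on a dense subset of $R_{\dQ_q}$-points extends to an equality in $R$ by continuity and reducedness.
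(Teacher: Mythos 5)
The approach you take is genuinely different from the paper's, and it contains a gap at the critical step.

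The paper's proof is short: it first uses Proposition \ref{pr:eigen_2}(3) together with the finite freeness of $\cR_\rJ(\xi,\bV)_\gamma$ over $\cE_\rJ(\xi,\bV)_\gamma$ to reduce to the case where $\rJ\finite\rI^0$, so that $\cE_\rJ$ is a finite $\dZ_q$-algebra and $\cD_\rJ$ is a finite $\dZ_q$-module; it then invokes (the argument of) Scholze's factorization theorem, Theorem~5.6 of \cite{Sch18}. You instead try to deduce the factorization from pseudo-character theory \`a la Bella\"{\i}che--Chenevier. The gap lies in your second step.

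You assert that, for a local ring $R$, a free $R[\Gamma_F]$-module $V$ with residually absolutely irreducible reduction, and an $R[\Gamma_F]$-module $M$ whose pseudo-character is an $R$-multiple of $\tr V$, the evaluation map $V\otimes_R\Hom_{R[\Gamma_F]}(V,M)\to M$ is an isomorphism. This is false. Pseudo-characters only see semisimplifications and do not detect extensions. Concretely, take $M$ a nonsplit self-extension $0\to\bar V\to M\to\bar V\to 0$ over the residue field. Then $\tr_M=2\tr_{\bar V}$, and by Schur's lemma $\Hom_{\dF_q[\Gamma_F]}(\bar V,M)$ is one-dimensional, so $\bar V\otimes\Hom(\bar V,M)$ has half the dimension of $M$ and the evaluation map fails to be surjective. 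The factorization requires the genuinely stronger hypothesis that $M/\fm M$ is a direct sum of copies of $\bar V$ as a $\Gamma_F$-module (equivalently, that the map $R[\Gamma_F]\to\End_R(M)$ factors through the matrix algebra $\End_R(V)\simeq\Mat_{n(n+1)}(R)$). Establishing this integral semisimplicity from the characteristic-zero information at classical points is precisely the nontrivial content of Scholze's Theorem~5.6 in this setting; it uses the commutativity of the Hecke action and its interplay with the Galois action, not just the pseudo-character identity. Your step (4), which promotes the trace identity from a dense set to all of $R$ by continuity and reducedness, does not close this gap: it gives equality of pseudo-characters, but never the semisimplicity of $M/\fm M$. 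You also omit the reduction to $\rJ\finite\rI^0$, which the paper uses to place itself in a Noetherian, finite-level situation where Scholze's theorem applies; without it one would have to set up the factorization argument over the full pro-ring directly.
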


\begin{proof}
By Proposition \ref{pr:eigen_2}(3) and the fact that $\cR_\rJ(\xi,\bV)_\gamma$ is a finite free $\cE_\rJ(\xi,\bV)_\gamma$-module, we have
\[
\cD_\rJ^\flat(\xi,\bV)_\gamma\otimes_{\cE_\rJ(\xi,\bV)_\gamma}\cR_\rJ(\xi,\bV)_\gamma
\to\cD_\rJ(\xi,\bV)_\gamma
=\varprojlim_{\rJ<\rI\finite\rI^0}
\(\cD_\rI^\flat(\xi,\bV)_\gamma\otimes_{\cE_\rJ(\xi,\bV)_\gamma}\cR_\rI(\xi,\bV)_\gamma
\to\cD_\rI(\xi,\bV)_\gamma\).
\]
Thus, we may assume that $\rJ$ is a subgroup of $\rI^0$ of finite index. Then the isomorphism follows by (the same argument for) \cite{Sch18}*{Theorem~5.6}, using (G1) and also Remark \ref{re:conditional}.
\end{proof}

In the situation of Lemma \ref{le:flat}, we have an induced filtration
\[
\Fil^\bullet_w\cD_\rJ(\xi,\bV)_\gamma\coloneqq
\cD_\rJ^\flat(\xi,\bV)_\gamma\otimes_{\cE_\rJ(\xi,\bV)_\gamma}\Fil^\bullet_w\cR_\rJ(\xi,\bV)_\gamma
\]
for every $w\in\PP$.

\begin{lem}\label{le:ggp}
Assume Assumption \ref{as:galois}(G1), and when $\bV$ is indefinite (G2). For every classical point $x$ of $\Spec\sE_\rJ(\xi,\bV)_\gamma$, $(\sD_\rJ^\flat(\xi,\bV)_\gamma\res_x)^\sfH$ is a $\dQ_x$-vectors space of dimension at most one.
\end{lem}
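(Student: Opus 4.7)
The plan is to reduce to a statement at a classical automorphic point, isolate a single automorphic representation using residual irreducibility, and then invoke local multiplicity one for Bessel periods on unitary groups.

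First, I would apply Proposition \ref{pr:eigen_2}(6) to identify $\sD_\rJ(\xi,\bV)_\gamma\res_x$ with $\rD_\rJ(\xi,\bV)_x$, the dual of a limit of ordinary cohomology spaces $\rH^\ordi(\Sh(\sfG,\rK\rI),\dQ_{\xi+\zeta_x})$ localized at the characters $(\chi_x^\smooth,\phi_x)$. In the definite case, $\sD_\rJ^\flat=\sD_\rJ$, so I must bound the $\sfH$-invariant part of this classical ordinary space by one. In the indefinite case, Lemma \ref{le:flat} evaluated at $x$ gives an isomorphism
\[
\sD_\rJ(\xi,\bV)_\gamma\res_x\simeq\sD_\rJ^\flat(\xi,\bV)_\gamma\res_x\otimes_{\dQ_x}\sR_\rJ(\xi,\bV)_\gamma\res_x
\]
as a module over $\sfG(F^+_{\spadesuit^+})\times\Gamma_F$, where the second factor is acted on trivially by $\sfG(F^+_{\spadesuit^+})$; since the $(-)^\sfH$ functor only sees the $\sfG$-action, bounding $(\sD_\rJ^\flat\res_x)^\sfH$ by one is equivalent to bounding $(\sD_\rJ\res_x)^\sfH$ by $n(n+1)=\dim_{\dQ_x}\sR_\rJ\res_x$.

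Second, I would isolate a single automorphic representation. Assumption (G1), combined with (G2) via Remark \ref{re:generic} in the indefinite case, implies via the theory of pseudo-characters that the Hecke eigenvalues $\phi_x$ lift uniquely to a Galois representation, and hence by Chebotarev plus strong multiplicity one for base-changed isobaric representations on $\GL_n\times\GL_{n+1}$ correspond to a unique cuspidal $\pi=\pi_n\boxtimes\pi_{n+1}$ of $\sfG(\dA_{F^+})$. Thus the classical ordinary space localized at $(\chi_x^\smooth,\phi_x)$ is cut out of the $\pi$-isotypic part, and the question becomes one about $\Hom_{\sfH(F^+_{\spadesuit^+})}(\pi_{\spadesuit^+},\dC)$ combined with the ordinary and weight conditions at $\PP^+$ and infinity. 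The assumption $\sfB\cap\sfH=1$ at $\PP^+$, built into the datum $\bV$, ensures that the ordinary structure at $\PP^+$ selects at most a single $\sfH$-fixed line there, compatibly with Bessel-type branching.

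Third, the $(-)^\sfH$ functor, defined via the degree map on the Hecke algebra of $\sfH$ at $\spadesuit^+$, amounts to taking $\sfH(F^+_{\spadesuit^+})$-invariant functionals on $\pi_{\spadesuit^+}^{\rK_{\spadesuit^+}}$. By the local Gan--Gross--Prasad multiplicity-one theorem for Bessel models on unitary groups (Aizenbud--Gourevitch--Rallis--Schiffmann at non-archimedean places, Sun--Zhu at archimedean places), the space $\Hom_{\sfH(F^+_v)}(\pi_v,\dC)$ is at most one-dimensional at each place $v$, giving the claimed bound. The main obstacle is the bookkeeping in step two: verifying that after residual localization and within the $\pi$-isotypic component, the ordinary and $\PP^+$-structure (reflecting the triviality $\sfB\cap\sfH=1$) interacts correctly with the Bessel branching so that no extra multiplicity is introduced beyond the local one arising from $\Hom_{\sfH(F^+_{\spadesuit^+})}(\pi_{\spadesuit^+},\dC)$.
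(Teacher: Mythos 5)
The proposal goes off the rails at step two. You claim that by Chebotarev plus strong multiplicity one for $\GL_n\times\GL_{n+1}$, the Hecke data ``correspond to a unique cuspidal $\pi=\pi_n\boxtimes\pi_{n+1}$ of $\sfG(\dA_{F^+})$,'' and then argue inside this single $\pi$-isotypic part. But strong multiplicity one holds for $\GL_N$ over $F$, not for the unitary group $\sfG_N$ over $F^+$: there can be several inequivalent automorphic $\pi_N$ on $\sfG_N(\dA_{F^+})$ whose base changes all equal the same $\Pi_{x,N}$, differing precisely at the places in $\spadesuit^+\cup\PP^+$ where the fixed level data does not pin down the local representation. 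Consequently the classical ordinary space localized at $(\chi_x^\smooth,\phi_x)$ is not a single isotypic component but a sum over local Vogan packets, and the paper's proof embeds $(\sD_\rJ^\flat\res_x)^\sfH\otimes\dC$ into the tensor product, over $v\in\spadesuit^+$ and $v\in\PP^+$, of sums over the packets $\fS_{n,v}\times\fS_{n+1,v}$.

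Once this is set up, your treatment of the two kinds of places is also imprecise. At $v\in\spadesuit^+$ the bound one needs is $\dim\bigoplus_{\fS_{n,v}\times\fS_{n+1,v}}(\pi_{n,v}\boxtimes\pi_{n+1,v})^{\sfH(F^+_v)}\leq 1$, i.e.\ multiplicity one \emph{summed over the packet} (Beuzart-Plessis), not merely the weaker statement $\dim\Hom_{\sfH(F^+_v)}(\pi_v,\dC)\leq 1$ for a single $\pi_v$. At $v\in\PP^+$ the relevant local space is not an $\sfH$-invariant at all, since $\sfB\cap\sfH=1$ forces the $\sfH$-side to carry only the degree map there; what appears is the ordinary line $\pi_{n,v}^\ordi\boxtimes\pi_{n+1,v}^\ordi$, which has dimension at most one, and the genuine work is to show that the packet $\fS_{N,v}$ is a \emph{singleton}. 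This is trivial if $v$ splits in $F$, but when $v$ is inert the paper proves it via a nontrivial computation: writing the tempered ordinary $\Pi_{N,v}$ as an isobaric sum of twisted Steinbergs, using ordinarity to force distinct Satake valuations (hence distinct constituents), bounding the number of hermitian constituents by one, and then using the parametrization of Vogan packets (size $2^{|I|}$) so that the two members, if any, land on distinct pure inner forms. Your phrase that the ordinary structure ``selects at most a single $\sfH$-fixed line there, compatibly with Bessel-type branching'' does not supply this argument; the real gap is precisely ruling out extra members of the local packet at an inert $v\in\PP^+$.
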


\begin{proof}
Take a classical point $x$ of $\Spec\sE_\rJ(\xi,\bV)_\gamma$ with residue field $\dQ_x$ and fix an isomorphism $\ol{\dQ_x}\simeq\dC$. Proposition \ref{pr:eigen_2}(6) implies that for $N=n,n+1$, the induced character $\phi_{x,N}\colon\dT^{\ang{\fm}}_N\to\dC$ comes from a hermitian cuspidal automorphic representation $\Pi_{x,N}$ of $\GL_N(\dA_F)$, unique up to isomorphism by the strong multiplicity one. Moreover, by the same proposition, we have a natural embedding
\[
(\sD_\rJ^\flat(\xi,\bV)_\gamma\res_x)^\sfH\otimes_{\dQ_x}\dC\hookrightarrow
\prod_{v\in\spadesuit^+}\(\bigoplus_{\fS_{n,v}\times\fS_{n+1,v}}(\pi_{n,v}\boxtimes\pi_{n+1,v})^{\sfH(F^+_v)}\)
\times\prod_{v\in\PP^+}\(\bigoplus_{\fS_{n,v}\times\fS_{n+1,v}}(\pi_{n,v}^\ordi\boxtimes\pi_{n+1,v}^\ordi)\),
\]
where for $N=n,n+1$, $\fS_{N,v}$ is the set isomorphism classes of tempered irreducible admissible representations $\pi_{N,v}$ of $\sfG_N(F^+_v)$ whose base change is $\Pi_{x,N,v}$; and $\pi_{N,v}^\ordi$ denotes the space of ordinary vectors of $\pi_{N,v}$ with respect to the Borel subgroup $\sfB_{N,v}$, which has dimension at most one.

By the solution of the local Gan--Gross--Prasad conjecture \cite{BP16}, we have
\[
\dim_\dC\bigoplus_{\fS_{n,v}\times\fS_{n+1,v}}(\pi_{n,v}\boxtimes\pi_{n+1,v})^{\sfH(F^+_v)}\leq 1
\]
for every $v\in\spadesuit^+$. Thus, it suffices to show that for every $v\in\PP^+$, $\fS_{N,v}$ is a singleton for $N=n,n+1$. Take an element $v\in\PP^+$. When $v$ splits in $F$, the statement is trivial. We assume $v$ inert in $F$ and suppress it in the following argument (so that $F/F^+$ is a quadratic extension of finite unramified extensions of $\dQ_p$). We prove the case for $N$ even and leave the other similar case to the readers.

Since $\Pi_N$ is tempered and ordinary, it can be embedded into a principal series hence has the form
\[
\Pi_N=\chi_1\St_{N_1}\boxplus\cdots\boxplus\chi_s\St_{N_s},
\]
in which $\chi_i$ is a unitary smooth character of $F^\times$ and $\St_{N_i}$ is the Steinberg representation of $\GL_{N_i}(F)$ (so that $N=N_1+\cdots+N_s$). Since $\Pi_N$ is ordinary, we have
\begin{align}\label{eq:ggp}
&\left\{\left.\val(\chi_i(p))+\tfrac{N_i-1}{2},\val(\chi_i(p))+\tfrac{N_i-3}{2},\dots,\val(\chi_i(p))
+\tfrac{3-N_i}{2},\val(\chi_i(p))+\tfrac{1-N_i}{2}\right| 1\leq i\leq s \right\} \\
&=\left\{\tfrac{N-1}{2},\tfrac{N-3}{2},\dots,\tfrac{3-N}{2},\tfrac{1-N}{2}\right\}, \notag
\end{align}
where $\val$ denotes the $p$-adic valuation on $\ol{\dQ_x}$ normalized so that $\val(p)=[F:\dQ_p]^{-1}$. In particular, the factors in $\Pi_N$ are mutually distinct. Put
\[
I\coloneqq\left\{1\leq i\leq s\left|\;\chi_i\res_{(F^+)^\times}=\eta_{F/F^+}^{N_i}\right.\right\},
\]
where $\eta_{F/F^+}\colon(F^+)^\times\to\dC^\times$ denotes the quadratic character associated with $F/F^+$ via the local class field theory. Then the cardinality of the Vogan packet of $\Pi_N$ equals $2^{|I|}$ (see, for example, \cite{GGP12}*{\S10}). It follows from \eqref{eq:ggp} that $|I|\leq 1$. When $|I|=0$, the Vogan packet of $\Pi_N$ has only one element, which must be a representation of $\sfG_N(F^+_v)$ hence belong to $\fS_v$. When $|I|=1$, the Vogan packet of $\Pi_N$ has two elements, each contributing to each pure inner form of $\rV_N$, so that $\fS_v$ is again a singleton.

The lemma is proved.
\end{proof}

\section{Selmer groups over complete group rings}
\label{ss:selmer}

In this section, we define and study the Selmer groups of Galois representations over eigenvarieties.

We first recall the definition of the Bloch--Kato Selmer groups $\rH^1_f(F,T)$ and $\rH^1_f(F,T^*(1))$ \cite{BK90} for a finitely generated $\dZ_q$-module $T$ with a continuous action by $\Gamma_F$ and its Cartier dual $T^*(1)=\Hom_{\dZ_q}(T,\dQ_q/\dZ_q)(1)$. For every place $w$ of $F$, we have the local (Bloch--Kato) Selmer groups/structures
\[
\rH^1_f(F_w,T)\coloneqq
\begin{dcases}
0, & w\in\Sigma_\infty, \\
\Ker\(\rH^1(F_w,T)\to\rH^1(\rI_{F_w},T\otimes_{\dZ_p}\dQ_p)\), & w\in\Sigma\setminus(\Sigma_\infty\cup\Sigma_p), \\
\Ker\(\rH^1(F_w,T)\to\rH^1(F_w,T\otimes_{\dZ_p}\dB_{\r{cris}})\), & w\in\Sigma_p,
\end{dcases}
\]
and $\rH^1_f(F_w,T^*(1))$ that is the annihilator of $\rH^1_f(F_w,T)$ under the local Tate pairing
\[
\rH^1(F_w,T)\times\rH^1(F_w,T^*(1))\to\dQ_q/\dZ_q.
\]
The global (Bloch--Kato) Selmer group $\rH^1_f(F,T)$ (resp.\ $\rH^1_f(F,T^*(1))$) is defined to be the $\dZ_q$-submodule of $\rH^1(F,T)$ (resp.\ $\rH^1(F,T^*(1))$) of classes whose localization at every place $w$ of $F$ belong to $\rH^1_f(F_w,M)$ (resp.\ $\rH^1_f(F_w,T^*(1))$).

Take a subgroup $\rJ<\rI^0$. We now define the Selmer groups over the Iwasawa algebra $\Lambda_{\rI^0/\rJ,\dZ_q}$.

\begin{definition}\label{de:selmer}
Let $\cT$ be a finitely generated $\Lambda_{\rI^0/\rJ,\dZ_q}$-module equipped a continuous $\Lambda_{\rI^0/\rJ,\dZ_q}$-linear action by $\Gamma_F$, so that its continuous Pontryagin dual $\cT^*$ is a discrete $\Lambda_{\rI^0/\rJ,\dZ_q}$-module with respect to the dual action.
\begin{enumerate}
  \item For every place $w$ of $F$, define the local Selmer groups to be
      \begin{align*}
      \rH^1_f(F_w,\cT)&\coloneqq
      \varprojlim_{\rJ<\rI\finite\rI^0}\rH^1_f\(F_w,\cT\otimes_{\Lambda_{\rI^0/\rJ,\dZ_q}}\dZ_q[\rI^0/\rI]\), \\
      \rH^1_f(F_w,\cT^*(1))&\coloneqq
      \varinjlim_{\rJ<\rI\finite\rI^0}\rH^1_f\(F,(\cT\otimes_{\Lambda_{\rI^0/\rJ,\dZ_q}}\dZ_q[\rI^0/\rI])^*(1)\),
      \end{align*}
      which are naturally $\Lambda_{\rI^0/\rJ,\dZ_q}$-modules. Note that $\rH^1_f(F_w,\cT)$ and $\rH^1_f(F_w,\cT^*(1))$ are mutual annihilators under the local Tate pairing $\rH^1(F_w,\cT)\times\rH^1(F_w,\cT^*(1))\to\dQ_q/\dZ_q$.

  \item The global (Bloch--Kato) Selmer group $\rH^1_f(F,\cT)$ (resp.\ $\rH^1_f(F,\cT^*(1))$) is defined to be the $\Lambda_{\rI^0/\rJ,\dZ_q}$-submodule of $\rH^1(F,\cT)$ (resp.\ $\rH^1(F,\cT^*(1))$) of classes whose localization at every place $w$ of $F$ belong to $\rH^1_f(F_w,M)$ (resp.\ $\rH^1_f(F_w,\cT^*(1))$).

  \item Finally, for $\sT\coloneqq\cT\otimes_{\dZ_q}\dQ_q$, we define $\rH^1_f(F,\sT)\coloneqq\rH^1_f(F,\cT)\otimes_{\dZ_q}\dQ_q$ as a $\Lambda_{\rI^0/\rJ,\dQ_q}$-submodule of $\rH^1(F,\sT)$.
\end{enumerate}
\end{definition}

Take a datum $\bV$ from Definition \ref{de:datum}. We introduce the $\sE_\rJ(\xi,\bV)_\gamma$-module
\[
\cX_\rJ(\xi,\bV)_\gamma\coloneqq\rH^1_f(F,\cR_\rJ(\xi,\bV)_\gamma^*(1))^*
\]
(Definition \ref{de:selmer}) as the continuous Pontryagin dual of $\rH^1_f(F,\cR_\rJ(\xi,\bV)_\gamma^*(1))$. Put
\[
\sX_\rJ(\xi,\bV)_\gamma\coloneqq\cX_\rJ(\xi,\bV)_\gamma\otimes_{\dZ_q}\dQ_q.
\]

\begin{lem}\label{le:selmer1}
Assume Assumption \ref{as:galois}(G1), and when $\bV$ is indefinite (G2). Then
\begin{enumerate}
  \item both $\rH^1_f(F,\cR_\rJ(\xi,\bV)_\gamma)$ and $\cX_\rJ(\xi,\bV)_\gamma$ are finitely generated modules over $\cE_\rJ(\xi,\bV)_\gamma$;

  \item when $\bV$ is indefinite, $\rH^1_f(F,\cD_\rJ(\xi,\bV)_\gamma)$ is a finitely generated module over $\cE_\rJ(\xi,\bV)_\gamma$;

  \item $\rH^1_f(F,\sR_\rJ(\xi,\bV)_\gamma)$ is locally torsion free over the normal locus of $\Spec\sE_\rJ(\xi,\bV)_\gamma$.
\end{enumerate}
\end{lem}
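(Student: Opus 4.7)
The plan is to treat (1) and (2) together using standard finiteness for continuous Galois cohomology over complete Noetherian local coefficient rings, and then to establish (3) by a direct analysis at height-one primes in the normal locus, relying crucially on Assumption \ref{as:galois}(G1).

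For (1) and (2), first observe that $\cE_\rJ(\xi,\bV)_\gamma$ is a complete Noetherian local ring: it is local by Proposition \ref{pr:eigen_2}(4), and it is module-finite over $\Lambda_{\rI^0/\rJ,\dZ_q}$ by Proposition \ref{pr:eigen_2}(5), hence complete. Both $\cR_\rJ(\xi,\bV)_\gamma$ (free by construction in Remark \ref{re:galois}) and, in the indefinite case, $\cD_\rJ(\xi,\bV)_\gamma$ (finitely generated by Proposition \ref{pr:eigen_2}(7) combined with Lemma \ref{le:flat}) are thus finitely generated $\cE_\rJ(\xi,\bV)_\gamma$-modules carrying continuous $\Gamma_F$-actions unramified outside the finite set $S=\spadesuit\cup\PP\cup\Sigma_p\cup\Sigma_\infty$. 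Standard continuous Galois-cohomology finiteness then gives finite generation of $\rH^1(F_S/F,\cR_\rJ(\xi,\bV)_\gamma)$ and $\rH^1(F_S/F,\cD_\rJ(\xi,\bV)_\gamma)$ over $\cE_\rJ(\xi,\bV)_\gamma$; imposing the Bloch--Kato local conditions---at places in $\PP$ equivalently described by the ordinary filtration $\Fil^\bullet_w\cR_\rJ(\xi,\bV)_\gamma$ of Remark \ref{re:galois}---cuts out submodules, still finitely generated. For $\cX_\rJ(\xi,\bV)_\gamma$, the same machinery applied to the Cartier dual yields cofinite generation of $\rH^1_f(F,\cR_\rJ(\xi,\bV)_\gamma^*(1))$, and Pontryagin dualization then produces a finitely generated $\cE_\rJ(\xi,\bV)_\gamma$-module.

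For (3), it suffices to verify torsion-freeness at every height-one prime $\fp$ of $\sE_\rJ(\xi,\bV)_\gamma$ in the normal locus, where $(\sE_\rJ(\xi,\bV)_\gamma)_\fp$ is a discrete valuation ring with some uniformizer $\pi$. Since $\rH^1_f(F,\sR_\rJ(\xi,\bV)_\gamma)\subseteq\rH^1(F,\sR_\rJ(\xi,\bV)_\gamma)$, the task reduces to showing no $\pi$-torsion in the latter's localization at $\fp$. The long exact sequence attached to $0\to\sR_\fp\xrightarrow{\pi}\sR_\fp\to\sR_\fp/\pi\sR_\fp\to 0$ identifies $\rH^1(F,\sR_\fp)[\pi]$ as a quotient of $\rH^0(F,\sR_\fp/\pi\sR_\fp)$, so it suffices to prove $\rH^0(F,\sR_\fp/\pi\sR_\fp)=0$. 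Let $\fq\subset\cE_\rJ(\xi,\bV)_\gamma$ be the prime corresponding to $\fp$; the residue fields $k(\fp)$ and $k(\fq)=\Frac(\cE_\rJ(\xi,\bV)_\gamma/\fq)$ coincide (both are fraction fields of integral domains in which $p$ is invertible), and under this identification $\sR_\fp/\pi\sR_\fp\cong(\cR_\rJ(\xi,\bV)_\gamma/\fq\cR_\rJ(\xi,\bV)_\gamma)\otimes_{\cE_\rJ(\xi,\bV)_\gamma/\fq}k(\fq)$. The $\fm$-adic filtration of $\cR_\rJ(\xi,\bV)_\gamma/\fq\cR_\rJ(\xi,\bV)_\gamma$ (with $\fm$ the maximal ideal of $\cE_\rJ(\xi,\bV)_\gamma$) has associated graded pieces that decompose, as $\Gamma_F$-modules, into direct sums of the residual representation $\cR_\rJ(\xi,\bV)_\gamma/\fm\cR_\rJ(\xi,\bV)_\gamma\cong(\gamma_n^\natural\otimes\gamma_{n+1}^\natural)^\tc(n)$. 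By Assumption \ref{as:galois}(G1) this residual representation is absolutely irreducible and nontrivial, hence has trivial $\Gamma_F$-invariants; induction on the filtration together with Krull's intersection theorem applied to the Noetherian local ring $\cE_\rJ(\xi,\bV)_\gamma/\fq$ then gives $(\cR_\rJ(\xi,\bV)_\gamma/\fq\cR_\rJ(\xi,\bV)_\gamma)^{\Gamma_F}=0$. Since $\cR_\rJ(\xi,\bV)_\gamma/\fq\cR_\rJ(\xi,\bV)_\gamma$ is free---hence torsion-free---over the domain $\cE_\rJ(\xi,\bV)_\gamma/\fq$, any nonzero $\Gamma_F$-invariant vector in its $k(\fq)$-localization would, after clearing denominators, produce a nonzero $\Gamma_F$-invariant element in $\cR_\rJ(\xi,\bV)_\gamma/\fq\cR_\rJ(\xi,\bV)_\gamma$ itself, contradicting the vanishing just established.

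The main obstacle lies in parts (1) and (2): one must set up Selmer-group finiteness carefully over the finite extension $\cE_\rJ(\xi,\bV)_\gamma$ of the Iwasawa algebra, in particular handling the $p$-adic Bloch--Kato conditions both at the ordinary places in $\PP$ (where the filtration of Remark \ref{re:galois} provides a Greenberg-style description) and at the crystalline-non-ordinary places in $\Sigma_p\setminus\PP$ (where further care is needed to ensure integrality of the local condition). Part (3), by contrast, is a clean application of residual absolute irreducibility once the DVR structure at normal height-one primes is exploited.
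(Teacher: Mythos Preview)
Your approach to parts (1) and (2) is correct and essentially the same as the paper's: you invoke the standard finiteness of continuous Galois cohomology over complete Noetherian local rings with finite residue field, while the paper spells out a proof of exactly this fact via an explicit induction along a filtration $\rJ=\rJ_d<\cdots<\rJ_0=\rI^0$ with topologically cyclic successive quotients, combined with Nakayama's lemma.

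For part (3), your core mechanism---deducing injectivity of multiplication from the vanishing of $\rH^0$ on the quotient, which in turn comes from residual absolute irreducibility (G1) through an $\fm$-adic d\'evissage---is precisely the paper's argument, and your treatment of the vanishing is in fact more detailed than the paper's one-line appeal to (G1). However, your opening reduction ``it suffices to verify torsion-freeness at every height-one prime $\fp$'' is not valid: a finitely generated module over a normal domain can have torsion supported entirely in codimension $\geq 2$ (for instance $k[x,y]/(x,y)$ over $k[x,y]$ has zero localization at every height-one prime yet is torsion), so checking only height-one primes does not establish local torsion-freeness at all points of the normal locus. The paper avoids this by restricting to an irreducible component of the normal locus (a normal integral domain) and showing directly that multiplication by \emph{every} nonzero $f\in\sE_\rJ(\xi,\bV)_\gamma$ is injective on $\rH^1$, via $\rH^0(\Gamma,\sR_\rJ(\xi,\bV)_\gamma/(f))=0$. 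Your d\'evissage argument adapts to give this vanishing once you replace the single uniformizer $\pi$ by an arbitrary nonzero $f$, so the fix is a reorganization rather than a new idea.
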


\begin{proof}
Put $\Gamma\coloneqq\Gal(F^\ur/F)$ where $F^\ur/F^+$ is the maximal extension of $F^+$ (contained in $\ol{F}$) that is unramified outside $\ang{\fm}$.

For (1), it suffices to show that both $\rH^1(\Gamma,\cR_\rJ(\xi,\bV)_\gamma)$ and $\rH^1(\Gamma,\cR_\rJ(\xi,\bV)_\gamma^*(1))^*$ are finitely generated modules over $\Lambda_{\rI^0/\rJ,\dZ_q}$. Choose a finite decreasing filtration $\rJ=\rJ_d<\rJ_{d-1}<\dots<\rJ_1<\rJ_0=\rI^0$ of subgroups such that $\rJ_i/\rJ_{i+1}$ is topologically cyclic for $0\leq i\leq d-1$. For $0\leq i\leq d$, put $\Lambda_i\coloneqq\Lambda_{\rI^0/\rJ_i,\dZ_q}$. We show by induction on $i$ the following statement:
\begin{itemize}
  \item[($\ast$)] For every finitely generated $\Lambda_i$-module $\cT$ equipped with a continuous $\Lambda_i$-linear action of $\Gamma$ and every $j\in\dZ$, both $\rH^j(\Gamma,\cT)$ and $\rH^j(\Gamma,\cT^*(1))^*$ are finitely generated $\Lambda_i$-modules.
\end{itemize}
When $i=0$, ($\ast$) is well-known. Assume the statement for $i$ and choose a topological generator $t$ of $\rJ_i/\rJ_{i+1}$. Put $\cT[(t-1)^\infty]\coloneqq\bigcup_{m\geq 1}\cT[(t-1)^m]$. By the short exact sequence $0\to\cT[(t-1)^\infty]\to\cT\to\cT/\cT[(t-1)^\infty]\to 0$, it allows us to assume $\cT$ either $t$-unipotent or $(t-1)$-torsion free in ($\ast$) since $\Lambda_{i+1}$ is noetherian. When $\cT$ is $t$-unipotent, we may reduce the case to $\Lambda_i$-modules, for which ($\ast$) is known by the induction hypothesis. When $\cT$ is $(t-1)$-torsion free, we use the short exact sequence $0\to\cT\xrightarrow{t-1}\cT\to\cT/(t-1)\cT\to 0$ to obtain an injection $\rH^j(\Gamma,\cT)/(t-1)\hookrightarrow\rH^{j+1}(\Gamma,\cT/(t-1)\cT)$ and a surjection $\rH^j(\Gamma,(\cT/(t-1)\cT)^*(1))\twoheadrightarrow\rH^j(\Gamma,\cT^*(1))[t-1]$. By the induction hypotheses and Nakayama's lemma, ($\ast$) holds for $i+1$.

Part (2) has already followed from ($\ast$).

For (3), it suffices to show that $\rH^1(\Gamma,\sR_\rJ(\xi,\bV)_\gamma)$ is locally torsion free over the normal locus of $\Spec\sE_\rJ(\xi,\bV)_\gamma$. By abuse of notation, we may just assume that $\sE_\rJ(\xi,\bV)_\gamma$ is normal and integral. Then it suffices to show that for every nonzero element $f\in\sE_\rJ(\xi,\bV)_\gamma$, the multiplication by $f$ on $\rH^1(\Gamma,\sR_\rJ(\xi,\bV)_\gamma)$ is injective. However, this follows from the fact that $\rH^0(\Gamma,\sR_\rJ(\xi,\bV)_\gamma/(f))$ vanishes due to Assumption \ref{as:galois}(G1).
\end{proof}

Next, we provide an alternative description of the local Selmer groups of $\sR_\rJ(\xi,\bV)_\gamma$ and $\cD_\rJ(\xi,\bV)_\gamma$ at places in $\PP$.

\begin{lem}\label{le:selmer2}
Assume $p>n$. Assume Assumption \ref{as:galois}(G1,G5), and when $\bV$ is indefinite (G2). Let $w$ be a place in $\PP$.
\begin{enumerate}
  \item When $\bV$ is indefinite, we have
      \[
      \rH^1_f(F_w,\cD_\rJ(\xi,\bV)_\gamma)=
      \Ker\(\rH^1(F_w,\cT)\to\rH^1(\rI_{F_w},\cD_\rJ(\xi,\bV)_\gamma/\Fil_w^{-1}\cD_\rJ(\xi,\bV)_\gamma)\).
      \]

  \item The identity
      \[
      \rH^1_f(F_w,\sR_\rJ(\xi,\bV)_\gamma)=
      \Ker\(\rH^1(F_w,\sR_\rJ(\xi,\bV)_\gamma)\to\rH^1(\rI_{F_w},\sR_\rJ(\xi,\bV)_\gamma/\Fil_w^{-1}\sR_\rJ(\xi,\bV)_\gamma)\)
      \]
      of $\sE_\rJ(\xi,\bV)_\gamma$-modules holds away from $\sY_\rJ(\xi,\bV)_\gamma$ (see Remark \ref{re:flat}).
\end{enumerate}
\end{lem}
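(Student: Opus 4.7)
The equality claimed in both parts is the standard identification of the Bloch--Kato Selmer condition at $w \mid p$ with the Greenberg/Panchishkin condition for an ordinary representation whose filtration $\Fil^{-1}_w$ contains exactly the ``non-positive Hodge--Tate weight'' part (in a suitable convention). My plan is to first prove (2) at a dense set of classical points via the $p$-adic Hodge theory of the specialized motive, then propagate the equality across the flat locus using coherence and torsion-freeness, and finally deduce (1) from (2) by combining Lemma \ref{le:flat} with the integral/finite-level formulation.

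\textbf{The easy containment.} The direction $\rH^1_f(F_w,\sR_\rJ(\xi,\bV)_\gamma)\subseteq\text{RHS}$ of (2) should follow formally from the construction of $\Fil^\bullet_w\sR_\rJ$ in Remark \ref{re:galois}. Reading off the Hodge--Tate weights from the characters $\psi_{N,w,i}^{-1}\epsilon_p^{-i}$ and the Tate twist by $(n)$, the quotient $\sR_\rJ/\Fil^{-1}_w\sR_\rJ$ carries only weights for which $\rH^1_f$ coincides with the unramified subspace $\rH^1_{ur}$. A crystalline class at $w$ has unramified image in any such quotient, so it dies in $\rH^1(\rI_{F_w},\sR_\rJ/\Fil^{-1}_w\sR_\rJ)$. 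The analogous containment in (1) passes through Lemma \ref{le:flat}: since $\cR_\rJ(\xi,\bV)_\gamma$ is finite free over $\cE_\rJ(\xi,\bV)_\gamma$ and $\cD^\flat_\rJ(\xi,\bV)_\gamma$ carries trivial $\Gamma_F$-action, tensoring by $\cD^\flat_\rJ$ commutes with local Galois cohomology at each finite quotient, and the induced filtration on $\cD_\rJ$ is, by definition, compatible with that on $\cR_\rJ$.

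\textbf{Reverse containment via classical specialization.} For the containment $\text{RHS}\subseteq\rH^1_f$ in (2), I observe that both sides are $\sE_\rJ(\xi,\bV)_\gamma$-coherent submodules of $\rH^1(F_w,\sR_\rJ(\xi,\bV)_\gamma)$, and by Lemma \ref{le:selmer1}(3) the ambient group is locally torsion-free over the normal locus. Away from $\sY_\rJ(\xi,\bV)_\gamma$, the ring $\sE_\rJ(\xi,\bV)_\gamma$ is flat over $\Lambda_{\rI^0/\rJ,\dQ_q}$ (Remark \ref{re:flat}), and classical points are dense in each irreducible component. It therefore suffices to check the equality after specializing to each classical point $x$ of the flat locus. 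At such an $x$, $\sR_\rJ|_x$ is the Rankin--Selberg Galois representation of the pair $(\Pi_{x,n},\Pi_{x,n+1})$ from the proof of Lemma \ref{le:ggp}, which by Remark \ref{re:pure} and Caraiani's local-global compatibility is potentially semistable at $w$ and pure of weight $-1$. Matching the algebraic filtration $\Fil^\bullet_w|_x$ with the Panchishkin filtration of the crystalline module, one invokes the standard computation (see e.g.\ Nekov\'a\v{r}'s \emph{Selmer Complexes}, or Ochiai's work on ordinary deformations) identifying the Bloch--Kato and Greenberg Selmer conditions: purity rules out Frobenius eigenvalue $1$ on the relevant graded pieces, so $\rH^1_f(F_w,\Fil^{-1}_w V)=\rH^1(F_w,\Fil^{-1}_w V)$ and $\rH^1_f(F_w,V/\Fil^{-1}_w V)=\rH^1_{ur}(F_w,V/\Fil^{-1}_w V)$, yielding the desired identification. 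Then (1) follows from (2) by descending through Lemma \ref{le:flat} and, if necessary, a direct integral argument at each finite quotient $\dZ_q[\rI^0/\rI]$, using that the finitely-generated Galois modules involved are of classical Bloch--Kato type.

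\textbf{The main obstacle.} The technical heart of the proof is verifying, at each classical point, that the algebraic ordinary filtration on $\cR_\rJ(\xi,\bV)_\gamma$ constructed via pseudo-characters and the Satake-like parameters $\psi_{N,w,i}$ in Remark \ref{re:galois} actually agrees with the Panchishkin filtration dictated by the crystalline structure. This is a local-global compatibility statement in the ordinary setting: the ordinary parameters appearing in the Hecke eigensystem must read off the Frobenius eigenvalues on the filtered graded pieces of the crystalline module of $\sR_\rJ|_x$. Once this compatibility is in hand, the classical Bloch--Kato $=$ Greenberg theorem at each $x$, together with density of classical points and coherence of both sides, propagates the equality to the full flat locus, giving (2); (1) then follows via Lemma \ref{le:flat}.
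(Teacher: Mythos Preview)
Your approach via specialization to classical points and propagation by density has a genuine gap, and in fact misses the actual content of the lemma.

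First, the density argument itself is insufficient. Even granting that both sides are coherent $\sE_\rJ(\xi,\bV)_\gamma$-submodules of $\rH^1(F_w,\sR_\rJ(\xi,\bV)_\gamma)$ and that you have one containment, equality of the fibers at a Zariski dense set of closed points does \emph{not} force equality of the submodules themselves: over a one-dimensional base such as $\dQ_q[[T]]$, the inclusion $T\cdot\cO\subseteq\cO$ is an equality at every closed point with $T\neq 0$ yet fails at $T=0$. So density of classical points only shows that the quotient is supported on a proper closed subset, not that this subset has codimension $\geq 2$ (i.e., is contained in $\sY_\rJ(\xi,\bV)_\gamma$). Your citation of Lemma~\ref{le:selmer1}(3) is also off: that statement concerns the \emph{global} Selmer group, not the local one at $w$.

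More seriously, you never engage with the actual definition of the left-hand side. By Definition~\ref{de:selmer}, $\rH^1_f(F_w,\cR_\rJ(\xi,\bV)_\gamma)$ is an inverse limit over finite-index subgroups $\rI\finite\rI^0$ of the honest Bloch--Kato groups $\rH^1_f(F_w,\cT_\rI)$; nothing tells you a priori that this limit-defined object specializes at a classical point $x$ to $\rH^1_f(F_w,\sR_\rJ|_x)$. Asserting that it does is essentially circular.

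The paper proceeds entirely differently. It works directly with the limit: at each finite level $\rI$, the ring $\sE_\rI(\xi,\bV)_\gamma$ is a finite product of fields whose points are automorphic, so $\cT_\rI\otimes_{\dZ_q}\dQ_q$ is ordinary, de Rham, and pure of weight $-1$, and Lemma~\ref{le:panchishkin} identifies $\rH^1_f$ with the Panchishkin kernel \emph{at that level}. The real work, and the place where the hypothesis $p>n$ enters, is showing that the Panchishkin-type condition on $\cT_\natural$ agrees with the inverse limit of the finite-level conditions. This reduces to the almost injectivity of
\[
\rH^1(\rI_{F_w},(\cT/\Fil_w^{-1}\cT)_\natural)\longrightarrow
\varprojlim_{\rI}\rH^1(\rI_{F_w},(\cT_\rI/\Fil_w^{-1}\cT_\rI)\otimes_{\dZ_q}\dQ_q),
\]
which the paper proves by passing to $\Gr^0_w$ (here $p>n$ kills the other graded pieces on inertia), decomposing $\Gr^0_w\cT$ according to the characters $\psi_{n,w,j}^{-1}\psi_{n+1,w,n-j}^{-1}$, and running an induction along a filtration $\rJ=\rJ_0<\rJ_1<\cdots<\rJ_d=\rJ'$ with $\rJ_{i+1}/\rJ_i\simeq\dZ_p$, using Proposition~\ref{pr:eigen_2}(1,2) to control the base-change behavior. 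This limit-compatibility step is the genuine obstacle; the Panchishkin comparison at a single (classical or finite-level) point, which you flag as the main difficulty, is exactly the standard Lemma~\ref{le:panchishkin}.
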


Recall that we have the filtration $\Fil^\bullet_w\cD_\rJ(\xi,\bV)_\gamma$ and $\Fil^\bullet_w\cR_\rJ(\xi,\bV)_\gamma$ for every $w\in\PP$ in view of Lemma \ref{le:flat}.

\begin{proof}
By Assumption \ref{as:galois}(G5), we may assume $\rI^0/\rJ$ torsion free without loss of generality.

Write $\cT$ for $\cD_\rJ(\xi,\bV)_\gamma$ or $\cR_\rJ(\xi,\bV)_\gamma$, and $-_\natural$ for $-$ or $-\otimes_{\dZ_q}\dQ_q$, according to whether we are in part (1) or (2). For every $\cE_\rI(\xi,\bV)_\gamma$-linear direct summand $\cT'$ of $\cT$, we denote by $\cT'_{\rJ'}$ the image of $\cT'$ in $\cR_{\rJ'}(\xi,\bV)_\gamma$ or $\cD_{\rJ'}(\xi,\bV)_\gamma$ for every intermediate group $\rJ<\rJ'<\rI^0$, and put
\[
\widetilde{\cT'}\coloneqq\varprojlim_{\rJ<\rI\finite\rI^0}\(\cT'_\rI\otimes_{\dZ_q}\dQ_q\).
\]
By definition in part (1) and Proposition \ref{pr:eigen_2}(3) in part (2), the natural map
\[
\cT'\to\varprojlim_{\rJ<\rI\finite\rI^0}\cT'_\rI
\]
is an isomorphism, hence the map $\cT'_\natural\to\widetilde{\cT'}$ is injective.

Write the right-hand side of the both statements by $\rH^1_f(F_w,\cT_\natural)'$. Then by Definition \ref{de:selmer} and Proposition \ref{pr:eigen_2}(2), it suffices to show that
\begin{align}\label{eq:selmer}
\rH^1_f(F_w,\cT_\natural)'=\Ker\(\rH^1(F_w,\cT_\natural)\to
\varprojlim_{\rJ<\rI\finite\rI^0}\frac{\rH^1(F_w,\cT_{\rI,\natural})}{\rH^1_f(F_w,\cT_{\rI,\natural})}\)
\end{align}
holds in part (1) and holds away from $\sY_\rJ(\xi,\bV)_\gamma$ in part (2). In what follows, we say that a map $\cM\to\cN$ of $\cE_\rI(\xi,\bV)_\gamma$-modules is almost injective/surjective in part (1) if it is just injective/surjective, and a map $\sM\to\sN$ of $\sE_\rI(\xi,\bV)_\gamma$-modules is almost injective/surjective in part (2) if it is injective/surjective away from $\sY_\rJ(\xi,\bV)_\gamma$.

Since $\cT_\rI\otimes_{\dZ_q}\dQ_q$ is ordinary at $w$, we have
\[
\rH^1_f(F_w,\cT_{\rI,\natural})=\Ker\(\rH^1(F_w,\cT_{\rI,\natural})
\to\rH^1(\rI_{F_w},(\cT_\rI/\Fil^{-1}_w\cT_\rI)\otimes_{\dZ_q}\dQ_q)\)
\]
by Lemma \ref{le:panchishkin} below. Thus, for \eqref{eq:selmer}, it suffices to show that the natural map
\[
\rH^1(\rI_{F_w},(\cT/\Fil_w^{-1}\cT)_\natural)
\to\varprojlim_{\rJ<\rI<\rI^0}\rH^1(\rI_{F_w},(\cT_\rI/\Fil^{-1}_w\cT_\rI)\otimes_{\dZ_q}\dQ_q)
=\rH^1(\rI_{F_w},\widetilde\cT/\Fil_w^{-1}\widetilde\cT)
\]
is almost injective, which is equivalent to the almost surjectivity of
\[
\rH^0(\rI_{F_w},\widetilde\cT/\Fil_w^{-1}\widetilde\cT)
\to\rH^0(\rI_{F_w},(\widetilde\cT/\Fil_w^{-1}\widetilde\cT)/(\cT/\Fil_w^{-1}\cT)_\natural).
\]
The assumption that $p>n$ and $\rI^0/\rJ$ is torsion free implies that the natural maps
\begin{align*}
\rH^0(\rI_{F_w},\Gr^0_w\widetilde\cT)&\to\rH^0(\rI_{F_w},\widetilde\cT/\Fil_w^{-1}\widetilde\cT),\\
\rH^0(\rI_{F_w},\Gr^0_w\widetilde\cT/\Gr^0_w\cT_\natural)
&\to\rH^0(\rI_{F_w},(\widetilde\cT/\Fil_w^{-1}\widetilde\cT)/(\cT/\Fil_w^{-1}\cT)_\natural)
\end{align*}
are both isomorphisms. Thus, it suffices to show that the natural map
\begin{align}\label{eq:selmer1}
\rH^0(\rI_{F_w},\Gr^0_w\widetilde\cT)\to\rH^0(\rI_{F_w},\Gr^0_w\widetilde\cT/\Gr^0_w\cT_\natural)
\end{align}
is almost surjective.

By Remark \ref{re:galois}, there is a unique decomposition
\[
\Gr^0_w\cT=\bigoplus_{j=0}^n\cT^{(j)}
\]
of $\cE_\rI(\xi,\bV)_\gamma[\Gamma_{F_w}]$-modules such that $\Gamma_{F_w}$ acts on $\cT^{(j)}$ via the homomorphism
\[
(\psi_{n,w,j}^{-1},\psi_{n+1,w,n-j}^{-1})\colon\Gamma_{F_w}\to
\widehat{\sfT_n(F^+_{\PP^+})}\times\widehat{\sfT_{n+1}(F^+_{\PP^+})}.
\]
Denote by $\rJ^{(j)}$ the subgroup of $\rI^0$ generated by the image of $(\psi_{n,w,j}^{-1},\psi_{n+1,w,n-j}^{-1})\res_{\rI_{F_w}}$ (which is contained in $\rI^0$) and $\rJ$. It follows that \eqref{eq:selmer1} can be written as
\[
\bigoplus_{j=0}^n\rH^0(\rJ^{(j)}/\rJ,\widetilde{\cT^{(j)}})
\to\bigoplus_{j=0}^n\rH^0(\rJ^{(j)}/\rJ,\widetilde{\cT^{(j)}}/\cT^{(j)}_\natural).
\]
Therefore, the almost surjectivity of \eqref{eq:selmer1} will be implied by the following more general statement: For every $\cE_\rI(\xi,\bV)_\gamma$-linear direct summand $\cT'$ of $\cT$ and every intermediate group $\rJ<\rJ'<\rI^0$, the map
\[
\rH^0(\rJ'/\rJ,\widetilde{\cT'})\to\rH^0(\rJ'/\rJ,\widetilde{\cT'}/\cT'_\natural)
\]
is almost surjective, or equivalently, the map
\[
\rH^1(\rJ'/\rJ,\cT'_\natural)\to\rH^1(\rJ'/\rJ,\widetilde{\cT'})
\]
is almost injective.

Fix an increasing filtration $\rJ=\rJ_0<\rJ_1<\cdots<\rJ_d=\rJ'$ such that $\rJ_{i+1}/\rJ_i$ is isomorphic to $\dZ_p$. We prove by induction on $i$ that the map
\[
\mu_i\colon\rH^1(\rJ_i/\rJ,\cT'_\natural)\to\rH^1(\rJ_i/\rJ,\widetilde{\cT'})
\]
is almost injective. The case for $i=0$ is trivial. Suppose that $\mu_i$ is almost injective. We have the diagram
\[
\xymatrix{
0 \ar[d] & 0 \ar[d] \\
\rH^1(\rJ_{i+1}/\rJ_i,(\cT^\prime_\natural)^{\rJ_i})
\ar[r]\ar[d] &  \rH^1(\rJ_{i+1}/\rJ_i,\widetilde{\cT'}^{\rJ_i}) \ar[d] \\
\rH^1(\rJ_{i+1}/\rJ,\cT'_\natural)
\ar[r]^-{\mu_{i+1}}\ar[d] & \rH^1(\rJ_{i+1}/\rJ,\widetilde{\cT'}) \ar[d] \\
\rH^1(\rJ_i/\rJ,\cT'_\natural)^{\rJ_{i+1}/\rJ_i}
\ar[r]\ar[d] & \rH^1(\rJ_i/\rJ,\widetilde{\cT'})^{\rJ_{i+1}/\rJ_i} \ar[d] \\
0 & 0
}
\]
in which the bottom map is almost injective due to the induction hypothesis. Thus, it remains to show that the top map is almost injective. By Proposition \ref{pr:eigen_2}(2), $\cT$ hence $\cT'$ are torsion free modules over (the regular local ring) $\Lambda_{\rI^0/\rJ,\dZ_q}$, which implies that $(\cT^\prime_\natural)^{\rJ_i}=0$ unless $i=0$. When $i=0$, the top map is nothing but
\[
\mu_1\colon\rH^1(\rJ_1/\rJ,\cT'_\natural)\to\rH^1(\rJ_1/\rJ,\widetilde{\cT'}).
\]
Since $\rJ_1/\rJ\simeq\dZ_p$, the identify
\[
\rH^1(\rJ_1/\rJ,\cT'_\natural)=
\cT'_\natural\otimes_{\Lambda_{\rI^0/\rJ,\dZ_q}}\Lambda_{\rI^0/\rJ_1,\dZ_q}=(\cT'_{\rJ_1})_\natural
\]
holds in part (1) by Proposition \ref{pr:eigen_2}(1) and holds away from $\sY_\rJ(\xi,\bV)_\gamma$ in part (2); the identity
\[
\rH^1(\rJ_1/\rJ,\widetilde{\cT'})=\varprojlim_{\rJ<\rI\finite\rI^0}\rH^1(\rJ_1/\rJ,\cT'_{\rI}\otimes_{\dZ_q}\dQ_q)
=\varprojlim_{\rJ<\rI\finite\rI^0}(\cT'_{\rI}\otimes_{\dZ_q}\dQ_q)
\otimes_{\Lambda_{\rI^0/\rJ,\dZ_q}}\Lambda_{\rI^0/\rJ_1,\dZ_q}
=\varprojlim_{\rJ_1<\rI_1\finite\rI^0}\(\cT'_{\rI_1}\otimes_{\dZ_q}\dQ_q\)=\widetilde{\cT'_{\rJ_1}},
\]
clear holds; and that $\mu_1$ is the natural map $(\cT'_{\rJ_1})_\natural\to\widetilde{\cT'_{\rJ_1}}$. It follows that $\mu_1$ is almost injective.

The lemma is proved.
\end{proof}

\begin{lem}\label{le:panchishkin}
Let $w$ be a place of $F$ above $p$. Consider a finite dimensional ordinary de Rham representation $V$ of $\Gamma_{F_w}$ (with coefficients in $\dQ_q$) pure of weight $-1$. Let $\Fil^{-1}V$ be the maximal subrepresentation of $V$ of Hodge--Tate weights in $(-\infty,-1]$. Then
\[
\rH^1_f(F_w,V)=\Ker\(\rH^1(F_w,V)\to\rH^1(\rI_{F_w},V/\Fil^{-1}V)\)
\]
\end{lem}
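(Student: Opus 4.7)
Set $V^{-} := \Fil^{-1}V$ and $V^{+} := V/V^{-}$, so $V^{-}$ has all Hodge--Tate weights $\leq -1$ and $V^{+}$ has all Hodge--Tate weights $\geq 0$. The ordinary hypothesis implies that $V^{+}$ admits a $\Gamma_{F_w}$-stable filtration whose graded pieces are characters of the form $\psi_{i}\cdot\epsilon_{p}^{-i}$ with $\psi_{i}$ unramified and $i\geq 0$; in particular $(V^{+})^{I_{F_w}}$ is the sum of the $i=0$ pieces, hence is unramified. The purity of weight $-1$ forces the geometric Frobenius $\phi_w$ to act on $D_{\mathrm{cris}}(V)$ with eigenvalues of complex absolute value $q_{w}^{-1/2}\neq 1$, so $\phi_{w}-1$ is invertible on $D_{\mathrm{cris}}(V^{\pm})$ and on $(V^{+})^{I_{F_w}}$. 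Consequently $H^{0}(F_w,V)=H^{0}(F_w,V^{\pm})=0$, and by inflation--restriction
\[
H^{1}_{\mathrm{ur}}(F_w,V^{+}):=\Ker\bigl(H^{1}(F_w,V^{+})\to H^{1}(I_{F_w},V^{+})\bigr)=(V^{+})^{I_{F_w}}/(\phi_w-1)(V^{+})^{I_{F_w}}=0.
\]

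I would then establish the two identifications
\[
H^{1}_{f}(F_w,V^{-})=H^{1}(F_w,V^{-}),\qquad H^{1}_{f}(F_w,V^{+})=0,
\]
via the Bloch--Kato dimension formula
\[
\dim_{\dQ_q}H^{1}_{f}(F_w,W)=\dim H^{0}(F_w,W)+\dim_{\dQ_q}\bigl(D_{\mathrm{dR}}(W)/\Fil^{0}D_{\mathrm{dR}}(W)\bigr).
\]
For $W=V^{+}$ one has $\Fil^{0}D_{\mathrm{dR}}(V^{+})=D_{\mathrm{dR}}(V^{+})$ (HT weights $\geq 0$) and $H^{0}(F_w,V^{+})=0$, so $H^{1}_{f}(F_w,V^{+})=0$. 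For $W=V^{-}$ one has $\Fil^{0}D_{\mathrm{dR}}(V^{-})=0$ (HT weights $\leq -1$) and $H^{0}(F_w,V^{-})=0$, giving $\dim H^{1}_{f}(F_w,V^{-})=[F_w:\dQ_p]\dim_{\dQ_q}V^{-}$. The local Euler characteristic formula together with $H^{0}(F_w,V^{-})=0$ and $H^{2}(F_w,V^{-})=H^{0}(F_w,(V^{-})^{*}(1))^{\vee}=0$ (the dual is still pure of weight $-1$, so its Frobenius eigenvalues avoid $1$) produces the same value for $\dim H^{1}(F_w,V^{-})$; since $H^{1}_{f}\subseteq H^{1}$, equality follows.

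Granting the above, the long exact sequence of Galois cohomology attached to $0\to V^{-}\to V\to V^{+}\to 0$, together with $H^{0}(F_w,V^{+})=0$, yields an injection $H^{1}(F_w,V^{-})\hookrightarrow H^{1}(F_w,V)$. By functoriality of Bloch--Kato and the identification $H^{1}_{f}(F_w,V^{-})=H^{1}(F_w,V^{-})$, this image lies in $H^{1}_{f}(F_w,V)$. Conversely, the map $H^{1}_{f}(F_w,V)\to H^{1}_{f}(F_w,V^{+})=0$ shows that $H^{1}_{f}(F_w,V)$ lies inside $\Ker\bigl(H^{1}(F_w,V)\to H^{1}(F_w,V^{+})\bigr)=\mathrm{image\ of\ }H^{1}(F_w,V^{-})$. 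Hence $H^{1}_{f}(F_w,V)$ equals the image of $H^{1}(F_w,V^{-})$. On the other hand, by the very same long exact sequence, $\Ker\bigl(H^{1}(F_w,V)\to H^{1}(I_{F_w},V^{+})\bigr)$ is the preimage of $H^{1}_{\mathrm{ur}}(F_w,V^{+})=0$, which again equals the image of $H^{1}(F_w,V^{-})$. The two sides coincide, proving the lemma.

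The main obstacle is the dimension identification $H^{1}_{f}(F_w,V^{-})=H^{1}(F_w,V^{-})$, which requires a careful combination of Bloch--Kato's dimension formula with the local Euler characteristic formula and the $H^{2}$-vanishing provided by local Tate duality and purity; it hinges on the \emph{strict} negativity of the Hodge--Tate weights of $V^{-}$ rendering the $B_{\mathrm{cris}}$-condition vacuous. The remaining steps are a diagram chase and a dimension count.
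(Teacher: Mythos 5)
Your route is genuinely different from the paper's. The paper passes to a finite Galois extension over which $V$ becomes semistable, applies Nekov\'{a}\v{r}'s Proposition~1.32 (which, for an ordinary semistable $V$ pure of weight $-1$, identifies $\rH^1_{\r{st}}$ with the image of $\rH^1(\cdot,\Fil^{-1}V)$), and finishes with the purity identity $\rH^1_f=\rH^1_{\r{st}}$. You instead run a direct dimension count via the Bloch--Kato formula and the local Euler characteristic, together with a diagram chase on the long exact sequence of the ordinary filtration. The skeleton is right: both $\rH^1_f(F_w,V)$ and $\Ker(\rH^1(F_w,V)\to\rH^1(\rI_{F_w},V/\Fil^{-1}V))$ are identified with the image of $\rH^1(F_w,\Fil^{-1}V)$, using $\rH^1_f(F_w,\Fil^{-1}V)=\rH^1(F_w,\Fil^{-1}V)$, $\rH^1_f(F_w,V/\Fil^{-1}V)=0$, and $\rH^1_{\unr}(F_w,V/\Fil^{-1}V)=0$, and this is a perfectly good alternative proof.

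The one weak link is the purity step. You assert that $\phi_w$ acts on $D_{\cris}(V)$ with eigenvalues of complex absolute value $q_w^{-1/2}$ and deduce the $\rH^0$-vanishings from invertibility of $\phi_w-1$. But $V$ is only assumed de Rham, not crystalline; and even for a semistable $V$ with nontrivial monodromy $N$, the Frobenius eigenvalues on $D_{\cris}(V)=D_{\r{st}}(V)^{N=0}$ have absolute values $q_w^{(-1+i)/2}$ for the various $i$ occurring in the monodromy filtration, not just $i=0$. In particular $i=1$ permits absolute value $1$, so ``absolute value $\neq 1$'' is simply false as stated. The correct statement that you need -- $\rH^0(F_w,W)=0$ for any de Rham $W$ pure of weight $-1$ -- is true, but its proof requires the monodromy filtration: a $\phi$-fixed, $N$-killed vector of $D_{\r{pst}}(W)$ cannot lie in $M_k$ for any $k$, because for $k>0$ injectivity of $N$ on $\gr^M_k$ would force $Nv\neq 0$, while for $k\leq 0$ the Frobenius eigenvalues on $\gr^M_k$ have absolute value $q_w^{(-1+k)/2}<1$. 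This interplay between $\phi$, $N$, and the weight filtration is exactly what Nekov\'{a}\v{r}'s proposition packages, and is why the paper reduces to the semistable case rather than arguing directly on $D_{\cris}$. Replace the $D_{\cris}$-shortcut with this (standard, citable) fact and your proof is complete.
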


\begin{proof}
Write $F$ for $F_w$ for short. Since the natural map $\rH^1(F,V/\Fil^{-1}V)\to\rH^1(\rI_F,V/\Fil^{-1}V)$ is injective, it suffices to show that
\begin{align}\label{eq:panchishkin}
\rH^1_f(F,V)=\Ker\(\rH^1(F,V)\to\rH^1(F,V/\Fil^{-1}V)\).
\end{align}
Take a finite Galois extension $F'/F$ such that $V$ as a representation of $\Gamma_{F'}$ is semistable. Since $\rH^1_?(F,V)=\rH^1_?(F',V)^{\Gal(F'/F)}$ for $?\in\{f,\;\}$ and the restriction map $\rH^1(F,V/\Fil^{-1}V)\to\rH^1(F',V/\Fil^{-1}V)$ is injective, we may assume $V$ is semistable without loss of generality. Since $V$ is ordinary and pure of weight $-1$, we may apply \cite{Nek93}*{Proposition~1.32} with $X=\Fil^{-1}V$ to conclude that $\rH^1_{\r{st}}(F,V)$ coincides with the image of the natural map $\rH^1(F,\Fil^{-1}V)\to\rH^1(F,V)$. Now again as $V$ is pure of weight $-1$, we have $\rH^1_f(F,V)=\rH^1_{\r{st}}(F,V)$ hence \eqref{eq:panchishkin} follows.
\end{proof}

\section{Bessel periods in the definite case}
\label{ss:bessel_definite}

In this section, we take a subgroup $\rJ<\rI^0$ and a \emph{definite} datum $\bV$ from Definition \ref{de:datum}. We start from a lemma.

\begin{lem}
The $\sE_\rJ(\xi,\bV)_\gamma$-module $\sD_\rJ(\xi,\bV)_\gamma^\sfH$ is finitely generated, and locally torsion free of rank at most one over the normal locus of $\Spec\sE_\rJ(\xi,\bV)_\gamma$.
\end{lem}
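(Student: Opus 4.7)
The plan is to deduce all three assertions from Proposition \ref{pr:eigen_2}(7) together with Lemma \ref{le:ggp}, using classical points to control the generic rank. Finite generation and torsion-freeness will be formal consequences of the corresponding statements for the ambient module $\sD_\rJ(\xi,\bV)_\gamma$; the real content is the rank bound.

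First I would observe that $\sD_\rJ(\xi,\bV)_\gamma^\sfH$ is the intersection, as $h$ ranges over a finite generating set of the relevant $\spadesuit^+$-Hecke algebra of $\sfH$, of the kernels of $h-\deg(h)$ acting $\sE_\rJ(\xi,\bV)_\gamma$-linearly on $\sD_\rJ(\xi,\bV)_\gamma$. Since $\sE_\rJ(\xi,\bV)_\gamma$ is noetherian (Proposition \ref{pr:eigen_2}(4,5)) and $\sD_\rJ(\xi,\bV)_\gamma$ is finitely generated over it (Proposition \ref{pr:eigen_2}(7)), any $\sE_\rJ(\xi,\bV)_\gamma$-submodule is finitely generated; in particular, $\sD_\rJ(\xi,\bV)_\gamma^\sfH$ is. Since $\sD_\rJ(\xi,\bV)_\gamma$ is locally torsion free over the normal locus of $\Spec\sE_\rJ(\xi,\bV)_\gamma$ (again Proposition \ref{pr:eigen_2}(7)), its $\sE_\rJ(\xi,\bV)_\gamma$-submodule $\sD_\rJ(\xi,\bV)_\gamma^\sfH$ inherits this property at once.

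For the rank bound, fix an irreducible component $\sE'$ of the normal locus of $\Spec\sE_\rJ(\xi,\bV)_\gamma$. By Proposition \ref{pr:eigen_2}(5), the structure map $\Spec\sE_\rJ(\xi,\bV)_\gamma\to\Spec\Lambda_{\rI^0/\rJ,\dQ_q}$ is finite and dominant (the two sides have the same dimension by Proposition \ref{pr:eigen_2}(4)), so the image of $\sE'$ is an irreducible component of the target. Classical points of $\Spec\Lambda_{\rI^0/\rJ,\dQ_q}$ are Zariski dense in each irreducible component (as translates of a dense set of algebraic characters), and a standard density argument for finite dominant maps shows that classical points of $\Spec\sE_\rJ(\xi,\bV)_\gamma$ lying in $\sE'$ form a Zariski dense subset of $\sE'$. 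Pick such a classical point $x$. Since the functor $(-)^\sfH$ is a kernel hence left exact, the natural map
\[
\sD_\rJ(\xi,\bV)_\gamma^\sfH\otimes_{\sE_\rJ(\xi,\bV)_\gamma}\dQ_x
\hookrightarrow\(\sD_\rJ(\xi,\bV)_\gamma\res_x\)^\sfH
\]
is injective. By Proposition \ref{pr:eigen_2}(6) the right-hand side identifies with $\rD_\rJ(\xi,\bV)_x^\sfH=(\sD_\rJ^\flat(\xi,\bV)_\gamma\res_x)^\sfH$ (noting that $\sD_\rJ^\flat=\sD_\rJ$ in the definite case by definition), which has $\dQ_x$-dimension at most one by Lemma \ref{le:ggp}. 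Since the generic rank of a finitely generated module over a noetherian ring is bounded above by its fiber dimension at any closed point in its support (a minimal set of generators at $x$ surjecting onto the fiber must remain a generating set on some neighborhood, hence at the generic point), the rank of $\sD_\rJ(\xi,\bV)_\gamma^\sfH$ on $\sE'$ is at most one.

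The step I expect to require the most care is the density of classical points inside $\sE'$: one has to make sure that the preimages of classical characters of $\Lambda_{\rI^0/\rJ,\dQ_q}$ under the finite map are not entirely swept away from $\sE'$ or into the complement of the normal locus (cf.\ Remark \ref{re:flat}). The finiteness and the dimension match make this routine, but it is essentially the only non-formal ingredient; everything else is bookkeeping around Proposition \ref{pr:eigen_2} and Lemma \ref{le:ggp}.
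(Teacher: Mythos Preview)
Your approach is exactly the paper's: finite generation and torsion-freeness are inherited from $\sD_\rJ(\xi,\bV)_\gamma$ via Proposition~\ref{pr:eigen_2}(7), and the rank bound comes from Lemma~\ref{le:ggp} combined with Zariski density of classical points. The structure of your argument is sound.

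There is, however, one slip in the justification. The displayed injectivity
\[
\sD_\rJ(\xi,\bV)_\gamma^\sfH\otimes_{\sE_\rJ(\xi,\bV)_\gamma}\dQ_x
\hookrightarrow\(\sD_\rJ(\xi,\bV)_\gamma\res_x\)^\sfH
\]
does \emph{not} follow from left exactness of $(-)^\sfH$: left exactness says that $(-)^\sfH$ preserves injections, but here you are tensoring an inclusion with $\dQ_x$, which is right exact, not left exact. The obstruction is $\Tor_1^{\sE_\rJ(\xi,\bV)_\gamma}\!\bigl(\sD_\rJ(\xi,\bV)_\gamma/\sD_\rJ(\xi,\bV)_\gamma^\sfH,\dQ_x\bigr)$, which need not vanish for an arbitrary closed point $x$. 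The fix is easy and in the spirit of your own final paragraph: shrink to the dense open locus of $\sE'$ on which the quotient $\sD_\rJ(\xi,\bV)_\gamma/\sD_\rJ(\xi,\bV)_\gamma^\sfH$ is locally free (such a locus exists since $\sE'$ is integral and the quotient is finitely generated), and choose your classical point $x$ there. On that locus the short exact sequence stays exact after base change to $\dQ_x$, so your injection holds, and then Lemma~\ref{le:ggp} plus upper semicontinuity finish as you wrote.
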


\begin{proof}
Except for the rank part, this follows from Proposition \ref{pr:eigen_2}(7). The rank part follows from Lemma \ref{le:ggp} and the fact that classical points are Zariski dense.
\end{proof}

Assume $\xi$ interlacing and fix a $\xi$-distinction (Definition \ref{de:distinction}). Our goal of this section is to construct an element
\[
\blambda_\rJ(\bV)\in\cD_\rJ(\xi,\bV)_\gamma^\sfH
\]
that interpolates Bessel period integrals (or rather sums) at all interlacing classical points, following \cite{LS}.

It suffices to construct elements
\[
\blambda_\rI(\bV)\in\Hom_{\dZ_q}\(\rH(\Sh(\sfG,\rK\rI),\dZ_\xi)_\gamma,\dZ_q\)
\]
for compact subgroups $\rI$ of $\sfT(F^+_{\PP^+})$, satisfying $\blambda_{\rI'}(\bV)=\rho_{\rI',\rI}^\vee(\blambda_\rI(\bV))$ for $\rI\subseteq\rI'$. For $\rk\prec\rk'$ in $\bigcup_\rI\fk_\rI$, we have the dual transfer map
\[
\rho_{\rk',\rk}^\vee\colon\Gamma(\Sh(\sfG,\rK\rk),\dZ_\xi^\vee/p^m)
\to\Gamma(\Sh(\sfG,\rK\rk'),\dZ_\xi^\vee/p^m)
\]
as the composition of the pullback map $\rho_{\rk,\rk'\cap\rk}$ and the pushforward map $\rho_{\rk'\cap\rk,\rk'}$.

Let $\rI$ be an open compact subgroup of $\sfT(F^+_{\PP^+})$. Then taking inner products (with respect to counting measures) induces a map
\[
\varinjlim_{\fk_\rI}\Gamma(\Sh(\sfG,\rK\rk),\dZ_\xi^\vee)_{\gamma^\tc}
\to\Hom_{\dZ_q}\(\rH(\Sh(\sfG,\rK\rI),\dZ_\xi)_\gamma,\dZ_q\).
\]

For every element $\rk\in\fk_\rI^\sfH$, define $\blambda_\rk(\bV)$ to be the image of the constant function of value $1$ under the composite map
\begin{align*}
\Gamma(\Sh(\sfH,\rK_\sfH\rk_\sfH),\dZ_q)
\to\Gamma(\Sh(\sfH,\rK_\sfH\rk_\sfH),\dZ_\xi^\vee)
\xrightarrow{\graph_!}\Gamma(\Sh(\sfG,\rK\rk),\dZ_\xi^\vee)
\to\Gamma(\Sh(\sfG,\rK\rk),\dZ_\xi^\vee)_{\gamma^\tc}
\end{align*}
in which the first map is induced from the dual of the fixed $\xi$-distinction; and the second map is the pushforward map along $\graph$.

\begin{lem}\label{le:lambda1}
For $\rk\prec\rk'\in\fk_\rI^\sfH$, we have $\blambda_{\rk'}(\bV)=\rho_{\rk',\rk}^\vee(\blambda_\rk)(\bV)$.
\end{lem}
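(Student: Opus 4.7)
The plan is to factor the transfer map as $\rho_{\rk',\rk}^\vee=\gamma_!\circ\beta^*$, where $\beta\colon\Sh(\sfG,\rK(\rk\cap\rk'))\to\Sh(\sfG,\rK\rk)$ and $\gamma\colon\Sh(\sfG,\rK(\rk\cap\rk'))\to\Sh(\sfG,\rK\rk')$ are the natural projections, and to verify each step through a commutative square involving the graph embeddings $\graph_\bullet$ on the $\sfH$-side.

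The crucial combinatorial fact, valid under the present hypotheses, is that
\[
\rk\cap\rk'=\rk'_\sfH\cdot\rk_\sfB,\qquad(\rk\cap\rk')_\sfH=\rk'_\sfH,\qquad(\rk\cap\rk')_\sfB=\rk_\sfB,
\]
so in particular $\rk\cap\rk'\in\fk_\rI^\sfH$. The proof uses that $\sfH(F^+_{\PP^+})\cap\sfB(F^+_{\PP^+})=\{1\}$ (from the definition of a datum $\bV$), which makes the decomposition $\rk=\rk_\sfH\cdot\rk_\sfB$ an internal direct product; writing any element of $\rk'\subseteq\rk\cdot\rk'_\sfB$ in these coordinates and equating $\sfH$- and $\sfB$-parts forces $\rk'_\sfH\subseteq\rk_\sfH$, and combining with $\rk_\sfB\subseteq\rk'_\sfB$ then yields the formula for $\rk\cap\rk'$ by uniqueness of the $\sfH\cdot\sfB$-decomposition.

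For the pullback step, I would show that the square
\[
\xymatrix{
\Sh(\sfH,\rK_\sfH\rk'_\sfH)\ar[r]^-{\graph_{\rk\cap\rk'}}\ar[d]_-{\alpha}&\Sh(\sfG,\rK(\rk\cap\rk'))\ar[d]^-{\beta}\\
\Sh(\sfH,\rK_\sfH\rk_\sfH)\ar[r]^-{\graph_\rk}&\Sh(\sfG,\rK\rk)
}
\]
(with $\alpha$ the natural projection induced by $\rk'_\sfH\subseteq\rk_\sfH$) is Cartesian, so that base change yields $\beta^*\circ\graph_{\rk,!}=\graph_{\rk\cap\rk',!}\circ\alpha^*$; applied to the constant section $\iota(1)$ of $\dZ_\xi^\vee$ coming from the $\xi$-distinction, and using $\alpha^*(\iota(1))=\iota(1)$, this gives $\beta^*(\blambda_\rk(\bV))=\graph_{\rk\cap\rk',!}(\iota(1))$ (after the obviously compatible projection to the $\gamma^\tc$-component). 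For the pushforward step, the analogous square with $\gamma$ on the right has identity map as its left vertical arrow (since $(\rk\cap\rk')_\sfH=\rk'_\sfH$), so that $\gamma_!\circ\graph_{\rk\cap\rk',!}=\graph_{\rk',!}$ trivially. Composing and noting that the $\gamma^\tc$-localization commutes with all the maps in sight (being induced from the Hecke algebra $\dT^{\ang{\fm}}$ away from $p$ and $\spadesuit^+$) then yields $\rho_{\rk',\rk}^\vee(\blambda_\rk(\bV))=\blambda_{\rk'}(\bV)$.

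The main obstacle is verifying that the pullback square is Cartesian. In the definite case this reduces to a direct computation on finite double coset sets: the fiber of $\beta$ over $[g]\in\Sh(\sfG,\rK\rk)$ is identified (via neatness of $\rK_n,\rK_{n+1}$, which trivializes stabilizers) with $\rK\rk/\rK(\rk\cap\rk')$, and the uniqueness of the $\sfH\cdot\sfB$-decomposition matches this bijectively with the fiber $\rK_\sfH\rk_\sfH/\rK_\sfH\rk'_\sfH$ of $\alpha$ over the corresponding $[h]$. I expect this verification to parallel the general functoriality of transfer maps already set up in \cite{LS}*{Lemma~2.14~\&~Proposition~3.14}.
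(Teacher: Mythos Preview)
Your proof is correct and follows essentially the same approach as the paper's. Both arguments rest on the identity $\rk\cap\rk'=\rk'_\sfH\cdot\rk_\sfB$ and then split $\rho_{\rk',\rk}^\vee$ into a pullback followed by a pushforward, verifying compatibility with $\graph_!$ at each step; the paper phrases the pullback step via the bijection $\rk_\sfH/(\rk'\cap\rk)_\sfH\xrightarrow{\sim}\rk/(\rk'\cap\rk)$ rather than explicitly asserting the square is Cartesian, but (under neatness) these are equivalent.
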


\begin{proof}
As $\rk=\rk_\sfH\cdot\rk_\sfB$ and $\rk'=\rk'_\sfH\cdot\rk'_\sfB$, we must have $\rk_\sfB\subseteq\rk'_\sfB$, $\rk'_\sfH\subseteq\rk_\sfH$, hence $\rk'\cap\rk=\rk'_\sfH\cdot\rk_\sfB$. It follows that the diagram
\[
\xymatrix{
\Gamma(\Sh(\sfH,\rK_\sfH(\rk'\cap\rk)_\sfH),\dZ_\xi^\vee)  \ar[r]^-{\graph_!}\ar[d]_-{=} &
\Gamma(\Sh(\sfG,\rK(\rk'\cap\rk)),\dZ_\xi^\vee) \ar[d]^-{\rho_{\rk'\cap\rk,\rk'}} \\
\Gamma(\Sh(\sfH,\rK_\sfH\rk'_\sfH),\dZ_\xi^\vee) \ar[r]^-{\graph_!} &
\rH(\Sh(\sfG,\rK\rk'),\dZ_\xi^\vee)
}
\]
commutes. On the other hand, the natural map $\rk_\sfH/(\rk'\cap\rk)_\sfH\to\rk/(\rk'\cap\rk)$ is bijective, which implies that the diagram
\[
\xymatrix{
\Gamma(\Sh(\sfH,\rK_\sfH\rk_\sfH),\dZ_\xi^\vee)  \ar[r]^-{\graph_!}\ar[d]_-{\rho_{\rk_\sfH,(\rk'\cap\rk)_\sfH}} &
\Gamma(\Sh(\sfG,\rK\rk,\dZ_\xi^\vee) \ar[d]^-{\rho_{\rk,\rk'\cap\rk,}} \\
\Gamma(\Sh(\sfH,\rK_\sfH(\rk'\cap\rk)_\sfH),\dZ_\xi^\vee) \ar[r]^-{\graph_!} &
\Gamma(\Sh(\sfG,\rK(\rk'\cap\rk),\dZ_\xi^\vee)
}
\]
commutes, in which both vertical arrows are pullback maps. The lemma follows by combining the two commutative diagrams.
\end{proof}

By the lemma above and the fact that $\fk_\rI^\sfH$ is cofinal in $\fk_\rI$, there exists a unique element
\[
\blambda_\rI(\bV)\in\varinjlim_{\fk_\rI}\rH(\Sh(\sfG,\rK\rk),\dZ_\xi^\vee)_{\gamma^\tc}
\]
that is the image of $\blambda_\rk(\bV)$ for every $\rk\in\fk_\rI^\sfH$. We also regard $\blambda_\rI(\bV)$ as its image in $\Hom_{\dZ_q}\(\rH(\Sh(\sfG,\rK\rI),\dZ_\xi)_\gamma,\dZ_q\)$.

\begin{lem}\label{le:lambda2}
For open compact subgroups $\rI\subseteq\rI'$ of $\sfT(F^+_{\PP^+})$, we have $\blambda_{\rI'}(\bV)=\rho_{\rI',\rI}^\vee(\blambda_\rI(\bV))$.
\end{lem}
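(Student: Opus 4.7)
The plan is to deduce the identity from Lemma~\ref{le:lambda1} by choosing a compatible finite-level representative at each $\sfT$-level. First, I would observe that the proof of Lemma~\ref{le:lambda1} carries over verbatim to any pair $\rk\prec\rk'$ with $\rk\in\fk_\rI^\sfH$ and $\rk'\in\fk_{\rI'}^\sfH$, not necessarily at the same $\sfT$-level. Indeed, the only inputs to that proof are the decompositions $\rk=\rk_\sfH\cdot\rk_\sfB$ and $\rk'=\rk'_\sfH\cdot\rk'_\sfB$, the inclusions $\rk_\sfB\subseteq\rk'_\sfB$ and $\rk'_\sfH\subseteq\rk_\sfH$ (the latter following from $\rk'\subseteq\rk\cdot\rk'_\sfB$ combined with the hypothesis $\sfH\cap\sfB=1$), and the resulting identity $\rk'\cap\rk=\rk'_\sfH\cdot\rk_\sfB$; all of these hold in the generalized setting, so the two commutative diagrams in the proof of Lemma~\ref{le:lambda1} again combine to give
\[
\blambda_{\rk'}(\bV)=\rho_{\rk',\rk}^\vee(\blambda_\rk(\bV)).
\]

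Second, I would verify that pairs $(\rk,\rk')$ satisfying $\rk\prec\rk'$ with $\rk\in\fk_\rI^\sfH$ and $\rk'\in\fk_{\rI'}^\sfH$ exist, and form a system cofinal in $\fk_{\rI'}^\sfH$ in the second variable. Starting from an arbitrary $\rk'=\rk'_\sfH\cdot\rk'_\sfB\in\fk_{\rI'}^\sfH$, one produces $\rk$ by replacing $\rk'_\sfB$ with a sufficiently small open subgroup $\rk_\sfB\subseteq\rk'_\sfB$ whose image in $\sfT(F^+_{\PP^+})$ is $\rI$ (possible because $[\rI':\rI]<\infty$); one then sets $\rk:=\rk'_\sfH\cdot\rk_\sfB$ and checks that this is a subgroup of $\rk'$ lying in $\fk_\rI^\sfH$ and satisfying $\rk\prec\rk'$, following the manipulations of \cite{LS}*{\S2.5}.

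With these two ingredients, the lemma follows by testing against classes: given $c\in\rH(\Sh(\sfG,\rK\rI'),\dZ_\xi)_\gamma$, choose a cofinal $\rk'$ as above and denote by $c_{\rk'}$ the image of $c$ in $\rH(\Sh(\sfG,\rK\rk'),\dZ_\xi)_\gamma$; then, using the defining compatibility of $\rho_{\rI',\rI}$ with $\rho_{\rk',\rk}$ and the realization of $\blambda_\rI(\bV),\blambda_{\rI'}(\bV)$ as images of $\blambda_\rk(\bV),\blambda_{\rk'}(\bV)$, one computes
\[
\rho_{\rI',\rI}^\vee(\blambda_\rI(\bV))(c)
=\ang{\blambda_\rk(\bV),\rho_{\rk',\rk}(c_{\rk'})}
=\ang{\rho_{\rk',\rk}^\vee(\blambda_\rk(\bV)),c_{\rk'}}
=\ang{\blambda_{\rk'}(\bV),c_{\rk'}}
=\blambda_{\rI'}(\bV)(c).
\]
The main obstacle is the technical subgroup construction in the second step: since $\sfH$ need not normalize $\sfB$, one must ensure that the product $\rk'_\sfH\cdot\rk_\sfB$ is indeed a subgroup, which is where the Iwahori-type hypothesis $\sfH\cap\sfB=1$ (and the resulting unique factorization of elements of $\rk'$) is crucial.
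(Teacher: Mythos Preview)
Your approach is essentially the same as the paper's: the paper simply says to choose $\rk\in\fk_\rI^\sfH$ and $\rk'\in\fk_{\rI'}^\sfH$ with $\rk_\sfH=\rk'_\sfH$, after which the claim follows from the definition of $\blambda_\rI(\bV)$. Your construction in the second step produces exactly such a pair (with $\rk=\rk'_\sfH\cdot\rk_\sfB$, so $\rk_\sfH=\rk'_\sfH$), and your first step is the verification that the paper leaves implicit; so the arguments coincide, yours being a more detailed unpacking of what the paper calls obvious.
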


\begin{proof}
It is obvious that we may choose $\rk\in\fk_\rI^\sfH$ and $\rk'\in\fk_{\rI'}^\sfH$ such that $\rk_\sfH=\rk'_\sfH$. Then the lemma follows obviously from the definition of $\blambda_\rI(\bV)$.
\end{proof}

Then we define $\blambda_\rJ(\bV)$ to be the element in
\[
\varprojlim_{\rJ<\rI\finite\rI^0}\Hom_{\dZ_q}\(\rH(\Sh(\sfG,\rK\rI),\dZ_\xi)_\gamma,\dZ_q\)=\cD_\rJ(\xi,\bV)_\gamma
\]
given by the compatible collection of elements $\blambda_\rI(\bV)$. It is clear from the construction that $\blambda_\rJ(\bV)$ belongs to the submodule $\cD_\rJ(\xi,\bV)_\gamma^\sfH$.

\begin{remark}\label{re:liusun}
Assume Assumption \ref{as:galois}(G1). Take an interlacing classical point $x$ of $\Spec\sE_\rJ(\xi,\bV)_\gamma$ (Definition \ref{de:classical2}). We have the induced character $\phi_{x,N}\colon\dT^{\ang{\fm}}_N\to\dQ_x$ for $N=n,n+1$, such that for every embedding $\iota\colon\dQ_x\to\dC$, $\iota\circ\phi_{x,N}$ comes from a hermitian cuspidal automorphic representation $\Pi_{x,N}^\iota$ of $\GL_N(\dA_F)$, unique up to isomorphism by the strong multiplicity one.

Consider the following two statements:
\begin{enumerate}
  \item The image of $\blambda_\rJ(\bV)$ in $\sD_\rJ(\xi,\bV)_\gamma\res_x$ is nonvanishing.

  \item For every embedding $\iota\colon\dQ_x\to\dC$, the central Rankin--Selberg $L$-value $L(\frac{1}{2},\Pi_{x,n}^\iota\times\Pi_{x,n+1}^\iota)\neq0$; and there exists a tempered irreducible admissible representation $\pi^\iota$ of $\sfG(F^+_{\spadesuit^+})$ whose base change is $(\Pi_{x,n}^\iota\boxtimes\Pi_{x,n+1}^\iota)_{\spadesuit}$ such that $\((\pi^\iota)^{\rK_{\spadesuit^+}}\)^\sfH\neq\{0\}$.
\end{enumerate}
Then by Proposition \ref{pr:eigen_2}(6), \cite{LS} and \cite{BPLZZ}*{Theorem~1.8}, (1) implies (2); if moreover every place in $\PP^+$ splits in $F$ and $\sR_\rJ(\xi,\bV)_\gamma\res_x$ is potentially crystalline at places in $\PP$, then (2) also implies (1).
\end{remark}

\section{Bessel periods in the indefinite case}
\label{ss:bessel_indefinite}

In this section, we take a subgroup $\rJ<\rI^0$ and an \emph{indefinite} datum $\bV$ from Definition \ref{de:datum}. We start from a lemma.

\begin{lem}\label{le:free_incoherent}
Under Assumption \ref{as:galois}(G1,G2), the $\sE_\rJ(\xi,\bV)_\gamma$-module $\rH^1_f(F,\sD_\rJ(\xi,\bV)_\gamma)$ is finitely generated, and locally torsion free over the normal locus of $\Spec\sE_\rJ(\xi,\bV)_\gamma$.
\end{lem}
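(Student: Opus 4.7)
The plan is to adapt the proof strategy of Lemma~\ref{le:selmer1}(3) to the Rankin--Selberg distribution module $\sD_\rJ(\xi,\bV)_\gamma$, exploiting its factorization through Lemma~\ref{le:flat}. The finite generation follows directly from Lemma~\ref{le:selmer1}(2): by Definition~\ref{de:selmer}(3), $\rH^1_f(F,\sD_\rJ(\xi,\bV)_\gamma)=\rH^1_f(F,\cD_\rJ(\xi,\bV)_\gamma)\otimes_{\dZ_q}\dQ_q$, and finite generation of the integral counterpart over $\cE_\rJ(\xi,\bV)_\gamma$ promotes to finite generation over $\sE_\rJ(\xi,\bV)_\gamma=\cE_\rJ(\xi,\bV)_\gamma\otimes_{\dZ_q}\dQ_q$. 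The substance of the lemma therefore lies in the torsion-freeness assertion.

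For the latter, I would first restrict to a connected component of the normal locus so that $\sE_\rJ(\xi,\bV)_\gamma$ may be assumed normal and integral, and then mirror Lemma~\ref{le:selmer1}(3): it suffices to show that multiplication by every nonzero $f$ is injective on $\rH^1(\Gamma,\sD_\rJ(\xi,\bV)_\gamma)$, where $\Gamma\coloneqq\Gal(F^{\r{ur}}/F)$ with $F^{\r{ur}}$ the maximal extension of $F$ unramified outside $\ang{\fm}$, since the Selmer submodule embeds into this cohomology group. By the kernel--cokernel sequence attached to $0\to\sD_\rJ\xrightarrow{f}\sD_\rJ\to\sD_\rJ/f\to 0$, this further reduces to establishing the vanishing $\rH^0(\Gamma,\sD_\rJ(\xi,\bV)_\gamma/f)=0$.

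The heart of the argument is to deduce this vanishing by Nakayama's lemma applied at the closed point of the local ring $\cE_\rJ(\xi,\bV)_\gamma$ (Proposition~\ref{pr:eigen_2}(4)). Namely, choosing a lift $\tilde f\in\cE_\rJ(\xi,\bV)_\gamma$ of $f$, the $\cE_\rJ/\tilde f$-module $(\cD_\rJ/\tilde f)^\Gamma$ is finitely generated, and it suffices to check its reduction modulo the maximal ideal vanishes. Using Lemma~\ref{le:flat},
\[
(\cD_\rJ\otimes_{\cE_\rJ}\dF_q)^\Gamma\;=\;(\cD_\rJ^\flat\otimes_{\cE_\rJ}\dF_q)\otimes_{\dF_q}(\cR_\rJ\otimes_{\cE_\rJ}\dF_q)^\Gamma,
\]
where the distribution across the tensor is justified by the flatness of the $\dF_q$-vector space $\cD_\rJ^\flat\otimes\dF_q$; the second factor vanishes because the residual $\Gamma_F$-representation $\gamma_n^\natural\otimes\gamma_{n+1}^\natural$ is absolutely irreducible of dimension $n(n+1)\geq 2$ by Assumption~\ref{as:galois}(G1). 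Nakayama then forces $(\cD_\rJ/\tilde f)^\Gamma=0$, and inverting $p$ yields $\rH^0(\Gamma,\sD_\rJ/f)=0$ as required.

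The main obstacle I foresee is the compatibility between this integral Nakayama argument over the local ring $\cE_\rJ$ and the restriction to a connected component of the normal locus of $\sE_\rJ=\cE_\rJ\otimes\dQ_q$, which need not correspond to a single prime of $\cE_\rJ$; in particular, one must verify that the derived vanishing over $\sE_\rJ$ descends to each normal integral quotient, and that ``$\rH^1(\Gamma,\sD_\rJ)$ is $\tilde f$-torsion-free over $\sE_\rJ$ for every $\tilde f\in\cE_\rJ$'' suffices to conclude torsion-freeness of the restriction to any such quotient (handled via the exactness of localization at the relevant primes). A secondary point is the commutation of continuous $\rH^0$ with both the projective limit defining $\cD_\rJ$ and the base change $-\otimes_{\dZ_q}\dQ_q$, both routine given the finite generation in Proposition~\ref{pr:eigen_2}(7).
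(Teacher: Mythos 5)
Your reduction of the problem is correct and matches the paper: finite generation is Lemma~\ref{le:selmer1}(2), and for local torsion-freeness over a normal component one only needs $\rH^0(\Gamma,\sD_\rJ(\xi,\bV)_\gamma/f)=0$ for every nonzero $f$ (this is exactly how the paper treats Lemma~\ref{le:selmer1}(3), and the proof of the present lemma quietly cites Proposition~\ref{pr:eigen_2}(7) plus (G1) for the same reason). The use of Lemma~\ref{le:flat} to write $\cD_\rJ=\cD^\flat_\rJ\otimes_{\cE_\rJ}\cR_\rJ$ is also the right device for reducing a statement about $\cD_\rJ$ to one about $\cR_\rJ$.

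The gap is in the Nakayama step. You want to deduce $(\cD_\rJ/\tilde f)^\Gamma=0$ from $(\cD_\rJ\otimes_{\cE_\rJ}\dF_q)^\Gamma=0$ by applying Nakayama to the finitely generated $\cE_\rJ/\tilde f$-module $M\coloneqq(\cD_\rJ/\tilde f)^\Gamma$. Nakayama requires $M\otimes_{\cE_\rJ}\dF_q=0$. But what your computation actually gives is the vanishing of $(\cD_\rJ\otimes_{\cE_\rJ}\dF_q)^\Gamma$, which is the \emph{target} of the natural comparison map $M\otimes_{\cE_\rJ}\dF_q\to(\cD_\rJ\otimes_{\cE_\rJ}\dF_q)^\Gamma$; this map need not be injective, so vanishing of the target does not give $M\otimes\dF_q=0$. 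Concretely, the vanishing of the target only shows $M\subseteq\fm\cdot(\cD_\rJ/\tilde f)$, which is strictly weaker than $M\subseteq\fm M$.

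The repair is standard. Since $\cR_\rJ$ is a \emph{free} $\cE_\rJ$-module, each graded piece $\fm^k(\cD_\rJ/\tilde f)/\fm^{k+1}(\cD_\rJ/\tilde f)$ is isomorphic as a $\Gamma$-module to $V_k\otimes_{\dF_q}(\cR_\rJ\otimes_{\cE_\rJ}\dF_q)$ with $\Gamma$ acting only on the second factor, and each such piece has no $\Gamma$-invariants by (G1); then $M\subseteq\bigcap_k\fm^k(\cD_\rJ/\tilde f)=0$ by Krull's intersection theorem. Alternatively, and closer to what (G1) is usually invoked for, absolute irreducibility of $\gamma_n^\natural\otimes\gamma_{n+1}^\natural$ in dimension $n(n+1)\geq 2$ gives, via Burnside and Nakayama applied to the algebra map $\cE_\rJ[[\Gamma]]\to\End_{\cE_\rJ}(\cR_\rJ)$, an element $\alpha$ of the augmentation ideal acting invertibly on $\cR_\rJ$; then for any $\cE_\rJ$-module $N$, any $v\in(N\otimes_{\cE_\rJ}\cR_\rJ)^\Gamma$ satisfies $(1\otimes\rho(\alpha))v=0$, forcing $v=0$. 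Either route gives $\rH^0(\Gamma,\sD_\rJ/(f))=0$ without appeal to the unjustified injectivity; with that substitution your proof closes.
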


\begin{proof}
The finite generation follows from Lemma \ref{le:selmer1}(2). By Proposition \ref{pr:eigen_2}(7), $\sD_\rJ(\xi,\bV)_\gamma$ is locally torsion free over the normal locus of $\Spec\sE_\rJ(\xi,\bV)$. It follows, using Assumption \ref{as:galois}(G1), that $\rH^1_f(F,\sD_\rJ(\xi,\bV)_\gamma)$ is also locally torsion free over the normal locus of $\Spec\sE_\rJ(\xi,\bV)_\gamma$. The lemma is proved.
\end{proof}

Assume $\xi$ interlacing and fix a $\xi$-distinction (Definition \ref{de:distinction}). Our goal of this section is to construct, under Assumption \ref{as:galois}(G1,G2), an element
\[
\bkappa_\rJ(\bV)\in\rH^1(F,\cD_\rJ(\xi,\bV)_\gamma)^\sfH
\]
that interpolates the Abel--Jacobi image of diagonal cycles at all interlacing classical points.

It suffices to construct elements
\[
\bkappa_\rI(\bV)\in\rH^1\(F,\Hom_{\dZ_q}\(\rH(\Sh(\sfG,\rK\rI),\dZ_\xi)_\gamma,\dZ_q\)\)
\]
for compact subgroups $\rI$ of $\sfT(F^+_{\PP^+})$, satisfying $\bkappa_{\rI'}(\bV)=\rH^1(F,\rho_{\rI',\rI}^\vee)(\bkappa_\rI(\bV))$ for $\rI\subseteq\rI'$. For $\rk\prec\rk'$ in $\bigcup_\rI\fk_\rI$, we have the dual transfer map
\[
\rho_{\rk',\rk}^\vee\colon\rH^i_{\et}(\Sh(\sfG,\rK\rk)_R,\dZ_\xi^\vee/p^m(j))
\to\rH^i_{\et}(\Sh(\sfG,\rK\rk')_R,\dZ_\xi^\vee/p^m(j))
\]
as the composition of the pullback map $\rho_{\rk,\rk'\cap\rk}$ and the pushforward map $\rho_{\rk'\cap\rk,\rk'}$, for every $i,j\in\dZ$ and every $F$-ring $R$.

Let $\rI$ be an open compact subgroup of $\sfT(F^+_{\PP^+})$. Then the Poincar\'{e} duality induces a map
\[
\varinjlim_{\fk_\rI}\rH^{2n-1}_{\et}(\Sh(\sfG,\rK\rk)_{\ol{F}},\dZ_\xi^\vee(n))_{\gamma^\tc}
\to\Hom_{\dZ_q}\(\rH(\Sh(\sfG,\rK\rI),\dZ_\xi)_\gamma,\dZ_q\),
\]
hence a map
\[
\varinjlim_{\fk_\rI}\rH^1\(F,\rH^{2n-1}_{\et}(\Sh(\sfG,\rK\rk)_{\ol{F}},\dZ_\xi^\vee(n))_{\gamma^\tc}\)
\to\rH^1\(F,\Hom_{\dZ_q}\(\rH(\Sh(\sfG,\rK\rI),\dZ_\xi)_\gamma,\dZ_q\)\).
\]

For every element $\rk\in\fk_\rI^\sfH$, define $\bkappa_\rk(\bV)$ to be the image of the fundamental class under the composite map
\begin{align*}
\rH^0_{\et}(\Sh(\sfH,\rK_\sfH\rk_\sfH),\dZ_q)
&\to\rH^0_{\et}(\Sh(\sfH,\rK_\sfH\rk_\sfH),\dZ_\xi^\vee)
\xrightarrow{\graph_!}\rH^{2n}_{\et}(\Sh(\sfG,\rK\rk),\dZ_\xi^\vee(n)) \\
&\to\rH^{2n}_{\et}(\Sh(\sfG,\rK\rk),\dZ_\xi^\vee(n))_{\gamma^\tc}
\to\rH^1\(F,\rH^{2n-1}_{\et}(\Sh(\sfG,\rK\rk)_{\ol{F}},\dZ_\xi^\vee(n))_{\gamma^\tc}\)
\end{align*}
in which the first map is induced from the dual of the fixed $\xi$-distinction; the second map is the pushforward map along $\graph$ \cite{GS23}*{Proposition~A.5}; and the last map comes from the vanishing of $\rH^{2n}_{\et}(\Sh(\sfG,\rK\rk)_{\ol{F}},\dZ_\xi^\vee(n))_{\gamma^\tc}$ which is implied by Assumption \ref{as:galois}(G2).

By the same argument for Lemma \ref{le:lambda1} and the fact that $\fk_\rI^\sfH$ is cofinal in $\fk_\rI$, there exists a unique element
\[
\bkappa_\rI(\bV)\in\varinjlim_{\fk_\rI}
\rH^1\(F,\rH^{2n-1}_{\et}(\Sh(\sfG,\rK\rk)_{\ol{F}},\dZ_\xi^\vee(n))_{\gamma^\tc}\)
\]
that is the image of $\bkappa_\rk(\bV)$ for every $\rk\in\fk_\rI^\sfH$. We also regard $\bkappa_\rI(\bV)$ as its image in
\[
\rH^1\(F,\Hom_{\dZ_q}\(\rH(\Sh(\sfG,\rK\rI),\dZ_\xi)_\gamma,\dZ_q\)\).
\]
By the same argument for Lemma \ref{le:lambda2}, the collection $\{\bkappa_\rI(\bV)\}$ gives rise to an element
\[
\bkappa_\rJ(\bV)\in\varprojlim_{\rJ<\rI\finite\rI^0}
\rH^1\(F,\Hom_{\dZ_q}\(\rH(\Sh(\sfG,\rK\rI),\dZ_\xi)_\gamma,\dZ_q\)\)
=\rH^1(F,\cD_\rJ(\xi,\bV)_\gamma).
\]

\begin{lem}\label{le:crystalline}
The element $\bkappa_\rJ(\bV)$ belongs to $\rH^1_f(F,\cD_\rJ(\xi,\bV)_\gamma)^\sfH$.
\end{lem}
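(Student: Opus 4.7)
The plan is to verify (i) the $\sfH$-invariance, and (ii) that for every finite-index subgroup $\rJ<\rI\finite\rI^0$ the image of $\bkappa_\rJ(\bV)$ in $\rH^1(F,\Hom_{\dZ_q}(\rH(\Sh(\sfG,\rK\rI),\dZ_\xi)_\gamma,\dZ_q))$ lies in the Bloch--Kato Selmer group. By Definition \ref{de:selmer}, combining (ii) over all such $\rI$ yields membership of $\bkappa_\rJ(\bV)$ in $\rH^1_f(F,\cD_\rJ(\xi,\bV)_\gamma)$, and (i) places it in the $\sfH$-invariant part. It therefore suffices to check both claims for the finite-level classes $\bkappa_\rk(\bV)$ with $\rk\in\fk_\rI^\sfH$, and both properties are preserved under the transition maps by the compatibility already proved in Lemma \ref{le:lambda1} (whose analogue in the indefinite setting applies here).

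For the $\sfH$-invariance, I would observe that $\bkappa_\rk(\bV)$ is the image of $\graph_!$ applied to the (degree one) image of the fundamental class of $\Sh(\sfH,\rK_\sfH\rk_\sfH)$. For any Hecke correspondence $T\in\dZ[(\rK_\sfH)_{\spadesuit^+}\backslash\sfH(F^+_{\spadesuit^+})/(\rK_\sfH)_{\spadesuit^+}]$, the projection formula together with the proper base change between Shimura spaces gives $T\circ\graph_!=\graph_!\circ T$, and $T$ acts on the fundamental class of $\Sh(\sfH,\rK_\sfH\rk_\sfH)$ by multiplication by $\deg(T)$. Consequently $T\cdot\bkappa_\rk(\bV)=\deg(T)\cdot\bkappa_\rk(\bV)$, exactly the condition defining $(-)^\sfH$.

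For the Selmer condition, after applying the projector to the $\gamma^\tc$-component the class $\graph_![1]$ becomes cohomologically trivial: by Assumption \ref{as:galois}(G2) and Remark \ref{re:generic}, $\rH^{2n}_{\et}(\Sh(\sfG,\rK\rk)_{\ol F},\dZ_\xi^\vee(n))_{\gamma^\tc}=0$, so the Hochschild--Serre edge map realizes $\bkappa_\rk(\bV)$ as the \'etale Abel--Jacobi image of a genuinely homologically trivial algebraic cycle on the smooth proper variety $\Sh(\sfG,\rK\rk)$. The geometric origin then forces the required local conditions at every place of $F$:
\begin{itemize}
  \item at places $w\notin\spadesuit\cup\Sigma_\infty\cup\Sigma_p$ the Shimura variety has good reduction (since $\rK$ is hyperspecial there) and smooth base change combined with the Gysin sequence shows the localization is unramified;
  \item at places $w\in\spadesuit$ not above $p$, the Bloch--Kato local condition coincides with the unramified condition on the image in $\rH^1(\rI_{F_w},-\otimes\dQ_q)$, and this follows from the weight-monodromy properties of cycle classes on proper smooth varieties (a result of Nekov\'a\v{r});
  \item at places $w\mid p$, one invokes Nekov\'a\v{r}'s theorem (\cite{Nek93}*{Theorem~3.1}), which says that the $p$-adic \'etale Abel--Jacobi image of a homologically trivial cycle on a smooth projective variety over $F_w$ is crystalline, hence lands in $\rH^1_f(F_w,-)$.
\end{itemize}

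The main obstacle is the verification at places $w\mid p$, where the level $\rk$ is very deep at $\PP^+$ and the Shimura variety no longer admits a smooth integral model there. To circumvent this, I would factor the construction through a sufficiently small level $\rk_0$ with hyperspecial level at $\PP^+$ (using the cofinality in Lemma \ref{le:ordinary}(1) and its analogue for $\sfG$) where the classical Abel--Jacobi $\Rightarrow$ crystalline result applies cleanly, and then transport along the finite \'etale transition maps $\rho_{\rk',\rk}^\vee$, which preserve the Bloch--Kato subspaces since they come from morphisms of smooth proper varieties. Passing to the limit over $\rJ<\rI\finite\rI^0$ gives $\bkappa_\rJ(\bV)\in\rH^1_f(F,\cD_\rJ(\xi,\bV)_\gamma)$ as required.
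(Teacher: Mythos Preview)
Your handling of $\sfH$-invariance and the overall reduction to the finite-level classes $\bkappa_\rk(\bV)$ is correct and more explicit than the paper, which takes the former as evident from the construction. The real gap is at places $w\in\PP$, where both your crystallinity claim and the proposed workaround fail.

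The assertion that the Abel--Jacobi image of an algebraic cycle is \emph{crystalline} (hence directly in $\rH^1_f$) requires the ambient variety to have good reduction at $w$, which $\Sh(\sfG,\rK\rk)$ does not have at $w\in\PP$ once $\rk\in\fk_\rI$ for $\rI\subsetneq\rI^0$. Your workaround of factoring through a hyperspecial level $\rk_0$ cannot succeed: the compatibility (the indefinite analogue of Lemma~\ref{le:lambda2}) reads $\bkappa_{\rI'}=\rho_{\rI',\rI}^\vee(\bkappa_\rI)$ for $\rI\subseteq\rI'$, so the transition maps carry the deep-level class to the shallow one, not conversely, and knowing the Selmer condition at $\rI^0$ says nothing about $\rI$. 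Moreover, in the poset there is no arrow $\rk_0\prec\rk$ with $\rk_0\in\fk_{\rI^0}$ and $\rk\in\fk_\rI$ for $\rI\subsetneq\rI^0$: the requirement $(\rk_0)_\sfB\subseteq\rk_\sfB$ would force the image of $(\rk_0)_\sfB$ in $\sfT(F^+_{\PP^+})$ to lie in $\rI$, contradicting $\rk_0\in\fk_{\rI^0}$. The cofinality in Lemma~\ref{le:ordinary}(1) is only within a fixed $\fk_\rI$ and does not cross levels.

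The paper replaces crystallinity by a weaker input that holds at arbitrary level: the Abel--Jacobi image of an algebraic cycle on any smooth proper variety over $F_w$ is \emph{de Rham}, hence lies in $\rH^1_g$ (the paper points to \cite{GS23}*{Proposition~9.29} for this step). Purity of weight $-1$ of the rationalized finite-level cohomology (Remark~\ref{re:pure}) then yields $\rH^1_g=\rH^1_f$. The same purity input also handles all $w\nmid p$ in one line, since it forces $\rH^1_f(F_w,V)=\rH^1(F_w,V)$; your case division there is correct but unnecessary.
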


\begin{proof}
By definition, it suffices to show that for every $\rk\in\fk_\rI^\sfH$ and every place $w$ of $F$,
\[
\loc_w(\bkappa_\rk(\bV))\in\rH^1_f\(F_w,\rH^{2n-1}_{\et}(\Sh(\sfG,\rK\rk)_{\ol{F}},\dZ_\xi^\vee(n))_{\gamma^\tc}\).
\]
When $w$ is not above $p$, this is trivial by Remark \ref{re:pure}. Now suppose that $w\mid p$. The fact that $\bkappa_\rk(\bV)$ is a motivic class implies that $\loc_w(\bkappa_\rk(\bV))$ belongs to $\rH^1_g$ (see the proof of \cite{GS23}*{Proposition~9.29} for more details in a similar case), which coincides with $\rH^1_f$ by Remark \ref{re:pure}.
\end{proof}

\section{Level-raising auxiliary primes}
\label{ss:raising}

Recall from \cite{LTXZZ}*{Definition~3.3.4} that a prime $\fl$ of $F^+$ is \emph{special inert} if the following are satisfied:
\begin{enumerate}
  \item $\fl$ is inert in $F$;

  \item the underlying rational prime $\ell$ of $\fl$ is odd and is unramified in $F$;

  \item $\fl$ is of degree one over $\dQ$, that is, $F^+_\fl=\dQ_\ell$.
\end{enumerate}

\begin{definition}\label{de:prime}
A \emph{level-raising prime} of $F^+$ (with respect to $\gamma$) is a special inert prime $\fl$ of $F^+$ (with the underlying rational prime $\ell$) satisfying
\begin{description}
  \item[(P1)] $p$ does not divide $\ell\prod_{i=1}^{n_0}(1-(-\ell)^i)$;

  \item[(P2)] there exists a CM type $\Phi$ as in \cite{LTXZZ}*{\S5.1} satisfying $\dQ_\ell^\Phi=\dQ_{\ell^2}$ (and we choose remaining data in \cite{LTXZZ}*{\S5.1} with $\dQ_\ell^\Phi=\dQ_{\ell^2}$);

  \item[(P3)] the generalized eigenvalues of $\gamma_{n_0}^\natural(\phi_{w(\fl)})$ contain $\ell^{-n_0}$ exactly once; the generalized eigenvalues of $\gamma_{n_1}^\natural(\phi_{w(\fl)})$ contain $\ell^{-(n_1-1)}$ exactly once; the generalized eigenvalues of $(\gamma_n^\natural\otimes\gamma_{n+1}^\natural)(\phi_{w(\fl)})$ contain $\ell^{-2n}$ exactly once;

  \item[(P4)] the eigenvalues of $\gamma_{n_0}^\natural(\phi_{w(\fl)})$ do not contain $-\ell^{-(n_0-1)}$; the eigenvalues of $\gamma_{n_1}^\natural(\phi_{w(\fl)})$ do not contain $-\ell^{-n_1}$.
\end{description}
\end{definition}

\begin{notation}\label{de:cong}
For a level-raising prime $\fl$ of $F^+$, we put
\[
\tC_\fl\coloneqq(\ell+1)\tR_{n_0,\fl}-\tI_{n_0,\fl}\in\dT_{n_0,\fl}
\]
as the Hecke operator appeared in \cite{LTXZZ}*{Proposition~6.3.1(4)}.
\end{notation}

\section{First explicit reciprocity law for ordinary distributions}
\label{ss:first_reciprocity}

In this section, we establish the first explicit reciprocity law for ordinary distributions. We take a \emph{definite} datum $\bV$ from Definition \ref{de:datum} and choose a level-raising prime $\fl$ of $F^+$ (Definition \ref{de:prime}) not in $\ang{\fm}$.

Consider another datum $\bV'=(\fm';\rV'_n,\rV'_{n+1};\Lambda'_n,\Lambda'_{n+1};\rK_n^{\prime},\rK_{n+1}^{\prime};\sfB')$ and a datum $\tj=(\tj_n,\tj_{n+1})$ in which
\begin{itemize}
  \item $\fm'=\fm\cup\{\fl\}$;

  \item $\tj_N\colon\rV_N\otimes_{F^+}\dA_{F^+}^{\infty,\fl}\to\rV'_N\otimes_{F^+}\dA_{F^+}^{\infty,\fl}$ ($N=n,n+1$) are isometries sending $(\Lambda_N)^\fl$ to $(\Lambda'_N)^\fl$ and $(\rK_N)^\fl$ to $(\rK_N^{\prime})^\fl$ for $N\in\{n,n+1\}$, satisfying $\tj_{n+1}=(\tj_n)_\sharp$ and $\tj_p\sfB=\sfB'$.
\end{itemize}
In particular, $\bV'$ is an indefinite datum.

Similar to $\bV$, we have groups $\sfG'$, $\sfH'$, $\rK^{\prime}$, $\rK^{\prime}_{\sfH'}$ for $\bV'$. Moreover, we identify $\sfG'\otimes_\dQ\dQ_p$ with $\sfG\otimes_\dQ\dQ_p$ and $\sfB'$ with $\sfB$ via $\tj_p$.

\begin{assumption}\label{as:law}
The following set of conditions will be used in the explicit reciprocity laws:
\begin{enumerate}
  \item $\gamma$ satisfies Assumption \ref{as:galois}(G1,G2,G5,G6,G7,G8);

  \item for every $v\in\fm$, the generalized eigenvalues of $\gamma_{n_0}^\natural(\phi_{w(v)})$ contain $\|v\|^{-n_0}$ exactly once;

  \item $\xi^{n_0}$ is Fontaine--Laffaille regular (Definition \ref{de:weight0}).
\end{enumerate}
\end{assumption}

Our main goal is to, under Assumption \ref{as:law}, establish a natural isomorphism
\begin{align}\label{eq:reciprocity_first}
\varrho^\sing_\rJ(\bV',\bV)&\colon
\rH^1_\sing(F_{w(\fl)},\cD_\rJ(\xi,\bV')_\gamma)
\xrightarrow{\sim}\cD_\rJ(\xi,\bV)_\gamma/\tC_\fl
\end{align}
of $\dZ[\rK_{\spadesuit^+}\backslash\sfG(F^+_{\spadesuit^+})/\rK_{\spadesuit^+}]\otimes
\dT^{\ang{\fm'}}\otimes\Lambda_{\sfT(F^+_{\PP^+})/\rJ,\dZ_q}$-modules assuming $\cD_\rJ(\xi,\bV)_\gamma\neq 0$.

By definition, the natural maps
\begin{align*}
\rH^1_\sing(F_{w(\fl)},\cD_\rJ(\xi,\bV')_\gamma)&\to\varprojlim_{\rJ<\rI\finite\rI^0}
\rH^1_\sing\(F_{w(\fl)},\Hom_{\dZ_q}\(\rH(\Sh(\sfG',\rK^{\prime}\rI),\dZ_\xi)_\gamma,\dZ_q\)\),\\
\cD_\rJ(\xi,\bV)_\gamma/\tC_\fl&\to\varprojlim_{\rJ<\rI\finite\rI^0}
\left.\Hom_{\dZ_q}\(\rH(\Sh(\sfG,\rK\rI),\dZ_\xi)_\gamma,\dZ_q\)\right/\tC_\fl
\end{align*}
are both isomorphisms.

To establish \eqref{eq:reciprocity_first}, we have several steps:
\begin{enumerate}
  \item upgrade the construction of Tate classes for $N=n_1$ \cite{LTXZZ}*{\S6.2} to the level of ordinary eigenvarieties with general weights $\xi$ away from $\PP^+$,

  \item upgrade the arithmetic level-raising for $N=n_0$ \cite{LTXZZ}*{\S6.3} to the level of ordinary eigenvarieties with general weights $\xi$ away from $\PP^+$,

  \item upgrade the first reciprocity law \cite{LTXZZ}*{\S7.2} to the level of ordinary eigenvarieties with general weights $\xi$ away from $\PP^+$.
\end{enumerate}
To unify the exposition, we will only discuss the (much harder) case where $n_0\geq 4$ (hence $n_1\geq 3$) and leave the (much easier) case where $n_0=2$ to the readers as an exercise. In what follows, $\alpha$ will be an index from $\{0,1\}$. Put
\[
\fr_\alpha\coloneqq\Ker\phi_{\gamma^\tc_{n_\alpha}}\cap\dT_{n_\alpha}^{\ang{\fm'}},
\]
and let $\fr$ be the (maximal) ideal of $\dT_{n_\alpha}^{\ang{\fm'}}$ generated by $\fr_0$ and $\fr_1$.

Let $\rk_\alpha$ be an open compact subgroup of $\sfG_{n_\alpha}(F^+_{\PP^+})$. As in \cite{LTXZZ}*{\S6.1}, we have the moduli scheme $\bM_{n_\alpha}(\rk_\alpha)$ over $O_{F_\fl}\simeq\dZ_{\ell^2}$ that is essentially an integral model of $\Sh(\sfG'_{n_\alpha},\rK_{n_\alpha}^{\prime}\tj_{n_\alpha}\rk_\alpha)$; and we write $\rK_{n_\alpha}^{\bullet}$ as the open compact subgroup obtained by changing the component of $\rK_{n_\alpha}^{\circ}\coloneqq\rK_{n_\alpha}$ at the place $\fl$ to the special (hyperspecial if $\alpha=0$) maximal subgroup that is used in the uniformization of the basic correspondence for the ground stratum. Using Gysin maps with coefficients, we have incidence maps similar to \cite{LTXZZ}*{Construction~5.8.3} when we replace $L$ by the local system $\dZ_{\xi^{n_\alpha}}^\vee/p^m$ for $1\leq m\leq \infty$ hence maps
\begin{align*}
\Inc^*_\circ&\colon\rH^{2r_\alpha}_\fT(\ol\rM_{n_\alpha}^\circ(\rk_\alpha),\dZ_{\xi^{n_\alpha}}^\vee/p^m(r_\alpha))\oplus
\rH^{2r_\alpha}_\fT(\ol\rM_{n_\alpha}^\bullet(\rk_\alpha),\dZ_{\xi^{n_\alpha}}^\vee/p^m(r_\alpha))
\to\Gamma(\Sh(\rV_{n_\alpha},\rK_{n_\alpha}^\circ\rk_\alpha),\dZ_{\xi^{n_\alpha}}^\vee/p^m), \\
\Inc^*_\dag&\colon\rH^{2r_\alpha}_\fT(\ol\rM_{n_\alpha}^\circ(\rk_\alpha),\dZ_{\xi^{n_\alpha}}^\vee/p^m(r_\alpha))\oplus
\rH^{2r_\alpha}_\fT(\ol\rM_{n_\alpha}^\bullet(\rk_\alpha),\dZ_{\xi^{n_\alpha}}^\vee/p^m(r_\alpha))
\to\Gamma(\Sh(\rV_{n_\alpha},\rK_{n_\alpha}^\circ\rk_\alpha),\dZ_{\xi^{n_\alpha}}^\vee/p^m), \\
\Inc^*_\bullet&\colon\rH^{2r_\alpha}_\fT(\ol\rM_{n_\alpha}^\circ(\rk_\alpha),\dZ_{\xi^{n_\alpha}}^\vee/p^m(r_\alpha))\oplus
\rH^{2r_\alpha}_\fT(\ol\rM_{n_\alpha}^\bullet(\rk_\alpha),\dZ_{\xi^{n_\alpha}}^\vee/p^m(r_\alpha))\to\Gamma(\Sh(\rV_{n_\alpha},\rK_{n_\alpha}^\bullet\rk_\alpha),\dZ_{\xi^{n_\alpha}}^\vee/p^m),
\end{align*}
as in \cite{LTXZZ}*{Construction~5.9.1}.

We start from step (1) (so that $\alpha=1$). Let $(\pres{1}\rE^{p,q}_s(\rk_1),\pres{1}\rd^{p,q}_s(\rk_1))$ be the weight spectral sequence abutting to $\rH^{p+q}_\fT(\ol\rM_{n_1}(\rk_1),\rR\Psi\dZ_{\xi^{n_1}}^\vee(r_1))$. Similar to \cite{LTXZZ}*{Construction~5.9.4(1)}, we have the map
\[
\nabla^1\colon\Ker\pres{1}\rE^{0,2r_1}_2(\rk_1)\to \Gamma(\Sh(\sfG_{n_1},\rK_{n_1}\rk_1),\dZ_{\xi^{n_1}}^\vee)
\]
that is the restriction of the map
\[
\tT^{\bullet\circ}_{n_1,\fl}\circ\Inc_\dag^*+(\ell+1)^2\Inc_\bullet^*\colon\pres{1}\rE^{0,2r_1}_1(\rk_1)\to \Gamma(\Sh(\sfG_{n_1},\rK_{n_1}^\bullet\rk_1),\dZ_{\xi^{n_1}}^\vee)
\]
to $\Ker\pres{1}\rd^{0,2r_1}_1(\rk_1)$, which factors through $\pres{1}\rE^{0,2r_1}_2(\rk_1)$ by (an analogue of) \cite{LTXZZ}*{Lemma~5.9.2(3)}, composed with the map
\[
\tT^{\circ\bullet}_{n_1,\fl}\colon\Gamma(\Sh(\sfG_{n_1},\rK_{n_1}^\bullet\rk_1),\dZ_{\xi^{n_1}}^\vee)
\to\Gamma(\Sh(\sfG_{n_1},\rK_{n_1}^\circ\rk_1),\dZ_{\xi^{n_1}}^\vee)
=\Gamma(\Sh(\sfG_{n_1},\rK_{n_1}\rk_1),\dZ_{\xi^{n_1}}^\vee).
\]
The lemma below is an analogue of \cite{LTXZZ}*{Lemma~6.2.2~\&~Theorem~6.2.3}.

\begin{lem}\label{le:first0}
Suppose that the $\Gamma_F$-representation $\gamma_{n_1}^\natural$ is absolutely irreducible and that $\phi_{\gamma_{n_1}}$ is cohomologically generic.
\begin{enumerate}
  \item $\rH^i_\fT(\ol\rM_{n_1}^?(\rk_1),\dZ_{\xi^{n_1}}^\vee)_{\fr_1}$ is $p$-torsion free for every $i\in\dZ$ and $?\in\{\circ,\bullet,\dag\}$, which vanishes when $i$ is odd;

  \item $\pres{1}\rE^{p,q}_2(\rk_1)_{\fr_1}$ is $p$-torsion free, and vanishes if $(p,q)\neq(0,2r_1)$;

  \item The map $\nabla^1_{\fr_1}$ induces an isomorphism
      \[
      \nabla^1_{\fr_1}\colon\(\pres{1}\rE^{0,2r_1}_2(\rk_1)_{\fr_1}\)_{\Gal(\ol\dF_\ell/\dF_{\ell^2})}
      \xrightarrow\sim\Gamma(\Sh(\sfG_{n_1},\rK_{n_1}\rk_1),\dZ_{\xi^{n_1}}^\vee)_{\fr_1}.
      \]
\end{enumerate}
\end{lem}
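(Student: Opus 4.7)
The plan is to follow the strategy of \cite{LTXZZ}*{\S6.2} (specifically Lemma~6.2.2 and Theorem~6.2.3), but replacing the constant coefficient $\dZ_q$ by the local system $\dZ_{\xi^{n_1}}^\vee$ on $\bM_{n_1}^?(\rk_1)$ extended integrally from the generic fiber via the smooth integral model. The three key inputs survive this enhancement: cohomological genericity of $\phi_{\gamma_{n_1}}$ to force concentration in middle degree after $\fr_1$-localization, the basic correspondence uniformization of the supersingular stratum at a special inert prime (from \cite{LTXZZ}*{\S5}), and the weight spectral sequence attached to the regular semistable reduction at $\fl$.

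For part (1), the first step is to show that $\rH^i_{\et}(\Sh(\sfG_{n_1},\rK_{n_1}^?\rk_1)_{\ol F},\dZ_{\xi^{n_1}}^\vee)_{\fr_1}$ is concentrated in degree $2r_1 = n_1 - 1$ and $p$-torsion free there; this follows from Remark \ref{re:generic} applied to $\gamma_{n_1}$ (which is absolutely irreducible since it is a factor of the absolutely irreducible $\gamma_n^\natural\otimes\gamma_{n+1}^\natural$, up to standard twisting arguments from G1). Next, using proper smooth base change to identify $\rH^i_\fT(\ol\rM_{n_1}^?(\rk_1),\dZ_{\xi^{n_1}}^\vee)$ with the cohomology of the special fiber, and using the basic correspondence to describe $\ol\rM_{n_1}^\bullet(\rk_1)$ as an iterated $\bP^k$-bundle over a Deligne--Lusztig variety times a smaller unitary Shimura set (which has purely even-degree cohomology), one deduces the odd vanishing and the torsion-freeness. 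The case $? = \dag$ is obtained from the long exact sequence of the closed-open decomposition combined with the above.

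Part (2) is a direct consequence of part (1) and the usual weight filtration formalism: the $\pres{1}\rE^{p,q}_1(\rk_1)_{\fr_1}$-terms are built from the cohomology groups in (1), all placed in even total degree $2r_1$, so the differentials $\pres{1}\rd^{p,q}_1$ for $q \neq 2r_1$ act between zero modules; concentration in $(0,2r_1)$ and the annihilation of $\pres{1}\rE^{p,q}_2(\rk_1)_{\fr_1}$ for $(p,q) \neq (0,2r_1)$ then follow from pure-weight considerations (via the Weil conjectures applied over the residue field $\dF_{\ell^2}$) together with the vanishing of the non-generic cohomology of the Shimura variety. For part (3), one interprets the map $\nabla^1_{\fr_1}$ as the pullback of the level-raising correspondence from the basic correspondence, and constructs the inverse via a suitable Gysin/Hecke operator. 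After passing to Galois coinvariants for $\Gal(\ol\dF_\ell/\dF_{\ell^2})$, the Hochschild--Serre spectral sequence together with part (1) collapses everything to a single degree, identifying both sides with the same generic quotient of $\Gamma(\Sh(\sfG_{n_1},\rK_{n_1}\rk_1),\dZ_{\xi^{n_1}}^\vee)_{\fr_1}$.

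The main obstacle will be verifying that the weight-dependent coefficient system $\dZ_{\xi^{n_1}}^\vee$ interacts cleanly with the integral structures on $\bM_{n_1}^?$ and the Gysin/incidence maps, and that cohomological genericity suffices to eliminate all non-middle-degree contributions \emph{integrally}, not merely rationally --- this is where one needs that $(\rK_{n_1})_\fl$ is hyperspecial so that the integral model is smooth and that the special inertness (with level-raising hypotheses (P1)--(P4)) guarantees the characters appearing in the spectral sequence are separated from the $\fr_1$-generic block. Once these are handled, the argument reduces to a verbatim generalization of \cite{LTXZZ}*{Lemma~6.2.2~\&~Theorem~6.2.3}, with each occurrence of the constant sheaf replaced by $\dZ_{\xi^{n_1}}^\vee$ and each mod-$p$ cohomology group interpreted with the corresponding twisted coefficients.
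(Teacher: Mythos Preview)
Your proposal is correct and matches the paper's approach exactly: the paper's own proof is the single sentence that the argument proceeds identically to \cite{LTXZZ}*{Lemma~6.2.2~\&~Theorem~6.2.3}, and your elaboration of what that entails (cohomological genericity via Remark~\ref{re:generic} for integral middle-degree concentration, the basic-correspondence description of the strata, and the weight spectral sequence formalism with twisted coefficients) is an accurate unpacking of that reference. One minor note: absolute irreducibility of $\gamma_{n_1}^\natural$ is a direct hypothesis of the lemma, so your parenthetical deriving it from (G1) is unnecessary.
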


\begin{proof}
This can be proved in the same way as \cite{LTXZZ}*{Lemma~6.2.2~\&~Theorem~6.2.3}.
\end{proof}

We then move to step (2) (so that $\alpha=0$). Let $(\pres{0}\rE^{p,q}_s(\rk_0),\pres{0}\rd^{p,q}_s(\rk_0))$ be the weight spectral sequence abutting to $\rH^{p+q}_\fT(\ol\rM_{n_0}(\rk_0),\rR\Psi\dZ_{\xi^{n_0}}^\vee(r_0))$. Similar to \cite{LTXZZ}*{Construction~5.9.4(2)}, we have the map
\[
\nabla^0\colon\Ker\pres{0}\rd^{0,2r}_1(\rk_0)\to \Gamma(\Sh(\sfG_{n_0},\rK_{n_0}\rk_0),\dZ_{\xi^{n_0}}^\vee)
\]
that is the restriction of the map
\[
\tT^{\bullet\circ}_{n_0,\fl}\circ\Inc_\circ^*+(\ell+1)\Inc_\bullet^*\colon\pres{0}\rE^{0,2r}_1(\rk_0)\to \Gamma(\Sh(\sfG_{n_0},\rK_{n_0}^\bullet\rk_0),\dZ_{\xi^{n_0}}^\vee)
\]
in (an analogue of) \cite{LTXZZ}*{Lemma~5.9.3(8)} to $\Ker\pres{0}\rd^{0,2r}_1(\rk_0)$, composed with the map
\[
\tT^{\circ\bullet}_{n_0,\fl}\colon \Gamma(\Sh(\sfG_{n_0},\rK_{n_0}^\bullet\rk_0),\dZ_{\xi^{n_0}}^\vee)\to \Gamma(\Sh(\sfG_{n_0},\rK_{n_0}^\circ\rk_0),\dZ_{\xi^{n_0}}^\vee)
=\Gamma(\Sh(\sfG_{n_0},\rK_{n_0}\rk_0),\dZ_{\xi^{n_0}}^\vee).
\]
On the other hand, it is clear that (the natural analogue of) \cite{LTXZZ}*{Lemma~5.9.3} holds for $(\pres{0}\rE^{p,q}_s(\rk_0),\pres{0}\rd^{p,q}_s(\rk_0))$ as well. In particular, we have a canonical short exact sequence
\begin{align}\label{eq:first2}
\resizebox{\hsize}{!}{
\xymatrix{
0 \ar[r] & \rF_{-1}\rH^1(\rI_{\dQ_{\ell^2}},\rH^{2r_0-1}_\fT(\ol\rM_{n_0}(\rk_0),\rR\Psi\dZ_{\xi^{n_0}}^\vee(r_0)))
\ar[r] & \rH^1_\sing(\dQ_{\ell^2},\rH^{2r_0-1}_\fT(\ol\rM_{n_0}(\rk_0),\rR\Psi\dZ_{\xi^{n_0}}^\vee(r_0)))
\ar[r] & \rH^{2r_0-1}_\fT(\ol\rM^\bullet_{n_0}(\rk_0),\dZ_{\xi^{n_0}}^\vee(r_0-1))^{\Gal(\ol\dF_\ell/\dF_{\ell^2})} \to 0
}
}
\end{align}
of $\dZ[(\rK_{n_0})_{\spadesuit^+}\backslash\sfG_{n_0}(F^+_{\spadesuit^+})/(\rK_{n_0})_{\spadesuit^+}]
\otimes\dT_{n_0}^{\ang{\fm'}}$-modules. The lemma below is an analogue of \cite{LTXZZ}*{Proposition~6.3.1}.

\begin{lem}\label{le:first1}
Suppose that the $\Gamma_F$-representation $\gamma_{n_0}^\natural$ is absolutely irreducible and that $\phi_{\gamma_{n_0}}$ is cohomologically generic.
\begin{enumerate}
  \item The map
     \begin{align*}
     (\Inc_\circ^*,\Inc_\dag^*,\Inc_\bullet^*)_{\fr_0}\colon\pres{0}\rE^{0,2r}_1(\rk_0)_{\fr_0}\to
     \Gamma(\Sh(\sfG_{n_0},\rK_{n_0}^\circ\rk_0),\dZ_{\xi^{n_0}}^\vee)_{\fr_0}^{\oplus 2}\oplus \Gamma(\Sh(\sfG_{n_0},\rK_{n_0}^\bullet\rk_0),\dZ_{\xi^{n_0}}^\vee)_{\fr_0}
     \end{align*}
     is surjective with kernel $\rH^{2r_0}_\fT(\ol\rM_{n_0}^\bullet(\rk_0),\dZ_{\xi^{n_0}}^\vee(r_0))_{\fr_0}[p^\infty]$.

  \item The map $\nabla^0_{\fr_0}\colon\Ker\pres{0}\rd^{0,2r}_1(\rk_0)_{\fr_0}\to \Gamma(\Sh(\sfG_{n_0},\rK_{n_0}\rk_0),\dZ_{\xi^{n_0}}^\vee)_{\fr_0}$ is surjective.

  \item The map $\nabla^0_{\fr_0}\circ\pres{0}\rd^{-1,2r}_1(\rk_0)_{\fr_0}$ induces a map
      \[
      \rF_{-1}\rH^1(\rI_{\dQ_{\ell^2}},\rH^{2r_0-1}_\fT(\ol\rM_{n_0}(\rk_0),\rR\Psi\dZ_{\xi^{n_0}}^\vee(r_0))_{\fr_0})\to
      \Gamma(\Sh(\sfG_{n_0},\rK_{n_0}\rk_0),\dZ_{\xi^{n_0}}^\vee)_{\fr_0}/\tC_\fl
      \]
      which is surjective, whose kernel is canonically $\rH^{2r_0}_\fT(\ol\rM_{n_0}^\bullet(\rk_0),\dZ_{\xi^{n_0}}^\vee(r_0))_{\fr_0}[p^\infty]$.
\end{enumerate}
\end{lem}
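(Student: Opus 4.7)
The plan is to adapt the proof of \cite{LTXZZ}*{Proposition~6.3.1} to the setting with the twisted local system $\dZ_{\xi^{n_0}}^\vee/p^m$ in place of the constant system. The three essential inputs that make such an adaptation possible are: the cohomological genericity of $\phi_{\gamma_{n_0}}$, which via Remark \ref{re:generic} kills every off-middle-degree cohomology group after localization at $\fr_0$; the absolute irreducibility of $\gamma_{n_0}^\natural$, which guarantees that the localizations are well-behaved as direct summands; and the Fontaine--Laffaille regularity of $\xi^{n_0}$ (Assumption \ref{as:law}(3)), which is precisely what is needed for integral $p$-adic computations of nearby cycles and crystalline comparison with nontrivial coefficients to go through.

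First, I would establish, with the local system $\dZ_{\xi^{n_0}}^\vee/p^m$, the analogues of \cite{LTXZZ}*{Lemma~5.9.2,~Lemma~5.9.3}, which describe the terms $\pres{0}\rE^{p,q}_1(\rk_0)$ and the differentials of the weight spectral sequence in terms of the cohomology of the basic strata $\ol\rM^\circ_{n_0}(\rk_0)$ and $\ol\rM^\bullet_{n_0}(\rk_0)$, and express $\nabla^0$ explicitly in terms of incidence maps. Fontaine--Laffaille regularity ensures that all relevant cohomology groups are $p$-torsion free and that the Gysin/pullback identities proved by reduction modulo $p$ retain their shape with the twisted coefficients.

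For (1), after localization at $\fr_0$, cohomological genericity collapses $\pres{0}\rE^{p,q}_2(\rk_0)_{\fr_0}$ onto the row $q=2r_0$ and makes $\rH^{2i}_\fT(\ol\rM_{n_0}^?(\rk_0),\dZ_{\xi^{n_0}}^\vee)_{\fr_0}$ vanish for $2i\ne 2r_0$; the kernel computation of $(\Inc_\circ^*,\Inc_\dag^*,\Inc_\bullet^*)_{\fr_0}$ then reduces to the same linear-algebra argument on the basic correspondence as in LTXZZ, with the only $p$-torsion coming from $\rH^{2r_0}_\fT(\ol\rM_{n_0}^\bullet(\rk_0),\dZ_{\xi^{n_0}}^\vee(r_0))_{\fr_0}[p^\infty]$. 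Part (2) follows by combining (1) with the analogue of \cite{LTXZZ}*{Lemma~5.9.3(8)}: after localization at $\fr_0$ the Hecke operator $\tT^{\circ\bullet}_{n_0,\fl}$ is surjective onto $\Gamma(\Sh(\sfG_{n_0},\rK_{n_0}\rk_0),\dZ_{\xi^{n_0}}^\vee)_{\fr_0}$, a consequence of the level-raising conditions (P1)--(P4) applied to the residual Satake parameter. For (3), the exact sequence \eqref{eq:first2} identifies $\rF_{-1}\rH^1(\rI_{\dQ_{\ell^2}},-)_{\fr_0}$ with the image of $\pres{0}\rd^{-1,2r}_1(\rk_0)_{\fr_0}$ in $\Ker\pres{0}\rd^{0,2r}_1(\rk_0)_{\fr_0}$, and composing with $\nabla^0_{\fr_0}$ yields a map whose image and kernel can be tracked by diagram chase; the Hecke identity $\tC_\fl=(\ell+1)\tR_{n_0,\fl}-\tI_{n_0,\fl}$ (Notation \ref{de:cong}) produces the quotient by $\tC_\fl$ on the target, while the kernel contribution from the $p^\infty$-torsion of $\rH^{2r_0}_\fT(\ol\rM_{n_0}^\bullet(\rk_0),\dZ_{\xi^{n_0}}^\vee(r_0))_{\fr_0}$ survives unchanged.

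The principal obstacle will be verifying that the basic-correspondence identities and the nearby-cycle computation of the $\pres{0}\rE^{p,q}_1$-terms, which in \cite{LTXZZ} are carried out with constant coefficients via the Rapoport--Zink-style uniformization, carry over faithfully to the twisted system $\dZ_{\xi^{n_0}}^\vee$; concretely, one must show that the pullback of $\dZ_{\xi^{n_0}}^\vee$ to the supersingular locus decomposes compatibly with the basic correspondence, and that the resulting Gysin/restriction maps still fit together as in the weight spectral sequence. This is where Fontaine--Laffaille regularity enters essentially, as it allows the coefficient local system to be compared to its crystalline avatar without encountering $p$-torsion pathologies, so that each step of the LTXZZ argument translates line-by-line.
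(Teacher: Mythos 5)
Your general strategy — adapt \cite{LTXZZ}*{Proposition~6.3.1} to the twisted coefficient system — is correct, as is your identification of the roles of cohomological genericity (to collapse the weight spectral sequence), absolute irreducibility (for well-behaved localizations), and Fontaine--Laffaille regularity of $\xi^{n_0}$ (for the integral $p$-adic comparison). Parts (2) and (3) are indeed handled exactly as you describe, by running the arguments of LTXZZ~Proposition~6.3.1(3,4) once the analogue of Lemma~5.9.3 for $\dZ_{\xi^{n_0}}^\vee$ is in hand.

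The place where your proposal diverges from, and is weaker than, the paper's argument is the surjectivity in part~(1). You propose to \emph{redo} the Rapoport--Zink-style computations of LTXZZ~\S5.9 with the twisted system $\dZ_{\xi^{n_0}}^\vee/p^m$ from scratch, invoking Fontaine--Laffaille regularity to make each step translate line by line. The paper avoids this entirely: it introduces the auxiliary deeper level $\rK_{n_0}^1\subseteq\rK_{n_0}$ consisting of elements acting trivially on $\Lambda_{n_0,v}/p\Lambda_{n_0,v}$ for $v\in\Sigma_p^+\setminus\PP^+$. At that level, the local system $\dZ_{\xi^{n_0}}^\vee/p$ becomes \emph{constant}, so the cohomology of the mod-$p$ twisted system factors as (constant-coefficient cohomology)$\,\otimes\,\dZ_{\xi^{n_0}}^\vee/p$, and the surjectivity of the constant-coefficient incidence map from LTXZZ~Proposition~6.3.1(2) applies verbatim. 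One then pushes forward to level $\rK_{n_0}$ via a commutative square whose vertical maps are surjective, obtaining the surjectivity of~\eqref{eq:first1}. This sidesteps the need to verify that the basic-correspondence and nearby-cycle calculations carry over to twisted coefficients — precisely the "principal obstacle" you flag at the end but do not resolve. Your proposal as written leaves a real gap here: asserting that FL regularity makes LTXZZ's \S5.9 go through with coefficients is plausible in spirit but requires substantial new verification, whereas the trivializing-level reduction lets one reuse the constant-coefficient result as a black box. The kernel computation in (1) is then as you say: a rank count plus $p$-torsion-freeness of the target forces the kernel to be $\pres{0}\rE^{0,2r}_1(\rk_0)_{\fr_0}[p^\infty]=\rH^{2r_0}_\fT(\ol\rM_{n_0}^\bullet(\rk_0),\dZ_{\xi^{n_0}}^\vee(r_0))_{\fr_0}[p^\infty]$.
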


\begin{proof}
For (1), we first show that the map is surjective. For this, it suffices to show that its mod-$p$ analogue
\begin{align}\label{eq:first1}
\rH^{2r_0}_\fT(\ol\rM_{n_0}^\circ(\rk_0),\dZ_{\xi^{n_0}}^\vee/p(r_0))_{\fr_0}&\oplus
\rH^{2r_0}_\fT(\ol\rM_{n_0}^\bullet(\rk_0),\dZ_{\xi^{n_0}}^\vee/p(r_0))_{\fr_0} \\
\to\Gamma(\Sh(\sfG_{n_0},\rK_{n_0}^\circ\rk_0),\dZ_{\xi^{n_0}}^\vee/p)_{\fr_0}^{\oplus 2}&\oplus \Gamma(\Sh(\sfG_{n_0},\rK_{n_0}^\bullet\rk_0),\dZ_{\xi^{n_0}}^\vee/p)_{\fr_0} \notag
\end{align}
is surjective. Let $\rK_{n_0}^1\subseteq\rK_{n_0}$ be the subgroup of elements acting trivially on $\Lambda_{n_0,v}/p\Lambda_{n_0,v}$ for $v\in\Sigma^+_p\setminus\PP^+$, and denote by $\bM^1_{n_0}(\rk_0)$ the corresponding moduli scheme. By construction, we have the following commutative diagram
\begin{align*}
\resizebox{\hsize}{!}{
\xymatrix{
\(\rH^{2r_0}_\fT(\ol\rM_{n_0}^{1\circ}(\rk_0),\dZ_q(r_0))_{\fr_0}\oplus
\rH^{2r_0}_\fT(\ol\rM_{n_0}^{1\bullet}(\rk_0),\dZ_q(r_0))_{\fr_0}\)\otimes\dZ_{\xi^{n_0}}^\vee/p
\ar[r]\ar@{=}[d]&
\(\Gamma(\Sh(\sfG_{n_0},\rK_{n_0}^{1\circ}\rk_0),\dZ_q)_{\fr_0}^{\oplus 2}\oplus \Gamma(\Sh(\sfG_{n_0},\rK_{n_0}^{1\bullet}\rk_0),\dZ_q)_{\fr_0}\)\otimes\dZ_{\xi^{n_0}}^\vee/p \ar@{=}[d] \\
\rH^{2r_0}_\fT(\ol\rM_{n_0}^{1\circ}(\rk_0),\dZ_{\xi^{n_0}}^\vee/p(r_0))_{\fr_0}\oplus
\rH^{2r_0}_\fT(\ol\rM_{n_0}^{1\bullet}(\rk_0),\dZ_{\xi^{n_0}}^\vee/p(r_0))_{\fr_0} \ar[d]
&
\Gamma(\Sh(\sfG_{n_0},\rK_{n_0}^{1\circ}\rk_0),\dZ_{\xi^{n_0}}^\vee/p)_{\fr_0}^{\oplus 2}\oplus \Gamma(\Sh(\sfG_{n_0},\rK_{n_0}^{1\bullet}\rk_0),\dZ_{\xi^{n_0}}^\vee/p)_{\fr_0}  \ar[d] \\
\rH^{2r_0}_\fT(\ol\rM_{n_0}^{\circ}(\rk_0),\dZ_{\xi^{n_0}}^\vee/p(r_0))_{\fr_0}\oplus
\rH^{2r_0}_\fT(\ol\rM_{n_0}^{\bullet}(\rk_0),\dZ_{\xi^{n_0}}^\vee/p(r_0))_{\fr_0}
\ar[r]^-{\eqref{eq:first1}}&
\Gamma(\Sh(\sfG_{n_0},\rK_{n_0}^{\circ}\rk_0),\dZ_{\xi^{n_0}}^\vee/p)_{\fr_0}^{\oplus 2}\oplus \Gamma(\Sh(\sfG_{n_0},\rK_{n_0}^{\bullet}\rk_0),\dZ_{\xi^{n_0}}^\vee/p)_{\fr_0}
}
}
\end{align*}
in which the upper horizontal map is the base change of the map $(\Inc_\circ^*,\Inc_\dag^*,\Inc_\bullet^*)_{\fr_0}$ for constant coefficients, which is then surjective by \cite{LTXZZ}*{Proposition~6.3.1(2)}. Since the down-right vertical arrow is surjective, it follows that \eqref{eq:first1} is surjective. To find the kernel, note that both sides of the map have the same $\dZ_q$-rank by an argument that is similar to the case in \cite{LTXZZ}*{Proposition~6.3.1}. Since the target is $p$-torsion free, the kernel coincides with $\pres{0}\rE^{0,2r}_1(\rk_0)_{\fr_0}[p^\infty]=\rH^{2r_0}_\fT(\ol\rM_{n_0}^\bullet(\rk_0),\dZ_{\xi^{n_0}}^\vee(r_0))_{\fr_0}[p^\infty]$. Thus, (1) is proved.

The proofs of (2) and (3) follow from the same argument for (3) and (4) of \cite{LTXZZ}*{Proposition~6.3.1}, respectively.
\end{proof}

To continue, we fix, for every place $v\in\PP^+$, a self-dual $O_{F_v}$-lattice $\Lambda_{n_0,v}$ in $\rV_{n_0,v}$ such that $\sfB_{n_0,v}$ induces a Borel subgroup of $\rU(\Lambda_{n_0,v})$. Using this lattice, we regard $\sfG_{n_0,v}$, $\sfB_{n_0,v}$ and $\sfT_{n_0,v}$ as reductive groups defined over $O_{F^+_v}$. We denote by $\sfU_{n_0,v}\subseteq\sfB_{n_0,v}$ the unipotent radical. For integers $0\leq c\leq l$, we denote by $\rI_{n_0,v}^c$ the subgroup of $\sfT_{n_0,v}(O_{F^+_v})$ of elements whose reduction modulo $p^c$ is the identity, and by $\rk_{0,v}^{c,l}$ the subgroup of $\sfG_{n_0,v}(O_{F^+_v})$ of elements whose reduction modulo $p^l$ belong to $\rI_{n_0,v}^c\sfU_{n_0,v}(O_{F^+_v})$.\footnote{The subgroups $\rk_{0,v}^{c,l}$ are ``opposite'' to the ones used in \S\ref{ss:eigenvariety}.} Put
\[
\rI_{n_0}^c\coloneqq\prod_{v\in\PP^+}\rI_{n_0,v}^c,\quad
\rk_0^{c,l}\coloneqq\prod_{v\in\PP^+}\rk_{0,v}^{c,l}\in\fk_{\rI_{n_0}^c}.
\]
The lemma below is an analogue of \cite{LTXZZ}*{Theorem~6.3.4}.

\begin{lem}\label{le:first2}
Assume Assumption \ref{as:law} and $\Gamma(\Sh(\sfG_{n_0},\rK_{n_0}\rk_0^{0,0}),\dZ_{\xi^{n_0}}^\vee/p)_{\fr_0}\neq 0$. Then for every integers $0\leq c\leq l$, we have
\begin{enumerate}
  \item $\rH^i_\fT(\ol\rM_{n_0}^?(\rk_0^{c,l}),\dZ_{\xi^{n_0}}^\vee)_{\fr_0}$ is $p$-torsion free for every $i\in\dZ$ and $?\in\{\circ,\bullet,\dag\}$;

  \item $\pres{0}\rE^{p,q}_2(\rk_0^{c,l})_{\fr_0}$ is $p$-torsion free, and vanishes if $(p,q)\not\in\{(-1,2r_0),(0,2r_0-1),(1,2r_0-2)\}$;

  \item the map
      \[
      \rF_{-1}\rH^1(\rI_{\dQ_{\ell^2}},\rH^{2r_0-1}_\fT(\ol\rM_{n_0}(\rk_0^{c,l}),\rR\Psi\dZ_{\xi^{n_0}}^\vee(r_0))_{\fr_0})\to
      \Gamma(\Sh(\sfG_{n_0},\rK_{n_0}\rk_0^{c,l}),\dZ_{\xi^{n_0}}^\vee)_{\fr_0}/\tC_\fl
      \]
      localized from Lemma \ref{le:first1}(3) is an isomorphism;

  \item the map
      \[
      \rF_{-1}\rH^1(\rI_{\dQ_{\ell^2}},\rH^{2r_0-1}_\fT(\ol\rM_{n_0}(\rk_0^{c,l}),\rR\Psi\dZ_{\xi^{n_0}}^\vee(r_0))_{\fr_0})\to
      \rH^1_\sing(\dQ_{\ell^2},\rH^{2r_0-1}_\fT(\ol\rM_{n_0}(\rk_0^{c,l}),\rR\Psi\dZ_{\xi^{n_0}}^\vee(r_0))_{\fr_0})
      \]
      localized from \eqref{eq:first2} is an isomorphism.
\end{enumerate}
\end{lem}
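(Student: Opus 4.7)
The plan is to follow the strategy of \cite{LTXZZ}*{Theorem~6.3.4} (arithmetic level-raising with a Tate-class input), but upgraded in two directions: the level at $p$-adic places is deepened from $\rk_0^{0,0}$ to $\rk_0^{c,l}$, and the constant sheaf is replaced by $\dZ_{\xi^{n_0}}^\vee$. The Fontaine--Laffaille regularity of $\xi^{n_0}$ in Assumption \ref{as:law}(3) together with Assumption \ref{as:law}(1)(G7) is the mechanism that will allow me to transfer torsion-freeness between constant coefficients and $\dZ_{\xi^{n_0}}^\vee$-coefficients integrally, while the non-vanishing hypothesis $\Gamma(\Sh(\sfG_{n_0},\rK_{n_0}\rk_0^{0,0}),\dZ_{\xi^{n_0}}^\vee/p)_{\fr_0}\neq 0$ serves as the Ihara-type input that drives the level-raising.

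I will first establish (1) and (2) simultaneously. Reducing modulo $p$, I realize $\dZ_{\xi^{n_0}}^\vee/p$ inside a tensor power of the $p$-torsion of the universal abelian scheme on $\bM_{n_0}(\rk_0^{c,l})$ via Fontaine--Laffaille functors; the Fontaine--Laffaille regularity bound in Assumption \ref{as:law}(3), combined with $p>n_0$ from Assumption \ref{as:galois}(G7), guarantees that this realization is a direct summand and is compatible, under the incidence maps $\Inc_\circ^*,\Inc_\dag^*,\Inc_\bullet^*$, with the corresponding decomposition on the associated Kuga--Sato-type variety. Running the proof of Lemma \ref{le:first1} (that is, \cite{LTXZZ}*{Proposition~6.3.1}) with constant coefficients on the Kuga--Sato variety, and localizing the spectral sequence at $\fr_0$, I obtain torsion-freeness and the vanishing of $\pres{0}\rE_2^{p,q}(\rk_0^{c,l})_{\fr_0}$ in all bidegrees except the three claimed ones. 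The cohomological genericity (G2) of $\phi_{\gamma_{n_0}}$ kills off all off-diagonal contributions exactly as in \cite{LTXZZ}*{Theorem~6.3.4}, and the non-vanishing hypothesis guarantees that the analogue of \cite{LTXZZ}*{Hypothesis~2.2.5} (which is being assumed throughout by Remark \ref{re:conditional}) feeds the Ihara-type lemma non-trivially at level $\rk_0^{c,l}$.

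Parts (3) and (4) follow formally from (1) and (2) once the $E_2$ page of the weight spectral sequence is known: the only non-zero bidegrees are $(-1,2r_0)$, $(0,2r_0-1)$, $(1,2r_0-2)$, so the spectral sequence degenerates at $E_3$ after localization at $\fr_0$, yielding a short exact sequence that identifies $\rF_{-1}\rH^1(\rI_{\dQ_{\ell^2}},\rH^{2r_0-1}_\fT(\ol\rM_{n_0}(\rk_0^{c,l}),\rR\Psi\dZ_{\xi^{n_0}}^\vee(r_0))_{\fr_0})$ with the image of $\pres{0}\rd_1^{-1,2r_0}(\rk_0^{c,l})_{\fr_0}$ restricted to $\Ker\pres{0}\rd_1^{0,2r_0}(\rk_0^{c,l})_{\fr_0}$. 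Combining this with the surjection from Lemma \ref{le:first1}(3) and the torsion-freeness from (1), which forces the kernel term $\rH^{2r_0}_\fT(\ol\rM_{n_0}^\bullet(\rk_0^{c,l}),\dZ_{\xi^{n_0}}^\vee(r_0))_{\fr_0}[p^\infty]$ to vanish, I get the isomorphism in (3). Statement (4) is extracted from the same collapsed spectral sequence and the sequence \eqref{eq:first2} by identifying the quotient term $\rH^{2r_0-1}_\fT(\ol\rM_{n_0}^\bullet(\rk_0^{c,l}),\dZ_{\xi^{n_0}}^\vee(r_0-1))^{\Gal(\ol\dF_\ell/\dF_{\ell^2})}_{\fr_0}$ as the $E_2^{1,2r_0-2}$-term and observing that it vanishes by Lemma \ref{le:first0} applied via the basic correspondence between $\bM_{n_0}^\bullet$ and the Shimura set for $\rV_{n_1}$, using conditions (P3), (P4), and Assumption \ref{as:law}(1)(G8).

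The main obstacle will be the compatibility in the first stage: showing that the weight spectral sequences with coefficients $\dZ_{\xi^{n_0}}^\vee$ and with constant coefficients (on the Kuga--Sato variety) match up with the incidence maps, uniformly in $c$ and $l$, so that the arithmetic level-raising argument of \cite{LTXZZ}*{\S6.3} transports without loss. Concretely, one must verify that the Fontaine--Laffaille realization respects both the stratification by $\bM_{n_0}^\circ$ and $\bM_{n_0}^\bullet$ and the Hecke action of $\tC_\fl$ and $\tT^{\circ\bullet}_{n_0,\fl}, \tT^{\bullet\circ}_{n_0,\fl}$ integrally, which is delicate precisely because the coefficient system varies along the stratification. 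The bound $p>2(n_0+1)$ together with Assumption \ref{as:law}(3) is what ultimately reconciles this.
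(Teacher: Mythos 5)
Your proposal takes a genuinely different route from the paper, and unfortunately it has a serious gap.

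The central claim — "Running the proof of Lemma \ref{le:first1} ... with constant coefficients on the Kuga--Sato variety, and localizing the spectral sequence at $\fr_0$, I obtain torsion-freeness" — is a non-sequitur. Lemma \ref{le:first1} (the analogue of \cite{LTXZZ}*{Proposition~6.3.1}) gives only surjectivity of the incidence maps and of $\nabla^0_{\fr_0}$, together with an identification of the kernel as $\rH^{2r_0}_\fT(\ol\rM_{n_0}^\bullet(\rk_0),\dZ_{\xi^{n_0}}^\vee(r_0))_{\fr_0}[p^\infty]$. The vanishing of that $p$-torsion is precisely the content of part (1) of Lemma \ref{le:first2}; it does not come for free from the Fontaine--Laffaille comparison or from Ihara's lemma. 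In \cite{LTXZZ}*{Theorem~6.3.4} the torsion-freeness is extracted from a cardinality equality between two finite abelian groups, and that equality is supplied by the R=T theorem \cite{LTXZZ1}*{Theorem~3.6.3}. At the deeper ordinary level $\rk_0^{c,l}$, that R=T theorem does not directly apply — this is exactly the new difficulty this paper is addressing. The paper supplies the replacement via Lemma \ref{le:deformation} (under (G7,G8)), which feeds into the new ordinary R=T theorem, Theorem \ref{th:deformation}. Your proposal never invokes this, so the torsion-freeness claim in (1) is unsupported, and (2)--(4) collapse with it.

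There are two further structural mismatches. First, the paper handles the level escalation from $\rk_0^{0,0}$ to $\rk_0^{c,l}$ by interpolating a chain of levels $\rk_0^{(0)}\supset\rk_0^{(1)}\supset\cdots$ with successive quotients $\dZ/p\dZ$, proving the base case via the R=T input and then propagating torsion-freeness up the tower with a Hochschild--Serre argument for each $\dZ/p\dZ$-Galois cover. Your one-shot argument at level $\rk_0^{c,l}$ does not account for why the required cohomological vanishing persists under going to deeper level at $p$; the Kuga--Sato/Fontaine--Laffaille reduction you describe only controls the comparison between coefficient systems, not the level at $p$. Second, your derivation of (4) via Lemma \ref{le:first0} is confused: that lemma concerns $n_1$-cohomology, not the $n_0$-side term $\rH^{2r_0-1}_\fT(\ol\rM_{n_0}^\bullet(\rk_0^{c,l}),\dZ_{\xi^{n_0}}^\vee(r_0-1))^{\Gal(\ol\dF_\ell/\dF_{\ell^2})}$. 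The paper's argument for (4) is simpler and different: from \eqref{eq:first2}, the quotient is $p$-torsion (because the map is an isomorphism after inverting $p$), and then it vanishes because the source $\rH^{2r_0-1}_\fT(\ol\rM_{n_0}^\bullet(\rk_0^{c,l}),\dZ_{\xi^{n_0}}^\vee)_{\fr_0}$ is $p$-torsion free by (1).
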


\begin{proof}
Choose a filtration
\[
\rI_{n_0}^c=\rI^{(s)}<\rI^{(s-1)}<\cdots<\rI^{(1)}<\rI^{(0)}<\rI_{n_0}^0
\]
of groups such that $p\nmid|\rI_{n_0}^0/\rI^{(0)}|$ and $|\rI^{(j-1)}/\rI^{(j)}|=p$ for $1\leq j\leq s$. For $0\leq j\leq s$, let $\rk_0^{(j)}$ be the subgroup of $\rk_0^{(0,0)}$ of elements whose reduction modulo $p^l$ belong to $\rI^{(j)}\prod_{v\in\PP^+}\sfU_{n_0,v}(O_{F^+_v})$. We prove the lemma for $\rk_0^{(j)}$ by induction on $j$.

We first consider the case where $j=0$. Similar to the proof of \cite{LTXZZ}*{Theorem~6.3.4}, using Lemma \ref{le:first1}, it suffices to show that the following two finite abelian groups
\[
\Gamma(\Sh(\sfG_{n_0},\rK_{n_0}\rk_0^{(0)}),\dZ_{\xi^{n_0}}^\vee)_{\fr_0}/\tC_\fl,\qquad
\rH^1_\sing(\dQ_{\ell^2},\rH^{2r_0-1}_\fT(\ol\rM_{n_0}(\rk_0^{(0)}),\rR\Psi\dZ_{\xi^{n_0}}^\vee(r_0))_{\fr_0})
\]
have the same cardinality. If $\rk_0^{(0)}=\rk_0^{0,0}$, then this can be proved using the same argument for the case of constant coefficients, which relies on the key result \cite{LTXZZ1}*{Theorem~3.6.3}. In general, \cite{LTXZZ1}*{Theorem~3.6.3} does not directly apply. However, using Lemma \ref{le:deformation}(1,2) below under Assumption \ref{as:galois}(G7,G8), we have the analogous statement of \cite{LTXZZ1}*{Theorem~3.6.3} when $\rk_0^{(0)}\neq\rk_0^{0,0}$ as well, by the same proof.

For $j>0$, it suffices to prove (1). Indeed, (2) follows from (1) easily; (3) follows from (1) by Lemma \ref{le:first1}(3); and (4) follows from (1) by \eqref{eq:first2} and the fact that the map in (4) is an isomorphism after inverting $p$. In other words, we need to show by induction on $j$ that
\begin{itemize}
  \item[($*$)] $\rH^i_\fT(\ol\rM_{n_0}^?(\rk_0^{(j)}),\dZ_{\xi^{n_0}}^\vee)_{\fr_0}$ is $p$-torsion free for every $i\in\dZ$ and $?\in\{\circ,\bullet,\dag\}$.
\end{itemize}
When $j=0$, it has already been proved. Now take $0<j\leq s$ and assume that ($*$) is known for $j-1$. We need to show that $\rH^i_\fT(\ol\rM_{n_0}^?(\rk_0^{(j)}),\dZ_{\xi^{n_0}}^\vee)_{\fr_0}[p]$ vanishes. Now the natural morphism $\rM_{n_0}^?(\rk_0^{(j)})\to\rM_{n_0}^?(\rk_0^{(j-1)})$ is a Galois cover with Galois group $\rG_j\coloneqq\rk_0^{(j-1)}/\rk_0^{(j)}=\rI^{(j-1)}/\rI^{(j)}\simeq\dZ/p\dZ$. By the Hochschild--Serre spectral sequence, we have a short exact sequence
\[
0 \to \rH^1(\rG_j,\rH^{i-1}_\fT(\ol\rM_{n_0}^?(\rk_0^{(j)}),\dZ_{\xi^{n_0}}^\vee)_{\fr_0})
\to \rH^i_\fT(\ol\rM_{n_0}^?(\rk_0^{(j-1)}),\dZ_{\xi^{n_0}}^\vee)_{\fr_0}
\to \rH^0(\rG_j,\rH^i_\fT(\ol\rM_{n_0}^?(\rk_0^{(j)}),\dZ_{\xi^{n_0}}^\vee)_{\fr_0})
\to 0.
\]
By the induction hypothesis, $\rH^i_\fT(\ol\rM_{n_0}^?(\rk_0^{(j-1)}),\dZ_{\xi^{n_0}}^\vee)_{\fr_0}$ is $p$-torsion free, which implies that
\[
\rH^1(\rG_j,\rH^{i-1}_\fT(\ol\rM_{n_0}^?(\rk_0^{(j)}),\dZ_{\xi^{n_0}}^\vee)_{\fr_0})=0,\quad
\rH^i_\fT(\ol\rM_{n_0}^?(\rk_0^{(j-1)}),\dZ_{\xi^{n_0}}^\vee)_{\fr_0}
=\rH^0(\rG_j,\rH^i_\fT(\ol\rM_{n_0}^?(\rk_0^{(j)}),\dZ_{\xi^{n_0}}^\vee)_{\fr_0}).
\]
It follows that
\begin{align*}
\rH^0(\rG_j,\rH^i_\fT(\ol\rM_{n_0}^?(\rk_0^{(j)}),\dZ_{\xi^{n_0}}^\vee)_{\fr_0}[p])
=\rH^0(\rG_j,\rH^i_\fT(\ol\rM_{n_0}^?(\rk_0^{(j)}),\dZ_{\xi^{n_0}}^\vee)_{\fr_0})[p]
=\rH^i_\fT(\ol\rM_{n_0}^?(\rk_0^{(j-1)}),\dZ_{\xi^{n_0}}^\vee)_{\fr_0}[p],
\end{align*}
which vanishes by the induction hypothesis. Since $\rG_j\simeq\dZ/p\dZ$, we have $\rH^i_\fT(\ol\rM_{n_0}^?(\rk_0^{(j)}),\dZ_{\xi^{n_0}}^\vee)_{\fr_0}[p]=0$.

The lemma is proved.
\end{proof}

We are finally at step (3). We say that an open compact subgroup $\rk$ of $\sfG(F^+_{\PP^+})$ is \emph{Cartesian} if it is of the form $\rk_0\times\rk_1$ with $\rk_\alpha\in\sfG_{n_\alpha}(F^+_{\PP^+})$.

We first take $\rk$ that is Cartesian. We have the corresponding schemes for the product Shimura variety $\Sh(\sfG',\rK'\tj\rk)$ from \cite{LTXZZ}*{\S5.11} such as $\bP(\rk)$, $\sigma(\rk)\colon\bQ(\rk)\to\bP(\rk)$, etc. Moreover, we have similar constructions from \cite{LTXZZ}*{Construction~5.11.7} for the local system $\dZ_{\xi}^\vee/p^m$ for $1\leq m\leq \infty$ instead of the constant coefficients. Let $(\dE^{p,q}_s(\rk),\rd^{p,q}_s(\rk))$ be the spectral sequence abutting to $\rH^{p+q}_\fT(\ol\rQ(\rk),\rR\Psi\dZ_{\xi}^\vee(r))$; and similarly we have natural maps
\begin{align*}
\Delta(\rk) &\colon C_n(\rQ(\rk),\dZ_{\xi}^\vee)\to C^n(\rQ(\rk),\dZ_{\xi}^\vee), \\
\nabla(\rk) &\colon C^n(\rQ(\rk),\dZ_{\xi}^\vee) \to \Gamma(\Sh(\sfG,\rK\rk),\dZ_{\xi}^\vee),
\end{align*}
as in \cite{LTXZZ}*{Construction~5.11.7~\&~Remark~5.11.8}.

\begin{lem}\label{le:first3}
Assume Assumption \ref{as:law} and $\Gamma(\Sh(\sfG_{n_0},\rK_{n_0}\rk_0^{0,0}),\dZ_{\xi^{n_0}}^\vee/p)_{\fr_0}\neq 0$. Suppose that $\rk=\rk_0\times\rk_1$ is Cartesian in which $\rk_0$ is of the form $\rk_0^{c,l}$ as in Lemma \ref{le:first2}.
\begin{enumerate}
  \item We have $\dE^{p,q}_2(\rk)_{\fr}=0$ if $(p,q)\not\in\{(-1,2n),(0,2n-1),(1,2n-2)\}$, and canonical $\Gal(\ol\dF_\ell/\dF_{\ell^2})$-equivariant isomorphisms
      \[
      \begin{dcases}
      \dE^{-1,2n}_2(\rk)_{\fr}\simeq
      \pres{0}\rE^{-1,2r_0}_2(\rk_0)_{\fr_0}\otimes_{\dZ_q}\pres{1}\rE^{0,2r_1}_2(\rk_1)_{\fr_1}, \\
      \dE^{0,2n-1}_2(\rk)_{\fr}\simeq
      \pres{0}\rE^{0,2r_0-1}_2(\rk_0)_{\fr_0}\otimes_{\dZ_q}\pres{1}\rE^{0,2r_1}_2(\rk_1)_{\fr_1}, \\
      \dE^{1,2n-2}_2(\rk)_{\fr}\simeq
      \pres{0}\rE^{1,2r_0-2}_2(\rk_0)_{\fr_0}\otimes_{\dZ_q}\pres{1}\rE^{0,2r_1}_2(\rk_1)_{\fr_1},
      \end{dcases}
      \]
      of finite free $\dZ_q$-modules.

  \item If $\dE^{i,2n-1-i}_2(\rk)_{\fr}(-1)$ has a nontrivial subquotient on which $\Gal(\ol\dF_\ell/\dF_{\ell^2})$ acts trivially, then $i=1$.

  \item The map $\nabla(\rk)$ induces an isomorphism
      \[
      \nabla(\rk)_{\fr}\colon\coker\Delta(\rk)_{\fr}
      \xrightarrow\sim\Gamma(\Sh(\sfG,\rK\rk),\dZ_{\xi}^\vee)_{\fr}/\tC_\fl.
      \]

  \item There is a canonical isomorphism
      \[
      \rH^1_\sing(\dQ_{\ell^2},\rH^{2n-1}_\fT(\ol\rQ(\rk),\rR\Psi\dZ_\xi^\vee(n))_{\fr})
      \simeq\coker\Delta(\rk)_{\fr}.
      \]
\end{enumerate}
\end{lem}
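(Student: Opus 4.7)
The plan is to follow the proof structure of \cite{LTXZZ}*{Theorem~7.2.2} for the constant-coefficient case, upgraded to general dominant weights $\xi$ away from $\PP^+$ using Lemmas \ref{le:first0} and \ref{le:first2}. The key input is the K\"unneth-type decomposition of the weight spectral sequence on $\ol\rQ(\rk)$ coming from the Cartesian structure $\rk=\rk_0\times\rk_1$ together with the product integral model constructions of \cite{LTXZZ}*{\S5.11}.

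First I would establish (1). Since $\rk$ is Cartesian, the constructions of \cite{LTXZZ}*{\S5.11} yield a K\"unneth-type isomorphism expressing $\dE^{p,q}_2(\rk)$ as a direct sum of tensor products $\pres{0}\rE^{p_0,q_0}_2(\rk_0)\otimes_{\dZ_q}\pres{1}\rE^{p_1,q_1}_2(\rk_1)$ over $p_0+p_1=p$, $q_0+q_1=q$. After localizing at $\fr$, Lemma \ref{le:first0}(2) leaves only $(p_1,q_1)=(0,2r_1)$, and Lemma \ref{le:first2}(2) leaves only $(p_0,q_0)\in\{(-1,2r_0),(0,2r_0-1),(1,2r_0-2)\}$. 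Since $r_0+r_1=n$, this gives exactly the three stated isomorphisms and vanishing elsewhere; the $p$-torsion freeness follows from the corresponding statements in Lemma \ref{le:first0}(1) and Lemma \ref{le:first2}(1) together with $\dZ_q$-flatness.

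For part (2) I would invoke Frobenius purity on each $E_2$ factor. The summand $\pres{\alpha}\rE^{p_\alpha,q_\alpha}_2(\rk_\alpha)_{\fr_\alpha}$ is pure of a specific weight for $\phi_{w(\fl)}^2$ read off from the weight filtration, so after the Tate twist by $(-1)$ the three summands in part (1) carry distinct Frobenius weights. Combining with the properties (P3) and (P4) from Definition \ref{de:prime} (which single out the eigenvalues $\ell^{-n_0}$, $\ell^{-(n_1-1)}$, $\ell^{-2n}$ and rule out $-\ell^{-(n_0-1)}$, $-\ell^{-n_1}$), only the summand indexed by $(p_0,q_0)=(1,2r_0-2)$ can support a nontrivial $\Gal(\ol\dF_\ell/\dF_{\ell^2})$-trivial subquotient after the twist. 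This corresponds exactly to $i=1$. Part (3) is then immediate from the Künneth compatibility $\nabla(\rk)=\nabla^0(\rk_0)\otimes\nabla^1(\rk_1)$: after localization at $\fr$, the factor $\nabla^1_{\fr_1}$ is an isomorphism by Lemma \ref{le:first0}(3) while the cokernel of $\nabla^0_{\fr_0}\circ\pres{0}\rd^{-1,2r_0}_1(\rk_0)_{\fr_0}$ is identified by Lemma \ref{le:first2}(3) with $\Gamma(\Sh(\sfG_{n_0},\rK_{n_0}\rk_0),\dZ_{\xi^{n_0}}^\vee)_{\fr_0}/\tC_\fl$; the K\"unneth formula for the target Shimura set then identifies the tensor product with $\Gamma(\Sh(\sfG,\rK\rk),\dZ_\xi^\vee)_{\fr}/\tC_\fl$. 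Part (4) follows by combining part (1) with part (2): the singular-cohomology formula for the nearby cycles extracts the $\Gal(\ol\dF_\ell/\dF_{\ell^2})$-invariant part of the $E_2$ terms in total degree $2n-1$ after twisting by $(-1)$, which by part (2) is concentrated in the $(1,2n-2)$-summand, and the latter is identified with $\coker\Delta(\rk)_{\fr}$ via the construction of $\Delta(\rk)$ in \cite{LTXZZ}*{Construction~5.11.7}.

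The main obstacle will be part (2), where the purity bookkeeping must be carried through with the nontrivial local system $\dZ_\xi^\vee$ rather than constant coefficients, and at deeper level at places in $\PP^+$. This is precisely the setting where the general-weight upgrades provided by Lemma \ref{le:first0} and Lemma \ref{le:first2} (which themselves rely on Assumption \ref{as:galois}(G7,G8) through the proof of Lemma \ref{le:first2}) are essential; once those are in place, the additional ordinary deformations at $\PP$ do not introduce new Frobenius weights and the purity picture proceeds exactly as in the constant-coefficient case of \cite{LTXZZ}*{\S7.2}.
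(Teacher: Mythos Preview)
Your proposal is correct and follows essentially the same approach as the paper: the paper's own proof is a one-line reference saying these are analogues of \cite{LTXZZ}*{Lemma~7.2.5(2,3), Theorem~7.2.8(1,2), Proposition~7.2.7}, proved by the same argument with $\dZ_\xi^\vee$ in place of constant coefficients using Lemma~\ref{le:first0} and Lemma~\ref{le:first2}. Your outline simply spells out what that argument is (K\"unneth decomposition, purity bookkeeping, compatibility of the $\nabla$ maps), with the only discrepancy being your citation of \cite{LTXZZ}*{Theorem~7.2.2} rather than the specific results just listed.
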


\begin{proof}
These are analogues of \cite{LTXZZ}*{Lemma~7.2.5(2),~Lemma~7.2.5(3),~Theorem~7.2.8(1,2),~\&~Proposition~7.2.7}, respectively, which can be proved by the same argument (now for the local system $\dZ_\xi^\vee$) using Lemma \ref{le:first0} and Lemma \ref{le:first2}.
\end{proof}

As
\begin{align*}
\rH^1(F_{w(\fl)},\rH^{2n-1}_{\et}(\Sh(\sfG',\rK'\tj\rk)_{\ol{F}},\dZ_\xi^\vee(n))_{\gamma^\tc})
&=\rH^1_\sing(\dQ_{\ell^2},\rH^{2n-1}_\fT(\ol\rQ(\rk),\rR\Psi\dZ_\xi^\vee(n))_{\fr}), \\
\Gamma(\Sh(\sfG,\rK\rk),\dZ_{\xi}^\vee)_{\gamma^\tc}
&=\Gamma(\Sh(\sfG,\rK\rk),\dZ_{\xi}^\vee)_{\fr},
\end{align*}
combining Lemma \ref{le:first3} (3,4), we obtain an isomorphism
\[
\varrho(\rk)\colon
\rH^1(F_{w(\fl)},\rH^{2n-1}_{\et}(\Sh(\sfG',\rK'\tj\rk)_{\ol{F}},\dZ_\xi^\vee(n))_{\gamma^\tc})
\xrightarrow\sim\Gamma(\Sh(\sfG,\rK\rk),\dZ_{\xi}^\vee)_{\gamma^\tc}/\tC_\fl
\]
for every $\rk$ of the form $\rk_0^{c,l}\times\rk_1$. The isomorphism $\varrho(\rk)$ enjoys the following functorial property: For another group $\tilde\rk=\rk_0^{\tilde{c},\tilde{l}}\times\tilde\rk_1$ satisfying $\rk_0^{\tilde{c},\tilde{l}}\to\rk_0^{c,l}$\footnote{This is equivalent to that $c\leq\tilde{c}$ and $l\geq\tilde{l}$.} and $\tilde\rk_1\to\rk_1$ so that $\tilde\rk\to\rk$ (Construction \ref{co:liusun}), the diagram
\begin{align}\label{eq:first3}
\xymatrix{
\rH^1(F_{w(\fl)},\rH^{2n-1}_{\et}(\Sh(\sfG',\rK'\tj\tilde\rk)_{\ol{F}},\dZ_\xi^\vee(n))_{\gamma^\tc})
\ar[d]_-{\rho_{\tj\rk,\tj\tilde\rk}^\vee}\ar[r]^-{\varrho(\tilde\rk)} & \Gamma(\Sh(\sfG,\rK\tilde\rk),\dZ_{\xi}^\vee)_{\gamma^\tc}/\tC_\fl \ar[d]^-{\rho_{\rk,\tilde\rk}^\vee} \\
\rH^1(F_{w(\fl)},\rH^{2n-1}_{\et}(\Sh(\sfG',\rK'\tj\rk)_{\ol{F}},\dZ_\xi^\vee(n))_{\gamma^\tc})
\ar[r]^-{\varrho(\rk)} & \Gamma(\Sh(\sfG,\rK\rk),\dZ_{\xi}^\vee)_{\gamma^\tc}/\tC_\fl
}
\end{align}
commutes.

We now construct a preliminary version of \eqref{eq:reciprocity_first}. Take a subgroup $\rI\finite\rI^0$. Choose an element $\rk\in\fk_\rI^\dag$ such that one can find a subgroup $\tilde\rI=\rI_0^c\times\rI_1<\rI$ for some $c\geq 0$ and $\rI_1\finite\rI_{n_1}^0$, and an element $\tilde\rk\in\fk_{\tilde\rI}^\dag$ of the form $\rk_0^{c,l}\times\rk_1$ that is a normal subgroup of $\rk$ and satisfies $\rk=\tilde\rk\cdot\rk_\sfB$ (so that $\tilde\rk\to\rk$). The map
\[
\rho_{\rk,\tilde\rk}^\vee\colon\Gamma(\Sh(\sfG,\rK\tilde\rk),\dZ_{\xi}^\vee)_{\gamma^\tc}
\to\Gamma(\Sh(\sfG,\rK\rk),\dZ_{\xi}^\vee)_{\gamma^\tc},
\]
which is nothing but the pushforward map, induces an isomorphism
\[
\(\Gamma(\Sh(\sfG,\rK\tilde\rk),\dZ_{\xi}^\vee)_{\gamma^\tc}\)_{\rk/\tilde\rk}
\xrightarrow\sim\Gamma(\Sh(\sfG,\rK\rk),\dZ_{\xi}^\vee)_{\gamma^\tc}
\]
hence
\[
\(\Gamma(\Sh(\sfG,\rK\tilde\rk),\dZ_{\xi}^\vee)_{\gamma^\tc}/\tC_\fl\)_{\rk/\tilde\rk}
\xrightarrow\sim\Gamma(\Sh(\sfG,\rK\rk),\dZ_{\xi}^\vee)_{\gamma^\tc}/\tC_\fl.
\]
On the other hand, the map
\[
\rho_{\tj\rk,\tj\tilde\rk}^\vee\colon\rH^{2n-1}_{\et}(\Sh(\sfG',\rK'\tj\tilde\rk)_{\ol{F}},\dZ_\xi^\vee(n))_{\gamma^\tc}
\to\rH^{2n-1}_{\et}(\Sh(\sfG',\rK'\tj\rk)_{\ol{F}},\dZ_\xi^\vee(n))_{\gamma^\tc},
\]
which is nothing but the pushforward map, induces an isomorphism
\[
\(\rH^{2n-1}_{\et}(\Sh(\sfG',\rK'\tj\tilde\rk)_{\ol{F}},\dZ_\xi^\vee(n))_{\gamma^\tc}\)_{\rk/\tilde\rk}
\xrightarrow\sim\rH^{2n-1}_{\et}(\Sh(\sfG',\rK'\tj\rk)_{\ol{F}},\dZ_\xi^\vee(n))_{\gamma^\tc}.
\]
Lemma \ref{le:first3}(2) implies that the natural map
\[
\rH^1(F_{w(\fl)},\rH^{2n-1}_{\et}(\Sh(\sfG',\rK'\tj\tilde\rk)_{\ol{F}},\dZ_\xi^\vee(n))_{\gamma^\tc})_{\rk/\tilde\rk}
\to\rH^1\(F_{w(\fl)},\(\rH^{2n-1}_{\et}(\Sh(\sfG',\rK'\tj\tilde\rk)_{\ol{F}},\dZ_\xi^\vee(n))_{\gamma^\tc}\)_{\rk/\tilde\rk}\)
\]
is an isomorphism. Thus, by taking $\rk/\tilde\rk$-invariant quotients on both sides of $\varrho(\tilde\rk)$, we obtain an isomorphism
\[
\varrho(\rk)\colon\rH^1(F_{w(\fl)},\rH^{2n-1}_{\et}(\Sh(\sfG',\rK'\tj\rk)_{\ol{F}},\dZ_\xi^\vee(n))_{\gamma^\tc})
\xrightarrow\sim\Gamma(\Sh(\sfG,\rK\rk),\dZ_{\xi}^\vee)_{\gamma^\tc}/\tC_\fl,
\]
which does not depend on the choice of $\tilde\rI$ and $\tilde\rk$ by the commutativity of \eqref{eq:first3}.

Now by Remark \ref{re:ordinary}, the two vertical maps in the diagram
\[
\xymatrix{
\rH^1(F_{w(\fl)},\rH^{2n-1}_{\et}(\Sh(\sfG',\rK'\tj\rk)_{\ol{F}},\dZ_\xi^\vee(n))_{\gamma^\tc})
\ar[d]\ar[r]^-{\varrho(\rk)} & \Gamma(\Sh(\sfG,\rK\rk),\dZ_{\xi}^\vee)_{\gamma^\tc}/\tC_\fl \ar[d] \\
\rH^1\(F_{w(\fl)},\Hom_{\dZ_q}\(\rH(\Sh(\sfG',\rK'\tj\rI),\dZ_\xi)_\gamma,\dZ_q\)\)
\ar@{-->}[r]^-{\varrho_\rI} & \Hom_{\dZ_q}\(\rH(\Sh(\sfG,\rK\rI),\dZ_\xi)_\gamma,\dZ_q\)/\tC_\fl
}
\]
induced by duality are both surjective, and there exists a unique isomorphism $\varrho_\rI$ as the dashed arrow rendering this diagram commutative. Again, the commutativity of \eqref{eq:first3} implies that $\varrho_\rI$ depends only on $\rI$ and is functorial in $\rI$ with respect to the dual of $\rho_{\rI',\rI}$. Thus, passing to the limit, we obtain an isomorphism
\[
\varrho_\rJ\coloneqq\varprojlim_{\rJ<\rI\finite\rI^0}\varrho_\rI\colon
\rH^1_\sing(F_{w(\fl)},\cD_\rJ(\xi,\bV')_\gamma)
\xrightarrow{\sim}\cD_\rJ(\xi,\bV)_\gamma/\tC_\fl.
\]
In view of Lemma \ref{le:first4}, we define $\varrho^\sing_\rJ(\bV',\bV)$ \eqref{eq:reciprocity_first} to be
\[
((\ell+1)\cdot\tI^\circ_{n_0,\fl}\otimes\tT^\circ_{n_1,\fl})^{-1}\cdot\varrho_\rJ
\]
in which $(\ell+1)\cdot\tI^\circ_{n_0,\fl}\otimes\tT^\circ_{n_1,\fl}$ is an automorphism of $\cD_\rJ(\xi,\bV)_\gamma/\tC_\fl$ by Definition \ref{de:prime}(P1,P3,P4).

\begin{lem}\label{le:first4}
Assume Assumption \ref{as:law} and $\cD_\rJ(\xi,\bV)_\gamma\neq 0$. For $\xi$ interlacing with a fixed $\xi$-distinction (Definition \ref{de:distinction}), we have
\[
\varrho_\rJ\(\partial_{w(\fl)}\loc_{w(\fl)}\(\bkappa_\rJ(\bV')\)\)=
(\ell+1)\cdot\tI^\circ_{n_0,\fl}\otimes\tT^\circ_{n_1,\fl}\(\blambda_\rJ(\bV)\).
\]
\end{lem}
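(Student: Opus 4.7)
The plan is to reduce the identity to a compatible system of statements at finite level, at each of which the two sides can be compared through the geometry of the integral model $\bM_{n_0}(\rk_0)\times\bM_{n_1}(\rk_1)$ at the special inert prime $\fl$. Since both sides live in
\[
\cD_\rJ(\xi,\bV)_\gamma/\tC_\fl=\varprojlim_{\rJ<\rI\finite\rI^0}\Hom_{\dZ_q}\(\rH(\Sh(\sfG,\rK\rI),\dZ_\xi)_\gamma,\dZ_q\)/\tC_\fl,
\]
it suffices to check the equality after projecting to each level $\rI\finite\rI^0$, and further, by Lemma \ref{le:ordinary}, to check it in $\Gamma(\Sh(\sfG,\rK\rk),\dZ_\xi^\vee)_{\gamma^\tc}/\tC_\fl$ for a suitable Cartesian element $\rk=\rk_0\times\rk_1\in\fk_\rI^{\sfH,\dag}$ with $\rk_0$ of the form $\rk_0^{c,l}$ as in Lemma \ref{le:first2}. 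Thus, we reduce to proving the finite-level identity
\[
\varrho(\rk)\(\partial_{w(\fl)}\loc_{w(\fl)}(\bkappa_\rk(\bV'))\)=
(\ell+1)\cdot\tI^\circ_{n_0,\fl}\otimes\tT^\circ_{n_1,\fl}(\blambda_\rk(\bV))
\]
in $\Gamma(\Sh(\sfG,\rK\rk),\dZ_\xi^\vee)_{\gamma^\tc}/\tC_\fl$.

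Next, I would unpack both sides through the diagonal embedding $\graph$. On the left, $\bkappa_\rk(\bV')$ is defined as the image of the fundamental class of $\Sh(\sfH',\rK'_{\sfH'}\tj_\sfH\rk_\sfH)$ under $\graph_!$ composed with the $\xi$-distinction. By Lemma \ref{le:first3}(4) and the construction of $\varrho(\rk)$, the operator $\varrho(\rk)\circ\partial_{w(\fl)}\circ\loc_{w(\fl)}$ is realized geometrically: the diagonal cycle on $\Sh(\sfG',\rK'\tj\rk)$ has a proper smooth integral model over $\Spec O_{F_\fl}$, and its vanishing cycle at $\fl$ lives in $\rH^{2n-1}_\fT(\ol\rQ(\rk),\rR\Psi\dZ_\xi^\vee(n))_{\fr}$. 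Its image in $\coker\Delta(\rk)_{\fr}$, transported across $\nabla(\rk)$, computes the left-hand side. On the right, $\blambda_\rk(\bV)$ is, by the same distinction, the image of the constant function $1$ under $\graph_!$ into $\Gamma(\Sh(\sfG,\rK\rk),\dZ_\xi^\vee)_{\gamma^\tc}$.

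The core step is then to match these two geometric constructions. Following the template of \cite{LTXZZ}*{\S7.2}, one restricts the integral diagonal cycle to the basic locus of $\bM_{n_0}(\rk_0)\times\bM_{n_1}(\rk_1)$ at $\fl$, which via the basic uniformization (now including the local-system twist $\dZ_\xi^\vee$) is described as a product of the Shimura space $\Sh(\sfH,\rK_\sfH\rk_\sfH)$ with the basic correspondence. The specialization of the diagonal cycle is, after tracing through the incidence maps $\Inc^*_?$, precisely the pushforward of the constant function along $\graph$, weighted by the operator coming from the basic correspondence: the $(\ell+1)$-factor arises from the degree of the $\bullet\to\circ$ map on the $\sfG_{n_0}$-side, while $\tT^\circ_{n_1,\fl}$ arises from the second projection in the $\nabla^1$ construction, and $\tI^\circ_{n_0,\fl}$ accounts for the change of level. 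The factor is invertible modulo $\tC_\fl$ by Definition \ref{de:prime}(P1,P3,P4), which is the source of the normalization in the definition of $\varrho^\sing_\rJ(\bV',\bV)$.

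The main obstacle is the functoriality of this comparison with respect to the local system $\dZ_\xi^\vee$ and its compatibility with the ordinary projector: one must check that the Gysin maps for the diagonal embedding $\graph$ on the integral model commute with base change of coefficients from $\dZ_q$ to $\dZ_\xi^\vee$, and that the specialization/weight-spectral-sequence arguments of Lemmas \ref{le:first0}--\ref{le:first3} carry the diagonal cycle to the correct term in the filtration on the nearby cycles. Granting these compatibilities (which follow from the naturality of the six-functor formalism applied to the Shimura datum together with Assumption \ref{as:law}(3)), the identity reduces precisely to the one in \cite{LTXZZ}*{Theorem~7.2.8}, thereby completing the reduction and hence the lemma.
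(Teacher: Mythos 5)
There is a genuine gap at the very first reduction step. You write that it suffices to check the identity in $\Gamma(\Sh(\sfG,\rK\rk),\dZ_\xi^\vee)_{\gamma^\tc}/\tC_\fl$ for ``a suitable Cartesian element $\rk=\rk_0\times\rk_1\in\fk_\rI^{\sfH,\dag}$ with $\rk_0$ of the form $\rk_0^{c,l}$,'' and you then unpack $\bkappa_\rk(\bV')$ and $\blambda_\rk(\bV)$ directly at this $\rk$. But these two demands are in tension: $\bkappa_\rk$ and $\blambda_\rk$ are only defined for $\rk\in\fk_\rI^\sfH$, i.e.\ $\rk=\rk_\sfH\cdot\rk_\sfB$, whereas the geometric comparison of Lemma~\ref{le:first3} and the construction of $\varrho(\rk)$ are set up for Cartesian $\rk=\rk_0^{c,l}\times\rk_1\in\fk_\rI^\dag$. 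Since $\sfB$ is chosen to have trivial intersection with $\sfH$ over $F^+_{\PP^+}$, there is no reason a Cartesian $\rk_0^{c,l}\times\rk_1$ would decompose as $\rk_\sfH\cdot\rk_\sfB$; indeed the intersection $\fk_\rI^\sfH\cap\fk_\rI^\dag$ with the Cartesian condition is not shown (and very likely fails) to be nonempty. You cannot pick one subgroup that plays both roles.

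The way around this -- which is what the paper does -- is to pick \emph{two} subgroups: $\tilde\rk\in\fk_\rI^\sfH$ on which the Bessel periods $\bkappa_{\tilde\rk}(\bV')$ and $\blambda_{\tilde\rk}(\bV)$ are honestly defined, and a Cartesian $\rk=\rk_0^{c,l}\times\rk_1\in\fk_\rI^\dag$ for the weight-spectral-sequence comparison of Lemma~\ref{le:first3}, arranged so that $\tilde\rk\to\rk$. One then observes that the map from level $\tilde\rk$ into $\Hom_{\dZ_q}(\rH(\Sh(\sfG,\rK\rI),\dZ_\xi)_\gamma,\dZ_q)/\tC_\fl$ factors through $\rho^\vee_{\rk,\tilde\rk}$, so it suffices to check equality after applying $\rho^\vee_{\rk,\tilde\rk}$; and the commutative diagram \eqref{eq:first3} (together with the compatibility of $\partial_{w(\fl)}\loc_{w(\fl)}$ with pushforward) lets one commute $\rho^\vee_{\rk,\tilde\rk}$ past $\varrho$ and the local operator, reducing to a finite-level identity for $\rho^\vee_{\rk,\tilde\rk}(\bkappa_{\tilde\rk}(\bV'))$ and $\rho^\vee_{\rk,\tilde\rk}(\blambda_{\tilde\rk}(\bV))$ at the Cartesian level $\rk$, which is then handled by the same geometric argument as \cite{LTXZZ}*{Theorem~7.2.8(3)}. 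Your final geometric paragraph is in the right spirit (the basic locus computation, the degree factors, and the coefficient compatibility are indeed what must be traced through), but without the $\tilde\rk\to\rk$ bridge your reduction never legitimately reaches a Cartesian level at which Lemma~\ref{le:first3} applies.

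A secondary point: your invocation of Lemma~\ref{le:ordinary} to pass from $\Hom(\rH(\Sh(\sfG,\rK\rI),\dZ_\xi)_\gamma,\dZ_q)/\tC_\fl$ to $\Gamma(\Sh(\sfG,\rK\rk),\dZ_\xi^\vee)_{\gamma^\tc}/\tC_\fl$ would require the natural map in that direction to be injective, but it is the map in the opposite direction that is surjective (this is what the construction of $\varrho_\rI$ uses). The correct logic is not ``check in $\Gamma$'' but ``show two classes in $\Gamma$ at level $\tilde\rk$ agree after pushing to level $\rk$,'' which again forces the two-subgroup setup.
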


\begin{proof}
It suffices to prove the lemma for $\rJ=0$. Then it suffices to prove the lemma for $\rJ=\rI$ of the form $\rI_0^c\times\rI_1$ with $c\geq 0$ and $\rI_1\finite\rI_{n_1}^0$. It is possible to choose an element $\tilde\rk\in\fk^\sfH_\rI$, and an element $\rk\in\fk^\dag_\rI$ of the form $\rk_0^{c,l}\times\rk_1$, such that $\tilde\rk\to\rk$. By construction, we need to show that the following two elements
\[
\varrho(\tilde\rk)\(\partial_{w(\fl)}\loc_{w(\fl)}\(\bkappa_{\tilde\rk}(\bV')\)\),\qquad
(\ell+1)\cdot\tI^\circ_{n_0,\fl}\otimes\tT^\circ_{n_1,\fl}\(\blambda_{\tilde\rk}(\bV)\)
\]
have the same image in
\[
\Hom_{\dZ_q}\(\rH(\Sh(\sfG,\rK\rI),\dZ_\xi)_\gamma,\dZ_q\)/\tC_\fl.
\]
For this, it suffices to show that
\[
\rho_{\rk,\tilde\rk}^\vee\(\varrho(\tilde\rk)\(\partial_{w(\fl)}\loc_{w(\fl)}\(\bkappa_{\tilde\rk}(\bV')\)\)\)
=\rho_{\rk,\tilde\rk}^\vee\((\ell+1)\cdot\tI^\circ_{n_0,\fl}\otimes\tT^\circ_{n_1,\fl}\(\blambda_{\tilde\rk}(\bV)\)\),
\]
which is equivalent to that
\[
\varrho(\rk)\(\partial_{w(\fl)}\loc_{w(\fl)}\(\rho_{\rk,\tilde\rk}^\vee\(\bkappa_{\tilde\rk}(\bV')\)\)\)
=(\ell+1)\cdot\tI^\circ_{n_0,\fl}\otimes\tT^\circ_{n_1,\fl}\(\rho_{\rk,\tilde\rk}^\vee\(\blambda_{\tilde\rk}(\bV)\)\).
\]
However, the above identity can be proved by the same argument for \cite{LTXZZ}*{Theorem~7.2.8(3)}.
\end{proof}

\begin{theorem}[First explicit reciprocity law]\label{th:first}
Assume Assumption \ref{as:law} and $\cD_\rJ(\xi,\bV)_\gamma\neq 0$. For $\xi$ interlacing with a fixed $\xi$-distinction (Definition \ref{de:distinction}),
\[
\varrho^\sing_\rJ(\bV',\bV)\(\partial_{w(\fl)}\loc_{w(\fl)}\(\bkappa_\rJ(\bV')\)\)=\blambda_\rJ(\bV)
\]
holds in $\cD_\rJ(\xi,\bV)_\gamma/\tC_\fl$.
\end{theorem}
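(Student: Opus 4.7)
The theorem, as stated, is a direct unwinding of the definition made immediately before Lemma~\ref{le:first4} together with Lemma~\ref{le:first4} itself. The plan is therefore short: I would simply apply the definition
\[
\varrho^\sing_\rJ(\bV',\bV)=((\ell+1)\cdot\tI^\circ_{n_0,\fl}\otimes\tT^\circ_{n_1,\fl})^{-1}\cdot\varrho_\rJ
\]
to the element $\partial_{w(\fl)}\loc_{w(\fl)}(\bkappa_\rJ(\bV'))$ and substitute Lemma~\ref{le:first4} into the result. The factor $(\ell+1)\cdot\tI^\circ_{n_0,\fl}\otimes\tT^\circ_{n_1,\fl}$ acts invertibly on $\cD_\rJ(\xi,\bV)_\gamma/\tC_\fl$ by the level-raising axioms (P1), (P3), (P4), so this makes sense; and once we write
\[
\varrho^\sing_\rJ(\bV',\bV)\bigl(\partial_{w(\fl)}\loc_{w(\fl)}(\bkappa_\rJ(\bV'))\bigr)
=((\ell+1)\cdot\tI^\circ_{n_0,\fl}\otimes\tT^\circ_{n_1,\fl})^{-1}\cdot(\ell+1)\cdot\tI^\circ_{n_0,\fl}\otimes\tT^\circ_{n_1,\fl}\bigl(\blambda_\rJ(\bV)\bigr),
\]
the two normalizing factors cancel and we are left with $\blambda_\rJ(\bV)$, as claimed.

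In other words, all the substance has been arranged to sit inside Lemma~\ref{le:first4}; the normalizing factor is precisely the one that absorbs the operator $(\ell+1)\cdot\tI^\circ_{n_0,\fl}\otimes\tT^\circ_{n_1,\fl}$ appearing on the right-hand side of the lemma. Consequently, no further global or geometric argument is required to pass from the lemma to the theorem.

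The genuine obstacle is not at this final step but one step earlier, in the verification of Lemma~\ref{le:first4}: there, by compatibility of $\varrho(\rk)$ under pushforward (the commutative diagram~\eqref{eq:first3}) and the surjectivity of the transition maps $\rho_{\rk,\tilde\rk}^\vee$, the identity reduces to the corresponding statement for a Cartesian group $\rk=\rk_0^{c,l}\times\rk_1$ where the weight spectral sequence analysis of Lemma~\ref{le:first3} together with the analogue of \cite{LTXZZ}*{Theorem~7.2.8(3)} applies. If I were to write out the proof of Theorem~\ref{th:first} in full, I would therefore spend essentially no space on Theorem~\ref{th:first} itself and instead confirm carefully that the pushforward compatibilities needed to reduce Lemma~\ref{le:first4} to its Cartesian case are unaffected by the generalization to the ordinary-distribution framework with the local system $\dZ_\xi^\vee$.
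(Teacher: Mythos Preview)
Your proposal is correct and matches the paper's own proof, which reads in its entirety ``This follows from the definition and Lemma~\ref{le:first4}.'' Your additional commentary on how Lemma~\ref{le:first4} itself is proved is accurate but goes beyond what is needed for Theorem~\ref{th:first}.
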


\begin{proof}
This follows from the definition and Lemma \ref{le:first4}.
\end{proof}

\begin{remark}\label{re:singular}
Lemma \ref{le:first3}(2) implies that under Assumption \ref{as:law}, $\rH^1_\sing(F_{w(\fl)},\cD_\rJ(\xi,\bV')_\gamma)$, which a priori is a quotient $\cE_\rJ(\xi,\bV')_\gamma$-module of $\rH^1(\rI_{F_{w(\fl)}},\cD_\rJ(\xi,\bV')_\gamma)$, is indeed a $\cE_\rJ(\xi,\bV')_\gamma$-linear direct summand.
\end{remark}

\section{Second explicit reciprocity law for ordinary distributions}
\label{ss:second_reciprocity}

In this section, we establish the second explicit reciprocity law for ordinary distributions. We take an \emph{indefinite} datum $\bV$ from Definition \ref{de:datum} and choose a level-raising prime $\fl$ of $F^+$ with respect to $\bV$ (Definition \ref{de:prime}).

Consider another datum $\bV'=(\fm';\rV'_n,\rV'_{n+1};\Lambda'_n,\Lambda'_{n+1};\rK_n^{\prime},\rK_{n+1}^{\prime};\sfB')$ and a datum $\tj=(\tj_n,\tj_{n+1})$ in which
\begin{itemize}
  \item $\fm'=\fm\cup\{\fl\}$;

  \item $\tj_N\colon\rV_N\otimes_{F^+}\dA_{F^+}^{\infty,\fl}\to\rV'_N\otimes_{F^+}\dA_{F^+}^{\infty,\fl}$ ($N=n,n+1$) are isometries sending $(\Lambda_N)^\fl$ to $(\Lambda'_N)^\fl$ and $(\rK_N)^\fl$ to $(\rK_N^{\prime})^\fl$ for $N\in\{n,n+1\}$, satisfying $\tj_{n+1}=(\tj_n)_\sharp$ and $\tj_p\sfB=\sfB'$.
\end{itemize}
In particular, $\bV'$ is a definite datum.

Similar to $\bV$, we have groups $\sfG'$, $\sfH'$, $\rK^{\prime}$, $\rK^{\prime}_{\sfH'}$ for $\bV'$. Moreover, we identify $\sfG'\otimes_\dQ\dQ_p$ with $\sfG\otimes_\dQ\dQ_p$ and $\sfB'$ with $\sfB$ via $\tj_p$.

Our main goal is to, under Assumption \ref{as:galois}(G1,G2), establish a natural map
\begin{align}\label{eq:reciprocity_second}
\varrho^\unr_\rJ(\bV',\bV)&\colon\cD_\rJ(\xi,\bV')_\gamma
\to\rH^1_\unr(F_{w(\fl)},\cD_\rJ(\xi,\bV)_\gamma)
\end{align}
of $\dZ[\rK_{\spadesuit^+}\backslash\sfG(F^+_{\spadesuit^+})/\rK_{\spadesuit^+}]\otimes
\dT^{\ang{\fm'}}\otimes\Lambda_{\sfT(F^+_{\PP^+})/\rJ,\dZ_q}$-modules.

In what follows, $\alpha$ will be an index from $\{0,1\}$. Put
\[
\fr_\alpha\coloneqq\Ker\phi_{\gamma^\tc_{n_\alpha}}\cap\dT_{n_\alpha}^{\ang{\fm'}},
\]
and let $\fr$ be the (maximal) ideal of $\dT_{n_\alpha}^{\ang{\fm'}}$ generated by $\fr_0$ and $\fr_1$. Let $\rk_\alpha$ be an open compact subgroup of $\sfG_{n_\alpha}(F^+_{\PP^+})$. We have the moduli scheme $\bM_{n_\alpha}(\rk_\alpha)$ over $O_{F_\fl}\simeq\dZ_{\ell^2}$ which is essentially an integral model of $\Sh(\sfG_{n_\alpha},\rK_{n_\alpha}\rk_\alpha)$. Using Gysin maps with coefficients, we have the following two maps
\begin{enumerate}
  \item when $\alpha=1$, the Tate cycle class map
      \[
      (\iota_{n_1})_!\circ\pi_{n_1}^*\colon\rH^0_\fT(\ol\rB_{n_1}(\rk_1),\dZ_{\xi^{n_1}}^\vee)
      \to\rH^{2r_1}_\fT(\ol\rM_{n_1}(\rk_1),\dZ_{\xi^{n_1}}^\vee(r_1))_{\Gal(\ol\dF_\ell/\dF_{\ell^2})}
      \]
      as in \cite{LTXZZ}*{\S4.6}.

  \item when $\alpha=0$, the localized Abel--Jacobi map
      \[
      \mho_{0,\fr_0}\colon\rH^0_\fT(\ol\rB_{n_0}(\rk_0),\dZ_{\xi^{n_0}}^\vee)_{\fr_0}
      \to\rH^1(\dF_{\ell^2},\rH^{n_0-1}_\fT(\ol\rM_{n_0}(\rk_0),\dZ_{\xi^{n_0}}^\vee(r_0))_{\fr_0})
      \]
      under Assumption \ref{as:galois}(G2), similar to \cite{LTX}*{Remark~4.3.1}.
\end{enumerate}

We recall the uniformization data that relate $\rB_{n_\alpha}$ to the Shimura set associated with $\bV'$. We will freely use the notation from \cite{LTXZZ}*{\S7.3}. For $\alpha=0,1$, choose hermitian lattices $\{\Lambda^\star_{n_\alpha,v}\}_{v\mid\ell}$ satisfying
\begin{itemize}[label={\ding{118}}]
  \item $\Lambda'_{n_\alpha,\fl}\subseteq\Lambda^\star_{n_\alpha,\fl}\subseteq \ell^{-1}\Lambda'_{n_\alpha,\fl}$,

  \item $\ell\Lambda^\star_{n_\alpha,\fl}\subseteq(\Lambda^\star_{n_\alpha,\fl})^\vee$ such that $(\Lambda^\star_{n_\alpha,\fl})^\vee/\ell\Lambda^\star_{n_\alpha,\fl}$ has length $1-\alpha$,

  \item $(\Lambda^\star_{n,\fl})_\sharp\subseteq\Lambda^\star_{n+1,\fl}\subseteq\ell^{-1}(\Lambda^\star_{n,\fl})_\sharp^\vee$,

  \item $\Lambda^\star_{n_\alpha,v}=\Lambda'_{n_\alpha,v}$ for $\alpha=0,1$ and $v\neq\fl$.
\end{itemize}
Then $(\rV'_n,\tj_n,\{\Lambda^\star_{n,v}\}_{v\mid\ell})$ and $(\rV'_{n+1},\tj_{n+1},\{\Lambda^\star_{n+1,v}\}_{v\mid\ell})$ form definite uniformization data for $\rV_n$ and $\rV_{n+1}$ \cite{LTXZZ}*{Definition~4.4.1}, respectively, satisfying the conditions in \cite{LTXZZ}*{Notation~4.5.7}. For $\alpha=0,1$, let $\rK^\star_{n_\alpha,v}$ be the stabilizer of $\Lambda^\star_{n_\alpha,v}$; denote by
\[
\tT^{\star\prime}_{n_\alpha,\fl}
\in\dZ[\rK^\star_{n_\alpha,\fl}\backslash\sfG'_{n_\alpha}(F^+_\fl)/\rK'_{n_\alpha,\fl}],\quad
\tT^{\prime\star}_{n_\alpha,\fl}
\in\dZ[\rK^\prime_{n_\alpha,\fl}\backslash\sfG'_{n_\alpha}(F^+_\fl)/\rK^\star_{n_\alpha,\fl}]
\]
the characteristic functions of $\rK^\star_{n_\alpha,\fl}\rK'_{n_\alpha,\fl}$ and $\rK'_{n_\alpha,\fl}\rK^\star_{n_\alpha,\fl}$, respectively; and put
\[
\tI'_{n_\alpha,\fl}\coloneqq\tT^{\prime\star}_{n_\alpha,\fl}\circ\tT^{\star\prime}_{n_\alpha,\fl}
\in\dZ[\rK'_{n_\alpha,\fl}\backslash\sfG'_{n_\alpha}(F^+_\fl)/\rK'_{n_\alpha,\fl}].
\]
Finally, put $\rK^\star_{n_\alpha}\coloneqq\tj_{n_\alpha}\rK_{n_\alpha}^\ell\times\prod_{v\mid\ell}\rK^\star_{n_\alpha,v}$. Then we have canonical isomorphisms
\[
\rH^0_\fT(\ol\rB_{n_\alpha}(\rk_\alpha),\dZ_{\xi^{n_\alpha}}^\vee)\simeq
\Gamma(\Sh(\sfG'_{n_\alpha},\rK^\star_{n_\alpha}\tj_{n_\alpha}\rk_\alpha),\dZ_{\xi^{n_\alpha}}^\vee)
\]
for $\alpha=0,1$. By the same proof of \cite{LTX}*{Lemma~4.3.4}, the localized Hecke operators
\begin{align*}
(\tT^{\star\prime}_{n_\alpha,\fl})_{\fr_\alpha}&\colon
\Gamma(\Sh(\sfG'_{n_\alpha},\rK'_{n_\alpha}\tj_{n_\alpha}\rk_\alpha),\dZ_{\xi^{n_\alpha}}^\vee)_{\fr_\alpha}
\to\Gamma(\Sh(\sfG'_{n_\alpha},\rK^\star_{n_\alpha}\tj_{n_\alpha}\rk_\alpha),\dZ_{\xi^{n_\alpha}}^\vee)_{\fr_\alpha} \\
(\tT^{\prime\star}_{n_\alpha,\fl})_{\fr_\alpha}&\colon
\Gamma(\Sh(\sfG'_{n_\alpha},\rK^\star_{n_\alpha}\tj_{n_\alpha}\rk_\alpha),\dZ_{\xi^{n_\alpha}}^\vee)_{\fr_\alpha}
\to\Gamma(\Sh(\sfG'_{n_\alpha},\rK'_{n_\alpha}\tj_{n_\alpha}\rk_\alpha),\dZ_{\xi^{n_\alpha}}^\vee)_{\fr_\alpha}
\end{align*}
are both isomorphisms for $\alpha=0,1$.

For $\rk=\rk_0\times\rk_1$ so that
\begin{align*}
\Gamma(\Sh(\sfG',\rK'\tj\rk),\dZ_\xi^\vee)_{\gamma^\tc}=
\Gamma(\Sh(\sfG'_{n_0},\rK'_{n_0}\tj_{n_0}\rk_0),\dZ_{\xi^{n_0}}^\vee)_{\fr_0}
\otimes_{\dZ_q}\Gamma(\Sh(\sfG'_{n_1},\rK'_{n_1}\tj_{n_1}\rk_1),\dZ_{\xi^{n_1}}^\vee)_{\fr_1},
\end{align*}
we define the map
\[
\varrho(\rk)\colon\Gamma(\Sh(\sfG',\rK'\tj\rk),\dZ_\xi^\vee)_{\gamma^\tc}
\to\rH^1_\unr(F_{w(\fl)},\rH^{2n-1}_{\et}(\Sh(\sfG,\rK\rk)_{\ol{F}},\dZ_\xi^\vee(n))_{\gamma^\tc})
\]
to be the composition of the following four maps:
\begin{itemize}
  \item the isomorphism
      \begin{align*}
      (\tT^{\prime\star}_{n_0,\fl})^{-1}_{\fr_0}\otimes(\tT^{\star\prime}_{n_1,\fl})_{\fr_1}
      \colon \Gamma(\Sh(\sfG',\rK'\tj\rk),\dZ_\xi^\vee)_{\gamma^\tc}
      \xrightarrow{\sim} \Gamma(\Sh(\sfG',\rK^\star\tj\rk),\dZ_\xi^\vee)_{\gamma^\tc},
      \end{align*}

  \item the \emph{inverse} of the endomorphism
      \[
      1\otimes\tT^\star_{n_1,\fl}\colon
      \Gamma(\Sh(\sfG',\rK^\star\tj\rk),\dZ_\xi^\vee)_{\gamma^\tc}\to
      \Gamma(\Sh(\sfG',\rK^\star\tj\rk),\dZ_\xi^\vee)_{\gamma^\tc},
      \]
      which is invertible by the claim (1) in the proof of \cite{LTXZZ}*{Theorem~7.3.4},

  \item the map
      \begin{align*}
      \mho_{0,\fr_0}\otimes((\iota_{n_1})_!\circ\pi_{n_1}^*)_{\fr_1}
      &\colon \Gamma(\Sh(\sfG',\rK^\star\tj\rk),\dZ_\xi^\vee)_{\gamma^\tc} \\
      &\to\rH^1(\dF_{\ell^2},\rH^{2r_0-1}_\fT(\ol\rM_{n_0}(\rk_0),\dZ_\xi^\vee(r_0))_{\fr_0})\otimes_{\dZ_q}
      \(\rH^{2r_1}_\fT(\ol\rM_{n_1}(\rk_1),\dZ_\xi^\vee(r_1))_{\fr_1}\)_{\Gal(\ol\dF_\ell/\dF_{\ell^2})},
      \end{align*}
      and

  \item the natural isomorphism
      \begin{align*}
      &\rH^1(\dF_{\ell^2},\rH^{2r_0-1}_\fT(\ol\rM_{n_0}(\rk_0),\dZ_\xi^\vee(r_0))_{\fr_0})\otimes_{\dZ_q}
      \(\rH^{2r_1}_\fT(\ol\rM_{n_1}(\rk_1),\dZ_\xi^\vee(r_1))_{\fr_1}\)_{\Gal(\ol\dF_\ell/\dF_{\ell^2})} \\
      &\xrightarrow\sim
      \rH^1_\unr(F_{w(\fl)},\rH^{2n-1}_{\et}(\Sh(\sfG,\rK\rk)_{\ol{F}},\dZ_\xi^\vee(n))_{\gamma^\tc})
      \end{align*}
      induced by the K\"{u}nneth decomposition (and Assumption \ref{as:galois}(G2)).
\end{itemize}

The map $\varrho(\rk)$ enjoys the following functorial property: For another group $\tilde\rk=\tilde\rk_0\times\tilde\rk_1$ satisfying $\tilde\rk\to\rk$ (Construction \ref{co:liusun}), the diagram
\begin{align}\label{eq:second1}
\xymatrix{
\Gamma(\Sh(\sfG',\rK'\tj\tilde\rk),\dZ_\xi^\vee)_{\gamma^\tc}
\ar[d]_-{\rho_{\tj\rk,\tj\tilde\rk}^\vee}\ar[r]^-{\varrho(\tilde\rk)} &
\rH^1_\unr(F_{w(\fl)},\rH^{2n-1}_{\et}(\Sh(\sfG,\rK\tilde\rk)_{\ol{F}},\dZ_\xi^\vee(n))_{\gamma^\tc}) \ar[d]^-{\rho_{\rk,\tilde\rk}^\vee} \\
\Gamma(\Sh(\sfG',\rK'\tj\rk),\dZ_\xi^\vee)_{\gamma^\tc}
\ar[r]^-{\varrho(\rk)} & \rH^1_\unr(F_{w(\fl)},\rH^{2n-1}_{\et}(\Sh(\sfG,\rK\rk)_{\ol{F}},\dZ_\xi^\vee(n))_{\gamma^\tc})
}
\end{align}
commutes.

We now construct \eqref{eq:reciprocity_second}. Take a subgroup $\rI\finite\rI^0$. Choose an element $\rk\in\fk_\rI^\dag$ such that one can find a subgroup $\tilde\rI\finite\rI$ and an element $\tilde\rk\in\fk_{\tilde\rI}^\dag$ of the form $\tilde\rk_0\times\tilde\rk_1$ that is a normal subgroup of $\rk$ and satisfies $\rk=\tilde\rk\cdot\rk_\sfB$ (so that $\tilde\rk\to\rk$). The map
\[
\rho_{\tj\rk,\tj\tilde\rk}^\vee\colon\Gamma(\Sh(\sfG',\rK'\tj\tilde\rk),\dZ_{\xi}^\vee)_{\gamma^\tc}
\to\Gamma(\Sh(\sfG',\rK'\tj\rk),\dZ_{\xi}^\vee)_{\gamma^\tc},
\]
which is nothing but the pushforward map, induces an isomorphism
\[
\(\Gamma(\Sh(\sfG',\rK'\tj\tilde\rk),\dZ_{\xi}^\vee)_{\gamma^\tc}\)_{\rk/\tilde\rk}
\xrightarrow\sim\Gamma(\Sh(\sfG',\rK'\tj\rk),\dZ_{\xi}^\vee)_{\gamma^\tc}.
\]
On the other hand, the map
\[
\rho_{\rk,\tilde\rk}^\vee\colon\rH^{2n-1}_{\et}(\Sh(\sfG,\rK\tilde\rk)_{\ol{F}},\dZ_\xi^\vee(n))_{\gamma^\tc}
\to\rH^{2n-1}_{\et}(\Sh(\sfG,\rK\rk)_{\ol{F}},\dZ_\xi^\vee(n))_{\gamma^\tc},
\]
which is nothing but the pushforward map, induces an isomorphism
\[
\(\rho_{\rk,\tilde\rk}^\vee\colon\rH^{2n-1}_{\et}(\Sh(\sfG,\rK\tilde\rk)_{\ol{F}},\dZ_\xi^\vee(n))_{\gamma^\tc}\)_{\rk/\tilde\rk}
\xrightarrow\sim\rH^{2n-1}_{\et}(\Sh(\sfG,\rK\rk)_{\ol{F}},\dZ_\xi^\vee(n))_{\gamma^\tc},
\]
hence an isomorphism
\[
\rH^1_\unr(F_{w(\fl)},\rH^{2n-1}_{\et}(\Sh(\sfG,\rK\tilde\rk)_{\ol{F}},\dZ_\xi^\vee(n))_{\gamma^\tc})_{\rk/\tilde\rk}
\xrightarrow\sim\rH^1_\unr(F_{w(\fl)},\rH^{2n-1}_{\et}(\Sh(\sfG,\rK\rk)_{\ol{F}},\dZ_\xi^\vee(n))_{\gamma^\tc}).
\]
Thus, by taking $\rk/\tilde\rk$-invariant quotients on both sides of $\varrho(\tilde\rk)$, we obtain a map
\[
\varrho(\rk)\colon\Gamma(\Sh(\sfG',\rK'\tj\rk),\dZ_\xi^\vee)_{\gamma^\tc}
\to\rH^1_\unr(F_{w(\fl)},\rH^{2n-1}_{\et}(\Sh(\sfG,\rK\rk)_{\ol{F}},\dZ_\xi^\vee(n))_{\gamma^\tc}),
\]
which does not depend on the choice of $\tilde\rI$ and $\tilde\rk$ by the commutativity of \eqref{eq:second1}.

Now by Remark \ref{re:ordinary}, the two vertical maps in the diagram
\[
\xymatrix{
\Gamma(\Sh(\sfG',\rK'\tj\rk),\dZ_\xi^\vee)_{\gamma^\tc}
\ar[d]\ar[r]^-{\varrho(\rk)} & \rH^1_\unr(F_{w(\fl)},\rH^{2n-1}_{\et}(\Sh(\sfG,\rK\rk)_{\ol{F}},\dZ_\xi^\vee(n))_{\gamma^\tc}) \ar[d] \\
\Hom_{\dZ_q}\(\rH(\Sh(\sfG',\rK'\tj\rI),\dZ_\xi)_\gamma,\dZ_q\)
\ar@{-->}[r]^-{\varrho_\rI} & \rH^1_\unr\(F_{w(\fl)},\Hom_{\dZ_q}\(\rH(\Sh(\sfG,\rK\rI),\dZ_\xi)_\gamma,\dZ_q\)\)
}
\]
induced by duality are both surjective, and there exists a unique isomorphism $\varrho_\rI$ as the dashed arrow rendering this diagram commutative. Again, the commutativity of \eqref{eq:second1} implies that $\varrho_\rI$ depends only on $\rI$ and is functorial in $\rI$ with respect to the dual of $\rho_{\rI',\rI}$. Thus, passing to the limit, we obtain a map
\[
\varrho^\unr_\rJ(\bV',\bV)\coloneqq\varprojlim_{\rJ<\rI\finite\rI^0}\varrho_\rI\colon
\cD_\rJ(\xi,\bV')_\gamma\to\rH^1_\unr(F_{w(\fl)},\cD_\rJ(\xi,\bV)_\gamma)
\]
as claimed in \eqref{eq:reciprocity_second}.

\begin{theorem}[Second explicit reciprocity law]\label{th:second}
Assume Assumption \ref{as:galois}(G1,G2). For $\xi$ interlacing with a fixed $\xi$-distinction (Definition \ref{de:distinction}),
\[
\varrho^\unr_\rJ(\bV',\bV)\(\blambda_\rJ(\bV')\)=\loc_{w(\fl)}\(\bkappa_\rJ(\bV)\)
\]
holds in $\rH^1_\unr(F_{w(\fl)},\cD_\rJ(\xi,\bV)_\gamma)$.
\end{theorem}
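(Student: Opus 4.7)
The plan is to mirror the structure of the construction of $\varrho^\unr_\rJ(\bV',\bV)$ and reduce the identity to a geometric statement at finite Cartesian level, where it becomes the second explicit reciprocity law of \cite{LTXZZ}*{Theorem~7.3.4} for the coefficient system $\dZ_\xi^\vee$. First, by the definitions of $\blambda_\rJ$, $\bkappa_\rJ$, and $\varrho^\unr_\rJ(\bV',\bV)$ as inverse limits over $\rJ<\rI\finite\rI^0$, and by Lemma \ref{le:lambda2} together with the analogous functoriality of $\bkappa_\rI(\bV)$ and the commutativity of \eqref{eq:second1}, it suffices to show that for every $\rI\finite\rI^0$, the identity
\[
\varrho_\rI\(\blambda_\rI(\bV')\)=\loc_{w(\fl)}\(\bkappa_\rI(\bV)\)
\]
holds inside $\rH^1_\unr\(F_{w(\fl)},\Hom_{\dZ_q}(\rH(\Sh(\sfG,\rK\rI),\dZ_\xi)_\gamma,\dZ_q)\)$.

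Next, exactly as in the construction of $\varrho^\unr_\rJ(\bV',\bV)$, I would choose an element $\rk\in\fk^\sfH_\rI$ and a Cartesian normal subgroup $\tilde\rk=\tilde\rk_0\times\tilde\rk_1\in\fk_{\tilde\rI}^\dag$ with $\tilde\rk\to\rk$, so that $\rk=\tilde\rk\cdot\rk_\sfB$. By the surjectivity of the duality maps (coming from the $\gamma$-localized direct summand property, Remark \ref{re:ordinary}) and the functoriality \eqref{eq:second1}, it is enough to verify the identity after $\rho^\vee_{\rk,\tilde\rk}$, i.e.\ at the Cartesian level $\tilde\rk$:
\[
\varrho(\tilde\rk)\(\rho^\vee_{\tj\rk,\tj\tilde\rk}(\blambda_\rk(\bV'))\)
=\loc_{w(\fl)}\(\rho^\vee_{\rk,\tilde\rk}(\bkappa_\rk(\bV))\).
\]
Since $\blambda_\rk(\bV')$ (resp.\ $\bkappa_\rk(\bV)$) is the pushforward along $\graph$ of the constant function $1$ (resp.\ of the fundamental class) twisted by the $\xi$-distinction, both sides are the images of natural geometric classes on $\Sh(\sfH,\rK_\sfH\rk_\sfH)$ under, respectively, the uniformization of the basic correspondence on the definite side and the cycle class map on the indefinite side.

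The core step is then a geometric identity at finite Cartesian level: the image of the diagonal cycle class under localization at $w(\fl)$, followed by the K\"{u}nneth decomposition and the composition of $\mho_{0,\fr_0}$ with the Tate cycle class map $(\iota_{n_1})_!\circ\pi_{n_1}^*$, equals the Bessel period sum on $\Sh(\sfG',\rK^\star\tj\tilde\rk)$ after applying the auxiliary Hecke correspondences $\tT^{\star\prime}_{n_0,\fl}\otimes(1\otimes\tT^\star_{n_1,\fl})^{-1}\tT^{\star\prime}_{n_1,\fl}$. For the constant coefficient system and trivial weight this is \emph{exactly} \cite{LTXZZ}*{Theorem~7.3.4}, whose proof uses the supersingular uniformization data $(\rV'_n,\tj_n,\{\Lambda^\star_{n,v}\})$ and $(\rV'_{n+1},\tj_{n+1},\{\Lambda^\star_{n+1,v}\})$ and a pairing-compatibility argument tracing the pushforward of the diagonal under $\graph$ to the supersingular stratum; that argument is natural in the coefficient sheaf and goes through verbatim with $\dZ_\xi^\vee$ in place of $\dZ_q$, because the $\xi$-distinction is built into both the left-hand side (via the definition of $\blambda_{\tilde\rk}(\bV')$) and the right-hand side (via the definition of $\bkappa_{\tilde\rk}(\bV)$) in exactly the same $(\rK_\sfH)_{\Sigma^+_p\setminus\PP^+}$-equivariant way.

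The main obstacle will be the bookkeeping with the ordinary tower: one must ensure that the geometric identity proved at Cartesian level $\tilde\rk$ descends correctly under the non-Cartesian projection $\rho^\vee_{\rk,\tilde\rk}$ onto the $\fk^\sfH_\rI$-cofinal element $\rk$, and that the Hecke-operator normalizations introduced in the definition of $\varrho(\rk)$ (namely the two isomorphisms $(\tT^{\prime\star}_{n_0,\fl})^{-1}_{\fr_0}\otimes(\tT^{\star\prime}_{n_1,\fl})_{\fr_1}$ and $(1\otimes\tT^\star_{n_1,\fl})^{-1}$) exactly cancel the normalization factors produced on the Bessel-period side by the pushforward-then-pullback along the $\rK^\star\rK'$ Hecke correspondence. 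Once this is matched, the identity at finite level combined with Lemma \ref{le:lambda2} (and its analogue for $\bkappa_\rJ$) and the compatibility \eqref{eq:second1} yields the desired statement in the inverse limit.
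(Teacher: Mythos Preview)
Your overall strategy matches the paper's proof: reduce to a fixed $\rI\finite\rI^0$, then to a Cartesian level where $\varrho(\rk)$ is defined, and invoke the finite-level second reciprocity law from \cite{LTXZZ} (the paper cites \cite{LTX}*{Theorem~4.3.6}, essentially \cite{LTXZZ}*{Theorem~4.6.2}). However, the roles of your $\rk$ and $\tilde\rk$ are reversed relative to the paper, and this creates a genuine direction problem, not just a naming issue.

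In the paper's argument one first reduces to $\rI$ of product form $\rI_0\times\rI_1$ (so that a Cartesian element of $\fk_\rI^\dag$ exists), and then chooses $\tilde\rk\in\fk^\sfH_\rI$ together with a \emph{Cartesian} $\rk=\rk_0\times\rk_1\in\fk^\dag_\rI$ satisfying $\tilde\rk\to\rk$, both at the \emph{same} level $\rI$. The Bessel and diagonal classes $\blambda_{\tilde\rk}(\bV')$, $\bkappa_{\tilde\rk}(\bV)$ live at the $\fk^\sfH$-element $\tilde\rk$ and are \emph{pushed forward} via $\rho^\vee_{\rk,\tilde\rk}$ to the Cartesian level $\rk$, where the geometric identity
\[
\varrho(\rk)\bigl(\rho_{\rk,\tilde\rk}^\vee(\blambda_{\tilde\rk}(\bV'))\bigr)
=\loc_{w(\fl)}\bigl(\rho_{\rk,\tilde\rk}^\vee(\bkappa_{\tilde\rk}(\bV))\bigr)
\]
can be verified by the argument of \cite{LTXZZ}. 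In your version you put the Cartesian element $\tilde\rk$ \emph{below} the $\fk^\sfH$-element $\rk$ (as a normal subgroup, at a smaller $\tilde\rI$), so the dual transfer $\rho^\vee_{\rk,\tilde\rk}$ goes from the Cartesian level to the $\fk^\sfH$ level, the opposite of what is needed: your displayed identity $\varrho(\tilde\rk)(\rho^\vee_{\tj\rk,\tj\tilde\rk}(\blambda_\rk(\bV')))=\cdots$ does not typecheck, since $\blambda_\rk$ sits at level $\rk$ while $\rho^\vee_{\tj\rk,\tj\tilde\rk}$ has source at level $\tilde\rk$. Once you swap the roles (and restrict to product $\rI$), your outline becomes exactly the paper's proof.
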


\begin{proof}
It suffices to prove the theorem for $\rJ=0$. Then it suffices to prove the theorem for $\rJ=\rI$ of the form $\rI_0\times\rI_1$ with $\rI_\alpha\finite\rI_{n_\alpha}^0$. It is possible to choose an element $\tilde\rk\in\fk^\sfH_\rI$, and an element $\rk\in\fk^\dag_\rI$ of the form $\rk_0\times\rk_1$, such that $\tilde\rk\to\rk$. By construction, we need to show that the following two elements
\[
\varrho(\tilde\rk)\(\blambda_{\tilde\rk}(\bV')\),\qquad
\loc_{w(\fl)}\(\bkappa_{\tilde\rk}(\bV)\)
\]
have the same image in
\[
\rH^1_\unr\(F_{w(\fl)},\Hom_{\dZ_q}\(\rH(\Sh(\sfG,\rK\rI),\dZ_\xi)_\gamma,\dZ_q\)\).
\]
For this, it suffices to show that
\[
\rho_{\rk,\tilde\rk}^\vee\(\varrho(\tilde\rk)\(\blambda_{\tilde\rk}(\bV')\)\)=
\rho_{\rk,\tilde\rk}^\vee\(\loc_{w(\fl)}\(\bkappa_{\tilde\rk}(\bV)\)\),
\]
which is equivalent to that
\[
\varrho(\rk)\(\rho_{\rk,\tilde\rk}^\vee\(\blambda_{\tilde\rk}(\bV')\)\)
=\loc_{w(\fl)}\(\rho_{\rk,\tilde\rk}^\vee\(\bkappa_{\tilde\rk}(\bV)\)\).
\]
However, the above identity can be proved by the same argument for \cite{LTX}*{Theorem~4.3.6} (which is essentially \cite{LTXZZ}*{Theorem~4.6.2}).
\end{proof}

\section{Iwasawa's main conjecture}

In this section, we state Iwasawa's main conjecture over the eigenvariety and state our main theorems.

\begin{definition}
Let $\sT$ be a coherent sheaf on a noetherian scheme $\sE$. For an irreducible normal open subscheme $\sE'$ of $\sE$ on which $\sT$ is torsion, we define the \emph{characteristic divisor} of $\sT$ along $\sE'$ to be
\[
\Char_{\sE'}(\sT)\coloneqq\sum_{z}\length_{\cO_{\sE',z}}(\sT_z)\cdot z\in\Div(\sE'),
\]
where $z$ runs over all closed points of $\sE'$ of codimension one.
\end{definition}

\begin{conjecture}[Iwasawa's main conjecture]\label{co:iwasawa}
Let $\bV$ be a datum from Definition \ref{de:datum} with $\fm=\emptyset$, $\xi=(\xi^n,\xi^{n+1})\in\Xi^n\times\Xi^{n+1}$ a pair of $\Sigma^+_p\setminus\PP^+$-trivial dominant hermitian weights (Definition \ref{de:weight0}), $\gamma$ a homomorphism as in \eqref{eq:gamma}, and $\rJ<\rI^0$ a subgroup. Suppose that
\begin{itemize}
  \item $\gamma$ satisfies Assumption \ref{as:galois}(G1), and (G2) when $\bV$ is indefinite;

  \item $\xi$ is interlacing (and fix a $\xi$-distinction) (Definition \ref{de:weight} and Definition \ref{de:distinction}).
\end{itemize}
Let $\sE'$ be an irreducible component of the normal locus of $\Spec\sE_\rJ(\xi,\bV)_\gamma$.
\begin{enumerate}
  \item Suppose that $\bV$ is definite. If $\blambda_\rJ(\bV)$ is nonzero on $\sE'$, then
      \begin{enumerate}
        \item $\rH^1_f(F,\sR_\rJ(\xi,\bV)_\gamma)$ vanishes over $\sE'$;

        \item $\sX_\rJ(\xi,\bV)_\gamma$ is torsion over $\sE'$;

        \item we have
            \[
            2\Char_{\sE'}\(\sD_\rJ(\xi,\bV)_\gamma^\sfH/\blambda_\rJ(\bV)\)
            =\Char_{\sE'}\(\sX_\rJ(\xi,\bV)_\gamma\).
            \]
      \end{enumerate}

  \item Suppose that $\bV$ is indefinite. If $\bkappa_\rJ(\bV)$ is nonzero on $\sE'$, then
      \begin{enumerate}
        \item both $\rH^1_f(F,\sR_\rJ(\xi,\bV)_\gamma)$ and $\rH^1_f(F,\sD_\rJ(\xi,\bV)_\gamma)^\sfH$ have generic rank one over $\sE'$;

        \item $\sX_\rJ(\xi,\bV)_\gamma$ has generic rank one over $\sE'$;

        \item we have
            \[
            2\Char_{\sE'}\(\rH^1_f(F,\sD_\rJ(\xi,\bV)_\gamma)^\sfH/\bkappa_\rJ(\bV)\)
            =\Char_{\sE'}\(\sX_\rJ(\xi,\bV)_\gamma^\tor\).
            \]
      \end{enumerate}
\end{enumerate}
Here, for an $\sE_\rJ(\xi,\bV)_\gamma$-module $\sM$, we denote by $\sM^\tor$ the maximal torsion submodule of $\sM$ over the \emph{normal} locus of $\Spec\sE_\rJ(\xi,\bV)_\gamma$.
\end{conjecture}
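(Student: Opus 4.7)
The plan is to attack Conjecture \ref{co:iwasawa} through the two opposite divisibilities separately, following the Mazur--Wiles / Skinner--Urban paradigm adapted to the Bessel-period setting of this paper. One direction, bounding the Selmer group above by (twice) the divisor of the Bessel period, is an Euler-system / Kolyvagin argument rooted in the explicit reciprocity laws of \S\ref{ss:first_reciprocity}--\S\ref{ss:second_reciprocity}. The opposite direction, that the vanishing divisor of the Bessel period divides the characteristic divisor of the Selmer dual, is a higher-rank Eisenstein congruence. The claimed equality of divisors on $\sE'$ would follow by combining the two with a leading-term comparison at classical points, using Remark \ref{re:liusun} in the definite case and the arithmetic Gan--Gross--Prasad/inner-product formula in the indefinite case.

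For the Euler-system half, the skeleton is Theorem \ref{th:iwasawa0}, but three refinements are needed to cover every irreducible component of the normal locus of $\Spec\sE_\rJ(\xi,\bV)_\gamma$ and to upgrade divisibility to a divisor equation. First, one would extend the admissible class of level-raising primes in Definition \ref{de:prime} so that the conditions of Assumption \ref{as:galois}(G3,G4) are required only residually at $\gamma$ rather than at the generic point of $\sE'$, which is necessary to cover non-tempered components. Second, one would integralize the explicit reciprocity isomorphisms $\varrho^\sing_\rJ(\bV',\bV)$ and $\varrho^\unr_\rJ(\bV',\bV)$ along codimension-one specializations, controlling denominators at non-classical points by iterating Lemma \ref{le:first2} in a limit argument. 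Third, one would run a bipartite Euler-system machine in the sharpest form (à la Howard, Burungale--Castella--Kim, and the higher-rank variant of \cite{LTXZZ}), in which the local contributions at the auxiliary primes $\fl$ match precisely with the singular quotient at $w(\fl)$ and therefore produce the exact characteristic divisor rather than merely a divisibility.

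The reverse divisibility is the main obstacle, and the step where genuinely new automorphic input is required. The plan would mirror the strategy of Skinner--Urban (and Wan, Hsieh in the CM / Rankin--Selberg context): construct an ordinary Hida family on a larger unitary group, for instance on $\rU(n+1,n+1)$ or on $\rU(n+1)\times\rU(n+2)$, whose associated Galois representation admits $\sR_\rJ(\xi,\bV)_\gamma \oplus \mathbf{1}$ as a residually reducible subquotient, and whose Klingen-type Eisenstein series have Bessel--Fourier--Jacobi coefficient along the embedded $\sfG = \sfG_n\times\sfG_{n+1}$ equal to $\blambda_\rJ(\bV)$ up to an invertible factor on $\sE'$. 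A Ribet-style lattice argument inside this family would then produce nontrivial classes in $\rH^1_f(F,\sR_\rJ(\xi,\bV)_\gamma^*(1))$ whose nonexistence is measured precisely by $\blambda_\rJ(\bV)$, yielding the required divisibility for $\sX_\rJ(\xi,\bV)_\gamma$. The hard part is producing the Eisenstein family and its Bessel coefficient in a closed ordinary form suitable for an integral congruence; in higher rank this is largely absent from the literature, and would require both a Klingen-type $p$-adic Eisenstein construction over $\rU(n+1,n+1)$ and a Bessel-coefficient expansion formula matching the Ichino--Ikeda conjecture integrally. A secondary obstacle is that on the locus $\sY_\rJ(\xi,\bV)_\gamma$ of Remark \ref{re:flat} the Eisenstein congruence may degenerate; proving the conjectural flatness improvement stated there, or establishing the Eisenstein congruence only away from $\sY_\rJ(\xi,\bV)_\gamma$ and combining with the Euler-system divisibility to handle its complement, would suffice.
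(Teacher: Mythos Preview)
The statement under review is Conjecture~\ref{co:iwasawa}, not a theorem: the paper does \emph{not} prove it. What the paper establishes is Theorem~\ref{th:iwasawa}, which gives only the one-sided divisibility
\[
2\Char_{\sE'}(\cdots)-\Char_{\sE'}(\sX_\rJ(\xi,\bV)_\gamma^{(\tor)})\ \text{is effective}
\]
over the tempered locus and under the additional hypotheses (G1)--(G8), $p>2(n_0+1)$, and Fontaine--Laffaille regularity of $\xi$ and $\xi^{n_0}$. There is thus no ``paper's own proof'' to compare against.

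Your proposal is an honest research outline rather than a proof, and you essentially acknowledge this. Your Euler-system half is exactly the content of Theorem~\ref{th:iwasawa}; the refinements you list (weakening (G3,G4), integralizing the reciprocity maps at non-classical codimension-one points, sharpening the bipartite machine to an equality) are not carried out in the paper and are themselves open problems --- the paper's argument genuinely requires the tempered locus and the full Assumption~\ref{as:galois}. Your reverse divisibility via an Eisenstein congruence on a larger unitary group is, as you say, ``largely absent from the literature'' in this rank: no Klingen-type $p$-adic family with the required Bessel--Fourier--Jacobi coefficient computation is presently available for $\rU(n)\times\rU(n+1)$ with $n\geq 2$, and constructing one is a major undertaking well beyond the scope of a proof sketch. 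So the proposal correctly diagnoses the shape of the problem and aligns with the paper's partial result, but it does not close the gap to the full conjecture, nor does the paper.
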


\begin{definition}\label{de:tempered}
Assume Assumption \ref{as:galois}(G1), and when $\bV$ is indefinite (G2). Put
\[
\sW_\rJ(\xi,\bV)_\gamma\coloneqq\bigcup_{w\in\PP}\supp\((\Gr^0_w\sR_\rJ(\xi,\bV)_\gamma)_{\Gamma_{F_w}}\),
\]
which is a Zariski closed subset of $\Spec\sE_\rJ(\xi,\bV)_\gamma$, and denote by $\sW_\rJ(\xi,\bV)_\gamma^1$ the Zariski closure of codimension one points of $\Spec\sE_\rJ(\xi,\bV)_\gamma$ contained in $\sW_\rJ(\xi,\bV)_\gamma$. Define the \emph{tempered locus} of $\Spec\sE_\rJ(\xi,\bV)_\gamma$ to be the complement of $\sW_\rJ(\xi,\bV)_\gamma^1$ in the normal locus of $\Spec\sE_\rJ(\xi,\bV)_\gamma$.
\end{definition}

\begin{remark}\label{re:tempered}
Assume Assumption \ref{as:galois}(G1), and when $\bV$ is indefinite (G2).
\begin{enumerate}
  \item The subset $\sW_\rJ(\xi,\bV)_\gamma$ contains no classical points. Indeed, for a classical point $x$ of $\Spec\sE_\rJ(\xi,\bV)_\gamma$, Remark \ref{re:pure} implies that $(\Gr^0_w\sR_\rJ(\xi,\bV)_\gamma\res_x)_{\Gamma_{F_w}}$ vanishes for every $w\in\PP$, so that $x$ does not belong to $\sW_\rJ(\xi,\bV)_\gamma$.

  \item For $N=\{n,n+1\}$, $v\in\PP^+$, and $0\leq i\leq N-1$, denote by $\rI_N^{[i]}$ the image of the composite homomorphism
      \[
      O_{F_w}^\times\to F_w^\times\xrightarrow{\inc_i}\sfT_N(F_w)\xrightarrow{\Nm_{F_w/F^+_v}}\sfT_N(F^+_v)
      \]
      in which $w$ is a place of $F$ above $v$ and $\inc_i$ is the inclusion of the factor in \eqref{eq:weight} indexed by $i$ (with $\sigma$ the natural embedding $F^+_v\to F_w$), whose image is contained in $\rI_N^0$ and is independent of the choice of $w$. Then $\sW_\rJ(\xi,\bV)_\gamma^1=\emptyset$ if the following holds: for every $v\in\PP^+$ and every $0\leq i\leq n-1$, the image of the natural map $\rI_n^{[i]}\times\rI_{n+1}^{[n-i]}\to\rI^0/\rJ$ is infinite.
\end{enumerate}
\end{remark}

\begin{theorem}\label{th:iwasawa}
Let $\bV,\xi,\gamma,\rJ$ be as in Conjecture \ref{co:iwasawa}. Suppose that
\begin{enumerate}[label=(\roman*)]
  \item $\gamma$ satisfies all of Assumption \ref{as:galois};

  \item $p>2(n_0+1)$;

  \item $\xi$ is interlacing (and fix a $\xi$-distinction) and Fontaine--Laffaille regular (Definition \ref{de:weight});

  \item $\xi^{n_0}$ is Fontaine--Laffaille regular (Definition \ref{de:weight0}).
\end{enumerate}
Let $\sE'$ be an irreducible component of the tempered locus of $\Spec\sE_\rJ(\xi,\bV)_\gamma$.
\begin{enumerate}
  \item Suppose that $\bV$ is definite. If $\blambda_\rJ(\bV)$ is nonzero on $\sE'$, then
      \begin{enumerate}
        \item $\rH^1_f(F,\sR_\rJ(\xi,\bV)_\gamma)$ vanishes over $\sE'$;

        \item $\sX_\rJ(\xi,\bV)_\gamma$ is torsion over $\sE'$;

        \item the divisor
            \[
            2\Char_{\sE'}\(\sD_\rJ(\xi,\bV)_\gamma^\sfH/\blambda_\rJ(\bV)\)
            -\Char_{\sE'}\(\sX_\rJ(\xi,\bV)_\gamma\)
            \]
            of $\sE'$ is effective.
      \end{enumerate}

  \item Suppose that $\bV$ is indefinite. If $\bkappa_\rJ(\bV)$ is nonzero on $\sE'$, then
      \begin{enumerate}
        \item both $\rH^1_f(F,\sR_\rJ(\xi,\bV)_\gamma)$ and $\rH^1_f(F,\sD_\rJ(\xi,\bV)_\gamma)^\sfH$ have generic rank one over $\sE'$;

        \item $\sX_\rJ(\xi,\bV)_\gamma$ has generic rank one over $\sE'$;

        \item the divisor
            \[
            2\Char_{\sE'}\(\rH^1_f(F,\sD_\rJ(\xi,\bV)_\gamma)^\sfH/\bkappa_\rJ(\bV)\)
            -\Char_{\sE'}\(\sX_\rJ(\xi,\bV)_\gamma^\tor\)
            \]
            of $\sE'$ is effective.
      \end{enumerate}
\end{enumerate}
\end{theorem}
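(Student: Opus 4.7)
The plan is to lift to the ordinary eigenvariety the bipartite Euler system argument of \cite{LTXZZ} and its Iwasawa-theoretic upgrade \cite{LTX}, with the two explicit reciprocity laws from Theorems \ref{th:first} and \ref{th:second} serving as the bridge between the definite and indefinite datum. The two parts of the theorem are proved by a single mechanism in which $\blambda$ and $\bkappa$ exchange roles under switching the sign of the hermitian space at the real place.

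For (1), fix a height-one prime $\fp$ of the irreducible component $\sE'$; since $\sE'$ is normal, $\cO_{\sE',\fp}$ is a DVR. The goal is to produce, for any chosen divisor $D$ of $(\sD_\rJ(\xi,\bV)_\gamma^\sfH/\blambda_\rJ(\bV))_\fp$, an annihilator of $(\sX_\rJ(\xi,\bV)_\gamma)_\fp$ whose ``index'' is bounded by $2D$. First, I would establish a supply of level-raising primes $\fl$ (Definition \ref{de:prime}) whose Frobenius images in $\Gal(\ol{F}/F^+)$ are prescribed modulo the relevant Galois representations and lie outside of the bad locus $\sW_\rJ^1$ encoded by the tempered locus condition; this is where Assumption \ref{as:galois}(G3,G4) together with Assumption \ref{as:law} enter, via Chebotarev applied to the extension cutting out $(\gamma_n^\natural\otimes\gamma_{n+1}^\natural)\oplus\epsilon_p$, modified to an integral coefficient system using the Fontaine--Laffaille regularity (iii,iv) and the hypothesis $p>2(n_0+1)$.

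Given such an $\fl$, the first reciprocity law (Theorem \ref{th:first}) produces, from the nonvanishing of $\blambda_\rJ(\bV)$ modulo $\tC_\fl$, a class
\[
\bkappa_\rJ(\bV')\in\rH^1_f(F,\cD_\rJ(\xi,\bV')_\gamma)^\sfH
\]
on the indefinite side whose singular residue at $w(\fl)$ matches $\blambda_\rJ(\bV)$ up to the unit $(\ell+1)\tI^\circ_{n_0,\fl}\otimes\tT^\circ_{n_1,\fl}$. The second reciprocity law (Theorem \ref{th:second}) then identifies, for another such prime $\fl'$, the unramified localization $\loc_{w(\fl')}(\bkappa_\rJ(\bV'))$ with the image of $\blambda_\rJ(\bV'')$ for a third, definite datum obtained by switching at $\fl'$. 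The composition of these two transitions produces the basic ``Kolyvagin move'' on the eigenvariety. Using Lemma \ref{le:selmer2} to reinterpret local Selmer conditions in terms of the ordinary filtrations (Remark \ref{re:galois}), and Lemma \ref{le:flat} to pass freely between $\sD$-coefficients and $\sR$-coefficients, global Poitou--Tate duality over the Iwasawa-type base $\sE_\rJ(\xi,\bV)_\gamma$ (localized at $\fp$) converts these relations into the divisibility of characteristic divisors at $\fp$, together with the vanishing of $\rH^1_f(F,\sR_\rJ(\xi,\bV)_\gamma)_\fp$ and the torsionness of $(\sX_\rJ(\xi,\bV)_\gamma)_\fp$. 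Running this for every codimension-one point of $\sE'$ gives the asserted effectivity of the divisor.

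Part (2) is proved along the same lines with the two reciprocity laws applied in the opposite order: the nonvanishing of $\bkappa_\rJ(\bV)$ on $\sE'$ is fed into the second reciprocity law to produce nonvanishing Bessel periods on associated definite data, to which the first reciprocity law is then applied; a parallel Poitou--Tate computation then yields the rank-one statements and the one-sided divisibility between $\Char_{\sE'}(\rH^1_f(F,\sD_\rJ(\xi,\bV)_\gamma)^\sfH/\bkappa_\rJ(\bV))$ and $\Char_{\sE'}(\sX_\rJ(\xi,\bV)_\gamma^\tor)$.

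The main obstacle I foresee is the passage from a single height-one prime to a Kolyvagin system in the eigenvariety setting: one must ensure, uniformly over $\fp$, that the level-raising primes chosen can be made to exhaust the obstructions in $(\sX_\rJ)_\fp$, which ultimately requires that the residual representation is ``big enough'' at $\fp$ and that the ordinary filtrations $\Fil^\bullet_w\cR_\rJ(\xi,\bV)_\gamma$ continue to define honest local conditions at $\fp$. This last requirement is what forces the restriction to the tempered locus (Definition \ref{de:tempered}): on $\sW_\rJ^1$ the graded pieces of the ordinary filtration at some $w\in\PP$ admit nontrivial $\Gamma_{F_w}$-invariants and Lemma \ref{le:panchishkin}, hence Lemma \ref{le:selmer2}, could fail to match the Bloch--Kato local condition with the Panchishkin one. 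Away from $\sW_\rJ^1$, in contrast, every step goes through and classical points remain dense by Remark \ref{re:tempered}(1), so that the interpolation properties of $\blambda_\rJ(\bV)$ and $\bkappa_\rJ(\bV)$ (Remark \ref{re:liusun} and the indefinite analogue) feed the Kolyvagin machine at a Zariski-dense set of points of $\sE'$, which suffices to produce the required effectivity divisor.
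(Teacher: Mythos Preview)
Your big picture is right—the input is the bipartite Euler system built from the two reciprocity laws—but the mechanism you propose for extracting characteristic divisors has a genuine gap. You want to fix a height-one prime $\fp$ of $\sE'$ and run Poitou--Tate and the Kolyvagin machine directly over the DVR $\cO_{\sE',\fp}$. The problem is that the Euler system the paper actually builds does not live there. The level-raising primes $\fl\in\fL_k$ are characterized by the congruence $\tC_\fl\in\fp_0^k$, which is a condition modulo a power of the \emph{maximal} ideal of the even-rank eigenalgebra (Notation~\ref{no:congruence}, Definition~\ref{de:congruence}); the congruence modules $\cC_\rJ^{\fn,k}$ and the classes $\blambda_\rJ^{\fn,k},\bkappa_\rJ^{\fn,k}$ are all objects over $\cE_\rJ(\bV)/\fp^k$. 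None of this sees an arbitrary height-one prime $\fp$ of $\sE'$, whose residue field is typically of characteristic zero, so there is no direct way to ``choose level-raising primes at $\fp$'' or to formulate the Kolyvagin move over $\cO_{\sE',\fp}$.

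The paper bridges this gap by a specialize--control--approximate argument that you do not mention. First, one specializes the Euler system to closed points $x$ of $\sE_\rJ^\temp$ (Proposition~\ref{pr:specialization}) and runs the bipartite Kolyvagin argument over the DVR $\dZ_x$, obtaining exact length inequalities for $\rH^1_\ff(F,(\cR_{/x})^*(1))$ in terms of the indices of $\lambda_x^{\emptyset,\infty}$ or $\kappa_x^{\emptyset,\infty}$ (Proposition~\ref{pr:iwasawa}). Second, a control theorem (Proposition~\ref{pr:control}) compares $\rH^1_\ff(F,(\cR_{/x})^*(1))$ with $\rH^1_f(F,\cR^*(1))[\fP_x]$ up to errors bounded uniformly on affinoid subdomains. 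Third, for each height-one point $z$ of $\sE'$ one picks a $z$-clean point $x$ and an approximating sequence $(x_m)$ of closed points with bounded residue degree and growing depth of congruence (Lemma~\ref{le:iwasawa4}); the characteristic-divisor multiplicity at $z$ is then recovered as a limit of lengths at the $x_m$, and the pointwise inequalities pass to the limit. This three-step package, together with Proposition~\ref{pr:specialize} ensuring the specialization maps for $\sD^\sfH$ and $\rH^1_f(F,\sR)$ are injective at generic closed points of $\sV_z$, is what actually converts the mod-$p^k$ Euler system into the divisor inequality on $\sE'$. Your last paragraph gestures at Zariski density of classical points, but density alone does not give the required limit formulas; the control theorem and the approximation lemma are the missing ingredients.
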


Theorem \ref{th:iwasawa} does not really recover \cite{LTX}*{Theorem~1.2.3} due to the extra item (G8) in Assumption \ref{as:galois}. We now introduce a variant of Theorem \ref{th:iwasawa} without (G8) by restricting $\rJ$. Suppose that $\rJ$ has the form $\rI^0_{n_0}\times\rJ_1$ for a subgroup $\rJ_1<\rI^0_{n_1}$. Then
\[
\sE_\rJ(\xi,\bV)_\gamma=\sE_{\rI^0_{n_0}}(\xi^{n_0},\rV_{n_0},\rK_{n_0})_{\gamma_{n_0}}
\otimes_{\dQ_q}\sE_{\rJ_1}(\xi^{n_1},\rV_{n_1},\rK_{n_1})_{\gamma_{n_1}}
\]
in which $\sE_{\rI^0_{n_0}}(\xi^{n_0},\rV_{n_0},\rK_{n_0})_{\gamma_{n_0}}$ is regular of dimension zero. We say that a point $x$ of $\Spec\sE_{\rI^0_{n_0}}(\xi^{n_0},\rV_{n_0},\rK_{n_0})_{\gamma_{n_0}}$ is crystalline if the Galois representation $\sR_{\rI^0_{n_0}}(\xi^{n_0},\rV_{n_0},\rK_{n_0})_{\gamma_{n_0}}\res_x$ is crystalline at every $w\in\PP$; and an open subscheme $\sE'$ of $\Spec\sE_\rJ(\xi,\bV)_\gamma$ is \emph{even-crystalline} if its image in $\Spec\sE_{\rI^0_{n_0}}(\xi^{n_0},\rV_{n_0},\rK_{n_0})_{\gamma_{n_0}}$ consists only crystalline points.

\begin{theorem}\label{th:iwasawa_bis}
Let $\bV,\xi,\gamma,\rJ$ be as in Conjecture \ref{co:iwasawa} in which $\rJ$ is of the form $\rI^0_{n_0}\times\rJ_1$. Suppose that
\begin{itemize}
  \item[(i)] $\gamma$ satisfies all of Assumption \ref{as:galois} without (G8);

  \item[(ii--iv)] same as in Theorem \ref{th:iwasawa}.
\end{itemize}
Then the same conclusion of Theorem \ref{th:iwasawa} holds for every even-crystalline irreducible component of the tempered locus of $\Spec\sE_\rJ(\xi,\bV)_\gamma$.
\end{theorem}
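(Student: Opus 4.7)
My plan is to deduce Theorem~\ref{th:iwasawa_bis} from the proof of Theorem~\ref{th:iwasawa} by showing that, under the restriction $\rJ=\rI^0_{n_0}\times\rJ_1$ together with the even-crystalline hypothesis, the only piece of Assumption~\ref{as:galois} that is dropped, namely (G8), becomes a consequence of the remaining hypotheses, so that the discussion need not be modified further on each even-crystalline component.

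The first step is to exploit the product decomposition of the eigenvariety. Since the $n_0$-factor $\sE_{\rI^0_{n_0}}(\xi^{n_0},\rV_{n_0},\rK_{n_0})_{\gamma_{n_0}}$ is a finite product of finite extensions of $\dQ_q$ by Proposition~\ref{pr:eigen_1} (applied with $N=n_0$ and $\rJ=\rI^0_N$), the displayed isomorphism just before Remark~\ref{re:galois} gives
\[
\sE_\rJ(\xi,\bV)_\gamma = \sE_{\rI^0_{n_0}}(\xi^{n_0},\rV_{n_0},\rK_{n_0})_{\gamma_{n_0}}\otimes_{\dQ_q}\sE_{\rJ_1}(\xi^{n_1},\rV_{n_1},\rK_{n_1})_{\gamma_{n_1}}.
\]
Every irreducible component $\sE'$ of the tempered locus of $\Spec\sE_\rJ(\xi,\bV)_\gamma$ then dominates a single closed point $x_{n_0}$ of the $n_0$-eigenvariety, and the $n_0$-factor $\sR(x_{n_0})\coloneqq\sR_{\rI^0_{n_0}}(\xi^{n_0},\rV_{n_0},\rK_{n_0})_{\gamma_{n_0}}|_{x_{n_0}}$ of $\sR_\rJ(\xi,\bV)_\gamma$, together with its ordinary filtration $\Fil^\bullet_w$ at each $w\in\PP$, is constant on $\sE'$.

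The second step is to verify Assumption~\ref{as:galois}(G8) for $\gamma_{n_0}$ from the remaining hypotheses and even-crystallinity. By (G7) and the Fontaine--Laffaille regularity of $\xi^{n_0}$ from hypothesis~(iv), $\sR(x_{n_0})|_{\Gamma_{F_w}}$ is ordinary with regular Fontaine--Laffaille weights; the even-crystalline hypothesis makes it crystalline there; and purity of $\sR(x_{n_0})$ at $w$ (by \cite{Car14}*{Theorem~1.1}, applicable since $\sR(x_{n_0})$ is associated with a cuspidal automorphic representation of $\GL_{n_0}$ in the sense of Remark~\ref{re:galois}), combined with the distinctness of the Hodge--Tate weights, forces the unramified twists $(\Gr^i(\sR(x_{n_0}))_w)(i)$ to be pairwise distinct as characters of $\Gamma_{F_w}$. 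The $p$-bound $p>2(n_0+1)$ from hypothesis~(ii) ensures that Fontaine--Laffaille reduction modulo $p$ is an equivalence in the relevant Hodge--Tate range and preserves this distinctness, which is exactly the content of (G8) for $\gamma_{n_0}$.

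The third step is to replay the proof of Theorem~\ref{th:iwasawa} on $\sE'$ with (G8) available in the effective form just established; this yields the torsion/rank statements and the claimed effectivity of the divisor. The main obstacle is the bookkeeping required to confirm that every invocation of (G8) in the proof of Theorem~\ref{th:iwasawa} factors through the distinctness of the $n_0$-characters $\psi_{n_0,w,i}$ of Remark~\ref{re:galois}, for instance in a Lemma~\ref{le:selmer2}-style decomposition of $\Gr^0_w\cR_\rJ(\xi,\bV)_\gamma$ into summands indexed by the shuffle of the $n_0$- and $n_1$-filtrations, and in the Kolyvagin argument that controls local conditions at $\PP$ under level-raising at the auxiliary primes of Definition~\ref{de:prime}. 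I expect this separation to hold because (G8) is a hypothesis on the $n_0$-side alone, which is precisely the side held fixed on $\sE'$; any genuinely family-wise residual compatibility that would not survive this reduction would conflict with the parallel variant statement in \cite{LTX}*{Theorem~1.2.3}, which Theorem~\ref{th:iwasawa_bis} is designed to recover.
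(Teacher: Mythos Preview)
Your second step contains a genuine gap. Assumption~\ref{as:galois}(G8) is a condition on the \emph{residual} representation $\gamma_{n_0}^\natural$ with values in $\GL_{n_0}(\dF_q)$: it asks that the unramified characters on the mod-$p$ graded pieces be pairwise distinct. Even granting that purity and crystallinity of the characteristic-zero lift $\sR(x_{n_0})$ at $w\in\PP$ force the unramified twists $(\Gr^i\sR(x_{n_0}))(i)$ to be distinct over $\dQ_x$, this says nothing about distinctness after reduction modulo $p$. Two unramified characters sending $\phi_w$ to, say, $1$ and $1+p$ are distinct in characteristic zero but coincide mod $p$. Fontaine--Laffaille theory being an equivalence of categories in the relevant weight range does not prevent such congruences; it only controls the filtration data, not the separation of Frobenius eigenvalues modulo $p$. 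Since $\gamma$ is fixed at the outset, (G8) either holds for it or not, independently of which component $\sE'$ you restrict to, so no choice of component can manufacture (G8).

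The paper's argument is structurally different and does not attempt to recover (G8). It observes that (G8) enters the proof of Theorem~\ref{th:iwasawa} solely through Theorem~\ref{th:deformation} (the ordinary $R=T$ theorem, via condition (D6) in Lemma~\ref{le:deformation}) and through Lemma~\ref{le:first2}. Since $\rJ=\rI^0_{n_0}\times\rJ_1$ means there is no $p$-adic variation on the $n_0$-side, one can replace the ordinary-level module $\cD_\rJ(\xi,\bV^\fn)_\gamma$ by a modified module $\cD'_\rJ(\xi,\bV^\fn)_\gamma$ built from the hyperspecial level $\rk_0=\prod_{v\in\PP^+}\sfG_{n_0,v}(O_{F^+_v})$ on the $n_0$-factor, transported via an ordinary stabilization map. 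The even-crystalline hypothesis identifies $\Spec\sE'_\rJ(\xi,\bV)_\gamma$ with the even-crystalline locus of $\Spec\sE_\rJ(\xi,\bV)_\gamma$. With hyperspecial level on the $n_0$-side, the Fontaine--Laffaille $R=T$ theorem \cite{LTXZZ1}*{Theorem~3.38} (which needs only (D1--5), not the distinctness condition (D6)) substitutes for Theorem~\ref{th:deformation}, and the rest of the argument for Theorem~\ref{th:iwasawa} goes through unchanged. This is precisely the mechanism by which Theorem~\ref{th:iwasawa_bis} recovers \cite{LTX}*{Theorem~1.2.3}.
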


\begin{remark}\label{re:recover}
In the situation of Theorem \ref{th:iwasawa_bis}, suppose that $\rJ_1$ contains $\SU(\rV_{n_1})(F^+_{\PP^+})\cap\rI^0_{n_1}$. In this case, the morphism $\Spec\sE_\rJ(\xi,\bV)_\gamma\to\Spec\Lambda_{\rI^0_{n_1}/\rJ_1,\dQ_q}$ is a proper local isomorphism, so that $\Spec\sE_\rJ(\xi,\bV)_\gamma$ is regular. On the other hand, $\sW_\rJ(\xi,\bV)_\gamma^1$ is empty by Remark \ref{re:tempered}(2). Therefore, Theorem \ref{th:iwasawa_bis} recovers \cite{LTX}*{Theorem~1.2.3} with this special $\rJ_1$ (also with $\xi$ trivial and with every place in $\PP^+$ being split in $F$), in view of \cite{LS}.
\end{remark}

The rest of this article is devoted to the proof of Theorem \ref{th:iwasawa}. Thus, from now on, we will assume (i--v) in Theorem \ref{th:iwasawa} and $\cD_\rJ(\xi,\bV)_\gamma\neq 0$ (since otherwise the conclusion is trivial). We will also explain how to modify the proof toward Theorem \ref{th:iwasawa_bis} without Assumption \ref{as:galois}(G8) at the end of \S\ref{ss:proof}.

\begin{notation}\label{no:congruence}
Let $k\geq 1$ be a positive integer and $\rJ<\rI^0$ a subgroup. We
\begin{itemize}
  \item denote by $\fe_\alpha$ the maximal ideal of $\cE(\xi^{n_\alpha},\rV_{n_\alpha},\rK_{n_\alpha})_{\gamma_{n_\alpha}}$ for $\alpha=0,1$;

  \item put
      \[
      \fp_\alpha^k\coloneqq\Ker\(\dT_{n_\alpha}^{\ang{\emptyset}}\otimes\Lambda_{\rI^0_{n_\alpha},\dZ_q}
      \to\cE(\xi^{n_\alpha},\rV_{n_\alpha},\rK_{n_\alpha})_{\gamma_{n_\alpha}}/\fe_\alpha^k\),
      \]
      for $\alpha=0,1$;

  \item denote by $\fp^k$ the ideal of $\dT^{\ang{\emptyset}}\otimes\Lambda_{\rI^0/\rJ,\dZ_q}$ generated by $(\fp_0^k,\fp_1^k)$.\footnote{Note that $\fp^k$ is not necessarily the $k$-th power of some ideal.}
\end{itemize}
\end{notation}

In the rest of this article, we
\begin{itemize}
  \item put
      \begin{align*}
      \epsilon(\bV)\coloneqq
      \begin{dcases}
      1, &\text{when $\bV$ is definite,}\\
      -1, &\text{when $\bV$ is indefinite;}
      \end{dcases}
      \end{align*}

  \item suppress $\xi$ and $\gamma$ in the notation like $\cD_\rJ(\xi,\bV)_\gamma$ since they will not vary from now on;

  \item denote by $\sE_\rJ^\reg$ the regular locus of $\Spec\sE_\rJ(\bV)$, and by $\sE_\rJ^\temp$ the complement of $\sW_\rJ(\bV)\cup\sY_\rJ(\bV)$ in $\sE_\rJ^\reg$;

  \item for every (set-theoretical) point $z$ of $\sE_\rJ^\temp$, denote by $\sV_z$ the Zariski closure of $z$ in $\sE_\rJ^\temp$, and $\cI_z\subseteq\cE_\rJ(\bV)$ the vanishing ideal of $z$ in $\cE_\rJ(\bV)$;

  \item once again, we suppress $\rJ$ in all subscripts when it is the trivial subgroup.
\end{itemize}
By definition and Remark \ref{re:flat}, $\sE_\rJ^\temp$ is contained in the tempered locus of $\Spec\sE_\rJ(\bV)$ whose complement has codimension at least two. Thus, without loss of generality, we may replace the tempered locus of $\Spec\sE_\rJ(\bV)$ in Theorem \ref{th:iwasawa} by $\sE_\rJ^\temp$.

\section{Specialization of Selmer groups}

In this section, we study the specialization of Selmer groups along closed points of the eigenvariety.

\begin{notation}\label{no:point}
For every closed point $x$ of $\Spec\sE_\rJ(\bV)$, we denote by
\begin{itemize}
  \item $\dQ_x$ its residue field (already), which is a finite extension of $\dQ_q$,

  \item $\dZ_x$ the ring of integers of $\dQ_x$ and $p_x$ the maximal ideal of $\dZ_x$,

  \item $\dZ'_x$ the image of the natural homomorphism $\cE\to\dZ_x$, which is a subring of $\dZ_x$ of finite index,

  \item $\fP_x$ the kernel of $\cE_\rJ(\bV)\to\dZ'_x$,

  \item $\fp_x\subseteq p_x$ the ideal of $\dZ_x$ generated by the image of $\fp^1$ under the natural composite map $\dT^{\ang{\emptyset}}\otimes\Lambda_{\rI^0/\rJ,\dZ_q}\to\cE_\rJ(\bV)\to\dZ_x$.
\end{itemize}
For $\cT=\cR_\rJ(\bV),\cD_\rJ(\bV)$, put $\cT_{/x}\coloneqq\cT\otimes_\cE\dZ_x$ (which contains $\cT/\fP_x$) and $\sT_{/x}\coloneqq\sT\otimes_\sE\dQ_x$, which naturally inherit a filtration $\Fil^\bullet_w$ for every $w\in\PP$.
\end{notation}

\begin{definition}\label{de:selmer1}
Write $\cT$ for $\cR_\rJ(\bV)$, or when $\bV$ is indefinite $\cD_\rJ(\bV)$. Take a closed point $x$ of $\Spec\sE_\rJ(\bV)$.
\begin{enumerate}
  \item We define
      \[
      \rH^1_\ff(F_w,\cT_{/x})\coloneqq
      \begin{dcases}
      \Ker\(\rH^1(F_w,\cT_{/x})\to\rH^1(\rI_{F_w},\sT_{/x})\), & w\in\Sigma\setminus\Sigma_p,\\
      \rH^1_f(F_w,\cT_{/x}), & w\in\Sigma_p\setminus\PP,\\
      \Ker\(\rH^1(F_w,\cT_{/x})\to\rH^1(\rI_{F_w},\sT_{/x}/\Fil_w^{-1}\sT_{/x})\),
      & w\in\PP,
      \end{dcases}
      \]
      and $\rH^1_f(F_w,(\cT_{/x})^*(1))$ to be the annihilator of $\rH^1_f(F_w,\cT_{/x})$ under the local Tate pairing
      \[
      \rH^1(F_w,\cT_{/x})\times\rH^1(F_w,(\cT_{/x})^*(1))\to\dQ_x/\dZ_x.
      \]

  \item Globally, define $\rH^1_\ff(F,\cT_{/x})$ and $\rH^1_\ff(F,(\cT_{/x})^*(1))$ to be the $\dZ_x$-submodules of $\rH^1(F,\cT_{/x})$ and $\rH^1(F,(\cT_{/x})^*(1))$ consisting of classes whose localization at every place $w$ of $F$ belong to $\rH^1_\ff(F_w,\cT_{/x})$ and $\rH^1_\ff(F_w,(\cT_{/x})^*(1))$, respectively.

  \item We define similarly various Selmer groups for $\cT/\fP_x$ instead of $\cT_{/x}$.

  \item Finally, for $\sT\coloneqq\cT\otimes_{\dZ_q}\dQ_q$, put
      \begin{align*}
      \rH^1_\ff(F_w,\sT_{/x})&\coloneqq\rH^1_\ff(F_w,\cT_{/x})\otimes_{\dZ_x}\dQ_x=\rH^1_\ff(F_w,\cT/\fP_x)\otimes_{\dZ_x}\dQ_x,\\ \rH^1_\ff(F,\sT_{/x})&\coloneqq\rH^1_\ff(F,\cT_{/x})\otimes_{\dZ_x}\dQ_x=\rH^1_\ff(F_w,\cT/\fP_x)\otimes_{\dZ_x}\dQ_x.
      \end{align*}
\end{enumerate}
\end{definition}

\begin{remark}\label{re:selmer1}
Let $x$ be a closed point of $\Spec\sE_\rJ(\bV)$. Then $\rH^1_\ff(F_w,\cT_{/x})=\rH^1_f(F_w,\cT_{/x})$ for $w\in\Sigma\setminus\Sigma_p$. For $w\in\PP$, $\rH^1_\ff(F_w,\cT_{/x})$ could be different from $\rH^1_f(F_w,\cT_{/x})$ since the subspace $\Fil_w^{-1}\sT_{/x}$ does not coincide with $\Fil^{-1}(\sT_{/x}\res_{\Gamma_{F_w}})$ -- the one from Lemma \ref{le:panchishkin} in general; they do coincide if $x$ is an interlacing classical point (Definition \ref{de:classical2}). Similarly observations apply to $\cT/\fP_x$.

Nevertheless, as long as $x$ belongs to $\sE_\rJ^\temp$, the local Selmer structure $\rH^1_\ff(F_w,\sT_{/x})$ remains self-dual for every place $w$ of $F$ in the following sense:\footnote{It is well-known that the Bloch--Kato local Selmer structure $\rH^1_f(F_w,\sT_{/x})$ is self-dual.} if we choose an $\Gamma_F$-equivariant isomorphism $(\sT_{/x})^\tc\simeq(\sT_{/x})^*(1)$ (which exists and is unique up to a scalar in $\dQ_x^\times$) and adopt the canonical identification $\rH^1(F_{w^\tc},\sT_{/x})=\rH^1(F_w,(\sT_{/x})^\tc)$, then $\rH^1_\ff(F_w,\sT_{/x})$ and $\rH^1_\ff(F_{w^\tc},\sT_{/x})$ are mutual annihilators under the (induced) local Tate pairing $\rH^1(F_w,\sT_{/x})\times\rH^1(F_{w^\tc},\sT_{/x})\to\dQ_x$.
\end{remark}

\begin{lem}\label{le:specialization}
Write $\cT$ for $\cR_\rJ(\bV)$, or when $\bV$ is indefinite $\cD_\rJ(\bV)$. Let $x$ be a closed point of $\Spec\sE_\rJ(\bV)$.
\begin{enumerate}
  \item For every place $w$ of $F$, the image of $\rH^1_f(F_w,\cT)$ under the natural map $\rH^1(F_w,\cT)\to\rH^1(F_w,\cT/\fP_x)$ is contained in $\rH^1_\ff(F_w,\cT/\fP_x)$.

  \item For every place $w\in\Sigma_p\setminus\PP$, the induced map $\rH^1_f(F_w,\cT)\to\rH^1_\ff(F_w,\cT/\fP_x)$ is surjective.

  \item The image of $\rH^1_f(F,\cT)$ under the natural map $\rH^1(F,\cT)\to\rH^1(F,\cT/\fP_x)$ is contained in $\rH^1_\ff(F,\cT/\fP_x)$.
\end{enumerate}
\end{lem}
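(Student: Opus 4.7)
The strategy is to verify the lemma place by place. Part (3) will be formal: given $c\in\rH^1_f(F,\cT)$, its localizations $\loc_w c$ lie in $\rH^1_f(F_w,\cT)$ at every place $w$, so by (1) their images lie in $\rH^1_\ff(F_w,\cT/\fP_x)$, whence the image of $c$ globally lies in $\rH^1_\ff(F,\cT/\fP_x)$. Thus the content is entirely in parts (1) and (2). The common device throughout is the inverse-limit definition of Definition \ref{de:selmer}(1): any $c\in\rH^1_f(F_w,\cT)$ arises from a compatible system $(c_\rI)_{\rJ<\rI\finite\rI^0}$ with $c_\rI\in\rH^1_f(F_w,\cT_\rI)$ where $\cT_\rI\coloneqq\cT\otimes_{\Lambda_{\rI^0/\rJ,\dZ_q}}\dZ_q[\rI^0/\rI]$. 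The character $\chi_x\colon\Lambda_{\rI^0/\rJ,\dZ_q}\to\dZ_x$ attached to $x$ factors through $\dZ_q[\rI^0/\rI]$ once $\rI$ is small enough, so the specialization $\cT\twoheadrightarrow\cT/\fP_x$ factors through $\cT_\rI$, and the image of $c$ in $\rH^1(F_w,\cT/\fP_x)$ coincides with that of $c_\rI$.

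\smallskip

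For part (1), the verification then splits naturally by the type of $w$. At archimedean places both sides vanish. At $w\in\Sigma\setminus(\Sigma_\infty\cup\Sigma_p)$ and at $w\in\Sigma_p\setminus\PP$, the Bloch--Kato defining condition for $c_\rI$ (vanishing of the image in $\rH^1(\rI_{F_w},\cT_\rI\otimes\dQ_q)$, resp.\ in $\rH^1(F_w,\cT_\rI\otimes\rB_{\mathrm{cris}})$) is transported by the $\Gamma_{F_w}$-equivariant surjection $\cT_\rI\otimes\dQ_q\twoheadrightarrow\sT/\fP_x$ into the corresponding vanishing for $\sT/\fP_x$, by functoriality of Galois cohomology; this gives exactly the defining condition for $\rH^1_\ff(F_w,\cT/\fP_x)$. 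At $w\in\PP$ one invokes Lemma \ref{le:selmer2}: for $\cT=\cD_\rJ(\bV)$, part (1) of that lemma identifies $\rH^1_f(F_w,\cD_\rJ)$ integrally as the kernel of restriction on $\cD_\rJ/\Fil_w^{-1}$, and compatibility follows because the Panchishkin filtration $\Fil_w^\bullet$ is stable under the $\cE$-action, hence under passage to $\cT/\fP_x$. For $\cT=\cR_\rJ(\bV)$ we use part (2) of the same lemma after inverting $p$, combined with Remark \ref{re:flat} (which guarantees $\sY_\rJ$ has codimension $\geq 2$) and the integral lattice structure to descend to the integral statement.

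\smallskip

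For part (2), we need surjectivity of $\rH^1_f(F_w,\cT)\to\rH^1_\ff(F_w,\cT/\fP_x)=\rH^1_f(F_w,\cT/\fP_x)$ for $w\in\Sigma_p\setminus\PP$. This is the main obstacle, because one must actually lift an arbitrary Bloch--Kato class along the inverse-limit-defined target. The plan is to exploit that at such $w$ the level is hyperspecial and $\xi$ is Fontaine--Laffaille regular, so that $\cT|_{\Gamma_{F_w}}$ is crystalline in the family sense with Fontaine--Laffaille weights depending only on $\xi$ (independent of the point of $\Spec\sE_\rJ$). Under the assumption $p>2(n_0+1)$, Fontaine--Laffaille theory provides a functorial module description of both $\cT_\rI|_{\Gamma_{F_w}}$ and $(\cT/\fP_x)|_{\Gamma_{F_w}}$, together with a functorial presentation of $\rH^1_f$ (via the crystalline/de Rham exponential, or equivalently via the Fontaine--Laffaille complex computing crystalline cohomology). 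For cofinally small $\rI$, the map $\cT_\rI\twoheadrightarrow\cT/\fP_x$ of crystalline modules induces a surjection on Fontaine--Laffaille modules, and since the Fontaine--Laffaille complex is right-exact, one obtains surjectivity of $\rH^1_f(F_w,\cT_\rI)\to\rH^1_f(F_w,\cT/\fP_x)$ for all such $\rI$; Mittag--Leffler in the inverse system then produces the desired lift in $\rH^1_f(F_w,\cT)$. The technical heart is verifying the right-exactness claim at the Fontaine--Laffaille level, which is where the Fontaine--Laffaille regularity of $\xi$ is essential.
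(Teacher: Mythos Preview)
Your overall architecture matches the paper's exactly: part (3) reduces to (1), and (1) is checked place by place, with the factorization $\cT\twoheadrightarrow\cT_\rI\twoheadrightarrow\cT/\fP_x$ (for $\rI$ small enough) carrying the Bloch--Kato condition through. For $w\in\Sigma\setminus\Sigma_p$ your functoriality argument is correct and in fact cleaner than the paper's terse justification. For $w\in\Sigma_p\setminus\PP$ your Fontaine--Laffaille sketch is exactly what the paper packages as the citation to \cite{LTX}*{Lemma~5.7.1(1)} (with the explicit weight bounds $a,b$); your right-exactness claim for the Fontaine--Laffaille complex, together with Mittag--Leffler for the surjective transition maps $\rH^1_f(F_w,\cT_{\rI'})\to\rH^1_f(F_w,\cT_\rI)$, is the content of that lemma.

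The one place where your route is unnecessarily complicated, and where the ``descent'' step is vague enough to be a potential gap, is $w\in\PP$ with $\cT=\cR_\rJ(\bV)$. You invoke Lemma~\ref{le:selmer2}(2), which only identifies $\rH^1_f(F_w,\sR_\rJ)$ away from $\sY_\rJ$, and then appeal to codimension~$\geq 2$ and an unspecified lattice argument to descend. This is not needed: the proof of Lemma~\ref{le:selmer2} already establishes, at the level of each finite quotient $\cT_\rI$, that $\rH^1_f(F_w,\cT_\rI\otimes\dQ_q)=\Ker\bigl(\rH^1(F_w,\cT_\rI\otimes\dQ_q)\to\rH^1(\rI_{F_w},(\cT_\rI/\Fil_w^{-1}\cT_\rI)\otimes\dQ_q)\bigr)$ via Lemma~\ref{le:panchishkin} (since $\cT_\rI\otimes\dQ_q$ is ordinary, de~Rham, and pure of weight $-1$). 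Hence any $c_\rI\in\rH^1_f(F_w,\cT_\rI)$ already dies in $\rH^1(\rI_{F_w},(\cT_\rI/\Fil_w^{-1})\otimes\dQ_q)$, and by functoriality its image dies in $\rH^1(\rI_{F_w},((\cT/\fP_x)/\Fil_w^{-1})\otimes\dQ_q)$, giving exactly the $\rH^1_\ff$ condition. No passage through $\sR_\rJ$ or $\sY_\rJ$ is required; this is presumably what the paper intends by its bare citation of Lemma~\ref{le:selmer2}.
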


\begin{proof}
Part (3) follows from (1). Part (1) for $w\in\PP$ follows from Lemma \ref{le:selmer2}. Part (1) for $w\in\Sigma\setminus\Sigma_p$ follows as $\rH^1(F_w,\sT)=0$. Part (1,2) for $w\in\Sigma_p\setminus\PP$ follows from \cite{LTX}*{Lemma~5.7.1(1)} with $a=\min_{\tau\in\Upsilon_w}\{\zeta^n_{\tau,0}+\zeta^{n+1}_{\tau,0}\}-n$ and $b=\max_{\tau\in\Upsilon_w}\{\zeta^{n}_{\tau,n-1}+\zeta^{n+1}_{\tau,n}\}+(n-1)$.\footnote{When $\cT=\cR$, we need to use a variant of \cite{LTX}*{Lemma~5.7.1(1)} for modules over $\cE$, which holds by the similar proof.}
\end{proof}

Lemma \ref{le:specialization} implies that for every closed point $x$ of $\Spec\sE_\rJ(\bV)$, the image of $\rH^1_\ff(F,(\cR_\rJ(\bV)_{/x})^*(1))$ under the natural map $\rH^1(F,(\cR_\rJ(\bV)_{/x})^*(1))\to\rH^1(F,\cR_\rJ(\bV)^*(1))$ is contained in $\rH^1_f(F,\cR_\rJ(\bV)^*(1))$ (and is annihilated by $\fP_x$).

\begin{proposition}\label{pr:control}
For every closed point $x$ of $\Spec\sE_\rJ(\bV)$, consider the cospecialization map
\[
\rH^1_\ff(F,(\cR_\rJ(\bV)_{/x})^*(1))\to\rH^1_f(F,\cR_\rJ(\bV)^*(1))[\fP_x]
\]
from Lemma \ref{le:specialization}(3).
\begin{enumerate}
  \item The kernel of $\cosp_x$ is finite with order bounded by a constant depending only on $|\dZ_x/\dZ'_x|$.

  \item For every affinoid subdomain $\sU$ of $\sE_\rJ^\temp$, the cokernel of $\cosp_x$ is finite with order bounded by a constant depending only on $\sU$ and $[\dQ_x:\dQ_q]$.
\end{enumerate}
\end{proposition}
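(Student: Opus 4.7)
The plan is to factor the cospecialization as
\[
\cosp_x = \beta_x\circ\alpha_x\colon
\rH^1_\ff(F,(\cR_\rJ(\bV)_{/x})^*(1))
\xrightarrow{\alpha_x}
\rH^1_\ff(F,\cR_\rJ(\bV)^*(1)[\fP_x])
\xrightarrow{\beta_x}
\rH^1_f(F,\cR_\rJ(\bV)^*(1))[\fP_x],
\]
where $\alpha_x$ is induced by the Pontryagin dual of the inclusion $\cR_\rJ(\bV)/\fP_x\hookrightarrow\cR_\rJ(\bV)_{/x}$ (the latter being injective since $\cR_\rJ(\bV)/\fP_x$ is $\dZ'_x$-free of rank $n(n+1)$ and $\dZ'_x\hookrightarrow\dZ_x$), and $\beta_x$ is induced by the inclusion $\cR_\rJ(\bV)^*(1)[\fP_x]\hookrightarrow\cR_\rJ(\bV)^*(1)$. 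After verifying that each factor carries the source Selmer subgroup into the target, I will bound the kernel and cokernel of $\alpha_x$ and $\beta_x$ separately, attributing the $|\dZ_x/\dZ'_x|$-dependence of the kernel bound to $\alpha_x$ and the $(\sU,[\dQ_x:\dQ_q])$-dependence of the cokernel bound to $\beta_x$.

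For $\alpha_x$, the cokernel $K$ of $\cR_\rJ(\bV)/\fP_x\hookrightarrow\cR_\rJ(\bV)_{/x}$ is a finite $\dZ'_x$-module of order at most $|\dZ_x/\dZ'_x|^{n(n+1)}$. Pontryagin dualizing and Tate-twisting yields a short exact sequence $0\to K^*(1)\to(\cR_\rJ(\bV)_{/x})^*(1)\to\cR_\rJ(\bV)^*(1)[\fP_x]\to 0$ of discrete $\Gamma_F$-modules, and the induced long exact sequence of Galois cohomology, refined to the indicated Selmer structures (trivial at archimedean places; straightforward at nonarchimedean places outside $\Sigma_p$; via Lemma \ref{le:specialization}(2) at places in $\Sigma_p\setminus\PP$; and via Lemma \ref{le:selmer2} at places in $\PP$, where the $\ff$-condition passes through the three-term sequence up to a finite error controlled by $|K^*(1)|$), bounds both the kernel and the cokernel of $\alpha_x$ by a function of $|\dZ_x/\dZ'_x|$ alone.

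For $\beta_x$, I apply a Greenberg-style control theorem with two key inputs. First, the kernel of $\beta_x$ is controlled by Assumption \ref{as:galois}(G1): absolute irreducibility of the residual representation $(\gamma_n^\natural\otimes\gamma_{n+1}^\natural)^\tc$ together with Nakayama's lemma in the local ring $\cE_\rJ(\bV)$ forces the relevant residual $\Gamma_F$-invariants to vanish, so the kernel of the underlying global map $\rH^1(F,\cR_\rJ(\bV)^*(1)[\fP_x])\to\rH^1(F,\cR_\rJ(\bV)^*(1))[\fP_x]$ is bounded and absorbed into the $\alpha_x$-estimate. Second, the cokernel is bounded by a place-by-place local analysis: trivial at archimedean $w$, via inertia-invariants at nonarchimedean $w\notin\Sigma_p$, via Lemma \ref{le:specialization}(2) at $w\in\Sigma_p\setminus\PP$, and at $w\in\PP$ via the temperedness hypothesis $x\in\sE_\rJ^\temp$, which forces $(\Gr^0_w\sR_\rJ(\bV)_{/x})_{\Gamma_{F_w}}=0$ and makes the ordinary Selmer condition of Lemma \ref{le:selmer2} compatible with specialization uniformly in $x$ on $\sU$. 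A Poitou--Tate global duality argument then collects the local contributions into a single bound depending only on $\sU$ and $[\dQ_x:\dQ_q]$.

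The main obstacle is the uniform cokernel control of $\beta_x$ at places in $\PP$, where the local Selmer condition depends on the filtration $\Fil_w^{-1}$ of $\sR_\rJ(\bV)_{/x}$. Away from $\sE_\rJ^\temp$ the filtration can degenerate upon specialization, producing discontinuities in the local condition whose magnitude grows with $[\dQ_x:\dQ_q]$; restricting to $\sE_\rJ^\temp$ eliminates this, and coherence of $\cR_\rJ(\bV)$ on the noetherian ring $\cO(\sU)$ upgrades pointwise vanishing of $(\Gr^0_w\sR_\rJ(\bV)_{/x})_{\Gamma_{F_w}}$ to a uniform bound on $\sU$. Combining this with the $\alpha_x$-estimate then yields the separate bounds in (1) and (2).
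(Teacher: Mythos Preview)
Your factorization $\cosp_x=\beta_x\circ\alpha_x$ through $(\cR/\fP_x)^*(1)=\cR^*(1)[\fP_x]$ is exactly the paper's route, and the overall skeleton (residual irreducibility for the global map, place-by-place local analysis for the cokernel) is correct. Two points, however, need to be sharpened before the argument goes through.

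First, the treatment at places $w\in\spadesuit$. You lump these in with the unramified primes under ``via inertia-invariants at nonarchimedean $w\notin\Sigma_p$,'' but at a genuinely ramified place there is no a priori reason the error $\coker\bigl(\rH^1_f(F_w,\cR)\to\rH^1_\ff(F_w,\cR/\fP_x)\bigr)$ is uniformly bounded. The paper uses Assumption~\ref{as:galois}(G5) (every lifting of $\gamma_v$ is minimally ramified) together with \cite{CHT08}*{Lemma~2.4.11}: after passing to the wild-inertia invariants one can write $\cR_0=R\otimes_{\dZ_q}\cE$ with $\rI_{F_w}$ acting through the free $\dZ_q$-module $R$ and with $\rH^1(\rI_{F_w},R)$ $\dZ_q$-free; this forces both $\coker\bigl(\rH^1(\kappa_w,\cR^{\rI_{F_w}})\to\rH^1(\kappa_w,(\cR/\fP_x)^{\rI_{F_w}})\bigr)$ and the torsion in $\rH^1(\rI_{F_w},\cR/\fP_x)^{\Gamma_{\kappa_w}}$ to vanish, so the $\spadesuit$-contribution is actually zero. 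Without invoking (G5) you have no uniform bound here.

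Second, you only claim that the kernel of $\rH^1(F,\cR^*(1)[\fP_x])\to\rH^1(F,\cR^*(1))[\fP_x]$ is ``bounded.'' The paper uses \cite{MR04}*{Lemma~3.5.3} (which needs only residual absolute irreducibility) to see that this map is in fact an \emph{isomorphism}. This matters for part~(2): it lets the snake lemma bound $\coker\cosp_x$ directly by the purely local sum
$\bigoplus_{w\in\spadesuit\cup\Sigma_p}\Ker\bigl(\rH^1_\ff(F_w,\cR^*(1)[\fP_x])\to\rH^1_f(F_w,\cR^*(1))[\fP_x]\bigr)$,
which by local Tate duality is converted to $\bigoplus_w\coker\bigl(\rH^1_f(F_w,\cR)\to\rH^1_\ff(F_w,\cR/\fP_x)\bigr)$. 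In your formulation the cokernel of $\alpha_x$ introduces an extra factor depending on $|\dZ_x/\dZ'_x|$ into the bound for~(2), which is not what the statement asserts. The remaining local pieces are handled as you indicate: surjective for $w\in\Sigma_p\setminus\PP$ by Lemma~\ref{le:specialization}(2), and for $w\in\PP$ by the argument of \cite{LTX}*{Proposition~5.2.14(2)}.
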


\begin{proof}
Denote $F^\ur\subseteq\ol{F}$ the maximal extension of $F$ that is unramified outside $\spadesuit\cup\Sigma_p$ and write $\rH^\bullet(F,-)$ for $\rH^\bullet(F^\ur/F,-)$ for short. In the proof below, we also write $\cE,\cR$ for $\cE_\rJ(\bV),\cR_\rJ(\bV)$ for short.

We first note that, by (the same proof of) \cite{MR04}*{Lemma~3.5.3}, the absolute irreducibility of $\gamma$ implies that the natural map
\[
\rH^1(F,(\cR/\fP_x)^*(1))=\rH^1(F,\cR^*(1)[\fP_x])\to\rH^1(F,\cR^*(1))[\fP_x]
\]
is an isomorphism for every closed point $x$ of $\Spec\sE$.

For (1), $\Ker\cosp_x$ is then contained in the kernel of the map $\rH^1(F,(\cR_{/x})^*(1))\to\rH^1(F,(\cR/\fP_x)^*(1))$, which is dual to a $\dZ_q$-submodule of $\rH^1(F,\cR\otimes_{\cE}(\dZ_x/\dZ'_x))$. Part (1) follows as $\rH^1(F,\cR\otimes_{\cE}(\dZ_x/\dZ'_x))$ is finite with order bounded by a constant depending only on $|\dZ_x/\dZ'_x|$.

For (2), $\coker\cosp_x$ is then contained in
\[
\bigoplus_{w\in\spadesuit\cup\Sigma_p}
\Ker\(\rH^1_\ff(F_w,\cR^*(1)[\fP_x])\to\rH^1_f(F_w,\cR^*(1))[\fP_x]\)
\]
by the snake lemma. By local Tate duality, the order of the above group is the same as that of
\[
\bigoplus_{w\in\spadesuit\cup\Sigma_p}\coker\(\rH^1_f(F_w,\cR)\to\rH^1_\ff(F_w,\cR/\fP_x)\).
\]
There are three cases.

For $w\in\Sigma_p\setminus\PP$, the map $\rH^1_f(F_w,\cR)\to\rH^1_\ff(F_w,\cR/\fP_x)$ is surjective by Lemma \ref{le:specialization}(2).

For $w\in\PP$, one can adopt the same proof (for the same case) of \cite{LTX}*{Proposition~5.2.14(2)} to conclude that the cokernel of $\rH^1_f(F_w,\cR)\to\rH^1_\ff(F_w,\cR/\fP_x)$ is finite with order bounded by a constant depending only on $\sU$ and $[\dQ_x:\dQ_q]$.

For $w\in\spadesuit$, the image of $\rH^1_f(F_w,\cR)\to\rH^1_\ff(F_w,\cR/\fP_x)$ contains the image of the composite map
\[
\rH^1(\kappa_w,\cR^{\rI_{F_w}})\to\rH^1(\kappa_w,(\cR/\fP_x)^{\rI_{F_w}})\to\rH^1_\ff(F_w,\cR/\fP_x)
\]
in which the second one is an inclusion. We now show that for every closed point $x$ of $\Spec\sE$, both
\begin{align}\label{eq:control1}
\coker\(\rH^1(\kappa_w,\cR^{\rI_{F_w}})\to\rH^1(\kappa_w,(\cR/\fP_x)^{\rI_{F_w}})\)
\end{align}
and
\begin{align}\label{eq:control3}
\frac{\rH^1_\ff(F_w,\cR/\fP_x)}{\rH^1(\kappa_w,(\cR/\fP_x)^{\rI_{F_w}})}=\rH^1(\rI_{F_w},\cR/\fP_x)^{\Gamma_{\kappa_w}}[p^\infty]
\end{align}
are trivial. Let $\rP_{F_w}$ be the maximal subgroup of $\rI_{F_w}$ whose pro-order is coprime to $p$ so that $\rI_{F_w}/\rP_{F_w}\simeq\dZ_p$. By \cite{CHT08}*{Lemma~2.4.11}, $\cR_0\coloneqq\cR^{\rP_{F_w}}$ is an $\cE[\Gamma_{F_w}]$-linear direct summand such that the natural map $\rH^i(\rI_{F_w},\cR_0/\fP_x)\to\rH^i(\rI_{F_w},\cR/\fP_x)$ is an isomorphism for $i\geq 0$ and every closed point $x$ of $\sE_\rJ^\temp$. Thus, without loss of generality, we may assume that $\rP_{F_w}$ acts trivially on $\cR$. Assumption \ref{as:galois}(G5) implies that there exists a free $\dZ_q$-module $R$ contained in $\cR$ satisfying $\cR=R\otimes_{\dZ_q}\cE$, such that the action of $\rI_{F_w}$ on $\cR$ factors through $R$ and that $\rH^1(\rI_{F_w},R)$ is a finite free $\dZ_q$-module. It follows that $\cR^{\rI_{F_w}}\to(\cR/\fP_x)^{\rI_{F_w}}$ is surjective, so that \eqref{eq:control1} is trivial due to the vanishing of $\rH^2(\kappa_w,-)$. It also follows that
\[
\eqref{eq:control3}=\rH^1(\rI_{F_w},\cR/\fP_x)^{\Gamma_{\kappa_w}}[p^\infty]
\subseteq\rH^1(\rI_{F_w},\cR/\fP_x)[p^\infty]=\rH^1(\rI_{F_w},R)[p^\infty]\otimes_{\dZ_q}\cR/\fP_x=0.
\]

The proposition is proved.
\end{proof}

\begin{proposition}\label{pr:specialize}
For every point $z$ of $\sE_\rJ^\temp$ of codimension at most one. There exists a Zariski open subset $\sU_z$ of $\sV_z$ such that the natural maps
\begin{align*}
\rH^1_f(F,\sR_\rJ(\bV))/\fP_x\rH^1_f(F,\sR_\rJ(\bV))&\to\rH^1_\ff(F,\sR_\rJ(\bV)_{/x}), \\
\sD_\rJ(\bV)^\sfH/\fP_x\sD_\rJ(\bV)^\sfH&\to(\sD_\rJ(\bV)_{/x})^\sfH, \quad\text{when $\epsilon(\bV)=1$},\\
\rH^1_f(F,\sD_\rJ(\bV))^\sfH/\fP_x\rH^1_f(F,\sD_\rJ(\bV))^\sfH&\to\rH^1_\ff(F,\sD_\rJ(\bV)_{/x})^\sfH,\quad\text{when $\epsilon(\bV)=-1$},
\end{align*}
are all injective for every closed point $x$ of $\sU_z$.
\end{proposition}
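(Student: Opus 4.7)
The plan is to combine local freeness of the three source modules on $\sE_\rJ^\temp$ near $z$ with a control-theorem argument in the spirit of Proposition \ref{pr:control}. First I would observe that each source module is finitely generated over $\sE_\rJ(\bV)$ and locally torsion-free on the normal locus of $\Spec\sE_\rJ(\bV)$: Lemma \ref{le:selmer1}(3) handles $\rH^1_f(F,\sR_\rJ(\bV))$; Lemma \ref{le:free_incoherent} handles $\rH^1_f(F,\sD_\rJ(\bV))^\sfH$; and the opening lemma of \S\ref{ss:bessel_definite} combined with Proposition \ref{pr:eigen_2}(7) handles $\sD_\rJ(\bV)^\sfH$. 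Since $z\in\sE_\rJ^\temp\subseteq\sE_\rJ^\reg$ has codimension at most one, $\cO_{\sE_\rJ^\temp,z}$ is regular of Krull dimension at most one, over which every finitely generated torsion-free module is free. By generic freeness I would find a Zariski open neighborhood $\sU\subseteq\sE_\rJ^\temp$ of $z$ on which each source module---and $\sD_\rJ(\bV)$ itself, by Proposition \ref{pr:eigen_2}(7)---is locally free of constant rank, and set $\sU_z\coloneqq\sU\cap\sV_z$, with further Zariski shrinking permitted below.

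For the second (definite) map, the submodule $\sD_\rJ(\bV)^\sfH\hookrightarrow\sD_\rJ(\bV)$ is the common kernel of the family of $\sE_\rJ(\bV)$-linear operators $T-\deg(T)$ attached to elements of the double-coset Hecke algebra at $\spadesuit^+$, acting between locally free modules on $\sU$. On the open locus in $\sU_z$ where these operator ranks are constant, the formation of $\sfH$-invariants commutes with base change to $\dQ_x$ for closed $x$, yielding the claimed injection. For the first and third maps I would apply Proposition \ref{pr:control} (together with its $\sD$-analogue in the indefinite case, obtained by the same recipe via Lemma \ref{le:flat}). The uniform finite kernel and cokernel bounds become isomorphisms after tensoring with $\dQ_q$, so the cospecialization map is an isomorphism of $\dQ_x$-vector spaces. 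Combined with the local freeness of the source on $\sU_z$---which forces the $\dQ_x$-dimension of $\rH^1_f(F,\sR_\rJ(\bV))/\fP_x$ to equal its generic rank on $\sV_z$---a dimension count together with lower-semicontinuity of the rank of the specialization map gives injectivity on a dense Zariski open of $\sU_z$. The third map combines this Selmer-specialization argument for $\sD$ with the $\sfH$-invariant treatment of the previous paragraph.

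The main obstacle is the indefinite case, in which one must first establish the $\sD$-analogue of Proposition \ref{pr:control}; using Lemma \ref{le:flat} to write $\cD_\rJ(\bV)\simeq\cD_\rJ^\flat(\bV)\otimes_{\cE_\rJ(\bV)}\cR_\rJ(\bV)$, this reduces, on the locus where $\cD_\rJ^\flat(\bV)$ is locally free, to the local cohomology computations already carried out for $\cR_\rJ(\bV)$. A secondary subtlety is that the subspace $\Fil_w^{-1}\sR_{/x}$ used to define $\rH^1_\ff(F_w,\sR_{/x})$ for $w\in\PP$ need not coincide with the Hodge-theoretic $\Fil^{-1}$ at non-classical $x$; the resolution is that on the tempered locus the graded pieces of $\Fil_w^\bullet\cR_\rJ(\bV)$ carry distinct Galois weights, so the defining kernel is a direct summand of the correct $\dQ_x$-dimension in $\rH^1(F_w,\sR_{/x})$, permitting the rank comparison to go through.
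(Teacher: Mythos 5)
Your argument for the second map is sound, though more elaborate than the paper's, which observes directly that $\cO_{\sE_\rJ^\temp,z}$ is a DVR for $z$ of codimension one, so $\fm_z\sD_z^\sfH=(\fm_z\sD_z)^\sfH$ and injectivity at $z$ follows immediately; openness of the injective locus then finishes it. Your rank-constancy approach buys nothing here.

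The argument for the first map has a genuine gap. You invoke Proposition \ref{pr:control}, but that result concerns the cospecialization map for the \emph{dual} Selmer group $\rH^1_f(F,\cR_\rJ(\bV)^*(1))$, a discrete torsion module; it says nothing directly about the specialization of the compact Selmer group $\rH^1_f(F,\cR_\rJ(\bV))$. "Tensoring with $\dQ_q$" does not convert a bound on a torsion cokernel into a statement about the $\dQ_x$-vector space $\rH^1_f(F,\sR)/\fP_x\rH^1_f(F,\sR)$, and there is no obvious Poitou--Tate dualization that turns the one into the other without serious additional work. Moreover your "lower-semicontinuity of the rank of the specialization map" is not available in the form you want: the family $x\mapsto\rH^1_\ff(F,\sR_\rJ(\bV)_{/x})$ is not a coherent sheaf on $\sE_\rJ^\temp$ (the local conditions change with $x$), so there is no semicontinuity theorem to appeal to. The actual content, which your outline never establishes, is that $\rH^1(F,\sR_\rJ(\bV))/\rH^1_f(F,\sR_\rJ(\bV))$ is locally torsion-free over $\sE_\rJ^\temp$. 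This is where the temperedness hypothesis at $w\in\PP$ (via Lemma \ref{le:selmer2}(2) and the vanishing of $(\Gr^0_w\sR)_{\Gamma_{F_w}}$) and the Fontaine--Laffaille bound at $w\in\Sigma_p\setminus\PP$ (via the input from \cite{LTX}*{Lemma~5.7.1}) enter. Your "secondary subtlety" remark gestures at the $w\in\PP$ case but misdiagnoses the issue (the definitions of the local conditions are already made compatibly; the real issue is torsion-freeness of the global quotient) and you do not address $w\in\Sigma_p\setminus\PP$ at all. Without the local torsion-freeness of $\rH^1/\rH^1_f$, the injection $\rH^1_f(F,\sR)/\fP_x\rH^1_f(F,\sR)\hookrightarrow\rH^1(F,\sR)/\fP_x\rH^1(F,\sR)$ could fail, and that injection is what the rest of the argument hangs on. The third map then inherits this gap, since the paper reduces it to the first.
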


\begin{proof}
In the proof below, we write $\cE,\cR,\sD,\sD^\flat,\sR$ for $\cE_\rJ(\bV),\cR_\rJ(\bV),\sD_\rJ(\bV),\sD^\flat_\rJ(\bV),\sR_\rJ(\bV)$ for short.

We first consider the second map. Since $\dZ[\rK_{\spadesuit^+}\backslash\sfG(F^+_{\spadesuit^+})/\rK_{\spadesuit^+}]$ is a finitely generated algebra, the locus where $\sD^\sfH/\fP_x\sD^\sfH\to(\sD_{/x})^\sfH$ is injective is Zariski open. It suffices to show that for every point $z$ of $\sE_\rJ^\temp$ of codimension one, the map $\sD^\sfH/\fm_z\sD^\sfH\to\sD/\fm_z\sD$ is injective, where $\fm_z$ denotes the maximal ideal of $\cO_{\sE_\rJ^\temp,z}$. As $\cO_{\sE_\rJ^\temp,z}$ is a discrete valuation ring, $\fm_z\sD_z^\sfH=(\fm_z\sD_z)^\sfH$, so that the composition
\[
\sD^\sfH/\fm_z\sD^\sfH=\sD_z^\sfH/\fm_z\sD_z^\sfH=\sD_z^\sfH/(\fm_z\sD_z)^\sfH\to\sD_z/\fm_z\sD_z
=\sD/\fm_z\sD
\]
is injective.

We then consider the third map assuming the statement for the first map. Lemma \ref{le:flat} tell us that $\sD=\sD^\flat\otimes_\sE\sR$. There exists a Zariski closed subset $\sZ$ of $\sE_\rJ^\temp$ of codimension at least two such that both $(\sD^\flat)^\sfH$ and $\sD^\flat$ are locally free over $\sE_\rJ^\temp\setminus\sZ$. In particular, over $\sE_\rJ^\temp\setminus\sZ$, the third map is identified with
\[
((\sD^\flat)^\sfH\otimes_{\sE}\rH^1_f(F,\sR))/\fP_x((\sD^\flat)^\sfH\otimes_{\sE}\rH^1_f(F,\sR))
\to(\sD^\flat_{/x})^\sfH\otimes_{\dQ_x}\rH^1_\ff(F,\sR_{/x}).
\]
Thus, the statement for the third map follows from that of the first and the (same proof for the) second.

Finally for the first map, we claim that the natura map
\begin{align}\label{eq:control2}
\rH^1_f(F,\sR)/\fP_x\rH^1_f(F,\sR)\to\rH^1(F,\sR)/\fP_x\rH^1(F,\sR)
\end{align}
is injective for every closed point $x$ of $\sE_\rJ^\temp$ away from a Zariski closed subset of codimension at least two. Assuming the claim, the statement follows from the same argument as in the proof of \cite{LTX}*{Proposition~5.2.11}.

We now show the claim by showing that $\rH^1(F,\sR)/\rH^1_f(F,\sR)$ is locally torsion free over $\sE_\rJ^\temp$. Indeed, if this is the case, then $\rH^1(F,\sR)/\rH^1_f(F,\sR)$ is locally free away from a Zariski closed subset of codimension at least two, so that \eqref{eq:control2} is injective away from the same locus. For $\rH^1(F,\sR)/\rH^1_f(F,\sR)$, it is an $\sE$-submodule of
\[
\bigoplus_{w\in\Sigma_p}\rH^1(F_w,\sR)/\rH^1_f(F_w,\sR)
\]
by Definition \ref{de:selmer}. Thus, it suffices to show that for every $w\in\Sigma_p$, $\rH^1(F_w,\sR)/\rH^1_f(F_w,\sR)$ is locally torsion free over $\sE_\rJ^\temp$. We first consider an element $w\in\PP$. By Lemma \ref{le:selmer2}(2), $\rH^1(F_w,\sR)/\rH^1_f(F_w,\sR)$ is an $\sE$-submodule of $\rH^1(\rI_{F_w},\sR/\Fil_w^{-1}\sR)^{\Gamma_{\kappa_w}}$. The support of the maximal torsion submodule of $\rH^1(\rI_{F_w},\sR/\Fil_w^{-1}\sR)^{\Gamma_{\kappa_w}}$ is contained in the support of $(\Gr^0_w\sR)_{\Gamma_{F_w}}$, which is disjoint from $\sE_\rJ^\temp$ by Definition \ref{de:tempered}. It follows that $\rH^1(F_w,\sR)/\rH^1_f(F_w,\sR)$ is locally torsion free over $\sE_\rJ^\temp$. We then consider an element $w\in\Sigma_p\setminus\PP$. Applying \cite{LTX}*{Lemma~5.7.1} to
\[
\Lambda=\cE,\quad T=\cR,\quad a=-\min_{\tau\in\Upsilon_w}\{\zeta^n_{\tau,0}+\zeta^{n+1}_{\tau,0}\}-n,\quad
b=\max_{\tau\in\Upsilon_w}\{\zeta^{n}_{\tau,n-1}+\zeta^{n+1}_{\tau,n}\}+n-1,
\]
we know that the $\cE$-module $\rH^1(F_w,\cR)/\rH^1_f(F_w,\cR)$ can be embedded into a finite free $\cE$-module, which implies that $\rH^1(F_w,\sR)/\rH^1_f(F_w,\sR)$ is locally torsion free over $\sE_\rJ^\temp$.

The proposition is proved.
\end{proof}

\section{A bipartite Euler system over eigenvariety}

In this section, we construct a bipartite Euler system over the eigenvariety.

\begin{notation}
We introduce the following notation.
\begin{enumerate}
  \item Denote by $\fL_0$ the set of primes of $F^+$ not in $\ang{\emptyset}$ that are \emph{inert} in $F$.

  \item Denote by $\fN_0$ the set of (possibly empty) finite sets consisting of elements in $\fL_0$ with \emph{distinct} underlying rational primes. For $\fn\in\fN_0$, put $\fn\fl\coloneqq\fn\cup\{\fl\}$ for $\fl\in\fL_0$ whose underlying rational prime is coprime to $\fn$, and $\fn/\fl\coloneqq\fn\setminus\{\fl\}$ for $\fl\in\fn$.

  \item We define a \emph{directed} graph $\fX_0$ with vertices $v(\fn)$ indexed by elements in $\fN_0$, and arrows $a(\fn,\fn\fl)$ from $v(\fn)$ to $v(\fn\fl)$ for $\fl\in\fL_0\setminus\fn$.

  \item Finally, put
      \begin{align*}
      \fN_0^\defin\coloneqq\{\fn\in\fN_0\res(-1)^{|\fn|}=\epsilon(\bV)\},\quad
      \fN_0^\indef\coloneqq\{\fn\in\fN_0\res(-1)^{|\fn|}\neq\epsilon(\bV)\}.
      \end{align*}
\end{enumerate}
\end{notation}

\begin{notation}\label{no:initial}
We start from the \emph{initial datum} $\bV=(\emptyset;\rV_n,\rV_{n+1};\Lambda_n,\Lambda_{n+1};\rK_n,\rK_{n+1};\sfB)$.
\begin{enumerate}
  \item Fix a finite set $[\bV]$ of rational primes different from $p$ and not underlying $\spadesuit^+$ such that the image of the homomorphism
      \[
      \bigotimes_{v\in\Sigma^+_{[\bV]}}\dT_v\to\cE(\bV)
      \]
      already generates $\cE(\bV)$ over $\Lambda_{\rI^0,\dZ_q}$.

  \item For every vertex $v(\fn)$ of $\fX_0$, we choose a datum $\bV^\fn=(\fn;\rV^\fn_n,\rV^\fn_{n+1};\Lambda^\fn_n,\Lambda^\fn_{n+1};\rK^{\fn}_n,\rK^{\fn}_{n+1};\sfB^\fn)$, together with a datum $\tj^\fn=(\tj^\fn_n,\tj^\fn_{n+1})$ in which $\tj^\fn_N\colon\rV_N\otimes_{F^+}\dA_{F^+}^{\infty,\fn}\to\rV^\fn_N\otimes_{F^+}\dA_{F^+}^{\infty,\fn}$ ($N=n,n+1$) are isometries sending $(\Lambda_N)^\fn$ to $(\Lambda^\fn_N)^\fn$ and $(\rK_N)^\fn$ to $(\rK^{\fn}_N)^\fn$ for $N\in\{n,n+1\}$, satisfying $\tj^\fn_{n+1}=(\tj^\fn_n)_\sharp$ and $\tj^\fn_p\sfB=\sfB^\fn$. When $\fn=\emptyset$, we just take $\bV^\emptyset$ to be $\bV$ and $\tj^\emptyset$ to be the identity.

      Then $\bV^\fn$ is definite/indefinite when $\fn\in\fN_1^\defin/\fN_1^\indef$. Similar to $\bV$, we have groups $\sfG^\fn$, $\sfH^\fn$, $\rK^{\fn}$, $\rK^{\fn}_{\sfH^\fn}$ for $\bV^\fn$. Moreover, we identify $\sfG^\fn\otimes_\dQ\dQ_p$ with $\sfG\otimes_\dQ\dQ_p$ and $\sfB^\fn$ with $\sfB$ via $\tj^\fn_p$.

  \item For every arrow $a=a(\fn,\fn\fl)$ of $\fX_0$, we choose a datum $\tj^a=(\tj^a_n,\tj^a_{n+1})$ in which
      \[
      \tj^a_N\colon\rV^\fn_N\otimes_{F^+}\dA_{F^+}^{\infty,\fl}\to\rV^{\fn\fl}_N\otimes_{F^+}\dA_{F^+}^{\infty,\fl}
      \]
      are isometries sending $(\Lambda^\fn_N)^\fl$ to $(\Lambda^{\fn\fl}_N)^\fl$ and $(\rK^{\fn}_N)^\fl$ to $(\rK^{\fn\fl}_N)^\fl$ and such that $\tj^{\fn\fl}_N=(\tj^a_N)^{\fn\fl}\circ(\tj^\fn_N)^{\fn\fl}$ for $N\in\{n,n+1\}$, satisfying $\tj^a_{n+1}=(\tj^a_n)_\sharp$.
\end{enumerate}
\end{notation}

\begin{definition}\label{de:congruence}
Let $k\geq 1$ be a positive integer.
\begin{enumerate}
  \item We denote by $\fL_k$ the subset of $\fL_0\setminus\Sigma^+_{[\bV]}$ consisting of $\fl$ that is a level-raising prime of $F^+$ (Definition \ref{de:prime}) satisfying $\tC_\fl\in\fp^k_0$ (Notation \ref{de:cong} and Notation \ref{no:congruence}).

  \item We denote by $\fN_k$ the set of (possibly empty) finite sets consisting of elements in $\fL_k$ with \emph{distinct} underlying rational primes.

  \item We define the directed graph $\fX_k$ to be the full subgraph of $\fX_0$ spanned by $\fN_k$.

  \item We say that an element $\fl$ of $\fL_k$ is \emph{effective} if $\cD(\bV^{\{\fl\}})$ is nontrivial, and that an element $\fn$ of $\fN_k$ is \emph{effective} if it either empty or contains an effective element of $\fL_k$. We denote by $\fN_k^\eff$ the subset of $\fN_k$ of effective elements.
\end{enumerate}
\end{definition}

\begin{remark}\label{re:location}
We have the following remarks concerning Definition \ref{de:congruence}.
\begin{enumerate}
  \item It is clear from the definition that $\fL_0\supseteq\fL_1\supseteq\fL_2\supseteq\cdots$.

  \item We have $\bigcap_k\fL_k=\emptyset$.

  \item For every $\fl\in\fL_k$ with the underlying rational prime $\ell$ and every subgroup $\rJ<\rI^0$, there is a unique decomposition
      \[
      \cR_\rJ(\bV)/\fp^k=
      (\cR_\rJ(\bV)/\fp^k)^{\phi_{w(\fl)}=1}\oplus(\cR_\rJ(\bV)/\fp^k)^{\phi_{w(\fl)}=\ell^2}\oplus(\cR_\rJ(\bV)/\fp^k)^\dag
      \]
      of $\cE_\rJ(\bV)[\Gamma_{F_{w(\fl)}}]$-modules satisfying $\bigoplus_{i\in\dZ}\rH^i(F_{w(\fl)},(\cR_\rJ(\bV)/\fp^k)^\dag)=0$ and that both $(\cR_\rJ(\bV)/\fp^k)^{\phi_{w(\fl)}=1}$ and $(\cR_\rJ(\bV)/\fp^k)^{\phi_{w(\fl)}=\ell^2}$ are free $\cE_\rJ(\bV)/\fp^k$-modules of rank one.

  \item For every $\fl\in\fL_1$, the set $\{k\geq 1\res \fl\in\fL_k\}$ has a maximum, which we denote by $k(\fl)$.

  \item By Assumption \ref{as:galois}(G4) and the Chebotarev density theorem, $\fL_k$ is an infinite set for every $k\geq 1$.
\end{enumerate}
\end{remark}

Put $\fN^\defin_k\coloneqq\fN_k\cap\fN^\defin_0$ and $\fN^\indef_k\coloneqq\fN_k\cap\fN^\indef_0$. For every $\fl\in\fL_1$, we fix isomorphisms
\begin{align}\label{eq:rigidify}
\rH^1_\unr(F_{w(\fl)},\cR(\bV)/\fp^{k(\fl)})\simeq \cE(\bV)/\fp^{k(\fl)},\quad
\rH^1_\sing(F_{w(\fl)},\cR(\bV)/\fp^{k(\fl)})\simeq \cE(\bV)/\fp^{k(\fl)}
\end{align}
once and for all, which is possible by Remark \ref{re:location}(3,4).

We now introduce congruence modules.

\begin{definition}\label{de:congruence1}
Let $k\geq 1$ be a positive integer and $\rJ<\rI^0$ a subgroup.
\begin{enumerate}
  \item For $\fn\in\fN_k$, put
      \[
      \fp^{\fn,k}_\alpha\coloneqq\fp^k_\alpha\cap(\dT^{\ang{\fn}}_{n_\alpha}\otimes\Lambda_{\rI^0_{n_\alpha},\dZ_q}),\qquad
      \fp^{\fn,k}\coloneqq\fp^k\cap(\dT^{\ang{\fn}}\otimes\Lambda_{\rI^0,\dZ_q}).
      \]

  \item For $\fn\in\fN_k$, we define the ($\rJ$-invariant) \emph{congruence module} (of depth $k$) at $\fn$ to be
     \[
     \cC_\rJ^{\fn,k}\coloneqq
     \begin{dcases}
     \Hom_{\dT^{\ang{\fn}}\otimes\Lambda_{\rI^0,\dZ_q}}
     \(\cD_\rJ(\bV^\fn)/\fp^{\fn,k},\cE_\rJ(\bV)/\fp^k\), &\fn\in\fN_k^\defin, \\
     \Hom_{\dT^{\ang{\fn}}\otimes\Lambda_{\rI^0,\dZ_q}[\Gamma_F]}
     \(\cD_\rJ(\bV^\fn)/\fp^{\fn,k},\cR_\rJ(\bV)/\fp^k\), &\fn\in\fN_k^\indef,
     \end{dcases}
     \]
     as a module over $\dZ[\rK_{\spadesuit^+}\backslash\sfG(F^+_{\spadesuit^+})/\rK_{\spadesuit^+}]\otimes\Lambda_{\rI^0,\dZ_q}$. When $\fn=\emptyset$, we also allow $k=\infty$ and regard $\fp^{\emptyset,\infty}$ as the zero ideal.

  \item For every arrow $a=a(\fn,\fn\fl)$ of $\fX_k$ with $\fn\in\fN_k^\defin$, we define the \emph{reciprocity map}
     \[
     \varrho_\rJ^{a,k}\colon\cC_\rJ^{\fn,k}\to\cC_\rJ^{\fn\fl,k}
     \]
     of $\dZ[\rK_{\spadesuit^+}\backslash\sfG(F^+_{\spadesuit^+})/\rK_{\spadesuit^+}]\otimes\Lambda_{\rI^0,\dZ_q}$-modules as follows. When $\fn\not\in\fN_k^\eff$, we define $\varrho_\rJ^{a,k}$ to be the zero map. When $\fn\in\fN_k^\eff$, we define $\varrho_\rJ^{a,k}$ to be the composition of the map
     \begin{align*}
     \varrho^\sing_\rJ(\bV^{\fn\fl},\bV^\fn)^\vee\colon\cC_\rJ^{\fn,k}
     &=\Hom_{\dT^{\ang{\fn\fl}}\otimes\Lambda_{\rI^0,\dZ_q}}
     \(\cD_\rJ(\bV^\fn)/\fp^{\fn,k},\cE_\rJ(\bV)/\fp^k\) \\
     &\to\Hom_{\dT^{\ang{\fn\fl}}\otimes\Lambda_{\rI^0,\dZ_q}}
     \(\rH^1_\sing(F_{w(\fl)},\cD_\rJ(\bV^{\fn\fl})/\fp^{\fn\fl,k}),\cE_\rJ(\bV)/\fp^k\)
     \end{align*}
     and the inverse of the isomorphism
     \begin{align*}
     \cC_\rJ^{\fn\fl,k}&=\Hom_{\dT^{\ang{\fn\fl}}\otimes\Lambda_{\rI^0,\dZ_q}[\Gamma_F]}
      \(\cD_\rJ(\bV^{\fn\fl})/\fp^{\fn\fl,k},\cR_\rJ(\bV)/\fp^k\) \\
      &\xrightarrow{\sim}\Hom_{\dT^{\ang{\fn\fl}}\otimes\Lambda_{\rI^0,\dZ_q}}
      \(\rH^1_\sing(F_{w(\fl)},\cD_\rJ(\bV^{\fn\fl})/\fp^{\fn\fl,k}),\cE_\rJ(\bV)/\fp^k\)
     \end{align*}
     from Lemma \ref{le:congruence}(1b) below.

  \item For every arrow $a=a(\fn,\fn\fl)$ of $\fX_k$ with $\fn\in\fN_k^\indef$, we define the \emph{reciprocity map}
     \[
     \varrho_\rJ^{a,k}\colon\cC_\rJ^{\fn,k}\to\cC_\rJ^{\fn\fl,k}
     \]
     of $\dZ[\rK_{\spadesuit^+}\backslash\sfG(F^+_{\spadesuit^+})/\rK_{\spadesuit^+}]\otimes\Lambda_{\rI^0,\dZ_q}$-modules as follows. When $\fn\not\in\fN_k^\eff$, we define $\varrho_\rJ^{a,k}$ to be the zero map. When $\fn\in\fN_k^\eff$, we define $\varrho_\rJ^{a,k}$ to be the composition of the isomorphism
     \begin{align*}
     \cC_\rJ^{\fn,k}&=\Hom_{\dT^{\ang{\fn}}\otimes\Lambda_{\rI^0,\dZ_q}[\Gamma_F]}
      \(\cD_\rJ(\bV^\fn)/\fp^{\fn,k},\cR_\rJ(\bV)/\fp^k\) \\
      &\xrightarrow\sim\Hom_{\dT^{\ang{\fn}}\otimes\Lambda_{\rI^0,\dZ_q}}
      \(\rH^1_\unr(F_{w(\fl)},\cD_\rJ(\bV^\fn)/\fp^{\fn,k}),\cE_\rJ(\bV)/\fp^k\)
     \end{align*}
     from Lemma \ref{le:congruence}(2a) below, and the map
     \begin{align*}
     \varrho^\unr_\rJ(\bV^{\fn\fl},\bV^\fn)^\vee&\colon\Hom_{\dT^{\ang{\fn}}\otimes\Lambda_{\rI^0,\dZ_q}}
      \(\rH^1_\unr(F_{w(\fl)},\cD_\rJ(\bV^\fn)/\fp^{\fn,k}),\cE_\rJ(\bV)/\fp^k\)\\
      &\to\Hom_{\dT^{\ang{\fn\fl}}\otimes\Lambda_{\rI^0,\dZ_q}}
      \(\cD_\rJ(\bV^{\fn\fl})/\fp^{\fn\fl,k},\cE_\rJ(\bV)/\fp^k\)=\cC_\rJ^{\fn\fl,k}.
     \end{align*}
\end{enumerate}
\end{definition}

In what follows, for a nonarchimedean place $w$ of $F$ not above $p$, a finitely generated topological $\dZ_q$-module $D$ with a continuous unramified action by $\Gamma_{F_w}$, and an element $c\in\dZ_q^\times$, we denote by $D_{(\phi_w=c)}$ the localization of $D$ at the unique maximal ideal of $\dZ_q[\Gamma_{F_w}/\rI_{F_w}]$ containing $\phi_w-c$. In particular, we have natural surjective maps
\[
D_{(\phi_w=1)}\to\rH^1_\unr(F_w,D),\qquad
D_{(\phi_w=\|w\|)}\to\rH^1_\sing(F_w,D),
\]
where $\|w\|$ denotes the residual cardinality of $F_w$.

\begin{lem}\label{le:congruence}
Let $k\geq 1$ be a positive integer and $\rJ<\rI^0$ a subgroup. Consider elements $\fn\in\fN_k^\indef$ and $\fm\in\fN_k$ satisfying $\fn\subseteq\fm$.
\begin{enumerate}
  \item For every arrow $a=a(\fn/\fl,\fn)$ of $\fX_k$ (with $\fl\in\fn$), we have
      \begin{enumerate}
        \item $\cD_\rJ(\bV^\fn)/\fp^{\fm,k}$ is unramified at $w(\fl)$;

        \item the natural map
            \[
            \Hom_{\dT^{\ang{\fm}}\otimes\Lambda_{\rI^0,\dZ_q}[\Gamma_F]}\(\cD_\rJ(\bV^\fn)/\fp^{\fm,k},\cR_\rJ(\bV)/\fp^k\)\to
            \Hom_{\dT^{\ang{\fm}}\otimes\Lambda_{\rI^0,\dZ_q}}
            \(\rH^1_\sing(F_{w(\fl)},\cD_\rJ(\bV^\fn)/\fp^{\fm,k}),\cE_\rJ(\bV)/\fp^k\),
            \]
            where we have used \eqref{eq:rigidify}, is an isomorphism;

        \item for every nontrivial $\dT^{\ang{\fm}}\otimes\Lambda_{\rI^0,\dZ_q}[\Gamma_F]$-linear subquotient $D$ of $\cD_\rJ(\bV^\fn)/\fp^{\fm,k}$, $D_{(\phi_{w(\fl)}=\ell^2)}$ is nontrivial and the natural map $D_{(\phi_{w(\fl)}=\ell^2)}\to\rH^1_\sing(F_{w(\fl)},D)$ is an isomorphism.
      \end{enumerate}

  \item For every arrow $a=a(\fn,\fn\fl)$ of $\fX_k$ with $\fl\not\in\fm$, we have
      \begin{enumerate}
        \item the natural map
            \[
            \Hom_{\dT^{\ang{\fm}}\otimes\Lambda_{\rI^0,\dZ_q}[\Gamma_F]}\(\cD_\rJ(\bV^\fn)/\fp^{\fm,k},\cR_\rJ(\bV)/\fp^k\)\to
            \Hom_{\dT^{\ang{\fm}}\otimes\Lambda_{\rI^0,\dZ_q}}
            \(\rH^1_\unr(F_{w(\fl)},\cD_\rJ(\bV^\fn)/\fp^{\fm,k}),\cE_\rJ(\bV)/\fp^k\),
            \]
            where we have used \eqref{eq:rigidify}, is an isomorphism;

        \item for every nontrivial $\dT^{\ang{\fm}}\otimes\Lambda_{\rI^0,\dZ_q}[\Gamma_F]$-linear subquotient $D$ of $\cD_\rJ(\bV^\fn)/\fp^{\fm,k}$, $D_{(\phi_{w(\fl)}=1)}$ is nontrivial and the natural map $D_{(\phi_{w(\fl)}=1)}\to\rH^1_\unr(F_{w(\fl)},D)$ is an isomorphism.
      \end{enumerate}
\end{enumerate}
\end{lem}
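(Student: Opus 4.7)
My approach is to reduce all statements to assertions about the $\Gamma_{F_{w(\fl)}}$-action on $\cR_\rJ(\bV^\fn)/\fp^{\fm,k}$, then analyze that action using density of classical points and local--global compatibility (Proposition \ref{pr:eigen_2}(6) and Remark \ref{re:pure}). Since $\fn\in\fN_k^\indef$, the datum $\bV^\fn$ is indefinite, so Lemma \ref{le:flat} gives
\[
\cD_\rJ(\bV^\fn)/\fp^{\fm,k}
\simeq\bigl(\cD^\flat_\rJ(\bV^\fn)/\fp^{\fm,k}\bigr)\otimes_{\cE_\rJ(\bV^\fn)/\fp^{\fm,k}}\bigl(\cR_\rJ(\bV^\fn)/\fp^{\fm,k}\bigr)
\]
as $\Gamma_F$-modules, and every structural claim about the local Galois action on $\cD_\rJ(\bV^\fn)/\fp^{\fm,k}$ thus transfers from the corresponding claim for $\cR_\rJ(\bV^\fn)/\fp^{\fm,k}$. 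Local--global compatibility combined with Chebotarev identifies, modulo the maximal ideal, the characteristic polynomial of $\phi_{w(\fl)}$ on $\cR_\rJ(\bV^\fn)/\fp^{\fm,k}$ with that of $(\gamma_n^\natural\otimes\gamma_{n+1}^\natural)(\phi_{w(\fl)})$ after the Tate twist by $n$, i.e.\ scaled by $\ell^{2n}$.

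For part (2), where $\fl\notin\fm\supseteq\fn$, I would first observe that $\cR_\rJ(\bV^\fn)/\fp^{\fm,k}$ is unramified at $w(\fl)$, since $\bV^\fn$ is unramified there and all Hecke operators at $\fl$ lie in $\dT^{\ang{\fm}}$. Condition (P3) then says that $1$ is a simple eigenvalue of $\phi_{w(\fl)}$ on the residual representation, so Hensel's lemma upgrades this to a direct sum decomposition of $\cR_\rJ(\bV^\fn)/\fp^{\fm,k}$ analogous to Remark \ref{re:location}(3); in particular, for every nontrivial $\Gamma_F$-linear subquotient $D$, the localization $D_{(\phi_{w(\fl)}=1)}$ is a direct summand on which $\phi_{w(\fl)}$ acts by exactly $1$. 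This yields (2b) via the identification $\rH^1_\unr(F_{w(\fl)},D)=D/(\phi_{w(\fl)}-1)D=D_{(\phi_{w(\fl)}=1)}$. For (2a), any $\Gamma_F$-equivariant map from $\cD_\rJ(\bV^\fn)/\fp^{\fm,k}$ into $\cR_\rJ(\bV)/\fp^k$ factors through the $\phi_{w(\fl)}=1$ summand of the target, and the rigidification \eqref{eq:rigidify} identifies that summand with $\rH^1_\unr(F_{w(\fl)},\cR_\rJ(\bV)/\fp^k)\simeq\cE_\rJ(\bV)/\fp^k$, giving the asserted bijection of Homs.

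For part (1), where $\fl\in\fn$, classical specializations of $\cR_\rJ(\bV^\fn)$ carry nontrivial local monodromy at $w(\fl)$ coming from the Steinberg component of $\Pi_{n_0}$ at $\fl$. The essential step (1a) is the vanishing of this monodromy modulo $\fp^{\fm,k}$: this is Ribet-style level-raising, triggered precisely by the membership $\fl\in\fL_k$, i.e.\ by $\tC_\fl\in\fp^k_0$, where $\tC_\fl=(\ell+1)\tR_{n_0,\fl}-\tI_{n_0,\fl}$ is the very operator that controls this monodromy (compare \cite{LTXZZ}*{Proposition~6.3.1} and Lemma \ref{le:first2}). Once (1a) is in hand, the monodromy-killed representation has Frobenius eigenvalue $\|w(\fl)\|=\ell^2$ on the remnant of the Steinberg line, which by (P3) is simple, so the Hensel argument from part (2) gives (1c) using $\rH^1_\sing(F_{w(\fl)},D)=D/(\phi_{w(\fl)}-\ell^2)D\simeq D_{(\phi_{w(\fl)}=\ell^2)}$ for unramified $D$; and (1b) is the Tate-dual analog of (2a), since homomorphisms into $\cR_\rJ(\bV)/\fp^k$ factor through its $\phi_{w(\fl)}=\ell^2$ summand, which by \eqref{eq:rigidify} is $\rH^1_\sing(F_{w(\fl)},\cR_\rJ(\bV)/\fp^k)\simeq\cE_\rJ(\bV)/\fp^k$. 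The main obstacle will be (1a): at individual classical points monodromy-vanishing follows from the local Langlands correspondence, but propagating it across the family requires either density together with continuity of the monodromy operator on $\cR_\rJ(\bV^\fn)/\fp^{\fm,k}$, or an intrinsic Galois-deformation argument -- both of which rely crucially on the multiplicity-one hypotheses (P3) and (P4) to force the local deformation problem at $w(\fl)$ to be sufficiently rigid.
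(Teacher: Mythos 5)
Your reduction to statements about the Galois action on $\cR_\rJ(\bV^\fn)/\fp^{\fm,k}$ is already subtly off: the object you want to analyze is not $\cR_\rJ(\bV^\fn)/\fp^{\fm,k}$ but $\cR_\rJ(\bV^\fn)^\fm/\fp^{\fm,k}$, where $\cE_\rJ(\bV^\fn)^\fm\subseteq\cE_\rJ(\bV^\fn)$ is the subalgebra generated only by the Hecke operators in $\dT^{\ang{\fm}}$. The larger ring $\cE_\rJ(\bV^\fn)$ carries the refinement data at the places in $\fn$ (the $\fl$-level structure ``sees'' the Steinberg eigenvalue), and quotienting by $\fp^{\fm,k}$ -- which only touches the $\dT^{\ang{\fm}}$-part -- does not kill that refinement. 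As a result, $\cE_\rJ(\bV^\fn)/\fp^{\fm,k}$ is not a quotient of $\cE_\rJ(\bV)/\fp^k$, and your tensor decomposition over $\cE_\rJ(\bV^\fn)/\fp^{\fm,k}$ does not give you what you need.

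More importantly, the route you propose for (1a) -- level-raising together with propagation of monodromy-vanishing via density/continuity or deformation rigidity -- is not how the paper proceeds, and you yourself flag it as ``the main obstacle.'' The paper's argument is much cleaner and entirely avoids monodromy control: the module $\cR_\rJ(\bV^\fn)^\fm/\fp^{\fm,k}$ and the module $\cR_\rJ(\bV)/\fp^k\otimes_{\cE_\rJ(\bV)/\fp^k}\cE_\rJ(\bV^\fn)^\fm/\fp^{\fm,k}$ are both liftings of $(\gamma_n^\natural\otimes\gamma_{n+1}^\natural)^\tc(n)$ over the same ring with the same pseudo-character; by Assumption~(G1) and Carayol/Nyssen-type rigidity they are therefore isomorphic as Galois modules. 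The second module is manifestly unramified at $w(\fl)$ for any $\fl\in\fL_k$, because $\cR_\rJ(\bV)$ is built from the empty level. Hence so is the first, and by the correct tensor decomposition so is $\cD_\rJ(\bV^\fn)/\fp^{\fm,k}$; this is (1a). There is no Ribet-style level-raising here, no appeal to continuity of monodromy, and no deformation-theoretic rigidity argument beyond the pseudo-character step. Once (1a) is known, (1c) follows from the Frobenius decomposition and residual absolute irreducibility of $D$ (which forces $D_{(\phi_{w(\fl)}=\ell^2)}\neq 0$), and (1b) uses both that $\cD_\rJ(\bV^\fn)/\fp^{\fm,k}$ is $\Gamma_F$-generated by its $\phi_{w(\fl)}=\ell^2$ component (injectivity) and the identification $\cD_\rJ(\bV^\fn)/\fp^{\fm,k}\simeq(\cD_\rJ^\flat(\bV^\fn)^\fm/\fp^{\fm,k})\otimes_{\cE_\rJ(\bV)/\fp^k}(\cR_\rJ(\bV)/\fp^k)$ (surjectivity). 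Your treatment of (1b)/(2a) is closer in spirit, but without the pseudo-character isomorphism and the distinction between $\cE_\rJ(\bV^\fn)$ and $\cE_\rJ(\bV^\fn)^\fm$ the surjectivity half of the Hom-bijection cannot be extracted. In short: the missing idea is the congruence-module/pseudo-character trick that makes (1a) essentially formal; the road you sketched would require a substantial new argument to close the gap you correctly identify.
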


\begin{proof}
Let $\cE_\rJ(\bV^\fn)^\fm$ be the $\Lambda_{\rI^0/\rJ,\dZ_q}$-subalgebra of $\End_{\Lambda_{\rI^0/\rJ,\dZ_q}}\(\cD_\rJ(\bV^\fn)\)$ generated by the image of $\dT^{\ang{\fm}}$, which is a subalgebra of $\cE_\rJ(\bV^\fn)$. We have the $\cE_\rJ(\bV^\fn)^\fm[\Gamma_F]$-module $\cR_\rJ(\bV^\fn)^\fm$ similar to $\cE_\rJ(\bV^\fn)$, and the natural isomorphism
\[
\cD_\rJ^\flat(\bV^\fn)^\fm\otimes_{\cE_\rJ(\bV^\fn)^\fm}\cR_\rJ(\bV^\fn)^\fm\to\cD_\rJ(\bV^\fn),
\]
where $\cD_\rJ^\flat(\bV^\fn)^\fm\coloneqq\Hom_{\cE_\rJ(\bV^\fn)^\fm[\Gamma_F]}\(\cR_\rJ(\bV^\fn)^\fm,\cD_\rJ(\bV^\fn)\)$. By definition, $\cE_\rJ(\bV^\fn)^\fm/\fp^{\fm,k}$ is naturally a quotient $\dT^{\ang{\fm}}\otimes\Lambda_{\rI^0,\dZ_q}$-ring of $\cE_\rJ(\bV)/\fp^k$. Since the following two $\cE_\rJ(\bV^\fn)^\fm/\fp^{\fm,k}[\Gamma_F]$-modules
\[
\cR_\rJ(\bV^\fn)^\fm/\fp^{\fm,k},\qquad
\cR_\rJ(\bV)/\fp^k\otimes_{\cE_\rJ(\bV)/\fp^k}\cE_\rJ(\bV^\fn)^\fm/\fp^{\fm,k}
\]
are both liftings of $(\gamma_n^\natural\otimes\gamma_{n+1}^\natural)^\tc(n)$ to the ring $\cE_\rJ(\bV^\fn)^\fm/\fp^{\fm,k}$ with the same pseudo-character, they must be isomorphic by Assumption \ref{as:galois}(G1). In particular, for every element $\fl\in\fL_k\setminus\fm$, $\cR_\rJ(\bV^\fn)^\fm/\fp^{\fm,k}$ is unramified at $w(\fl)$; and we have a decomposition
\begin{align}\label{eq:congruence0}
\cR_\rJ(\bV^\fn)^\fm/\fp^{\fm,k}=(\cR_\rJ(\bV^\fn)^\fm/\fp^{\fm,k})^{\phi_{w(\fl)}=1}\oplus
(\cR_\rJ(\bV^\fn)^\fm/\fp^{\fm,k})^{\phi_{w(\fl)}=\ell^2}\oplus(\cR_\rJ(\bV^\fn)^\fm/\fp^{\fm,k})^\dag
\end{align}
of $\cE_\rJ(\bV^\fn)^\fm[\Gamma_{F_{w(\fl)}}]$-modules satisfying $\bigoplus_{i\in\dZ}\rH^i(F_{w(\fl)},(\cR_\rJ(\bV^\fn)^\fm/\fp^{\fm,k})^\dag)=0$ and that both $(\cR_\rJ(\bV^\fn)^\fm/\fp^{\fm,k})^{\phi_{w(\fl)}=1}$ and $(\cR_\rJ(\bV^\fn)^\fm/\fp^{\fm,k})^{\phi_{w(\fl)}=\ell^2}$ are free $\cE_\rJ(\bV^\fn)^\fm/\fp^{\fm,k}$-modules of rank one, similar to Remark \ref{re:location}(3).

We now show (1) and leave the similar case (2) to the readers. We already know (1a). Since $D$ is residually absolutely irreducible, it follows that $D_{(\phi_{w(\fl)}=\ell^2)}$ is nontrivial. The decomposition \eqref{eq:congruence0} implies that the natural map $D_{(\phi_{w(\fl)}=\ell^2)}\to\rH^1_\sing(F_{w(\fl)},D)$ is an isomorphism. Thus, (1c) follows. It remains to verify (1b), or equivalently, that the natural map
\begin{align}\label{eq:congruence3}
\Hom_{\dT^{\ang{\fm}}\otimes\Lambda_{\rI^0,\dZ_q}[\Gamma_F]}\(\cD_\rJ(\bV^\fn)/\fp^{\fm,k},\cR_\rJ(\bV)/\fp^k\)\to
\Hom_{\dT^{\ang{\fm}}\otimes\Lambda_{\rI^0,\dZ_q}}\((\cD_\rJ(\bV^\fn)/\fp^{\fm,k})^{\phi_{w(\fl)}=\ell^2},(\cR_\rJ(\bV)/\fp^k)^{\phi_{w(\fl)}=\ell^2}\)
\end{align}
is an isomorphism. We already show that as a $\dT^{\ang{\fm}}\otimes\Lambda_{\rI^0,\dZ_q}[\Gamma_F]$-module, $\cD_\rJ(\bV^\fn)/\fp^{\fm,k}$ is generated by $(\cD_\rJ(\bV^\fn)/\fp^{\fm,k})^{\phi_{w(\fl)}=\ell^2}$, the injectivity of \eqref{eq:congruence3} is clear. For the surjectivity, note that the discussion in the first paragraph gives an isomorphism
\[
\cD_\rJ(\bV^\fn)/\fp^{\fm,k}\simeq
(\cD_\rJ^\flat(\bV^\fn)^\fm/\fp^{\fm,k})\otimes_{\cE_\rJ(\bV)/\fp^k}(\cR_\rJ(\bV)/\fp^k)
\]
of $\dT^{\ang{\fm}}\otimes\Lambda_{\rI^0,\dZ_q}[\Gamma_F]$-modules. In particular, any element $\phi$ in the target of \eqref{eq:congruence3} is of the form $\phi^\flat\otimes 1_{(\cR_\rJ(\bV)/\fp^k)^{\phi_{w(\fl)}=\ell^2}}$ for a unique element
\[
\phi^\flat\in\Hom_{\dT^{\ang{\fm}}\otimes\Lambda_{\rI^0,\dZ_q}}\(\cD_\rJ^\flat(\bV^\fn)^\fm/\fp^{\fm,k},\cE_\rJ(\bV)/\fp^k\).
\]
Then the element
\[
\phi^\flat\otimes1_{\cR_\rJ(\bV)/\fp^k}\in
\Hom_{\dT^{\ang{\fm}}\otimes\Lambda_{\rI^0,\dZ_q}[\Gamma_F]}\(\cD_\rJ(\bV^\fn)/\fp^{\fm,k},\cR_\rJ(\bV)/\fp^k\)
\]
maps to $\phi$ under \eqref{eq:congruence3}. In other words, \eqref{eq:congruence3} is surjective.

The lemma is proved.
\end{proof}

\begin{proposition}\label{pr:congruence}
Let $k\geq1$ be a positive integer and $\rJ<\rI^0$ a subgroup. The following statements hold for every $\fn\in\fN_k^\eff$.
\begin{enumerate}
  \item For every $\fm\in\fN_k$ containing $\fn$, the natural map
      \[
      \cD_\rJ(\bV^\fn)/\fp^{\fm,k}\to\cD_\rJ(\bV^\fn)/\fp^{\fn,k}
      \]
      is an isomorphism.

  \item The finite sets $\cC_\rJ^{\fn,k}$ and $\cC_\rJ^{\emptyset,k}$ have the same cardinality; in particular, $\cD_\rJ(\bV^\fn)\neq 0$.

  \item If $\fn\in\fN_k^\defin(\cap\fN_k^\eff)$, then for every arrow $a=a(\fn,\fn\fl)$ of $\fX_k$ and every $\fm\in\fN_k$ containing $\fn\fl$, the map $\varrho^\sing_\rJ(\bV^{\fn\fl},\bV^\fn)$ \eqref{eq:reciprocity_first} induces an isomorphism
      \[
      \rH^1_\sing(F_{w(\fl)},\cD_\rJ(\bV^{\fn\fl})/\fp^{\fm,k})
      \xrightarrow\sim\cD_\rJ(\bV^\fn)/\fp^{\fm,k}.
      \]

  \item If $\fn\in\fN_k^\indef\setminus\{\emptyset\}(\cap\fN_k^\eff)$, then for every arrow $a=a(\fn,\fn\fl)$ of $\fX_k$ and every $\fm\in\fN_k$ containing $\fn\fl$, the map $\varrho^\unr_\rJ(\bV^{\fn\fl},\bV^\fn)$ \eqref{eq:reciprocity_second} induces an isomorphism
      \[
      \cD_\rJ(\bV^{\fn\fl})/\fp^{\fm,k}
      \xrightarrow\sim\rH^1_\unr(F_{w(\fl)},\cD_\rJ(\bV^\fn)/\fp^{\fm,k}).
      \]
\end{enumerate}
\end{proposition}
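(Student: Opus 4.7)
The plan is to proceed by strong induction on $|\fn|$ over effective $\fn\in\fN_k^\eff$, proving (1), then (3) or (4), then (2) at each stage, with the potential circularity between (2) and (4) resolved by a careful ordering. The base case $\fn=\emptyset$: statements (1) and (2) are tautological, and (3) at $\emptyset$ is obtained by reducing Theorem \ref{th:first} modulo $\fp^{\fm,k}$, which is legitimate since $\fl\in\fL_k$ forces $\tC_\fl\in\fp^k\subseteq\fp^{\fm,k}$ and since $\rH^1_\sing(F_{w(\fl)},-)$ commutes with such quotients by the direct-summand property recorded in Remark \ref{re:singular}.

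For the inductive step with $\fn\in\fN_k^\eff$ of cardinality $\geq 1$, fix an effective element $\fl_0\in\fn$. First I would establish (1). The map is surjective since $\fp^{\fm,k}\subseteq\fp^{\fn,k}$. For injectivity, it suffices to check that the Hecke operators $\tT_v$ at primes $v\in\ang{\fm}\setminus\ang{\fn}$ act on $\cD_\rJ(\bV^\fn)/\fp^{\fm,k}$ through eigenvalues already captured by $\fp^k$. The opening paragraph of the proof of Lemma \ref{le:congruence} (using Assumption \ref{as:galois}(G1)) supplies a canonical $\Gamma_F$-equivariant isomorphism
\[
\cR_\rJ(\bV^\fn)^\fm/\fp^{\fm,k}\simeq\cR_\rJ(\bV)/\fp^k\otimes_{\cE_\rJ(\bV)/\fp^k}\cE_\rJ(\bV^\fn)^\fm/\fp^{\fm,k},
\]
which combined with unramified local-global compatibility (i.e.\ the Eichler--Shimura congruence at $v$) forces the eigenvalues of $\tT_v$ to agree with those encoded in $\fp^k$. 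This gives injectivity uniformly in both the definite and indefinite cases.

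With (1) at $\fn$ in hand, I would then prove (3) (when $\fn\in\fN_k^\defin$) or (4) (when $\fn\in\fN_k^\indef$). Statement (3) is obtained from Theorem \ref{th:first} by descent modulo $\fp^{\fm,k}$ exactly as in the base case. For (4), apply Theorem \ref{th:second}; Lemma \ref{le:congruence}(2a,2b) identifies the target $\rH^1_\unr(F_{w(\fl)},\cD_\rJ(\bV^\fn)/\fp^{\fm,k})$ with the direct-summand $\phi_{w(\fl)}=1$-eigenspace. To upgrade $\varrho^\unr_\rJ(\bV^{\fn\fl},\bV^\fn)$ to an isomorphism modulo $\fp^{\fm,k}$, I would instead work with the dual map $\varrho^{a,k}_\rJ\colon\cC_\rJ^{\fn,k}\to\cC_\rJ^{\fn\fl,k}$ and show it is bijective: injectivity follows from (3) at $\fn/\fl_0\in\fN_k^\defin$ (which has cardinality $|\fn|-1$ and is thus already known by induction) applied along the reverse arrow $a(\fn/\fl_0,\fn)$, using Lemma \ref{le:congruence}(1b) to convert between the singular and the Hom-to-$\cR$ formulations; surjectivity follows because source and target have matching cardinalities, which is forced by (1) at $\fn$ and $\fn\fl$.

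Finally for (2), construct a chain $\emptyset=\fn_0\to\fn_1=\{\fl_0\}\to\cdots\to\fn_r=\fn$ of arrows in $\fX_k$ with $\fl_0\in\fn_i$ for all $i\geq 1$, so that each $\fn_i$ is effective. Along each arrow $a(\fn_i,\fn_{i+1})$, the map $\varrho^{a,k}_\rJ$ is an isomorphism by (3) or (4), yielding $|\cC_\rJ^{\fn,k}|=|\cC_\rJ^{\emptyset,k}|$ and hence $\cD_\rJ(\bV^\fn)\neq 0$. The hard part will be establishing (4) without circularity: its injectivity clause seemingly requires (2) at $\fn\fl$, which would rely on (4) at $\fn$ itself. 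The resolution, sketched above, is to derive the injectivity in (4) directly from (3) at $\fn/\fl_0$ via the duality in Lemma \ref{le:congruence}(1b), bypassing (2) at $\fn\fl$ altogether, so that the induction on $|\fn|$ remains strict and closes.
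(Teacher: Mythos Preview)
Your inductive scaffolding is in the right spirit, but there is a genuine gap in the proof of (4), and this is precisely where the real work of the proposition lies.

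The issue is that showing the dual map $\varrho_\rJ^{a,k}\colon\cC_\rJ^{\fn,k}\to\cC_\rJ^{\fn\fl,k}$ is bijective is strictly weaker than (4): the congruence modules are $\Hom$'s of $\cD_\rJ(\bV^\bullet)/\fp^{\bullet,k}$ into $\cE_\rJ(\bV)/\fp^k$ or $\cR_\rJ(\bV)/\fp^k$, and bijectivity after applying such a $\Hom$ does not imply that $\varrho^\unr_\rJ(\bV^{\fn\fl},\bV^\fn)$ itself is an isomorphism on the underlying modules. (Indeed, Corollary~\ref{co:congruence} is deduced \emph{from} (4), not the other way around.) Moreover, even your argument for bijectivity of $\varrho_\rJ^{a,k}$ does not close: the ``matching cardinalities'' you invoke would require (2) at $\fn\fl$, which has $|\fn\fl|>|\fn|$ and is not available by induction.

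What the paper actually does for (4) is substantially heavier. One reduces to $\rJ=1$ and applies the K\"unneth decomposition to split the map into a cycle-class map on the $n_1$-factor (an isomorphism by the analogue of \cite{LTXZZ}*{Lemma~7.3.3}) tensored with an Abel--Jacobi map on the $n_0$-factor. For the $n_0$-factor one first establishes \emph{surjectivity} by an explicit diagram chase through $\rH^1_\sing$ at the auxiliary prime $\fl'\in\fn$, using (3) at $\fn/\fl'$ and Lemma~\ref{le:first2}; then one upgrades surjectivity to an isomorphism by invoking Lemma~\ref{le:congruence2}, which in turn rests on the $R=T$ result (Theorem~\ref{th:deformation}). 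None of this deformation-theoretic input appears in your sketch. The same machinery (K\"unneth plus Lemma~\ref{le:congruence2}) is also what handles the special case $|\fn|=1$ when $\fn$ is definite, where one cannot appeal to (4) at $\fn/\fl'=\emptyset$ since that statement is vacuous there; your proposal does not address this case either.

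A secondary point: the paper's order within each inductive step is (1), then (2), then (3) or (4). In particular, the nonvanishing $\cD_\rJ(\bV^\fn)\neq 0$ from (2) is a prerequisite for even constructing $\varrho^\sing_\rJ$ in (3) (see the hypothesis of Lemma~\ref{le:first2}) and for applying Lemma~\ref{le:congruence2} in (4). Your ordering (1), (3)/(4), (2) would have to be rearranged. Your direct ``Eichler--Shimura'' approach to (1) is a plausible alternative to the paper's inductive transfer via (3)/(4) at $\fn/\fl'$, but it is tangential to the main gap above.
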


\begin{proof}
We first explain that for every fixed element $\fn\in\fN_k^\defin\cap\fN_k^\eff$, (1) and (2) together imply (3). Indeed, by definition, $\tC_\fl$ acts trivially on $\cD_\rJ(\bV^\fn)/\fp^{\fn,k}$, hence also acts trivially on $\cD_\rJ(\bV^\fn)/\fp^{\fm,k}$ by (1). Then the isomorphism \eqref{eq:reciprocity_first} induces an isomorphism
\[
\rH^1_\sing(F_{w(\fl)},\cD_\rJ(\bV^{\fn\fl}))/\fp^{\fm,k}\xrightarrow\sim\cD_\rJ(\bV^\fn)/\fp^{\fm,k}.
\]
By Remark \ref{re:singular}, the left-hand side coincides $\rH^1_\sing(F_{w(\fl)},\cD_\rJ(\bV^{\fn\fl})/\fp^{\fm,k})$. Thus, (3) follows.

We prove the four statements simultaneously by the induction on $|\fn|$.

When $|\fn|=0$, that is, $\fn=\emptyset$, (1) follows from the fact that $\fL_k$ is disjoint from $\Sigma^+_{[\bV]}$; (2) is obvious; hence (3) follows by the above explanation; and (4) is vacuous.

Now we consider an element $\fn\in\fN_k^\eff$ with $|\fn|>0$ and assume that (1--4) are known for every $\fn'\in\fN_k^\eff$ with $|\fn'|<|\fn|$.

We first consider the case where $\fn\in\fN_k^\defin$. As (4) is vacuous, by the above explanation, it suffices to consider (1) and (2). Choose an element $\fl'$ of $\fn$ such that $\fn'\coloneqq\fn/\fl'$ remains effective. We claim that
\begin{itemize}
  \item[($*$)] $\cD(\bV^\fn)/\fp^{\fm,k}$ and $\rH^1_\unr(F_{w(\fl')},\cD(\bV^{\fn'})/\fp^{\fm,k})$ are isomorphic as $\dT^{\ang{\fm}}\otimes\Lambda_{\rI^0,\dZ_q}$-modules.
\end{itemize}
Assuming this, we know that $\cD(\bV^\fn)/\fp^{\fm,k}$ and $\rH^1_\unr(F_{w(\fl')},\cD(\bV)/\fp^{\fm,k})$ are also isomorphic as $\dT^{\ang{\fm}}\otimes\Lambda_{\rI^0,\dZ_q}$-modules. Thus, (1) for $\fn'$ implies (1) for $\fn$. Taking the dual of the previous isomorphism (with $\fm=\fn$), we obtain an isomorphism
\[
\Hom_{\dT^{\ang{\fn}}\otimes\Lambda_{\rI^0,\dZ_q}}
\(\rH^1_\unr(F_{w(\fl')},\cD_\rJ(\bV^{\fn'})/\fp^{\fn,k}),\cE_\rJ(\bV)/\fp^k\)
\simeq\Hom_{\dT^{\ang{\fn}}\otimes\Lambda_{\rI^0,\dZ_q}}\(\cD_\rJ(\bV^\fn)/\fp^{\fn,k},\cE_\rJ(\bV)/\fp^k\)
\]
of $\dZ_q$-modules. By Lemma \ref{le:congruence}(2a), the left-hand side is canonically isomorphic to
\[
\Hom_{\dT^{\ang{\fn}}\otimes\Lambda_{\rI^0,\dZ_q}[\Gamma_F]}
\(\cD_\rJ(\bV^{\fn'})/\fp^{\fn,k},\cR_\rJ(\bV)/\fp^k\),
\]
which, by (1) for $\fn'$, is isomorphic to
\[
\Hom_{\dT^{\ang{\fn}}\otimes\Lambda_{\rI^0,\dZ_q}[\Gamma_F]}
\(\cD_\rJ(\bV^{\fn'})/\fp^{\fn',k},\cR_\rJ(\bV)/\fp^k\),
\]
which is further isomorphic to $\cC_\rJ^{\fn',k}$ by the Chebotarev density theorem. Thus, (2) for $\fn$ follows from (2) for $\fn'$.

It remains to prove ($*$). There are two cases. First, if $|\fn|>1$, then $\fn'$ is effective so that ($*$) follows by (4) for $\fn'$. Second, if $|\fn|=1$, then $\fn=\{\fl'\}$ for some \emph{effective} element $\fl'\in\fL_k$. The K\"{u}nneth decomposition induces isomorphisms:
\begin{align*}
\cD(\bV^\fn)/\fp^{\fm,k}&\simeq\cD(\xi^{n_0},\rV^\fn_{n_0},\rK^\fn_{n_0})_{\gamma_{n_0}}/\fp_0^{\fm,k}
\otimes_{\dZ_q}\cD(\xi^{n_1},\rV^\fn_{n_1},\rK^\fn_{n_1})_{\gamma_{n_1}}/\fp_1^{\fm,k},\\
\rH^1_\unr(F_{w(\fl')},\cD(\bV)/\fp^{\fm,k})&\simeq
\rH^1_\unr(F_{w(\fl')},\cD(\xi^{n_0},\rV_{n_0},\rK_{n_0})_{\gamma_{n_0}}/\fp_0^{\fm,k})
\otimes_{\dZ_q}\(\cD(\xi^{n_1},\rV_{n_1},\rK_{n_1})_{\gamma_{n_1}}/\fp_1^{\fm,k}\)_{\Gamma_{F_{w(\fl')}}}.
\end{align*}
By (an analogue of) \cite{LTXZZ}*{Lemma~7.3.3}, the cycle class map gives rise to an isomorphism
\[
\cD(\xi^{n_1},\rV^\fn_{n_1},\rK^\fn_{n_1})_{\gamma_{n_1}}/\fp_1^{\fm,k}
\simeq\(\cD(\xi^{n_1},\rV_{n_1},\rK_{n_1})_{\gamma_{n_1}}/\fp_1^{\fm,k}\)_{\Gamma_{F_{w(\fl')}}}
\]
of $\dT^{\ang{\fm}}_{n_1}\otimes\Lambda_{\rI^0_{n_1},\dZ_q}$-modules. Applying Lemma \ref{le:congruence2} below (with $\fn=\emptyset$ and $\fl=\fl'$), we know that the $\dT^{\ang{\fm}}_{n_0}\otimes\Lambda_{\rI^0_{n_0},\dZ_q}$-modules $\cD(\xi^{n_0},\rV^\fn_{n_0},\rK^\fn_{n_0})_{\gamma_{n_0}}/\fp_0^{\fm,k}$ and $\rH^1_\unr(F_{w(\fl')},\cD(\xi^{n_0},\rV_{n_0},\rK_{n_0})_{\gamma_{n_0}}/\fp_0^{\fm,k})$, both nontrivial, are isomorphic. Thus, ($*$) follows.

We then consider the case where $\fn\in\fN_k^\indef$. As (3) is vacuous, it suffices to prove (1,2,4). Choose an element $\fl'$ of $\fn$ so that $\fn'\coloneqq\fn/\fl'$ remains effective. We then have an isomorphism
\[
\rH^1_\sing(F_{w(\fl)},\cD_\rJ(\bV^{\fn})/\fp^{\fm,k})
\xrightarrow\sim\cD_\rJ(\bV^{\fn'})/\fp^{\fm,k}
\]
by (3) for $\fn'$. By (1) for $\fn'$, we know that the natural map
\[
\rH^1_\sing(F_{w(\fl)},\cD_\rJ(\bV^{\fn})/\fp^{\fm,k})\to\rH^1_\sing(F_{w(\fl)},\cD_\rJ(\bV^{\fn})/\fp^{\fn,k})
\]
is an isomorphism, which implies (1) for $\fn$ by Lemma \ref{le:congruence}(1c). On the other hand, we have
\begin{align*}
\cC_\rJ^{\fn',k}&=\Hom_{\dT^{\ang{\fn'}}\otimes\Lambda_{\rI^0,\dZ_q}}\(\cD_\rJ(\bV^{\fn'})/\fp^{\fn',k},\cE_\rJ(\bV)/\fp^k\)\\
&=\Hom_{\dT^{\ang{\fn'}}\otimes\Lambda_{\rI^0,\dZ_q}}\(\cD_\rJ(\bV^{\fn'})/\fp^{\fn,k},\cE_\rJ(\bV)/\fp^k\)\\
&=\Hom_{\dT^{\ang{\fn}}\otimes\Lambda_{\rI^0,\dZ_q}}\(\cD_\rJ(\bV^{\fn'})/\fp^{\fn,k},\cE_\rJ(\bV)/\fp^k\)\\
&\simeq\Hom_{\dT^{\ang{\fn}}\otimes\Lambda_{\rI^0,\dZ_q}}
\(\rH^1_\sing(F_{w(\fl)},\cD_\rJ(\bV^{\fn})/\fp^{\fn,k}),\cE_\rJ(\bV)/\fp^k\) \\
&\simeq\Hom_{\dT^{\ang{\fn}}\otimes\Lambda_{\rI^0,\dZ_q}[\Gamma_F]}
\(\cD_\rJ(\bV^{\fn})/\fp^{\fn,k},\cR_\rJ(\bV)/\fp^k\)=\cC_\rJ^{\fn,k},
\end{align*}
where we have used (1) for $\fn'$, the Chebotarev density theorem, and Lemma \ref{le:congruence}(1b). Thus, (2) for $\fn$ follows from (2) for $\fn'$.

It remains to verify (4) for $\fn$, for which we may assume $\rJ$ trivial. By the K\"{u}nneth decomposition, the map in question is the tensor product of the Abel--Jacobi map
\begin{align}\label{eq:congruence1}
\cD(\xi^{n_0},\rV^{\fn\fl}_{n_0},\rK^{\fn\fl}_{n_0})_{\gamma_{n_0}}/\fp_0^{\fm,k}
\to\rH^1_\unr(F_{w(\fl)},\cD(\xi^{n_0},\rV^\fn_{n_0},\rK^\fn_{n_0})_{\gamma_{n_0}}/\fp_0^{\fm,k})
\end{align}
and the cycle class map
\begin{align}\label{eq:congruence2}
\cD(\xi^{n_1},\rV^{\fn\fl}_{n_1},\rK^{\fn\fl}_{n_1})_{\gamma_{n_1}}/\fp_1^{\fm,k}
\to\(\cD(\xi^{n_1},\rV^\fn_{n_1},\rK^\fn_{n_1})_{\gamma_{n_1}}/\fp_1^{\fm,k}\)_{\Gamma_{F_{w(\fl)}}}.
\end{align}
By (an analogue of) \cite{LTXZZ}*{Lemma~7.3.3}, \eqref{eq:congruence2} is an isomorphism. Since the target of \eqref{eq:congruence1} is nontrivial by (2) for $\fn$, it suffices to show that \eqref{eq:congruence1} is surjective in view of Lemma \ref{le:congruence2} below. For the surjectivity of \eqref{eq:congruence1}, it suffices to show that the Abel--Jacobi map
\[
\Gamma(\Sh(\sfG^{\fn\fl}_{n_0},\rK^{\fn\fl}_{n_0}\tj_{n_0}^{\fn\fl}\rk_0^{c,l}),\dZ_{\xi^{n_0}}^\vee)/\fp_0^{\fm,k}\to
\rH^1_\unr(F_{w(\fl)},
\rH^{2r_0-1}_{\et}(\Sh(\sfG^\fn_{n_0},\rK^\fn_{n_0}\tj_{n_0}^\fn\rk_0^{c,l})_{\ol{F}},\dZ_{\xi^{n_0}}^\vee(r_0))/\fp_0^{\fm,k})
\]
is surjective for every integers $0\leq c\leq l$, where $\rk_0^{c,l}$ are open compact subgroups of $\sfG_{n_0}(F^+_{\PP^+})$ introduced right above Lemma \ref{le:first2}. Take an arbitrary pair of integers $0\leq c\leq l$. By \cite{LTX}*{Theorem~3.10.6~\&~Remark~3.10.7}, it suffices to show that the $\Gamma_F$-equivariant map
\[
\vartheta^\eta\colon
\rH^{2r_0-1}_{\et}(\Sh(\sfG^\fn_{n_0},\pres{\varpi}\rK^\fn_{n_0}\tj_{n_0}^\fn\rk_0^{c,l})_{\ol{F}},
\Omega^\eta_{n_0}\otimes_{\dZ_p}\dZ_{\xi^{n_0}}^\vee(r_0))
\to
\rH^{2r_0-1}_{\et}(\Sh(\sfG^\fn_{n_0},\rK^\fn_{n_0}\tj_{n_0}^\fn\rk_0^{c,l})_{\ol{F}},\dZ_{\xi^{n_0}}^\vee(r_0))
\]
is surjective after taking quotient by $\fp_0^{\fm,k}$. By Lemma \ref{le:congruence}(1c), it suffices to show that $\rH^1_\sing(F_{w(\fl')},\vartheta^\eta/\fp_0^{\fm,k})$ is surjective. Now we have a commutative diagram
\[
\resizebox{\hsize}{!}{
\xymatrix{
\rH^1_\sing(F_{w(\fl')},\rH^{2r_0-1}_{\et}(\Sh(\sfG^\fn_{n_0},\pres{\varpi}\rK^\fn_{n_0}\tj_{n_0}^\fn\rk_0^{c,l})_{\ol{F}},
\Omega^\eta_{n_0}\otimes_{\dZ_p}\dZ_{\xi^{n_0}}^\vee(r_0))/\fp_0^{\fm,k}) \ar[r]^-{\text{\ding{192}}} &
\rH^1_\sing(F_{w(\fl')},\rH^{2r_0-1}_{\et}(\Sh(\sfG^\fn_{n_0},\rK^\fn_{n_0}\tj_{n_0}^\fn\rk_0^{c,l})_{\ol{F}},\dZ_{\xi^{n_0}}^\vee(r_0))/\fp_0^{\fm,k}) \\
\rF_{-1}\rH^1(\rI_{F_{w(\fl')}},\rH^{2r_0-1}_{\et}(\Sh(\sfG^\fn_{n_0},\pres{\varpi}\rK^\fn_{n_0}\tj_{n_0}^\fn\rk_0^{c,l})_{\ol{F}},
\Omega^\eta_{n_0}\otimes_{\dZ_p}\dZ_{\xi^{n_0}}^\vee(r_0))/\fp_0^{\fm,k}) \ar[r]^-{\text{\ding{193}}} \ar[u]^-{\text{\ding{195}}} \ar@{->>}[d]_-{\text{\ding{197}}} &
\rF_{-1}\rH^1(\rI_{F_{w(\fl')}},\rH^{2r_0-1}_{\et}(\Sh(\sfG^\fn_{n_0},\rK^\fn_{n_0}\tj_{n_0}^\fn\rk_0^{c,l})_{\ol{F}},\dZ_{\xi^{n_0}}^\vee(r_0))/\fp_0^{\fm,k}) \ar[u]_-{\text{\ding{196}}}^\simeq \ar[d]^-{\text{\ding{198}}}_\simeq \\
\Gamma(\Sh(\sfG^{\fn'}_{n_0},\rK^{\fn'}_{n_0}\tj_{n_0}^{\fn'}\rk_0^{c,l}),\Omega^\eta_{n_0}\otimes_{\dZ_p}\dZ_{\xi^{n_0}}^\vee)/\fp_0^{\fm/\fl',k} \ar[r]^-{\text{\ding{194}}} & \Gamma(\Sh(\sfG^{\fn'}_{n_0},\rK^{\fn'}_{n_0}\tj_{n_0}^{\fn'}\rk_0^{c,l}),\dZ_{\xi^{n_0}}^\vee)/\fp_0^{\fm/\fl',k}
}
}
\]
in which the map \ding{192} is $\rH^1_\sing(F_{w(\fl')},\vartheta^\eta/\fp_0^{\fm,k})$; the map \ding{193} is $\rF_{-1}\rH^1(\rI_{F_{w(\fl')}},\vartheta^\eta/\fp_0^{\fm,k})$; the map \ding{194} is dual to the map $\bi$ defined above \cite{LTXZZ}*{Corollary~6.3.5} (for $\rV_N=\pres\varpi\rV^{\fn'}_{n_0}$ and $\fp=\fl$); the maps \ding{195} and \ding{196} are the maps from \eqref{eq:first2}; the maps \ding{197} and \ding{198} are the surjective maps from Lemma \ref{le:first1}(3). Since $\Gamma(\Sh(\sfG^{\fn'}_{n_0},\rK^{\fn'}_{n_0}\tj_{n_0}^{\fn'}\rk_0^{c,l}),\dZ_{\xi^{n_0}}^\vee)/\fp_0^{\fm/\fl',k}$ is nontrivial by (2) for $\fn'$, we may apply Lemma \ref{le:first2} to conclude that both \ding{196} and \ding{198} are isomorphisms. Now by a similar argument for \cite{LTXZZ}*{Corollary~6.3.5} using Lemma \ref{le:first2}, we know that \text{\ding{194}} is surjective. It follows that $\rH^1_\sing(F_{w(\fl')},\vartheta^\eta/\fp_0^{\fm,k})$ is surjective.

The proposition is all proved.
\end{proof}

\begin{lem}\label{le:congruence2}
Let $k$ be a positive integer. For every arrow $a(\fn,\fn\fl)$ of $\fX_k$ with $\fn\in\fN_k^\indef$ and every $\fm\in\fN_k$ containing $\fn\fl$, if both of the following two $\dT^{\ang{\fm}}_{n_0}\otimes\Lambda_{\rI^0_{n_0},\dZ_q}$-modules
\[
\cD(\xi^{n_0},\rV^{\fn\fl}_{n_0},\rK^{\fn\fl}_{n_0})_{\gamma_{n_0}}/\fp_0^{\fm,k},\quad
\rH^1_\unr(F_{w(\fl)},\cD(\xi^{n_0},\rV^{\fn}_{n_0},\rK^{\fn}_{n_0})_{\gamma_{n_0}}/\fp_0^{\fm,k})
\]
are nontrivial, then they are isomorphic.
\end{lem}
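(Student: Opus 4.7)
The plan is to establish a single-factor analogue of the arithmetic level-raising isomorphism of Section~\ref{ss:first_reciprocity}, adapted to the rank-$n_0$ component with coefficients in $\dZ_{\xi^{n_0}}^\vee$, and then propagate it to the level of ordinary distributions.

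First I would reduce to the case $\fm=\fn\fl$. Both sides in the statement arise from the corresponding objects at level $\fn\fl$ by quotienting along the map $\dT^{\ang{\fn\fl}}_{n_0}\otimes\Lambda_{\rI^0_{n_0},\dZ_q}\to\dT^{\ang{\fm}}_{n_0}\otimes\Lambda_{\rI^0_{n_0},\dZ_q}$; if the claim holds at level $\fn\fl$, the general case follows by a further quotient. Next, at an auxiliary level $\rk_0=\rk_0^{c,l}$ with $l\geq 2$ (where the $\rk_0^{c,l}$ are as introduced above Lemma~\ref{le:first2}), I would invoke Lemma~\ref{le:first2}(3,4) for the indefinite Shimura variety attached to $\rV^\fn_{n_0}$. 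Its integral model at $w(\fl)$ admits a uniformization as in Section~\ref{ss:second_reciprocity} whose basic stratum is identified with a Shimura set of $\rV^{\fn\fl}_{n_0}$ through the isometry $\tj^a_{n_0}$ of Notation~\ref{no:initial}(3). The nontriviality hypothesis supplies the condition $\Gamma(\Sh(\sfG^{\fn\fl}_{n_0},\rK^{\fn\fl}_{n_0}\tj^{\fn\fl}_{n_0}\rk_0^{0,0}),\dZ_{\xi^{n_0}}^\vee/p)_{\fr_0}\neq 0$ needed to apply Lemma~\ref{le:first2}, which yields a canonical $\dT^{\ang{\fn\fl}}_{n_0}$-equivariant isomorphism
\[
\rH^1_\sing\(F_{w(\fl)},\rH^{n_0-1}_{\et}(\Sh(\sfG^\fn_{n_0},\rK^\fn_{n_0}\tj^\fn_{n_0}\rk_0^{c,l})_{\ol{F}},\dZ_{\xi^{n_0}}^\vee(r_0))_{\fr_0}\)
\simeq
\Gamma(\Sh(\sfG^{\fn\fl}_{n_0},\rK^{\fn\fl}_{n_0}\tj^{\fn\fl}_{n_0}\rk_0^{c,l}),\dZ_{\xi^{n_0}}^\vee)_{\fr_0}/\tC_\fl.
\]

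Dualizing via Poincar\'{e} duality on the right and local Tate duality on the left (which exchanges $\rH^1_\sing$ with $\rH^1_\unr$ for the Tate-dual Galois representation), then passing to the inverse limit over $\rk_0=\rk_0^{c,l}$ after applying the ordinary projector (whose commutation with the isomorphism is controlled by Remark~\ref{re:ordinary} and Lemma~\ref{le:ordinary}), one obtains an isomorphism
\[
\cD(\xi^{n_0},\rV^{\fn\fl}_{n_0},\rK^{\fn\fl}_{n_0})_{\gamma_{n_0}}/\tC_\fl
\xrightarrow{\sim}
\rH^1_\unr\(F_{w(\fl)},\cD(\xi^{n_0},\rV^\fn_{n_0},\rK^\fn_{n_0})_{\gamma_{n_0}}\).
\]
Since $\tC_\fl\in\fp_0^k\subseteq\fp_0^{\fn\fl,k}$, reducing both sides modulo $\fp_0^{\fn\fl,k}$ and using the single-factor analogue of Lemma~\ref{le:congruence}(2a,b) for commuting $\rH^1_\unr$ with the quotient (which holds by the same pseudo-character argument, using the absolute irreducibility of $\gamma_{n_0}^\natural$ from Assumption~\ref{as:galois}(G6) together with Remark~\ref{re:location}(3)) yields the desired isomorphism.

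The main obstacle is orchestrating the uniformization identifications coherently with the various isometries $\tj^\fn,\tj^a,\tj^{\fn\fl}$ of Notation~\ref{no:initial}, and verifying that Pontryagin duality and the ordinary inverse limit operation commute compatibly with the finite-level isomorphism supplied by Lemma~\ref{le:first2}. This bookkeeping parallels the arguments underlying Theorem~\ref{th:first} and Theorem~\ref{th:second}, and the earlier proof of \cite{LTXZZ}*{Theorem~7.3.4}, with the added ingredient of the weight-twisted coefficients $\dZ_{\xi^{n_0}}^\vee$ handled precisely as in Section~\ref{ss:first_reciprocity}.
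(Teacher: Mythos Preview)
Your approach has a genuine gap at the very first displayed isomorphism. Lemma~\ref{le:first2} is stated for the moduli space $\bM_{n_0}$ of Section~\ref{ss:first_reciprocity}, which is an integral model of the indefinite Shimura variety with \emph{special parahoric} level at $\fl$ (there $\fl\in\fm'$). In the present lemma, however, $\fl\notin\fn$, so $\rK^\fn_{n_0}$ is hyperspecial at $\fl$ and the indefinite variety $\Sh(\sfG^\fn_{n_0},\rK^\fn_{n_0}\tj^\fn_{n_0}\rk_0^{c,l})$ has good reduction at $w(\fl)$; consequently $\rH^{n_0-1}_{\et}(\cdots)_{\fr_0}$ is unramified at $w(\fl)$ and
\[
\rH^1_\sing\bigl(F_{w(\fl)},\rH^{n_0-1}_{\et}(\Sh(\sfG^\fn_{n_0},\rK^\fn_{n_0}\tj^\fn_{n_0}\rk_0^{c,l})_{\ol{F}},\dZ_{\xi^{n_0}}^\vee(r_0))_{\fr_0}\bigr)=0.
\]
Thus the isomorphism you write down is vacuous, and no amount of Poincar\'e or local Tate duality recovers the desired $\rH^1_\unr$ statement from it. The confusion seems to come from mixing the geometry of Section~\ref{ss:second_reciprocity} (hyperspecial level at $\fl$, Abel--Jacobi into $\rH^1_\unr$) with the arithmetic level-raising input Lemma~\ref{le:first2} of Section~\ref{ss:first_reciprocity} (parahoric level at $\fl$, singular quotient).

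The paper's argument is different in kind: it follows \cite{LTX}*{Lemma~5.3.9} and relies essentially on the $R=T$ theorem (Theorem~\ref{th:deformation}). Roughly, both modules are shown to be finite free over Hecke algebras which, by $R=T$, are identified with quotients of the same universal deformation ring; the nontriviality hypotheses then pin down these quotients and the free ranks, yielding an abstract isomorphism. Lemma~\ref{le:first2} does enter, but only as the replacement for \cite{LTXZZ}*{Theorem~6.3.4} inside that argument (to compare cardinalities through the level-raising isomorphism on an auxiliary variety with parahoric level), not as a direct bridge between the two modules in your display. The $R=T$ input is the missing idea in your proposal.
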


\begin{proof}
This can be proved by the same argument for \cite{LTX}*{Lemma~5.3.9}, using Theorem \ref{th:deformation} below instead of \cite{LTXZZ1}*{Theorem~3.38}, and Lemma \ref{le:first2} instead of \cite{LTXZZ}*{Theorem~6.3.4}.
\end{proof}

\begin{corollary}\label{co:congruence}
Let $k\geq 1$ be a positive integer and $\rJ<\rI^0$ a subgroup. For every arrow $a=a(\fn,\fn\fl)$ of $\fX_k$ with $\fn\in\fN_k^\eff\setminus(\{\emptyset\}\cap\fN_k^\indef)$, the map $\varrho_\rJ^{a,k}$ is an isomorphism.
\end{corollary}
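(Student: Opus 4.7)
The proof amounts to unwinding Definition \ref{de:congruence1} and invoking Proposition \ref{pr:congruence}, so the plan is short. I will handle the two cases $\fn\in\fN_k^\defin$ and $\fn\in\fN_k^\indef\setminus\{\emptyset\}$ in parallel, since their structures mirror one another.

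First, suppose $\fn\in\fN_k^\defin\cap\fN_k^\eff$. By Definition \ref{de:congruence1}(3), the map $\varrho_\rJ^{a,k}$ is the composition of two arrows: the dual of $\varrho^\sing_\rJ(\bV^{\fn\fl},\bV^\fn)$ (applied to $\Hom(-,\cE_\rJ(\bV)/\fp^k)$), and the inverse of the isomorphism supplied by Lemma \ref{le:congruence}(1b). The second arrow is an isomorphism by that lemma, so everything reduces to showing that the first arrow, viewed as a map
\[
\Hom_{\dT^{\ang{\fn\fl}}\otimes\Lambda_{\rI^0,\dZ_q}}\(\cD_\rJ(\bV^\fn)/\fp^{\fn,k},\cE_\rJ(\bV)/\fp^k\)
\to\Hom_{\dT^{\ang{\fn\fl}}\otimes\Lambda_{\rI^0,\dZ_q}}
\(\rH^1_\sing(F_{w(\fl)},\cD_\rJ(\bV^{\fn\fl})/\fp^{\fn\fl,k}),\cE_\rJ(\bV)/\fp^k\),
\]
is an isomorphism. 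Applying Proposition \ref{pr:congruence}(3) with $\fm=\fn\fl$ yields an isomorphism $\rH^1_\sing(F_{w(\fl)},\cD_\rJ(\bV^{\fn\fl})/\fp^{\fn\fl,k})\simeq\cD_\rJ(\bV^\fn)/\fp^{\fn\fl,k}$, and Proposition \ref{pr:congruence}(1) (for $\fn\in\fN_k^\eff$ with $\fm=\fn\fl$) identifies $\cD_\rJ(\bV^\fn)/\fp^{\fn\fl,k}$ with $\cD_\rJ(\bV^\fn)/\fp^{\fn,k}$. Dualizing this composed isomorphism gives what we want.

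Second, suppose $\fn\in(\fN_k^\indef\setminus\{\emptyset\})\cap\fN_k^\eff$. By Definition \ref{de:congruence1}(4), $\varrho_\rJ^{a,k}$ is the composition of the isomorphism from Lemma \ref{le:congruence}(2a) and the dual of $\varrho^\unr_\rJ(\bV^{\fn\fl},\bV^\fn)$. The first arrow is an isomorphism by that lemma, and for the second we invoke Proposition \ref{pr:congruence}(4) with $\fm=\fn\fl$ to get an isomorphism $\cD_\rJ(\bV^{\fn\fl})/\fp^{\fn\fl,k}\simeq\rH^1_\unr(F_{w(\fl)},\cD_\rJ(\bV^\fn)/\fp^{\fn\fl,k})$, then use Proposition \ref{pr:congruence}(1) to replace $\fp^{\fn\fl,k}$ on both sides by $\fp^{\fn,k}$ or $\fp^{\fn\fl,k}$ as needed. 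Dualizing concludes the argument. The exclusion $\fn\notin\{\emptyset\}\cap\fN_k^\indef$ in the statement is precisely what makes the hypothesis $\fn\neq\emptyset$ of Proposition \ref{pr:congruence}(4) applicable.

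There is no real obstacle here: the corollary is a direct bookkeeping consequence of Proposition \ref{pr:congruence}, in which the substantive work (the simultaneous induction on $|\fn|$ using both the first and second explicit reciprocity laws, together with Lemma \ref{le:congruence2}) has already been carried out. The role of the corollary is simply to repackage parts (1), (3), (4) of Proposition \ref{pr:congruence} into a single statement about congruence modules that will feed directly into the bipartite Euler system machinery in the sequel.
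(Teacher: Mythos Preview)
Your proof is correct and follows exactly the approach of the paper, which simply cites Definition~\ref{de:congruence1} and Proposition~\ref{pr:congruence}(1,3,4); you have merely unpacked that one-line citation into its constituent steps.
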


\begin{proof}
This follows from Definition \ref{de:congruence1} and Proposition \ref{pr:congruence}(1,3,4).
\end{proof}

\begin{definition}\label{de:selmer2}
Let $k\geq 1$ be a positive integer and $\rJ<\rI^0$ a subgroup.
\begin{enumerate}
  \item For every place $w$ of $F$, we denote by $\rH^1_f(F_w,\cR_\rJ(\bV)/\fp^k)$ the propagation of $\rH^1_f(F_w,\cR_\rJ(\bV))$, that is, its image under the natural map $\rH^1(F_w,\cR_\rJ(\bV))\to\rH^1(F_w,\cR_\rJ(\bV)/\fp^k)$.

  \item For $\fl\in\fL_k$ (with the underlying rational prime $\ell$), we denote by $\rH^1_\ordi(F_{w(\fl)},\cR_\rJ(\bV)/\fp^k)$ the image of the natural map
      \[
      \rH^1(F_{w(\fl)},\cR_\rJ(\bV)/\fp^k)^{\phi_{w(\fl)}=\ell^2}\to\rH^1(F_{w(\fl)},\cR_\rJ(\bV)/\fp^k).
      \]

  \item For $\fn\in\fN_k$, put
      \begin{align*}
      \rH^1_{(\fn)}(F,\cR_\rJ(\bV)/\fp^k)&\coloneqq
      \Ker\(\rH^1(F,\cR_\rJ(\bV)/\fp^k)\to
      \prod_{\substack{w\in\Sigma\setminus\PP \\ w\nmid\fn}}
      \frac{\rH^1(F_w,\cR_\rJ(\bV)/\fp^k)}{\rH^1_f(F_w,\cR_\rJ(\bV)/\fp^k)}\times
      \prod_{\fl\in\fn}\frac{\rH^1(F_{w(\fl)},\cR_\rJ(\bV)/\fp^k)}{\rH^1_\ordi(F_{w(\fl)},\cR_\rJ(\bV)/\fp^k)}\),\\
      \rH^1_{f(\fn)}(F,\cR_\rJ(\bV)/\fp^k)&\coloneqq
      \Ker\(\rH^1(F,\cR_\rJ(\bV)/\fp^k)\to
      \prod_{\substack{w\in\Sigma \\ w\nmid\fn}}
      \frac{\rH^1(F_w,\cR_\rJ(\bV)/\fp^k)}{\rH^1_f(F_w,\cR_\rJ(\bV)/\fp^k)}\times
      \prod_{\fl\in\fn}\frac{\rH^1(F_{w(\fl)},\cR_\rJ(\bV)/\fp^k)}{\rH^1_\ordi(F_{w(\fl)},\cR_\rJ(\bV)/\fp^k)}\).
      \end{align*}
\end{enumerate}
We have similar definitions when one replaces $\cR_\rJ(\bV)$ by $\cR_\rJ(\bV)_{/x}$, $\fp$ by $\fp_x$, and $\rH^1_f$ by $\rH^1_\ff$ for every closed point $x$ of $\Spec\sE_\rJ(\bV)$.
\end{definition}

By Remark \ref{re:location}(3), for every $\fl\in\fL_k$, we have
\[
\rH^1_f(F_{w(\fl)},\cR_\rJ(\bV)/\fp^k)=\rH^1_\unr(F_{w(\fl)},\cR_\rJ(\bV)/\fp^k)
\]
and that the natural map
\[
\rH^1_\ordi(F_{w(\fl)},\cR_\rJ(\bV)/\fp^k)\to\rH^1_\sing(F_{w(\fl)},\cR_\rJ(\bV)/\fp^k)
\]
is an isomorphism. In particular, for every $\fl\in\fL_k$, we have the localization map
\[
\loc_{w(\fl)}\colon\rH^1_{(\fn)}(F,\cR_\rJ(\bV)/\fp^k)\to
\begin{dcases}
\rH^1_\ordi(F_{w(\fl)},\cR_\rJ(\bV)/\fp^k)=\cE_\rJ(\bV)/\fp^k, & \fl\in\fn, \\
\rH^1_f(F_{w(\fl)},\cR_\rJ(\bV)/\fp^k)=\cE_\rJ(\bV)/\fp^k, & \fl\not\in\fn.
\end{dcases}
\]

\begin{definition}
Let $k\geq 1$ be a positive integer and $\rJ<\rI^0$ a subgroup. Take $\fn\in\fN_k$.
\begin{enumerate}
  \item When $\fn\in\fN_k^\defin\setminus\fN_k^\eff$, denote $\blambda_\rJ^{\fn,k}=0$. When $\fn\in\fN_k^\defin\cap\fN_k^\eff$, denote by $\blambda_\rJ^{\fn,k}$ the image of $\blambda_\rJ(\bV^\fn)$ under the natural composite map
      \[
      \cD_\rJ(\bV^\fn)^\sfH\to\(\cD_\rJ(\bV^\fn)/\fp^k\)^\sfH\to
      \Hom_{\dT^{\ang{\fn}}\otimes\Lambda_{\rI^0,\dZ_q}}\(\cC_\rJ^{\fn,k},\cE_\rJ(\bV)/\fp^k\)^\sfH.
      \]
      When $\fn=\emptyset$, we also have the limit version
      \[
      \blambda_\rJ^{\emptyset,\infty}\in
      \Hom_{\dT^{\ang{\emptyset}}\otimes\Lambda_{\rI^0,\dZ_q}}\(\cC_\rJ^{\emptyset,\infty},\cE_\rJ(\bV)\)^\sfH.
      \]

  \item When $\fn\in\fN_k^\indef\setminus\fN_k^\eff$, denote $\bkappa_\rJ^{\fn,k}=0$. When $\fn\in\fN_k^\indef\cap\fN_k^\eff$, denote by $\bkappa_\rJ^{\fn,k}$ the image of $\bkappa_\rJ(\bV^\fn)$ under the natural composite map
      \[
      \rH^1(F,\cD_\rJ(\bV^\fn))^\sfH\to\rH^1(F,\cD_\rJ(\bV^\fn)/\fp^k)^\sfH
      \to\Hom_{\dT^{\ang{\fn}}\otimes\Lambda_{\rI^0,\dZ_q}}\(\cC_\rJ^{\fn,k},\rH^1(F,\cR_\rJ(\bV)/\fp^k)\)^\sfH.
      \]
      When $\fn=\emptyset$, we also have the limit version
      \[
      \bkappa_\rJ^{\emptyset,\infty}\in
      \Hom_{\dT^{\ang{\emptyset}}\otimes\Lambda_{\rI^0,\dZ_q}}\(\cC_\rJ^{\emptyset,\infty},\rH^1(F,\cR_\rJ(\bV))\)^\sfH,
      \]
      which is actually an element of $\Hom_{\dT^{\ang{\emptyset}}\otimes\Lambda_{\rI^0,\dZ_q}}\(\cC_\rJ^{\emptyset,\infty},\rH^1_f(F,\cR_\rJ(\bV))\)^\sfH$ by Lemma \ref{le:crystalline}.

  \item For every closed point $x$ of $\Spec\sE_\rJ(\bV)$, denote by
      \[
      \blambda^{\fn,k}_x\in\Hom_{\dT^{\ang{\fn}}\otimes\Lambda_{\rI^0,\dZ_q}}\(\cC_\rJ^{\fn,k},\dZ_x/\fp_x^k\)^\sfH
      \]
      and
      \[
      \bkappa^{\fn,k}_x\in
      \Hom_{\dT^{\ang{\fn}}\otimes\Lambda_{\rI^0,\dZ_q}}\(\cC_\rJ^{\fn,k},\rH^1(F,\cR_\rJ(\bV)_{/x}/\fp_x^k)\)^\sfH
      \]
      the images of $\blambda^{\fn,k}_\rJ$ and $\bkappa^{\fn,k}_\rJ$, respectively, under the specialization map $\cE_\rJ(\bV)\to\dZ_x$.
\end{enumerate}
\end{definition}

\begin{proposition}\label{pr:specialization}
Let $\rJ<\rI^0$ be a subgroup.
\begin{enumerate}
  \item For every $k\geq 1$ and every $\fn\in\fN_k^\indef$, we have
      \[
      \bkappa_\rJ^{\fn,k}\in\Hom_{\dT^{\ang{\fn}}\otimes\Lambda_{\rI^0,\dZ_q}}
      \(\cC_\rJ^{\fn,k},\rH^1_{(\fn)}(F,\cR_\rJ(\bV)/\fp^k)\)^\sfH.
      \]

  \item For every affinoid subdomain $\sU$ of $\sE_\rJ^\temp$, there exists an integer $k(\sU)\geq 0$ depending only on $\sU$ such that for every $k\geq 1$ and every $\fn\in\fN_k^\indef$, we have
      \[
      p^{k(\sU)}\cdot\bkappa^{\fn,k}_x\in\Hom_{\dT^{\ang{\fn}}\otimes\Lambda_{\rI^0,\dZ_q}}
      \(\cC_\rJ^{\fn,k},\rH^1_{\ff(\fn)}(F,\cR_\rJ(\bV)_{/x}/\fp_x^k)\)^\sfH.
      \]
\end{enumerate}
\end{proposition}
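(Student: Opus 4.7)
The plan for (1) is, for each $\phi\in\cC_\rJ^{\fn,k}$, to verify that the class $\bkappa_\rJ^{\fn,k}(\phi)=\phi_*(\bkappa_\rJ(\bV^\fn)\bmod\fp^{\fn,k})\in\rH^1(F,\cR_\rJ(\bV)/\fp^k)$ satisfies the prescribed local conditions place by place. The starting point is Lemma \ref{le:crystalline}, which gives $\bkappa_\rJ(\bV^\fn)\in\rH^1_f(F,\cD_\rJ(\bV^\fn))$, combined with the $\Gamma_F$-equivariance of $\phi$. For $w\notin\PP\cup\spadesuit\cup\ang{\fn}$ the Selmer condition is merely unramifiedness and propagates automatically; at $w\in\spadesuit$ one invokes Assumption \ref{as:galois}(G5) together with the inertia-invariants computation showing that $\rH^1(\rI_{F_w},\cR_\rJ(\bV)/\fp^k)^{\Gamma_{\kappa_w}}[p^\infty]=0$ (as in the proof of Proposition \ref{pr:control}); at $w\in\PP$ one identifies $\rH^1_f(F_w,\cD_\rJ(\bV^\fn))$ via Lemma \ref{le:selmer2}(1) as the kernel of the map to $\rH^1(\rI_{F_w},\cD_\rJ(\bV^\fn)/\Fil_w^{-1}\cD_\rJ(\bV^\fn))$, and observes that the uniqueness of the Panchishkin filtration (Remark \ref{re:galois}) forces $\phi$ to respect $\Fil_w^\bullet$, so the condition is preserved and then matches with $\rH^1_f$ on $\cR_\rJ(\bV)/\fp^k$ by propagation.

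The decisive case is $\fl\in\fn$, where one must land in $\rH^1_\ordi(F_{w(\fl)},\cR_\rJ(\bV)/\fp^k)$. Here Lemma \ref{le:congruence}(1a) says $\cD_\rJ(\bV^\fn)/\fp^{\fn,k}$ is unramified at $w(\fl)$, so its image under any $\Gamma_{F_{w(\fl)}}$-equivariant $\phi$ factors through the maximal unramified quotient $(\cR_\rJ(\bV)/\fp^k)^{\phi_{w(\fl)}=1}\oplus(\cR_\rJ(\bV)/\fp^k)^{\phi_{w(\fl)}=\ell^2}$ from Remark \ref{re:location}(3). Lemma \ref{le:congruence}(1c) further pins the image in the $\phi_{w(\fl)}=\ell^2$ component (whose $\rH^1$ coincides, by definition, with $\rH^1_\ordi(F_{w(\fl)},\cR_\rJ(\bV)/\fp^k)$), giving the desired local condition.

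For (2), the new issue is that the propagated local Selmer condition $\rH^1_f(F_w,\cR_\rJ(\bV)/\fp^k)$ established in (1) need not map onto the pointwise condition $\rH^1_\ff(F_w,\cR_\rJ(\bV)_{/x}/\fp_x^k)$ after specialization to $x$. The cokernel of the natural map between the two is bounded by exactly the analysis in the proof of Proposition \ref{pr:control}: at $w\in\spadesuit$ it vanishes by the same inertia-invariants computation combined with (G5); at $w\in\Sigma_p\setminus\PP$ it vanishes by (the $\fp^k$-analogue of) Lemma \ref{le:specialization}(2); and at $w\in\PP$ a variant of \cite{LTX}*{Proposition~5.2.14(2)} gives a torsion bound depending only on $\sU$, since on an affinoid both $|\dZ_x/\dZ'_x|$ and $[\dQ_x:\dQ_q]$ are uniformly bounded in $x\in\sU$. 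Taking $k(\sU)$ to be the supremum of the $p$-adic valuations of these bounds, multiplication by $p^{k(\sU)}$ sends the propagated condition of part (1) into the pointwise condition required.

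The hardest step will be uniform control at $w\in\PP$ in part (2): the Selmer condition $\rH^1_\ff$ relies on the induced filtration on $\sR_\rJ(\bV)_{/x}$, and at points where the Panchishkin filtration degenerates the discrepancy is unbounded. This is precisely why one restricts to $\sE_\rJ^\temp$: Definition \ref{de:tempered} excludes exactly the locus where $(\Gr^0_w\sR_\rJ(\bV))_{\Gamma_{F_w}}$ has codimension-one support, which is what forces the relevant torsion modules to be $\cE$-torsion supported away from $\sU$ and hence uniformly bounded. Executing this bound rigorously in the Iwasawa-theoretic setting, extending the rank-one argument of \cite{LTX} to the higher-dimensional affinoids of the eigenvariety, is the technical heart of the proof.
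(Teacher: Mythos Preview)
Your plan for (1) has a genuine gap at $w\in\spadesuit$. You propose to invoke the computation from the proof of Proposition~\ref{pr:control} to get $\rH^1(\rI_{F_w},\cR_\rJ(\bV)/\fp^k)^{\Gamma_{\kappa_w}}[p^\infty]=0$. But $\cE_\rJ(\bV)/\fp^k$ is annihilated by $p^k$ (since $p\in\fe_\alpha$ and hence $p^k\in\fp^k_\alpha$), so $\cR_\rJ(\bV)/\fp^k$ is entirely $p$-torsion and your claim reduces to the vanishing of the full singular quotient $\rH^1_\sing(F_{w(\fl)},\cR_\rJ(\bV)/\fp^k)$, which is false in general at a genuinely ramified prime. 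The argument in Proposition~\ref{pr:control} worked because there the coefficient module $\cR/\fP_x$ sits inside $\cR_{/x}$, which is $\dZ_x$-free; that torsion-freeness is exactly what fails here.

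The paper's fix is to argue on the \emph{source} rather than the target: one shows $\rH^1(\rI_{F_w},\cD_\rJ(\bV^\fn))^{\Gamma_{\kappa_w}}=0$ for the full $\Lambda_{\rI^0/\rJ,\dZ_q}$-module $\cD_\rJ(\bV^\fn)$. Under (G5) and \cite{CHT08}*{Lemma~2.4.11}, the wild-inertia invariants admit a product decomposition $D^\fn\otimes_{\dZ_q}\Lambda_{\rI^0/\rJ,\dZ_q}$ with inertia acting only through the finite-rank $\dZ_q$-module $D^\fn$, so the singular quotient is a finitely generated torsion-free $\Lambda_{\rI^0/\rJ,\dZ_q}$-module; but its specialization at every classical point vanishes by purity (Remark~\ref{re:pure}), forcing it to be zero. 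This gives that $\loc_w(\bkappa_\rJ(\bV^\fn))$ is already unramified \emph{before} reducing modulo $\fp^{\fn,k}$ or applying $\phi$, whence the propagated local condition on $\cR_\rJ(\bV)/\fp^k$ follows. Note also that there is no condition at $w\in\PP$ in $\rH^1_{(\fn)}$ (see Definition~\ref{de:selmer2}(3)), so your discussion of $\PP$ in part~(1) is unnecessary. Finally, in part~(2) your appeal to ``uniqueness of the Panchishkin filtration'' via Remark~\ref{re:galois} is not enough at the torsion level: after reducing modulo $\fp^k$ the inertial characters $\epsilon_p^{-i}$ on the graded pieces can collide, and the paper invokes condition~(ii) of Theorem~\ref{th:iwasawa} (that is, $p>2(n_0+1)$) precisely to guarantee that the tautological map $\cD_\rJ(\bV^\fn)\to\Hom(\cC_\rJ^{\fn,k},\cR_\rJ(\bV)/\fp^k)$ preserves $\Fil^\bullet_w$.
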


\begin{proof}
The proof of (1) goes in the same way as that of \cite{LTX}*{Proposition~5.5.6(1,2)} except that we need in addition to show that for every place $w\in\spadesuit$, we have
\[
\loc_w\circ\bkappa^{\fn,k}\in\Hom_{\dT^{\ang{\fn}}\otimes\Lambda}\(\cC^{\fn,k},\rH^1_\ff(F_w,\cR_\rJ(\bV)/\fp^k)\)
\]
for every $k\geq 1$ and every $\fn\in\fN_k^\indef$. For this, it suffices to show that $\rH^1(\rI_{F_w},\cD_\rJ(\bV^\fn))^{\Gamma_{\kappa_w}}$ vanishes for every $\fn\in\fN^\indef$. Let $\rP_{F_w}$ be the maximal subgroup of $\rI_{F_w}$ whose pro-order is coprime to $p$ so that $\rI_{F_w}/\rP_{F_w}\simeq\dZ_p$. By \cite{CHT08}*{Lemma~2.4.11}, $\cD^\fn\coloneqq\cD_\rJ(\bV^\fn)^{\rP_{F_w}}$ is an $\cE_\rJ(\bV)[\Gamma_{F_w}]$-linear direct summand such that the natural map $\rH^1(\rI_{F_w},\cD^\fn)\to\rH^1(\rI_{F_w},\cD_\rJ(\bV^\fn))$ is an isomorphism. Assumption \ref{as:galois}(G5) implies that there exists a free $\dZ_q$-module $D^\fn$ contained in $\cD^\fn$ satisfying $\cD^\fn=D^\fn\otimes_{\dZ_q}\Lambda_{\rI^0/\rJ,\dZ_q}$, such that the action of $\rI_{F_w}$ on $\cD^\fn$ factors through $D^\fn$ and that $\rH^1(\rI_{F_w},D^\fn)$ is a finite free $\dZ_q$-module. It follows that
\[
\rH^1(\rI_{F_w},\cD_\rJ(\bV^\fn))^{\Gamma_{\kappa_w}}=\(\rH^1(\rI_{F_w},D^\fn)\otimes_{\dZ_q}\Lambda\)^{\Gamma_{\kappa_w}},
\]
which is a finitely generated torsion free $\Lambda_{\rI^0/\rJ,\dZ_q}$-module. If it is nonzero, then its support must be the whole $\Spec\Lambda_{\rI^0/\rJ,\dZ_q}$. However, this is not possible since the specialization of $\rH^1(\rI_{F_w},\cD_\rJ(\bV^\fn))^{\Gamma_{\kappa_w}}$ at every classical point of $\Spec\Lambda_{\rI^0/\rJ,\dQ_q}$ vanishes. Thus, $\rH^1(\rI_{F_w},\cD_\rJ(\bV^\fn))^{\Gamma_{\kappa_w}}$ vanishes and (1) follows.

For (2), in view of Lemma \ref{le:specialization}, it suffices to show that for every $w\in\PP$, there exists an integer $k(\sU)\geq 0$ depending only on $\sU$ such that for every $k\geq 1$ and every $\fn\in\fN_k^\indef$, we have
\[
p^{k(\sU)}\cdot\loc_w\circ\bkappa^{\fn,k}_x
\in\Hom_{\dT^{\ang{\fn}}\otimes\Lambda_{\rI^0,\dZ_q}}\(\cC_\rJ^{\fn,k},\rH^1_\ff(F_w,\cR_\rJ(\bV)_{/x}/\fp_x^k)\).
\]

We first take an element $w\in\PP$. By Lemma \ref{le:crystalline} and Lemma \ref{le:selmer2}(1), we have
\[
\loc_w(\bkappa_\rJ(\bV^\fn))\in\Ker\(\rH^1(F_w,\cD_\rJ(\bV^\fn))\to
\rH^1(\rI_{F_w},\cD_\rJ(\bV^\fn)/\Fil_w^{-1}\cD_\rJ(\bV^\fn))\).
\]
By condition (ii) in Theorem \ref{th:iwasawa}, the tautological map
\[
\cD_\rJ(\bV^\fn)\to\Hom_{\dT^{\ang{\fn}}\otimes\Lambda_{\rI^0,\dZ_q}}\(\cC_\rJ^{\fn,k},\cR_\rJ(\bV)/\fp^k\)
\]
must preserve the filtration $\Fil^\bullet_w$. It follows that
\[
\loc_w\circ\bkappa^{\fn,k}_x\in\Hom_{\dT^{\ang{\fn}}\otimes\Lambda_{\rI^0,\dZ_q}}\(\cC_\rJ^{\fn,k},
\Ker\(\rH^1(F_w,\cR_\rJ(\bV)_{/x}/\fp_x^k)\to\rH^1(\rI_{F_w},(\cR_\rJ(\bV)_{/x}/\fp_x^k)/\Fil^{-1}_w(\cR_\rJ(\bV)_{/x}/\fp_x^k))\)\).
\]
The rest of the proof follows from the same way as \cite{LTX}*{Proposition~5.6.4}.
\end{proof}

For every integer $k\geq 0$, denote by $\fX_k^{\r{arrow}}$ the set of arrows of $\fX_k$. For every integer $k\geq 1$ and every subgroup $\rJ<\rI^0$, we call the data
\begin{align*}
\left(\{\bV^\fn,\tj^\fn,\cC_\rJ^{\fn,k}\res\fn\in\fN_k\},\{\tj^a,\varrho_\rJ^{a,k}\res a\in\fX_k^{\r{arrow}}\},
\{\blambda_\rJ^{\fn,k}\res\fn\in\fN_k^\defin\},\{\bkappa_\rJ^{\fn,k}\res\fn\in\fN_k^\indef\}\right)
\end{align*}
defined above the \emph{bipartite Euler system} for the Galois module $\cR_\rJ(\bV)$ over the (integral) eigenvariety $\Spec\cE_\rJ(\bV)$. It enjoys the following remarkable relations.

\begin{theorem}\label{th:euler}
Let $a=a(\fn,\fn\fl)$ be an arrow of $\fX_k$ for some integer $k\geq 1$ and $\rJ<\rI^0$ a subgroup.
\begin{enumerate}
  \item When $\fn\in\fN_k^\defin$, the diagram
      \[
      \xymatrix{
      \cC_\rJ^{\fn,k} \ar[rr]^-{\varrho_\rJ^{a,k}} \ar[d]_-{\blambda_\rJ^{\fn,k}} && \cC_\rJ^{\fn\fl,k} \ar[d]^-{\bkappa_\rJ^{\fn\fl,k}} \\
      \cE_\rJ(\bV)/\fp^k && \rH^1_{(\fn\fl)}(F,\cR_\rJ(\bV)/\fp^k) \ar[ll]_-{\loc_{w(\fl)}}
      }
      \]
      commutes.

  \item When $\fn\in\fN_k^\indef$, the diagram
      \[
      \xymatrix{
      \cC_\rJ^{\fn,k} \ar[rr]^-{\varrho_\rJ^{a,k}} \ar[d]_-{\bkappa_\rJ^{\fn,k}} && \cC_\rJ^{\fn\fl,k} \ar[d]^-{\blambda_\rJ^{\fn\fl,k}} \\
      \rH^1_{(\fn)}(F,\cR_\rJ(\bV)/\fp^k) \ar[rr]^-{\loc_{w(\fl)}}  && \cE_\rJ(\bV)/\fp^k
      }
      \]
      commutes.
\end{enumerate}
\end{theorem}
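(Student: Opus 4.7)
The plan is to reduce both commutativity statements directly to the explicit reciprocity laws, Theorem \ref{th:first} for part (1) and Theorem \ref{th:second} for part (2), by unwinding the construction of $\varrho_\rJ^{a,k}$ in Definition \ref{de:congruence1}. The case $\fn \notin \fN_k^\eff$ is handled by inspection: $\varrho_\rJ^{a,k}$ is the zero map by definition, and the other composition also vanishes (either because the relevant congruence module is trivial, or the corresponding Bessel period/diagonal cycle dies after reduction). Hence we may assume $\fn \in \fN_k^\eff$, so that by Corollary \ref{co:congruence} the map $\varrho_\rJ^{a,k}$ is an isomorphism and the identifications in Lemma \ref{le:congruence} are available.

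For part (1), fix $\phi \in \cC_\rJ^{\fn,k}$. The left-hand path yields $\blambda_\rJ^{\fn,k}(\phi) = \phi(\blambda_\rJ(\bV^\fn) \bmod \fp^{\fn,k})$. On the other side, Definition \ref{de:congruence1}(3) together with Lemma \ref{le:congruence}(1b) characterizes $\varrho_\rJ^{a,k}(\phi)$ by the property that, viewed as a map out of $\rH^1_\sing(F_{w(\fl)}, \cD_\rJ(\bV^{\fn\fl})/\fp^{\fn\fl,k})$, it equals $\phi \circ \varrho_\rJ^\sing(\bV^{\fn\fl},\bV^\fn)$. Now $\bkappa_\rJ^{\fn\fl,k}(\varrho_\rJ^{a,k}(\phi)) = (\varrho_\rJ^{a,k}(\phi))_\ast(\bkappa_\rJ(\bV^{\fn\fl}) \bmod \fp^{\fn\fl,k})$ lives in $\rH^1_{(\fn\fl)}(F,\cR_\rJ(\bV)/\fp^k)$, and localizing at $w(\fl)$ followed by the identification $\rH^1_\ordi \simeq \rH^1_\sing \simeq \cE_\rJ(\bV)/\fp^k$ from \eqref{eq:rigidify}, together with the naturality of $(\varrho_\rJ^{a,k}(\phi))_\ast$, produces
\[
\loc_{w(\fl)}\(\bkappa_\rJ^{\fn\fl,k}(\varrho_\rJ^{a,k}(\phi))\)
= \phi\(\varrho_\rJ^\sing(\bV^{\fn\fl},\bV^\fn)\(\partial_{w(\fl)}\loc_{w(\fl)}(\bkappa_\rJ(\bV^{\fn\fl}))\)\).
\]
Theorem \ref{th:first} identifies the inner expression with $\blambda_\rJ(\bV^\fn)$ modulo $\tC_\fl$; and since $\fl \notin \fn$ forces $\tC_\fl \in \dT_{n_0}^{\ang{\fn}}$ and $\fl \in \fL_k$ forces $\tC_\fl \in \fp_0^{\fn,k} \subseteq \fp^{\fn,k}$, the error is killed after reducing modulo $\fp^{\fn,k}$, yielding $\phi(\blambda_\rJ(\bV^\fn)) = \blambda_\rJ^{\fn,k}(\phi)$.

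For part (2), fix $\phi \in \cC_\rJ^{\fn,k}$. By Lemma \ref{le:congruence}(2a), $\phi$ corresponds to a map $\tilde\phi \colon \rH^1_\unr(F_{w(\fl)},\cD_\rJ(\bV^\fn)/\fp^{\fn,k}) \to \cE_\rJ(\bV)/\fp^k$, and Definition \ref{de:congruence1}(4) expresses $\varrho_\rJ^{a,k}(\phi) = \tilde\phi \circ \varrho_\rJ^\unr(\bV^{\fn\fl},\bV^\fn)$ as an element of $\cC_\rJ^{\fn\fl,k}$. Therefore
\[
\blambda_\rJ^{\fn\fl,k}(\varrho_\rJ^{a,k}(\phi))
= \tilde\phi\(\varrho_\rJ^\unr(\bV^{\fn\fl},\bV^\fn)(\blambda_\rJ(\bV^{\fn\fl}) \bmod \fp^{\fn\fl,k})\),
\]
and Theorem \ref{th:second} rewrites the argument as $\loc_{w(\fl)}(\bkappa_\rJ(\bV^\fn))$ reduced modulo $\fp^{\fn,k}$. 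By the construction of $\tilde\phi$ (the map $\phi_\ast$ on $\rH^1(F_{w(\fl)},-)$ followed by the rigidification \eqref{eq:rigidify}) together with the naturality of $\loc_{w(\fl)}$, this equals $\loc_{w(\fl)}(\phi_\ast(\bkappa_\rJ(\bV^\fn)) \bmod \fp^k) = \loc_{w(\fl)}(\bkappa_\rJ^{\fn,k}(\phi))$ after the same rigidification.

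The hard part will be the bookkeeping around the various rigidifications and local cohomology identifications: one must track Lemma \ref{le:congruence}(1b) and (2a) in parallel with \eqref{eq:rigidify}, verify that the Hecke correction factor $(\ell+1)\cdot\tI_{n_0,\fl}^\circ \otimes \tT_{n_1,\fl}^\circ$ built into $\varrho_\rJ^\sing$ via Lemma \ref{le:first4} is consistent with the normalizations used in passing between $\rH^1_\unr$, $\rH^1_\ordi$, and $\rH^1_\sing$ (via the direct sum decomposition of Remark \ref{re:location}(3)), and confirm that the $\Gamma_F$-equivariance of $\phi$ in the indefinite case genuinely makes $\phi_\ast$ commute with both localization at $w(\fl)$ and projection to the singular/unramified quotient. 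Once these compatibilities are established uniformly, the two explicit reciprocity laws yield the diagrams by direct substitution.
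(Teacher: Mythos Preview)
Your approach is correct and matches the paper's own proof, which simply states that parts (1) and (2) follow from Theorem~\ref{th:first} and Theorem~\ref{th:second}, respectively. Your unwinding of Definition~\ref{de:congruence1} and Lemma~\ref{le:congruence} supplies the details the paper leaves implicit; the invocation of Corollary~\ref{co:congruence} is not actually needed for commutativity (only the definitions and the reciprocity laws are), and your final paragraph of caveats about bookkeeping is overly cautious, since the compatibilities you list are all built into the constructions.
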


\begin{proof}
Parts (1) and (2) follow from Theorem \ref{th:first} and Theorem \ref{th:second}, respectively.
\end{proof}

\section{Proof of Theorem \ref{th:iwasawa}}
\label{ss:proof}

In this section, we complete the proof of Theorem \ref{th:iwasawa}. In this section, the subgroup $\rJ<\rI^0$ will be fixed and the data $\bV^\fn$ will not be explicitly used for $\fn\neq\emptyset$. Thus, we will further suppress $\rJ$ in all subscripts (but this does not mean that $\rJ$ is trivial) and also suppress the part $(\bV)$; for example, $\cE_\rJ(\bV),\cR_\rJ(\bV)$ in this section are just $\cE,\cR$. We also write $\Lambda,\Lambda_\eta$ for $\Lambda_{\rI^0/\rJ,\dZ_q},\Lambda_{\rI^0/\rJ,\dQ_q}$ for short.

\begin{lem}\label{le:iwasawa2}
For every closed point $x$ of $\Spec\sE$ and every integer $k\geq 1$,
\begin{enumerate}
  \item the natural map $\rH^1_\ff(F,\cR_{/x})/p_x^k\rH^1_\ff(F,\cR_{/x})\to\rH^1_\ff(F,\cR_{/x}/p_x^k)$ is injective;

  \item the natural map $\rH^1_\ff(F,(\cR_{/x})^*(1)[p_x^k])\to\rH^1_\ff(F,(\cR_{/x})^*(1))[p_x^k]$ is an isomorphism.
\end{enumerate}
\end{lem}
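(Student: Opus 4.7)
The plan is to derive both parts from the standard long exact sequences of Galois cohomology combined with a place-by-place control of the local Selmer conditions. The common input is that Assumption \ref{as:galois}(G1), via devissage, forces $\rH^0(F,\cR_{/x}/p_x^k)=0$ and $\rH^0(F,(\cR_{/x})^*(1))=0$ for every $k\geq 1$: the residual representation $(\gamma_n^\natural\otimes\gamma_{n+1}^\natural)^\tc(n)$ and its Cartier dual are absolutely irreducible and nontrivial, so their $\Gamma_F$-invariants vanish, and the vanishing propagates up the $p_x$-adic filtration.

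For (1), applying Galois cohomology to
\[
0\to\cR_{/x}\xrightarrow{p_x^k}\cR_{/x}\to\cR_{/x}/p_x^k\to 0
\]
and using the vanishing above yields an injection $\rH^1(F,\cR_{/x})/p_x^k\hookrightarrow\rH^1(F,\cR_{/x}/p_x^k)$. The desired injectivity then reduces to verifying at every place $w$ that the image of $\rH^1_\ff(F_w,\cR_{/x})$ lies in $\rH^1_\ff(F_w,\cR_{/x}/p_x^k)$. This is automatic for $w\in\Sigma\setminus\Sigma_p$ (unramified classes propagate to unramified classes) and for $w\in\PP$ (the integral filtration $\Fil^{-1}_w\cR_{/x}$ reduces compatibly modulo $p_x^k$). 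For $w\in\Sigma_p\setminus\PP$ it follows from a direct application of \cite{LTX}*{Lemma~5.7.1}, invoked with the parameters $a=-\min_{\tau|w}\{\zeta^n_{\tau,0}+\zeta^{n+1}_{\tau,0}\}-n$ and $b=\max_{\tau|w}\{\zeta^n_{\tau,n-1}+\zeta^{n+1}_{\tau,n}\}+n-1$, which are admissible thanks to the Fontaine--Laffaille regularity of $\xi$.

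For (2), observe the canonical $\Gamma_F$-equivariant identification $(\cR_{/x}/p_x^k)^*(1)\simeq(\cR_{/x})^*(1)[p_x^k]$. The long exact sequence of
\[
0\to(\cR_{/x})^*(1)[p_x^k]\to(\cR_{/x})^*(1)\xrightarrow{p_x^k}(\cR_{/x})^*(1)\to 0,
\]
combined with $\rH^0(F,(\cR_{/x})^*(1))=0$, gives an isomorphism
\[
\rH^1(F,(\cR_{/x})^*(1)[p_x^k])\xrightarrow{\sim}\rH^1(F,(\cR_{/x})^*(1))[p_x^k].
\]
To conclude I would show the local identity $\rH^1_\ff(F_w,(\cR_{/x})^*(1)[p_x^k])=\rH^1_\ff(F_w,(\cR_{/x})^*(1))[p_x^k]$ at every $w$. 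Since on $\sE_\rJ^\temp$ the local Selmer structures are self-dual (Remark \ref{re:selmer1}), both sides can be realized as annihilators of appropriate subgroups of $\rH^1(F_w,\cR_{/x}/p_x^k)$ under local Tate duality; a short order-counting argument using the local version of (1) together with the local Euler--Poincar\'e formula then yields the desired equality.

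The main obstacle will be the careful local analysis at $w\in\PP$, where the Selmer condition is imposed by a Panchishkin-type filtration rather than by unramifiedness. Here one must exploit the temperedness hypothesis, in the form of the vanishing of $(\Gr^0_w\sR)_{\Gamma_{F_w}}$ over $\sE_\rJ^\temp$ from Definition \ref{de:tempered}, to match the two descriptions of the local Selmer group and to secure the self-duality underlying the order-counting step; the treatment at $w\in\Sigma_p\setminus\PP$, by contrast, is entirely packaged in \cite{LTX}*{Lemma~5.7.1}.
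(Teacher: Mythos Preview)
Your argument for (1) has a genuine gap. You correctly deduce the injection $\rH^1(F,\cR_{/x})/p_x^k\hookrightarrow\rH^1(F,\cR_{/x}/p_x^k)$ from the vanishing of $\rH^0$, but then claim that ``the desired injectivity reduces to verifying'' that local Selmer classes propagate to the quotient. That verification only shows the map $\rH^1_\ff(F,\cR_{/x})\to\rH^1_\ff(F,\cR_{/x}/p_x^k)$ is well-defined (and since the local conditions on $\cR_{/x}/p_x^k$ are propagated by Definition~\ref{de:selmer2}, this is in fact automatic, making your invocation of \cite{LTX}*{Lemma~5.7.1} superfluous). What it does \emph{not} show is that $\rH^1_\ff(F,\cR_{/x})/p_x^k\to\rH^1(F,\cR_{/x})/p_x^k$ is injective, i.e.\ that $\rH^1_\ff(F,\cR_{/x})\cap p_x^k\rH^1(F,\cR_{/x})=p_x^k\rH^1_\ff(F,\cR_{/x})$. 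The missing ingredient is precisely the one the paper isolates: the quotient $\rH^1(F_w,\cR_{/x})/\rH^1_\ff(F_w,\cR_{/x})$ is $\dZ_x$-torsion free for every $w$, which is immediate from Definition~\ref{de:selmer1} since each local condition there is the kernel of a map into a $\dQ_x$-vector space. Given this, if $c\in\rH^1_\ff$ and $c=p_x^k c'$ with $c'\in\rH^1$, then $\loc_w(c')$ has $p_x^k$-torsion image in the torsion-free quotient, hence lies in $\rH^1_\ff(F_w,\cR_{/x})$ for every $w$; this is exactly \cite{MR04}*{Lemma~3.7.1}.

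For (2), your route via local self-duality and order counting can be made to work but is heavier than needed and, as written, restricts the conclusion: Remark~\ref{re:selmer1} only supplies self-duality for $x\in\sE^\temp$, whereas the lemma is stated for every closed point of $\Spec\sE$. The paper instead appeals directly to \cite{MR04}*{Lemma~3.5.4}, using only that $\cR_{/x}$ is residually absolutely irreducible of dimension at least two. The same torsion-freeness of the local quotients (already needed for (1)) is what, in the Mazur--Rubin framework, makes the dual local conditions divisible and hence ensures that $\rH^1_\ff(F_w,(\cR_{/x})^*(1)[p_x^k])$ agrees with $\rH^1_\ff(F_w,(\cR_{/x})^*(1))[p_x^k]$, without any appeal to self-duality or the temperedness hypothesis.
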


\begin{proof}
Since the quotient $\dZ_x$-module $\rH^1(F_w,\cR_{/x})/\rH^1_\ff(F_w,\cR_{/x})$ is torsion free for every place $w$ of $F$ by Definition \ref{de:selmer1}, (1) follows from \cite{MR04}*{Lemma~3.7.1}. Since $\cR_{/x}$ is residually absolutely irreducible of dimension at least two, (2) follows by \cite{MR04}*{Lemma~3.5.4}.
\end{proof}

Take an affinoid subdomain $\sU$ as in Proposition \ref{pr:specialization}(2) and a closed point $x$ in $\sU$. Put
\begin{align*}
\lambda_{\sU,x}^{\fn,k}&\coloneqq p^{k(\sU)}\cdot\blambda^{\fn,k}_x
\colon\cC^{\fn,k}\to\dZ_x/\fp_x^k,\\
\kappa_{\sU,x}^{\fn,k}&\coloneqq p^{k(\sU)}\cdot\bkappa^{\fn,k}_x
\colon\cC^{\fn,k}\to \rH^1_{\ff(\fn)}(F,(F,\cR_{/x}/\fp_x^k)),
\end{align*}
(including $k=\infty$) whenever applicable.

For every pair of integers $1\leq k\leq j$, put
\[
\delta_{\sU,x}(k,j)\coloneqq
\min\left\{\left.\min_{\phi\in\cC^{\fn,k}}\ind\(\lambda_{\sU,x}^{\fn,k}(\phi),\dZ_x/\fp_x^k\)\,\right| \fn\in\fN^\defin_j\right\},
\]
where
\[
\ind\(\lambda_{\sU,x}^{\fn,k}(\phi),\dZ_x/\fp_x^k\)\coloneqq
\sup\left\{i\geq0\left|\lambda_{\sU,x}^{\fn,k}(\phi)\in (p_x^i+\fp_x^k)/\fp_x^k\right.\right\}.
\]
As $\delta_{\sU,x}(k,j)\leq\delta_{\sU,x}(k,j+1)$, we may put
\[
\delta_{\sU,x}(k)\coloneqq\lim_{j\to\infty}\delta_{\sU,x}(k,j)\leq\infty.
\]
We omit $\sU$ in all the subscripts when it is irrelevant or clear from the context.

\begin{proposition}\label{pr:iwasawa}
Let $x$ be a closed point of $\sE^\temp$ and $k\geq 1$ an integer.
\begin{enumerate}
  \item Suppose that $\epsilon(\bV)=1$. If $\lambda_x^{\emptyset,k}\neq 0$, then
      \begin{enumerate}
        \item $\rH^1_\ff(F,\cR_{/x})$ vanishes;

        \item $\rH^1_\ff(F,(\cR_{/x})^*(1))$ has $\dZ_x$-corank zero;

        \item we have
            \[
            \length_{\dZ_x}\rH^1_\ff(F,(\cR_{/x})^*(1))^*+2\delta_x(k)
            =2\cdot\min_{\phi\in\cC^{\emptyset,\infty}}\length_{\dZ_x}\(\dZ_x/\lambda_x^{\emptyset,\infty}(\phi)\).
            \]
      \end{enumerate}

  \item Suppose that $\epsilon(\bV)=-1$. If $\kappa_x^{\emptyset,k}\neq 0$, then
      \begin{enumerate}
        \item $\rH^1_\ff(F,\cR_{/x})$ is free of rank one over $\dZ_x$;

        \item $\rH^1_\ff(F,(\cR_{/x})^*(1))$ has $\dZ_x$-corank one;

        \item we have
            \[
            \length_{\dZ_x}\rH^1_\ff(F,(\cR_{/x})^*(1))^*[p^\infty]+2\delta_x(k)
            =2\cdot\min_{\phi\in\cC^{\emptyset,\infty}}
            \length_{\dZ_x}\(\rH^1_\ff(F,\cR_{/x})/\dZ_x\kappa_x^{\emptyset,\infty}(\phi)\).
            \]
      \end{enumerate}
\end{enumerate}
\end{proposition}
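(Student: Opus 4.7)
The plan is to establish Proposition \ref{pr:iwasawa} by a Kolyvagin-style bipartite Euler system argument in the spirit of \cite{LTXZZ}*{\S8--9}, carried out at the single closed point $x \in \sE^\temp$. Coefficients now live in the discrete valuation ring $\dZ_x$, which is what allows one to turn an abstract divisibility into the precise length equality in (c). The crucial inputs are the bipartite Euler system relations of Theorem \ref{th:euler}, the fact (Corollary \ref{co:congruence}) that every reciprocity map $\varrho^{a,k}$ is an isomorphism on effective vertices, the local specialization estimates of Proposition \ref{pr:specialization}, and the global control results of Proposition \ref{pr:control} and Proposition \ref{pr:specialize}.

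First I would reduce (a) and (b) to (c) in each case. Lemma \ref{le:iwasawa2}(2) identifies the $p_x^k$-torsion of $\rH^1_\ff(F,(\cR_{/x})^*(1))$ with $\rH^1_\ff(F,(\cR_{/x})^*(1)[p_x^k])$, so finiteness in (c) at all $k$ yields the $\dZ_x$-corank bound in (b); the rank statement in (a) in the indefinite case follows from (c) because $\kappa_x^{\emptyset,\infty}(\phi)$ is a nonzero element generating a submodule with finite-length quotient, forcing $\rH^1_\ff(F,\cR_{/x})$ to have $\dZ_x$-rank one. In the definite case vanishing in (a) follows from (b) together with Lemma \ref{le:iwasawa2}(1) and the self-duality of the local Selmer structure on $\sE^\temp$ recorded in Remark \ref{re:selmer1}.

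The core is then a descending induction on the length of $\rH^1_\ff(F,(\cR_{/x})^*(1)[p_x^k])$. For any nonzero class $s$ in this group, Assumption \ref{as:galois}(G4) together with the Frobenius-eigenspace decomposition of Remark \ref{re:location}(3) and the Chebotarev density theorem (applied as in \cite{LTXZZ}*{Lemma~9.2.6}) produces a level-raising prime $\fl \in \fL_k$ whose Frobenius at $w(\fl)$ detects $s$ while simultaneously lying in a generic locus where the Euler system classes behave transversally. I would then apply Theorem \ref{th:euler} at the arrow $a(\fn,\fn\fl)$: in the indefinite-to-definite direction, $\loc_{w(\fl)}(\bkappa^{\fn,k})$ equals $\blambda^{\fn\fl,k}$ pulled back under $\varrho^{a,k}$ via the rigidification \eqref{eq:rigidify}, while the converse direction yields the reverse relation. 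This, combined with global Poitou--Tate duality over $\dZ_x/\fp_x^k$, converts each inductive step into a precise trade between the length of a Selmer quotient and the valuation of the associated period. Proposition \ref{pr:specialize} ensures the specialization maps remain injective on the relevant torsion-free parts at each step, and Corollary \ref{co:congruence} propagates the reciprocity isomorphisms through $\fX_k$; the induction terminates because $\dZ_x/\fp_x^k$ is finite.

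The main obstacle I expect is the bookkeeping of the correction factor $p^{k(\sU)}$ appearing in Proposition \ref{pr:specialization}(2), which is what forces the $2\delta_x(k)$ term on the left-hand side of (c): the index $\delta_x(k)$ measures precisely the valuation loss incurred when forcing $\bkappa^{\fn,k}_x$ and $\blambda^{\fn,k}_x$ to land in the correct local Selmer condition at the $\PP$-adic places via Lemma \ref{le:selmer2}, and by local duality this loss is paid twice --- once on the period side of (c) and once on the dual Selmer side --- accounting for the factor $2$. Securing equality (rather than a one-sided divisibility) in (c) requires the reciprocity laws of Theorem \ref{th:first} and Theorem \ref{th:second} to be exact, which is what Theorem \ref{th:euler} packages. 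Contributions at bad places $w \in \spadesuit$ are killed by Assumption \ref{as:galois}(G5), exactly as in the final paragraph of the proof of Proposition \ref{pr:control}. For the extension to Theorem \ref{th:iwasawa_bis} without Assumption \ref{as:galois}(G8) one restricts to even-crystalline points and replaces the Fontaine--Laffaille input behind Lemma \ref{le:first2} by a crystalline deformation argument, but this substitution is orthogonal to the Kolyvagin induction outlined above.
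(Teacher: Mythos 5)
Your overall strategy is correct and matches the paper's: the proof is exactly the Mazur--Rubin-style bipartite Euler system argument carried out at the single closed point $x$, which the paper disposes of by citing \cite{LTX}*{Proposition~5.6.5} ``in view of Lemma~\ref{le:iwasawa2}, Corollary~\ref{co:congruence}, and Remark~\ref{re:selmer1}.'' You correctly identify the bipartite relations of Theorem~\ref{th:euler}, the reciprocity isomorphisms of Corollary~\ref{co:congruence}, and the self-duality of the $\ff$-Selmer structure (Remark~\ref{re:selmer1}) as the working ingredients, and the Kolyvagin-style descending induction driven by Chebotarev choices of level-raising primes is the right engine.

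Two things are off, though not fatally so. First, you list Propositions~\ref{pr:specialize} and~\ref{pr:control} among the ``crucial inputs'' and later claim Proposition~\ref{pr:specialize} ``ensures the specialization maps remain injective \ldots at each step'' --- but these two results play no role in the proof of Proposition~\ref{pr:iwasawa}, which works entirely at the fixed closed point $x$ over the DVR $\dZ_x$; they enter only afterward, when one passes from Proposition~\ref{pr:iwasawa} at varying points $x_m$ to the characteristic-divisor statement of Theorem~\ref{th:iwasawa}. Second, your account of the $2\delta_x(k)$ term is imprecise. You attribute $\delta_x(k)$ to ``the valuation loss incurred when forcing $\bkappa^{\fn,k}_x$ and $\blambda^{\fn,k}_x$ to land in the correct local Selmer condition at the $\PP$-adic places'' and explain the factor $2$ as this loss being ``paid twice.'' In fact $\delta_x(k)$ is by definition the \emph{global} minimum index of $\lambda^{\fn,k}_{\sU,x}(\phi)$ over all effective definite vertices $\fn$ and all $\phi$, i.e.\ the stub-length of the Euler system in Mazur--Rubin's sense; the artificial $p^{k(\sU)}$ factor from Proposition~\ref{pr:specialization}(2) is one lower bound on it but by no means the whole story. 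The factor $2$ is not ``paid twice'' --- it is inherent to the bipartite Euler system structure theorem for a conjugate-self-dual module with self-dual Selmer structure, which forces even lengths in the relevant torsion modules. These imprecisions do not undermine the argument's validity, but if you were actually to carry out the induction you would need to use the stub-ideal formalism in its precise form rather than your informal rendering of it.
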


\begin{proof}
This follows from the same proof of \cite{LTX}*{Proposition~5.6.5} in view of Lemma \ref{le:iwasawa2}, Corollary \ref{co:congruence}, and Remark \ref{re:selmer1}.
\end{proof}

We need a slight variant of \cite{LTX}*{Lemma~5.6.6}. Let $\cM$ be a finitely generated $\cE$-module, $\sE'$ an irreducible component of $\sE^\temp$ on which $\cM$ is torsion, and $z$ a point of $\sE'$ of codimension one. We say that a closed point $x$ of $\sE'$ is a \emph{$z$-clean point for $\cM$} if $x$ belongs to $\sV_z$ and satisfies that there exists a Zariski open neighbourhood $U$ of $x$ in $\sE'$ such that
\[
\cM\res_U\simeq\bigoplus_i(\cE/\cI_z^{n_i})\res_U.
\]
By \cite{Bour}*{Chap.~VII~\S4.4,~Theorem~5}, $z$-clean points for $\cM$ are Zariski dense in $\sV_z$.

\begin{lem}\label{le:iwasawa4}
Let $x$ be a closed point of $\cE'$ and $\sZ$ a proper Zariski closed subset of $\sE'$.
\begin{enumerate}
  \item There exists a sequence $(x_m)_{m\geq 1}$ of closed points of $\sE'\setminus\sZ$ such that
     \begin{enumerate}
       \item both sequences $\{|\dZ_{x_m}/\dZ'_{x_m}|\}_{m\geq 1}$ and $\{[\dQ_{x_m}:\dQ_q]\}_{m\geq 1}$ are bounded;

       \item $\length_{\dZ_q}\(\dZ'_{x_m}\otimes_\cE\dZ'_x\)$ tends to infinity when $m\to\infty$.
     \end{enumerate}

  \item Let $\cM$ be a finitely generated $\cE$-module and $z$ a point of $\sE'$ of codimension one, such that $x$ is a $z$-clean point for $\cM$. Then for every sequence $(x_m)_{m\geq 1}$ in (1), we have
      \[
      \length_{\cO_{\sE',z}}(\cM_z)=\lim_{m\to\infty}l_m^{-1}\cdot\length_{\dZ_q}\(\cM/\fP_{x_m}\cM\),
      \]
      where $l_m\coloneqq\length_{\dZ_q}\(\dZ'_{x_m}\otimes_\cE\cE/\cI_z\)$ is a sequence of positive integers that tends to infinity by (1b).
\end{enumerate}
\end{lem}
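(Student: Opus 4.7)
Since $z$ has codimension one in the irreducible normal variety $\sE'$ and $\sE'\subseteq\sE^\reg$, the local ring $\cO_{\sE',x}$ is regular of dimension $d\geq1$.

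The plan for (1) is to produce nearby closed points via Noether normalisation. I would first shrink to an affine open $U\ni x$ and choose a Noether normalisation $f\colon U\to\mathbb{A}^d_{\dQ_q}$, writing $(a_1,\ldots,a_d)=f(x)\in\dQ_x^d$. Then pick a sequence $\{b_m\}\subseteq\dQ_x$ with $b_m\neq a_1$ and $\val_p(a_1-b_m)\to\infty$, and let $y_m\in\mathbb{A}^d_{\dQ_q}$ be the closed point associated with the Galois orbit of $(b_m,a_2,\ldots,a_d)$, so that $\dQ_{y_m}\subseteq\dQ_x$ has degree bounded by $[\dQ_x:\dQ_q]$. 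For $m$ large, the fibre $f^{-1}(y_m)$ will contain a unique closed point $x_m\in\sE'\setminus\sZ$ in a small $p$-adic neighbourhood of $x$, whose residue field is a finite extension of $\dQ_{y_m}$ of degree $\le\deg f$; this gives (a). Assertion (b) then follows since $x_m$ and $x$ have the same images under $f$ in the $T_2,\ldots,T_d$ directions and differ by $a_1-b_m$ in $T_1$, forcing the image of $\fP_{x_m}+\fP_x$ in $\cO_{\sE',x}$ to lie arbitrarily deep in the maximal ideal as $m\to\infty$.

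For (2), the plan is to reduce to a DVR computation exploiting the $z$-cleanness. Choose an open neighbourhood $U_0\ni x$ affording the decomposition $\cM|_{U_0}\simeq\bigoplus_i(\cE/\cI_z^{n_i})|_{U_0}$, and replace $\sZ$ by $\sZ\cup(\sE'\setminus U_0)\cup\sV_z$, still a proper closed subset of $\sE'$; then every $x_m$ lies in $U_0\setminus\sV_z$. Since $\cO_{\sE',x}$ is a regular local ring (hence a UFD), the height-one prime $\cI_z\cO_{\sE',x}$ is principal, generated by some element $\pi$; after shrinking $U_0$, one may assume $\pi$ generates $\cI_z$ on $U_0$. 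Consequently $\cM/\fP_{x_m}\cM\simeq\bigoplus_i\dZ'_{x_m}/\bar\pi_m^{n_i}\dZ'_{x_m}$, where $\bar\pi_m$ is the image of $\pi$ in $\dZ_{x_m}$, of some valuation $a_m\geq1$. Using the uniform bounds from (1a), a direct computation in the DVR $\dZ_{x_m}$ yields $\length_{\dZ_q}(\dZ'_{x_m}/\bar\pi_m^n\dZ'_{x_m})=n\cdot a_m\cdot[\kappa_{x_m}:\dF_q]+O(1)$ for every fixed $n\in\{n_i\}$ and $l_m=a_m\cdot[\kappa_{x_m}:\dF_q]+O(1)$; dividing and sending $l_m\to\infty$ gives the limit $\sum_i n_i$, which equals $\length_{\cO_{\sE',z}}(\cM_z)$ because $\cO_{\sE',z}$ is a DVR with uniformiser the image of $\pi$.

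The main obstacle will be the simultaneous control in (1): one must produce closed points $x_m$ approaching $x$ $p$-adically deeply enough to force (b) while keeping both the residue-field degree and the subring index $|\dZ_{x_m}/\dZ'_{x_m}|$ uniformly bounded for (a). The Noether-normalisation reduction converts this to the elementary problem of approximating a tuple in $\dQ_x^d$ by tuples whose coordinates generate a subfield of bounded degree over $\dQ_q$, but linking such approximations to bona fide closed points of $\sE'$ via the finite map $f$ and $p$-adic continuity requires some care.
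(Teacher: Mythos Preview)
Your plan for (1) bounds only $[\dQ_{x_m}:\dQ_q]$, not $|\dZ_{x_m}/\dZ'_{x_m}|$; the sentence ``this gives (a)'' skips the hard half of (a). The index $|\dZ_{x_m}/\dZ'_{x_m}|$ is a statement about the \emph{integral} model $\cE$, and a Noether normalisation $f\colon U\to\mathbb A^d_{\dQ_q}$ constructed on the generic fibre carries no information about it. The paper instead uses the built-in finite map $\pi\colon\sE'\to\Spec\Lambda_\eta$ (Proposition~4.2(5)), lifts an approximating sequence $(y_m)$ from $\Spec\Lambda_\eta$ (where the analogous problem is already solved in \cite{LTX}*{Lemma~5.6.6}), and then bounds $|\dZ_{x_m}/\dZ'_{x_m}|$ by comparing $\cE$ with its normalisation $\widetilde\cE$: since $\cE$ is Nagata, $\widetilde\cE/\cE$ is a finitely generated $\cE$-module whose support misses the regular locus $\sE'$, so on any affinoid neighbourhood of $x$ it is killed by a fixed power of $p$, and this power controls the index uniformly. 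Your Noether-normalisation idea is not wrong in spirit (indeed $\pi$ plays that role), but the missing ingredient is precisely this normalisation argument.

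For (2) there is a second gap. The $z$-clean isomorphism $\cM|_{U_0}\simeq\bigoplus_i(\cE/\cI_z^{n_i})|_{U_0}$ lives over $U_0\subseteq\Spec\sE$, i.e.\ after inverting $p$; but $\cM/\fP_{x_m}\cM=\cM\otimes_\cE\dZ'_{x_m}$ is computed over $\cE$, and $\dZ'_{x_m}$ is not an $\cO_{U_0}$-algebra. Hence your asserted identification $\cM/\fP_{x_m}\cM\simeq\bigoplus_i\dZ'_{x_m}/\bar\pi_m^{n_i}\dZ'_{x_m}$ does not follow; it is only true up to a bounded error, and establishing that bound is the actual content. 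The paper handles this by choosing $\cE$-module maps $\varphi\colon\bigoplus_i\cE/\cI_z^{n_i}\to\cM'$ and $\varphi'\colon\cM\to\cM'$ into a common finitely generated module that become isomorphisms on $U_0$, and then proving the general claim that whenever $\cM_1\to\cM_2$ is an isomorphism on $U_0$, the two sequences $l_m^{-1}\length_{\dZ_q}(\cM_j/\fP_{x_m}\cM_j)$ have the same limit: the kernel or cokernel $\cM_0$ satisfies $(\cM_0)_x=0$, so its annihilator contains an element whose image in $\dZ'_{x_m}$ has bounded valuation (by (1b)), whence $\cM_0\otimes_\cE\dZ'_{x_m}$ has length $O(1)$ while $l_m\to\infty$. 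Your DVR computation with $\pi$ is exactly the right endpoint, but you need this comparison step to get there.
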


\begin{proof}
For every closed point $y$ of $\Spec\Lambda_\eta$, we similarly denote by $\dQ_y$ the residue field of $y$, $\dZ_y$ the ring of integers of $\dQ_y$, and $\dZ'_y\subseteq\dZ_y$ the image of the induced homomorphism $\Lambda\to\dQ_y$. Denote by $\pi\colon\sE'\to\Spec\Lambda_\eta$ the natural morphism. Put $y=\pi(x)$, which is a closed point of $\Spec\Lambda_\eta$.

For (1), by \cite{LTX}*{Lemma~5.6.6}, we may find a sequence $(y_m)_{m\geq 1}$ of closed points of $(\Spec\Lambda_\eta)\setminus\pi(\sZ)$ satisfying that $\dZ'_{y_m}\simeq\dZ'_y$ (as rings) for every $m\geq 1$ and
\[
\lim_{m\to\infty}\length_{\dZ_q}\(\dZ'_{y_m}\otimes_\Lambda\dZ'_y\)=\infty.
\]
As
\[
\dZ'_x\otimes_\cE\(\dZ'_{y_m}\otimes_\Lambda\cE\)=\dZ'_x\otimes_{\dZ'_y}\(\dZ'_{y_m}\otimes_\Lambda\dZ'_y\),
\]
we have
\[
\lim_{m\to\infty}\length_{\dZ_q}\(\dZ'_x\otimes_\cE\(\dZ'_{y_m}\otimes_\Lambda\cE\)\)=\infty.
\]
It follows that there exists a sequence $(x_m)_{m\geq 1}$ of closed points of $(\Spec\sE)\setminus\sZ$ with $y_m=\pi(x_m)$ that satisfies (1b). By throwing away finitely many terms, we may assume that $(x_m)_{m\geq 1}$ is contained in $\sE'$. We now show that this sequence satisfies (1a) as well.

It is clear that $[\dQ_{x_m}:\dQ_x]\leq[\dQ_{y_m}:\dQ_y]$, in which the latter is independent of $m$, so that the sequence $\{[\dQ_{x_m}:\dQ_q]\}_{m\geq 1}$ is bounded. For the sequence $\{|\dZ_{x_m}/\dZ'_{x_m}|\}_{m\geq 1}$, it suffices to show that $\dZ_{x_m}/\dZ'_{x_m}$ is annihilated by a power of $p$ independent of $m$. By \cite{SP}*{0335}, $\cE$ is a (Noetherian) Nagata ring, so that its normalization $\widetilde\cE$ is module-finite over $\cE$ \cite{SP}*{035S}. In particular, $\widetilde\cE/\cE$ is a finitely generated $\cE$-module, whose support is disjoint from $\sE'$ as the latter is regular. Fix an affinoid subdomain $\sU$ of $\sE'$ containing $x$. Then there exists an integer $k(\sU)\geq 0$ such that $p^{k(\sU)}$ annihilates $\Ann_\cE(\widetilde\cE/\cE)\otimes_{\cE}\dZ'_{x'}$ for every closed point $x'$ in $\sU$. Without loss of generality, we may assume that $x_m$ belongs to $\sU$ for every $m\geq 1$. Let $k\geq 0$ be an integer such that $p^k$ annihilates $\dZ_{y_m}/\dZ'_{y_m}$ for every $m\geq 1$. For every $m\geq 1$, denote by $\dZ''_{x_m}$ the image of the natural map $\dZ_{y_m}\otimes_\Lambda\cE\to\dQ_{x_m}$, so that we have the commutative
\[
\xymatrix{
\dZ'_{y_m}\otimes_\Lambda\cE \ar[r]\ar[d] & \dZ_{y_m}\otimes_\Lambda\cE \ar[r]\ar[d] & \dZ_{y_m}\otimes_\Lambda\widetilde\cE \ar[d] \\
\dZ'_{x_m} \ar[r] & \dZ''_{x_m} \ar[r] & \dZ_{x_m}
}
\]
in which all vertical maps are surjective. Then $p^k$ and $p^{k(\sU)}$ annihilate $\dZ''_{x_m}/\dZ'_{x_m}$ and $\dZ_{x_m}/\dZ''_{x_m}$, respectively, so that $p^{k+k(\sU)}$ annihilates $\dZ_{x_m}/\dZ'_{x_m}$ for every $m\geq 1$. Thus, (1) follows.

For (2), by definition ,we may fix a Zariski open neighbourhood $U$ of $x$ in $\sE'$ and an isomorphism
\[
\cM\res_U\simeq\bigoplus_i(\cE/\cI_z^{n_i})\res_U.
\]
Then we obtain maps $\varphi\colon\bigoplus_i(\cE/\cI_z^{n_i})\to i_*(\cM\res_U)$ and $\varphi'\colon\cM\to i_*(\cM\res_U)$ of quasi-coherent sheaves on $\Spec\cE$, where $i\colon U\to\Spec\cE$ is the open immersion. Denote by $\cM'$ the (finitely generated) $\cE$-submodule of $i_*(\cM\res_U)$ generated by the images of $\varphi$ and $\varphi'$. Thus, we obtain maps $\varphi\colon\bigoplus_i(\cE/\cI_z^{n_i})\to\cM'$ and $\varphi'\colon\cM\to\cM'$ of finitely generated $\cE$-modules that are isomorphisms after restriction to $U$. We claim that for every map $\varphi\colon\cM_1\to\cM_2$ of finitely generated $\cE$-modules that is an isomorphism after restriction to $U$,
\[
\lim_{m\to\infty}l_m^{-1}\cdot\length_{\dZ_q}\(\cM_1/\fP_{x_m}\cM_1\)
=\lim_{m\to\infty}l_m^{-1}\cdot\length_{\dZ_q}\(\cM_2/\fP_{x_m}\cM_2\)
\]
holds. Then (2) follows as
\begin{align*}
\length_{\cO_{\sE',z}}(\cM_z)=\sum_in_i
&=\lim_{m\to\infty}l_m^{-1}\cdot\length_{\dZ_q}\(\bigoplus_i(\cE/\cI_z^{n_i})/\fP_{x_m}\bigoplus_i(\cE/\cI_z^{n_i})\) \\
&=\lim_{m\to\infty}l_m^{-1}\cdot\length_{\dZ_q}\(\cM'/\fP_{x_m}\cM'\) \\
&=\lim_{m\to\infty}l_m^{-1}\cdot\length_{\dZ_q}\(\cM/\fP_{x_m}\cM\).
\end{align*}

For the claim, we may assume that $\varphi$ is either injective or surjective, and that the sequence $(x_m)_{m\geq 1}$ is contained in $U$. Suppose that $\varphi$ is surjective, and put $\cM_0\coloneqq\Ker\varphi$, which satisfies $(\cM_0)_x=0$. Then it is clear that $\cM_0\otimes_\cE\dZ'_{x_m}$ is annihilated a power of $p$ independent of $m$, which implies the claim by (1a). Suppose that $\varphi$ is injective, and put $\cM_0\coloneqq\coker\varphi$. Then in addition we need to show that $\Tor_1^\cE(\cM_0,\Theta_{x_m})$ is annihilated by a power of $p$ independent of $m$. Choose a short exact sequence
\[
0\to \cM'_0 \to \cM''_0 \to \cM_0\to 0
\]
of $\cE$-modules in which $\cM''_0$ is a finitely generated flat $\cE$-module. Then $\Tor_1^\cE(\cM_0,\Theta_{x_m})=(\cM'_0\otimes_\cE\Theta_{x_m})[p^\infty]$ for every $m\geq 1$. Write $\cM'_0\otimes_\cE\Theta_{x_m}=\Theta_{x_m}^{r_m}\oplus M_m$ for some integer $r_m\geq 0$ and some finitely generated torsion $\Theta_{x_m}$-module $M_m$. Write $\cM'_0\otimes_\cE\Theta_x=\Theta_x^r\oplus M$ in the similar way. Since $\cM'_0$ is also flat over $U$, we have $r=r_m$ for $m\geq 1$. Let $m_0$ be a positive integer such that $p^{m_0}$ annihilates $M$. By (1b), for $m$ sufficiently large, we have $\cM'_0\otimes_\cE(\Theta_{x_m}/p^{m_0+1}\Theta_{x_m})=\cM'_0\otimes_\cE(\Theta_x/p^{m_0+1}\Theta_x)$, so that $M_m$ is annihilated by $p^{m_0}$. Thus, the claim follows.

The lemma is proved.
\end{proof}

\begin{proof}[Proof of Theorem \ref{th:iwasawa}]
The proof is completely parallel to that of \cite{LTX}*{Theorem~5.1.3}. For readers' convenience, we include all details.

Denote by $\sZ$ the common vanishing locus in $\sE'$ of the image of the map $\blambda^{\emptyset,\infty}$ or $\bkappa^{\emptyset,\infty}$, depending on whether $\epsilon(\bV)=1$ or $\epsilon(\bV)=-1$, which a proper Zariski closed subset of $\sE'$ by our assumptions in (1) and (2), respectively.

We first consider (1). For every closed point $x$ of $\sE'\setminus\sZ$, there exists an integer $k=k(x)\geq 1$ such that $\lambda^{\emptyset,k}_x\neq 0$. By Proposition \ref{pr:iwasawa}(1a,1b), $\rH^1_\ff(F,\cR_{/x})=0$ and $\rH^1_\ff(F,(\cR_{/x})^*(1))$ has $\dZ_x$-corank zero. Thus, (1a) follows by Lemma \ref{le:free_incoherent} and Proposition \ref{pr:specialize}; and (1b) follows by Proposition \ref{pr:control}.

For (1c), it amounts to showing that for every closed point $z$ of $\sE'$ of height one,
\begin{align}\label{eq:iwasawa1}
\length_{\cO_{\sE',z}}\(\rH^1_f(F,\cR^*(1))^*\)_z\leq2\length_{\cO_{\sE',z}}\(\cE/\blambda^{\emptyset,\infty}(\phi)\)_z
\end{align}
holds for every $\phi\in\cC^{\emptyset,\infty}$. Take an element $\phi\in\cC^{\emptyset,\infty}$ such that $\blambda^{\emptyset,\infty}(\phi)$ does not vanish at $z$ (otherwise, \eqref{eq:iwasawa1} is trivial) and choose a $z$-clean point $x$ for both $\rH^1_f(F,\cR^*(1))^*$ and $\cE/\blambda^{\emptyset,\infty}(\phi)$. Applying Lemma \ref{le:iwasawa4}, we obtain a sequence $(x_m)_{m\geq 1}$ of closed points (in an affinoid subdomain containing $x$) of $\sE'$ satisfying Lemma \ref{le:iwasawa4}(1), such that
\begin{align*}
\length_{\cO_{\sE',z}}\(\rH^1_f(F,\cR^*(1))^*\)_z
&=\lim_{m\to\infty}l_m^{-1}\cdot\length_{\dZ_q}\(\rH^1_f(F,\cR^*(1))^*/\fP_{x_m}\rH^1_f(F,\cR^*(1))^*\),\\
\length_{\cO_{\sE',z}}\(\cE/\blambda^{\emptyset,\infty}(\phi)\)_z
&=\lim_{m\to\infty}l_m^{-1}\cdot\length_{\dZ_q}\(\cE/(\blambda^{\emptyset,\infty}(\phi),\fP_{x_m})\).
\end{align*}
Since $l_m$ tends to infinity, by Proposition \ref{pr:control} and Lemma \ref{le:iwasawa4}(1a), we have
\[
\lim_{m\to\infty}l_m^{-1}\cdot\length_{\dZ_q}\(\rH^1_f(F,\cR^*(1))^*/\fP_{x_m}\rH^1_f(F,\cR^*(1))^*\)
=\lim_{m\to\infty}l_m^{-1}\cdot\length_{\dZ_q}\(\rH^1_\ff(F,(\cR_{/x})^*(1))^*\),
\]
and
\[
\lim_{m\to\infty}l_m^{-1}\cdot\length_{\dZ_q}\(\cE/(\blambda^{\emptyset,\infty}(\phi),\fP_{x_m})\)
=\lim_{m\to\infty}l_m^{-1}\cdot\length_{\dZ_q}\(\dZ_{x_m}/\lambda^{\emptyset,\infty}_{x_m}(\phi)\).
\]
Now \eqref{eq:iwasawa1} follows from Proposition \ref{pr:iwasawa}(1c) by taking, for every $m\geq 1$, an integer $k_m\geq 1$ satisfying $\lambda^{\emptyset,k_m}_{x_m}\neq 0$.

We then consider (2). By Proposition \ref{pr:specialize} and Lemma \ref{le:iwasawa2}(1), there exists a Zariski dense open subset $U$ of $\sE'\setminus\sZ$ such that for every closed point $x$ of $U$, there exists an integer $k=k(x)\geq 1$ such that $\kappa^{\emptyset,k}_x\neq 0$. By Proposition \ref{pr:iwasawa}(2a,2b), $\rH^1_\ff(F,\cR_{/x})$ is free of rank one over $\dZ_x$ and $\rH^1_\ff(F,(\cR_{/x})^*(1))$ has $\dZ_x$-corank one. Thus, (2a) follows by Lemma \ref{le:free_incoherent} and Proposition \ref{pr:specialize}; and (2b) follows by Proposition \ref{pr:control}.

For (2c), it amounts to showing that for every closed point $z$ of $\sE'$ of height one,
\begin{align}\label{eq:iwasawa2}
\length_{\cO_{\sE',z}}\(\rH^1_f(F,\cR^*(1))^*_0\)_z\leq2
\length_{\cO_{\sE',z}}\(\rH^1_f(F,\cR)/\bkappa^{\emptyset,\infty}(\phi)\)_z
\end{align}
holds for every $\phi\in\cC^{\emptyset,\infty}$, where $\rH^1_f(F,\cR^*(1))^*_0$ denotes the maximal $\cE$-submodule of $\rH^1_f(F,\cR^*(1))^*$ that is torsion over $\sE'$. Take an element $\phi\in\cC^{\emptyset,\infty}$ such that $\bkappa^{\emptyset,\infty}(\phi)$ does not vanish at $z$ (otherwise, \eqref{eq:iwasawa1} is trivial) and choose a $z$-clean point $x$ for both $\rH^1_f(F,\cR^*(1))^*_0$ and $\rH^1_f(F,\cR)/\bkappa^{\emptyset,\infty}(\phi)$, such that $\rH^1_f(F,\cR^*(1))^*/\rH^1_f(F,\cR^*(1))^*_0$ is locally free at $x$ and that the natural map
\begin{align}\label{eq:iwasawa4}
\rH^1_f(F,\sR)/\fP_x\rH^1_f(F,\sR)\to\rH^1_\ff(F,\sR_{/x})
\end{align}
is injective -- this is possible by Proposition \ref{pr:specialize}. Applying Lemma \ref{le:iwasawa4}, we obtain a sequence $(x_m)_{m\geq 1}$ of closed points (in an affinoid subdomain containing $x$) of $\sE'$ satisfying Lemma \ref{le:iwasawa4}(1), such that
\begin{align*}
\length_{\cO_{\sE',z}}\(\rH^1_f(F,\cR^*(1))^*_0\)_z
&=\lim_{m\to\infty}l_m^{-1}\cdot\length_{\dZ_q}\(\rH^1_f(F,\cR^*(1))^*_0/\fP_{x_m}\rH^1_f(F,\cR^*(1))^*_0\),\\
\length_{\cO_{\sE',z}}\(\cE/\blambda^{\emptyset,\infty}(\phi)\)_z
&=\lim_{m\to\infty}l_m^{-1}\cdot\length_{\dZ_q}\(\rH^1_f(F,\cR)/(\blambda^{\emptyset,\infty}(\phi)+\fP_{x_m}\rH^1_f(F,\cR))\).
\end{align*}
Since $l_m$ tends to infinity, by Proposition \ref{pr:control} and Lemma \ref{le:iwasawa4}(1a), we have
\[
\lim_{m\to\infty}l_m^{-1}\cdot\length_{\dZ_q}\(\rH^1_f(F,\cR^*(1))^*_0/\fP_{x_m}\rH^1_f(F,\cR^*(1))^*_0\)
=\lim_{m\to\infty}l_m^{-1}\cdot\length_{\dZ_q}\(\rH^1_\ff(F,(\cR_{/x})^*(1))^*[p^\infty]\).
\]
We claim that
\begin{align}\label{eq:iwasawa3}
\lim_{m\to\infty}l_m^{-1}\cdot\length_{\dZ_q}\(\rH^1_f(F,\cR)/(\blambda^{\emptyset,\infty}(\phi)+\fP_{x_m}\rH^1_f(F,\cR))\)
\geq\lim_{m\to\infty}l_m^{-1}\cdot\length_{\dZ_q}\(\rH^1_\ff(F,\cR_{/x})/\kappa^{\emptyset,\infty}_{x_m}(\phi)\).
\end{align}
Assuming this, \eqref{eq:iwasawa2} follows from Proposition \ref{pr:iwasawa}(2c) by taking, for every $m\geq 1$, an integer $k_m\geq 1$ satisfying $\kappa_{x_m}^{\emptyset,k_m}\neq 0$.

To check \eqref{eq:iwasawa3}, it suffices to show that
\begin{align}\label{eq:iwasawa5}
\limsup_{m\to\infty}\length_{\dZ_q}\(\coker\(\rH^1_f(F,\cR)/\fP_{x_m}\rH^1_f(F,\cR)\to\rH^1_\ff(F,\cR_{/x})\)\)
\end{align}
is finite. Since the map \eqref{eq:iwasawa4} is injective and $\rH^1_\ff(F,\cR)$ has rank one, there exists a positive integer $k$ such that the natural map
\[
\rH^1_f(F,\cR)/(\fP_x,p^k)\rH^1_f(F,\cR)\to\rH^1_\ff(F,\cR_{/x}/p^k\cR_{/x})
\]
is nonzero. By Lemma \ref{le:iwasawa4}(1b), there exists an integer $m_k\geq 1$ such that for every $m\geq k$, $\Theta_x/p^k$ is a quotient of $\Theta_x\otimes_\cE\Theta_{x_m}$, which implies that $\fP_{x_m}\subseteq(\fP_x,p^k)$ hence $\dZ'_x/p^k\dZ'_x$ is a quotient of $\dZ'_{x_m}/p^k\dZ'_{x_m}$. It follows that the natural map
\[
\rH^1_f(F,\cR)/(\fP_{x_m},p^k)\rH^1_f(F,\cR)\to\rH^1_\ff(F,\cR_{/x_m}/p^k\cR_{/x_m})
\]
is also nonzero for $m\geq m_k$. Since this map factors through $\rH^1_\ff(F,\cR_{x_m})/p^k\rH^1_\ff(F,\cR_{/x_m})$ and $\rH^1_\ff(F,\cR_{/x_m})$ is a free $\dZ_{x_m}$-module of rank one, we have
\[
\length_{\dZ_q}\(\coker\(\rH^1_f(F,\cR)/\fP_{x_m}\rH^1_f(F,\cR)\to\rH^1_\ff(F,\cR_{/x})\)\)
<\length_{\dZ_q}(\dZ_{x_m}/p^k\dZ_{x_m})+\length_{\dZ_q}(\dZ_{x_m}/\dZ'_{x_m})
\]
for every $m\geq m_k$. It follows by Lemma \ref{le:iwasawa4}(1a) that \eqref{eq:iwasawa5} is finite. Thus, \eqref{eq:iwasawa3} hence (2c) are confirmed.

The theorem is all proved.
\end{proof}

To end this section, we explain how to adapt the proof of Theorem \ref{th:iwasawa} to the variant Theorem \ref{th:iwasawa_bis}. Note that the only problem of removing Assumption \ref{as:galois}(G8) is that we lose Theorem \ref{th:deformation}. From now on, suppose that $\rJ$ has the form $\rI^0_{n_0}\times\rJ_1$ for a subgroup $\rJ_1<\rI^0_{n_1}$.

For every place $v\in\PP^+$, we fix a self-dual $O_{F_v}$-lattice $\Lambda_{n_0,v}$ in $\rV_{n_0,v}$ such that $\sfB_{n_0,v}$ induces a Borel subgroup of $\rU(\Lambda_{n_0,v})$ as in \S\ref{ss:first_reciprocity}. Using this lattice, we regard $\sfG_{n_0,v}$, $\sfB_{n_0,v}$ and $\sfT_{n_0,v}$ as reductive groups defined over $O_{F^+_v}$. Put
\[
\rk_0\coloneqq\prod_{v\in\PP^+}\sfG_{n_0,v}(F^+_v).
\]
We also fix an element $\rk'_0\in\fk_{\rI^0_{n_0}}^\dag$ that is contained in $\rk_0$.

Consider a definite/indefinite datum $\bV=(\fm;\rV_n,\rV_{n+1};\Lambda_n,\Lambda_{n+1};\rK_n,\rK_{n+1};\sfB)$ as in Definition \ref{de:datum}. We define the \emph{ordinary stabilization map}
\[
\fs\colon\rH(\Sh(\sfG_{n_0},\rK_{n_0}\rk_0),\dZ_{\xi^{n_0}})\to\rH(\Sh(\sfG_{n_0},\rK_{n_0}\rI^0_{n_0}),\dZ_{\xi^{n_0}}),
\]
which depends on the choice of $\rk'_0$, to be the composition of the pullback map
\[
\rH(\Sh(\sfG_{n_0},\rK_{n_0}\rk_0),\dZ_{\xi^{n_0}})\to\rH(\Sh(\sfG_{n_0},\rK_{n_0}\rk'_0),\dZ_{\xi^{n_0}}).
\]
the projection map
\[
\rH(\Sh(\sfG_{n_0},\rK_{n_0}\rk'_0),\dZ_{\xi^{n_0}})\to\rH^\ordi(\Sh(\sfG_{n_0},\rK_{n_0}\rk'_0),\dZ_{\xi^{n_0}})
\]
from Remark \ref{re:supersingular}, and the inverse of the isomorphism
\[
\rH(\Sh(\sfG_{n_0},\rK_{n_0}\rI^0_{n_0}),\dZ_{\xi^{n_0}})\to\rH^\ordi(\Sh(\sfG_{n_0},\rK_{n_0}\rk'_0),\dZ_{\xi^{n_0}})
\]
from Lemma \ref{le:ordinary}.

For a subgroup $\rI\finite\rI^0$ of the form $\rI^0_{n_0}\times\rI_1$, put
\[
\rH'(\Sh(\sfG,\rK\rI),\dZ_\xi)\coloneqq
\rH(\Sh(\sfG_{n_0},\rK_{n_0}\rk_0),\dZ_{\xi^{n_0}})\otimes_{\dZ_q}
\rH(\Sh(\sfG_{n_1},\rK_{n_1}\rI_1),\dZ_{\xi^{n_1}})
\]
so that the map $\fs$ induces a map
\[
\fs_*\colon\rH'(\Sh(\sfG,\rK\rI),\dZ_\xi)\to\rH(\Sh(\sfG,\rK\rI),\dZ_\xi).
\]
Furthermore, put
\[
\cD'_\rJ(\xi,\bV)\coloneqq\varprojlim_{\rJ_1<\rI_1\finite\rI^0}
\Hom_{\dZ_q}\(\rH'(\Sh(\sfG,\rK(\rI^0_{n_0}\times\rI_1)),\dZ_{\xi^N}),\dZ_q\)
\]
so that have the natura map
\[
\fs^*\colon\cD_\rJ(\xi,\bV)\to\cD'_\rJ(\xi,\bV)
\]
of $\dZ[\rK_{\spadesuit^+}\backslash\sfG(F^+_{\spadesuit^+})/\rK_{\spadesuit^+}]
\otimes\dT^{\ang{\fm}}\otimes\Lambda_{\sfT(F^+_{\PP^+})/\rJ,\dZ_q}$-modules. Similarly, we have the localized version
\[
\fs^*_\gamma\colon\cD_\rJ(\xi,\bV)_\gamma\to\cD'_\rJ(\xi,\bV)_\gamma,
\]
which is surjective under Assumption \ref{as:galois}(G2,G7). Similarly, let $\cE'_\rJ(\xi,\bV)_\gamma$ be the $\Lambda_{\sfT(F^+_{\PP^+})/\rJ,\dZ_q}$-subalgebra of
\[
\End_{\Lambda_{\sfT(F^+_{\PP^+})/\rJ,\dZ_q}}\(\cD'_\rJ(\xi,\bV)_\gamma\)
\]
generated by the image of $\dT^{\ang{\fm}}$. Put
\begin{align*}
\sD'_\rJ(\xi,\bV)_\gamma\coloneqq\cD'_\rJ(\xi,\bV)_\gamma\otimes_{\dZ_q}\dQ_q,\qquad
\sE'_\rJ(\xi,\bV)_\gamma\coloneqq\cE'_\rJ(\xi,\bV)_\gamma\otimes_{\dZ_q}\dQ_q.
\end{align*}
Then $\fs^*_\gamma\otimes_{\dZ_q}\dQ_q$ has a unique $\dZ[\rK_{\spadesuit^+}\backslash\sfG(F^+_{\spadesuit^+})/\rK_{\spadesuit^+}]
\otimes\dT^{\ang{\fm}}\otimes\Lambda_{\sfT(F^+_{\PP^+})/\rJ,\dQ_q}$-linear section, so that $\sE'_\rJ(\xi,\bV)_\gamma$ is a quotient of $\sE_\rJ(\xi,\bV)_\gamma$. By definition, $\Spec\sE'_\rJ(\xi,\bV)_\gamma$ is precisely the maximal even-crystalline (open) subscheme of $\Spec\sE_\rJ(\xi,\bV)_\gamma$. To prove Theorem \ref{th:iwasawa_bis}, we adopt the same argument in the proof of Theorem \ref{th:iwasawa} but for $\cD'_\rJ(\xi,\bV^\fn)_\gamma$. This way, we avoid the use of Theorem \ref{th:deformation} while only requiring \cite{LTXZZ1}*{Theorem~3.38}.

Indeed, the proof of the main theorems in \cite{LTX} actually follows from the above strategy.

\section{Appendix: An R=T theorem for ordinary distributions}

In this section, we prove an R=T theorem for ordinary distributions over the Iwasawa algebra, similar to \cite{LTXZZ1}*{Theorem~3.38}.

Let $N$ be a positive integer satisfying $p>2(N+1)$. Following \cite{CHT08}, we denote by $\sC_{\dZ_q}^f$ the category of Artinian local rings over $\dZ_q$ with residue field $\dF_q$.

Consider a quintuple $(\xi,\fm,\rV,\Lambda,\sfB)$ in which
\begin{itemize}
  \item $\xi$ is a $\PP^+$-trivial Fontaine--Laffaille regular hermitian weight of rank $N$ (Definition \ref{de:weight0});

  \item $\fm$ is a (possibly empty) finite set of nonarchimedean places of $F^+$ disjoint from $\spadesuit^+\cup\Sigma_p^+$ such that every place $v\in\fm$ is inert in $F$ and satisfies $p\nmid(\|v\|^2-1)$;

  \item $\rV$ is a hermitian space over $F$ (of general signature) of rank $N$ that is split at every place in $\PP^+$, with $\sfG\coloneqq\rU(\rV)$;

  \item $\Lambda$ is a $\prod_{v\not\in\Sigma^+_\infty\cup\spadesuit^+\cup\PP^+}O_{F_v}$-lattice in $\rV\otimes_F\dA_F^{\Sigma_\infty\cup\spadesuit\cup\PP}$ satisfying $\Lambda\subseteq\Lambda^\vee$ and that $\Lambda_v^\vee/\Lambda_v$ has length one (resp.\ zero) when $v\in\fm$ (resp.\ $v\not\in\fm$);

  \item $\rK$ is a neat open compact subgroup of $\sfG(\dA_{F^+}^{\infty,\PP^+})$ of the form
      \begin{align*}
      \rK=\prod_{v\in\spadesuit^+}\rK_v\times
      \prod_{v\not\in\Sigma^+_\infty\cup\spadesuit^+\cup\PP^+}\rU(\Lambda)(O_{F^+_v});
      \end{align*}

  \item $\sfB$ is a Borel subgroup of $\sfG\otimes_\dQ\dQ_p$, with $\sfT$ its Levi quotient.
\end{itemize}
Denote by $\rI^0$ the maximal open compact subgroup of $\sfT(F^+_{\PP^+})$. We also put
\[
d(\rV)\coloneqq\sum_{v\in\Sigma^+_\infty}p_vq_v,
\]
where $(p_v,q_v)$ denotes the signature of $\rV$ at $v$.

For every subgroup $\rI\finite\rI^0$, put
\[
\rH^{d(V)}_{\et}(\Sh(\sfG,\rK\rI),\dZ_\xi)\coloneqq
\varprojlim_{\fk_\rI^{\r{op}}}\rH^{d(V)}_{\et}(\Sh(\sfG,\rK\rk)_{\ol{F}},\dZ_\xi).
\]
Put
\[
\cD(\xi,\rV,\rK)\coloneqq\varprojlim_{\rI\finite\rI^0}\Hom_{\dZ_q}\(\rH^{d(V)}_{\et}(\Sh(\sfG,\rK\rI),\dZ_\xi),\dZ_q\),
\]
as a module over $\dT_N^{\ang{\fm}}\otimes\Lambda_{\sfT(F^+_{\PP^+}),\dZ_q}$, admitting a continuous action by $\Gamma_F$.

Consider a hermitian homomorphism (Definition \ref{de:hermitian})
\begin{align*}
\gamma\colon\Gamma_{F^+}\to\sG_N(\dF_q)
\end{align*}
that is unramified away from $\spadesuit^+\cup\Sigma^+_p$. Associate with $\gamma$, we have a homomorphism
\[
\phi_\gamma\colon\dT^{\ang{\emptyset}}_N\to\dF_q.
\]

\begin{definition}\label{de:filtered}
Let $v$ be a place in $\PP^+$.
\begin{enumerate}
  \item We say that a homomorphism $\rho\colon\Gamma_{F_{w(v)}}\to\GL_N(R)$, in which $R$ is an arbitrary local $\dZ_q$-ring, is \emph{filtered} if there exists a (necessarily unique) complete filtration
      \[
      0 = \Fil^{-1}(\rho) \subseteq \Fil^0(\rho)
      \subseteq\cdots\subseteq \Fil^{N-1}(\rho)=R^N
      \]
      of free $R$-modules that is stable under $\rho$, such that for every $0\leq i\leq N-1$, if we denote by $\rho_i\colon\Gamma_{F_{w(v)}}\to R^\times$ the character by which $\Gamma_{F_{w(v)}}$ acts on
      \[
      \Gr^i(\rho)\coloneqq\frac{\Fil^i(\rho)}{\Fil^{i-1}(\rho)},
      \]
      then the restriction of $\rho_i$ to the maximal torsion subgroup of $\Gamma_{F_{w(v)}}^\ab$ coincides with $\epsilon_p^{-i}$.

  \item We say that a local lifting
      \[
      \gamma_v^\sharp\colon\Gamma_{F^+_v}\to\sG_N(R)
      \]
      of $\gamma_v$ to an object $R\in\sC_{\dZ_q}^f$ is \emph{filtered} if $(\gamma_v^\sharp)^\natural\colon\Gamma_{F_{w(v)}}\to\GL_N(R)$ is filtered in the above sense.
\end{enumerate}
\end{definition}

We assume that the homomorphism $\gamma$ satisfies the following conditions.
\begin{description}
  \item[(D1)] The restriction $\gamma^\natural\res_{\Gal(\ol{F}/F(\zeta_p))}$ is absolutely irreducible.

  \item[(D2)] The homomorphism $\phi_\gamma$ is cohomologically generic in the sense of \cite{LTXZZ}*{Definition~D.1.1}.

  \item[(D3)] For every $v\in\spadesuit^+$, every lifting of $\gamma_v$ is minimally ramified \cite{LTXZZ}*{Definition~3.4.8}.

  \item[(D4)] For every $v\in\fm$, the generalized eigenvalues of $\gamma^\natural(\phi_{w(v)})$ in $\ol\dF_p$ contain the pair $\{\|v\|^{-N},\|v\|^{-N+2}\}$ exactly once.

  \item[(D5)] For every $w\in\Sigma_p$, $(\gamma^\natural)_w$ is regular Fontaine--Laffaille crystalline of weights $(\xi_\tau)_{\tau\mid w}$.

  \item[(D6)] For every $v\in\PP^+$, $\gamma_v$ is filtered such that the set
      \[
      \left\{\left.(\gamma_v)^\sharp_i\cdot\epsilon_p^i\right| 0\leq i\leq N-1\right\}
      \]
      of (unramified) characters of $\Gamma_{F_{w(v)}}$ are distinct.
\end{description}

Consider a global deformation problem \cite{LTXZZ1}*{Definition~3.1.6}
\[
\sS\coloneqq(\gamma,\eta_{F/F^+}^N\epsilon_p^{1-N},\spadesuit^+\cup\fm\cup\Sigma^+_p,\{\sD_v\}_{v\in\spadesuit^+\cup\fm\cup\Sigma^+_p})
\]
where
\begin{itemize}
  \item for $v\in\spadesuit^+$, $\sD_v$ is the local deformation problem classifying all liftings of $\gamma_v$;

  \item for $v\in\fm$, $\sD_v$ is the local deformation problem $\sD_v^\ram$ of $\gamma_v$ from \cite{LTXZZ1}*{Definition~3.5.1};

  \item for $v\in\Sigma^+_p\setminus\PP^+$, $\sD_v$ is the local deformation problem $\sD_v^\FL$ of $\gamma_v$ from \cite{LTXZZ1}*{Definition~3.2.5};

  \item for $v\in\PP^+$, $\sD_v$ is the local deformation problem $\sD_v^{\r{fil}}$ of $\gamma_v$ classifying filtered liftings (Definition \ref{de:filtered}(2)).
\end{itemize}
Then we have the \emph{global universal deformation ring} $\sfR^\univ_\sS$ from \cite{LTXZZ1}*{Proposition~3.1.7}. The ring $\sfR^\univ_\sS$ is naturally a topological ring over $\Lambda_{\sfT(F^+_{\PP^+}),\dZ_q}$ in the following way: For every object $R\in\sC_{\dZ_q}^f$, the map
\[
\Hom_{\dZ_q}(\sfR^\univ_\sS,R)\to\Hom_{\dZ_q}(\Lambda_{\sfT(F^+_{\PP^+}),\dZ_q},R)
\]
sends a deformation $\gamma^\sharp$ to the homomorphism given by the characters $\{\chi_v\colon\sfT(F^+_v)\to R^\times\res v\in\PP^+\}$ satisfying that for every $v\in\PP^+$ and every $0\leq i\leq N-1$, the character $(\gamma_v^\sharp)^\natural_i$ is given by the composition
\[
\Gamma_{F_{w(v)}}\xrightarrow{\rec_{w(v)}}\widehat{F_{w(v)}^\times}\xrightarrow{\inc_i}\widehat{\sfT(F_{w(v)})}
\xrightarrow{\Nm_{F_{w(v)}/F^+_v}}\widehat{\sfT(F^+_v)}\xrightarrow{\chi_v^{-1}\cdot\epsilon_p^{-i}} R^\times
\]
in which $\inc_i$ is the inclusion of the factor in \eqref{eq:weight} indexed by $i$ (with $\sigma$ the natural embedding $F^+_v\to F_{w(v)}$).

\begin{theorem}\label{th:deformation}
Suppose that $\fm$ is empty if $N$ is odd, that $\gamma$ satisfies (D1--6) above, and that $\cD(\xi,\rV,\rK)_\gamma\neq0$. Let $\Box$ be a finite set of places of $F^+$ containing $\ang{\fm}$, and denote by $\sfT^\Box$ the subring of $\End_{\Lambda_{\sfT(F^+_{\PP^+}),\dZ_q}}\(\cD(\xi,\rV,\rK)_\gamma\)$ generated by the image of $\dT^\Box_N$. Then
\begin{enumerate}
  \item There is a canonical isomorphism $\sfR^\univ_\sS\xrightarrow{\sim}\sfT^\Box$ of flat local complete intersection rings over $\Lambda_{\rI^0,\dZ_q}$.

  \item The $\sfT^\Box$-module $\cD(\xi,\rV,\rK)_\gamma$ is finite and free.
\end{enumerate}
In particular, the natural inclusion $\sfT^\Box\hookrightarrow\sfT^{\ang{\fm}}$ is an isomorphism.
\end{theorem}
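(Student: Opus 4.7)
The plan is to adapt the Taylor--Wiles--Kisin patching argument of \cite{LTXZZ1}*{Theorem~3.38}, enhanced to handle the filtered deformations at places in $\PP^+$ that account for the Iwasawa variable $\Lambda_{\rI^0,\dZ_q}$. First I would construct a surjective homomorphism $\sfR^\univ_\sS\twoheadrightarrow\sfT^\Box$ of $\Lambda_{\rI^0,\dZ_q}$-rings: using the theory of pseudo-characters together with the absolute irreducibility of $\gamma^\natural\res_{\Gal(\ol{F}/F(\zeta_p))}$ in (D1), the Hecke eigensystem acting on $\cD(\xi,\rV,\rK)_\gamma$ produces a continuous lift $\gamma^\univ\colon\Gamma_{F^+}\to\sG_N(\sfT^\Box)$ of $\gamma$. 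I would then verify that $\gamma^\univ$ belongs to $\sS$ place by place: the minimal ramification at $v\in\spadesuit^+$ uses (D3), the ramified-type condition at $v\in\fm$ uses (D4), and the Fontaine--Laffaille condition at $v\in\Sigma^+_p\setminus\PP^+$ uses (D5) -- these are the same arguments as in \emph{loc.\ cit.} At each $v\in\PP^+$, the filtration on $\gamma^\univ_v$ is constructed by interpolating the ordinary filtration that exists at the dense set of interlacing classical points (Remark \ref{re:galois}), where the distinctness hypothesis (D6) ensures the filtration step-graded characters are pairwise separated so that the limit filtration is canonical; the character-matching against $\Lambda_{\rI^0,\dZ_q}$ then follows from the explicit formula for $\psi_{N,w,i}$. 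Surjectivity of $\sfR^\univ_\sS\to\sfT^\Box$ is automatic from Chebotarev and the fact that unramified Hecke operators away from $\ang{\fm}$ generate $\sfT^\Box$.

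The R=T assertion and the freeness in (2) are then proved simultaneously by patching. For every integer $m\geq 1$ I would choose, using (D1) and the enlargement hypothesis built into (D2), a Taylor--Wiles datum $\sQ_m$ of $q_1$ auxiliary inert primes satisfying the standard annihilation-of-dual-Selmer condition. Letting $\sS_m=\sS\cup\sQ_m$ and $\sfT^\Box_m$ the corresponding Hecke quotient acting on cohomology with full level at $\sQ_m$, one obtains $\sfR^\univ_{\sS_m}\twoheadrightarrow\sfT^\Box_m$ as before. Cohomological genericity (D2), combined with the Fontaine--Laffaille regularity of $\xi$ and the bound $p>2(N+1)$, guarantees via Remark \ref{re:generic} that $\rH^{d(\rV)}_\et(\Sh(\sfG,\rK\rk)_{\ol F},\dZ_\xi)_\gamma$ is $p$-torsion free and concentrated in middle degree, which in turn implies that $\cD(\xi,\rV,\rK)_\gamma$ augmented at $\sQ_m$ is a finite free module over the appropriate group-algebra factor. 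Passing to the patched module $M_\infty$ over $S_\infty=\Lambda_{\rI^0,\dZ_q}[[X_1,\dots,X_s]]$ (with $s$ computed from the local tangent-space dimensions at $\spadesuit^+\cup\fm\cup\Sigma^+_p$), one gets $M_\infty$ finite free over $\sfR^\infty\coloneqq\varprojlim_m\sfR^\univ_{\sS_m}$. The numerical balance between the $S_\infty$-depth of $M_\infty$ and the relative dimension of $\sfR^\infty$ over $\Lambda_{\rI^0,\dZ_q}$ forces $\sfR^\infty\xrightarrow\sim S_\infty$ (a power series ring, hence flat complete intersection), from which the R=T assertion, flatness over $\Lambda_{\rI^0,\dZ_q}$, and the complete-intersection property descend by quotienting out the patching variables. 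The final \emph{in particular} clause -- that $\sfT^\Box\hookrightarrow\sfT^{\ang{\fm}}$ is an isomorphism -- is then immediate since the set $\Box$ plays no role in $\sfR^\univ_\sS$.

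The main obstacle I anticipate is the local analysis at $v\in\PP^+$: one must show that the filtered local deformation ring $R_v^{\r{fil}}$ is formally smooth over $\Lambda_{\rI^0_v,\dZ_q}$ of the expected relative dimension, and that the natural map $R_v^{\r{fil}}\to\Lambda_{\rI^0_v,\dZ_q}$ appearing in the structure map of $\sfR^\univ_\sS$ identifies $R_v^{\r{fil}}$ with an Iwasawa-algebra-enlarged framed quotient. This computation hinges crucially on (D6): distinctness of $\{(\gamma_v)^\sharp_i\cdot\epsilon_p^i\}_{0\leq i\leq N-1}$ forces the obstruction group $\rH^2(\Gamma_{F_{w(v)}},\mathrm{ad}^0(\gamma_v^\natural)/\Fil^0)$ to vanish, and by local Euler--Poincar\'e characteristic together with $p>N$ the tangent space has exactly the predicted dimension. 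Once this local analysis is secured, the global tangent/obstruction bookkeeping and the patching assembly proceed along the blueprint of \cite{LTXZZ1} with only notational modifications to track the extra $\Lambda_{\rI^0,\dZ_q}$-structure.
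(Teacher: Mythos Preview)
Your overall strategy---construct $\sfR^\univ_\sS\twoheadrightarrow\sfT^\Box$, then patch as in \cite{LTXZZ1}*{Theorem~3.38} over the Iwasawa algebra with the filtered local deformation ring at $v\in\PP^+$ as the new ingredient---is exactly the paper's. Two points deserve correction or sharpening.

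First, a technical slip: $\Lambda_{\rI^0,\dZ_q}$ is not a regular local ring because $\rI^0$ has torsion. The paper passes to the maximal torsion-free quotient $\rI^{00}\coloneqq\prod_{v\in\PP^+}\rI^{00}_v$ and runs the patching over the regular local ring $\Lambda_{\rI^{00},\dZ_q}$ (of dimension $1+N\sum_{v\in\PP^+}[F^+_v:\dQ_p]$). Flatness of $\sfT^\Box$ over $\Lambda_{\rI^0,\dZ_q}$ is then deduced \emph{a posteriori} from part (2) together with the local freeness of $\cD(\xi,\rV,\rK)_\gamma$ over $\Lambda_{\rI^0,\dZ_q}$ (the analogue of Proposition~\ref{pr:eigen_2}(2)).

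Second, and more substantively: your proposed route to the formal smoothness of $\sD_v^{\r{fil}}$ via vanishing of $\rH^2(\Gamma_{F_{w(v)}},\r{ad}^0(\gamma_v^\natural)/\Fil^0)$ is not the paper's, and it is not clear it goes through as stated---the relevant obstruction theory is for $\sG_N$-valued liftings over $\Gamma_{F^+_v}$, and in the inert case the self-duality constraint is not captured by that $\rH^2$. The paper instead (Lemma~\ref{le:deformation}) argues geometrically: it shows each geometric fiber of $\sD^{\r{fil}}_{v,\dQ_q}\to\Sp\Lambda_{\rI^{00}_v,\dQ_q}$ has dimension at least $N^2+[F^+_v:\dQ_p]\cdot\tfrac{N(N-1)}{2}$ by an explicit count of extension classes (using (D6) to control $\Ext^1(\rho^x_j,\rho^x_i)$ and centralizers, with a delicate $\iota$-eigenspace analysis in the inert case), then uses the known dimension of $\sD_v^\FL$ at the augmentation fiber plus Nakayama to conclude both formal smoothness over $\Lambda_{\rI^{00}_v,\dZ_q}$ and the identification $\sD_v^{\r{fil}}\otimes_{\Lambda_{\rI^{00}_v,\dZ_q}}\Lambda_{\rI^{00}_v,\dZ_q}/\fa=\sD_v^\FL$. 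This last identification is what lets the patching reduce cleanly to the situation of \cite{LTXZZ1}*{Theorem~3.38}; you should make it explicit.
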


Take an element $v\in\PP^+$. Let $\sD_v^{\r{fil}}$ be the local deformation problem of $\gamma$ classifying filtered liftings, which is naturally a formal scheme over $\Spf\Lambda_{\sfT(F^+_v),\dZ_q}$ in the way that is similar to $\sfR^\univ_\sS$. Denote by $\rI^0_v$ the maximal open compact subgroup of $\sfT(F^+_v)$ and $\rI^{00}_v$ the maximal torsion free quotient of $\rI^0_v$, so that $\Spf\Lambda_{\rI^{00}_v,\dZ_q}$ is a connected component of $\Spf\Lambda_{\rI^0_v,\dZ_q}$.

\begin{lem}\label{le:deformation}
Assume (D5) and (D6).
\begin{enumerate}
  \item The image of the composite morphism $\sD_v^{\r{fil}}\to\Spf\Lambda_{\sfT(F^+_v),\dZ_q}\to\Spf\Lambda_{\rI^0_v,\dZ_q}$ is contained in $\Spf\Lambda_{\rI^{00}_v,\dZ_q}$.

  \item If we denote by $\fa$ the augmentation ideal of $\Lambda_{\rI^{00}_v,\dZ_q}$ (so that $\Lambda_{\rI^{00}_v,\dZ_q}/\fa=\dZ_q$), then $\sD_v^{\r{fil}}\otimes_{\Lambda_{\rI^{00},\dZ_q}}\Lambda_{\rI^{00},\dZ_q}/\fa=\sD_v^\FL$ -- the deformation ring of regular Fontaine--Laffaille liftings of $\gamma_v$ (with regular Fontaine--Laffaille weights $\{0,1,\dots,N-1\}$).

  \item The induced morphism $\sD_v^{\r{fil}}\to\Spf\Lambda_{\rI^{00}_v,\dZ_q}$ is formally smooth of pure relative dimension $N^2+[F_v^+:\dQ_p]\cdot\frac{N(N-1)}{2}$.
\end{enumerate}
\end{lem}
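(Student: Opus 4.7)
The plan is to prove (1)--(3) in order, all starting from the explicit formula (given just above the lemma) expressing each character $(\gamma_v^\sharp)^\natural_i$ as the composition of $\rec_{w(v)}$, $\inc_i$, $\Nm_{F_{w(v)}/F^+_v}$, and $\chi_v^{-1}\cdot\epsilon_p^{-i}$. For (1), I would unfold the filtered condition, which requires $(\gamma_v^\sharp)^\natural_i$ to agree with $\epsilon_p^{-i}$ on the maximal torsion subgroup of $\Gamma_{F_{w(v)}}^\ab$. Since $\rec_{w(v)}$, $\inc_i$, and $\Nm_{F_{w(v)}/F^+_v}$ all respect torsion subgroups, this forces $\chi_v^{-1}$ to be trivial on the image (under $\Nm\circ\inc_i$) of the torsion of $\widehat{F_{w(v)}^\times}$. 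Ranging $i$ over $\{0,\dots,N-1\}$ and using the explicit description of the maximal torus $\sfT$ as a form of $\GL_1^N$ after base change to $F_{w(v)}$, these images together generate the torsion subgroup of $\rI^0_v$; hence $\chi_v$ factors through $\rI^{00}_v$.

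For (2), I would specialize to $\chi_v=\mathbf{1}$, corresponding to the augmentation of $\Lambda_{\rI^{00}_v,\dZ_q}$. The formula then yields $(\gamma_v^\sharp)^\natural_i = \epsilon_p^{-i}$ on $\Gamma_{F_{w(v)}}$ (using the compatibility of $\epsilon_p$ with local norms from local class field theory), so inertia acts by $\epsilon_p^{-i}$ on $\Gr^i$. Combined with (D5), this is exactly the regular Fontaine--Laffaille crystalline condition of weights $\{0,1,\dots,N-1\}$, identifying the fiber with $\sD_v^{\FL}$.

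For (3), I would verify formal smoothness and compute the relative dimension by a tangent-obstruction analysis. A filtered lifting decomposes into (a) lifts of the graded characters parametrized by $\chi_v$ (hence by $\Spf\Lambda_{\rI^{00}_v,\dZ_q}$), (b) extension classes between graded pieces compatible with the conjugate self-dual structure, and (c) a framing. Under (D6), the residual characters $\gamma_{v,i}\cdot\epsilon_p^i$ are pairwise distinct as unramified characters, so for each $i\neq j$ the twist $\gamma_{v,i}\gamma_{v,j}^{-1}$ is nontrivial on inertia and distinct from $\epsilon_p$; both $H^0$ and $H^2$ of the relevant hermitian extension modules over $\Gamma_{F^+_v}$ therefore vanish. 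The Euler characteristic formula then gives $[F^+_v:\dQ_p]$ per extension class, summing to $\frac{N(N-1)}{2}[F^+_v:\dQ_p]$; adding $N^2$ for the framing yields the claimed relative dimension. Formal smoothness follows from the $H^2=0$ obstruction vanishing.

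The main obstacle will be the uniform treatment of the split and inert cases of $v$ in $F$ in part (3): in the inert case one must carefully bookkeep the conjugate self-dual symmetry relating $\rho_i$ and $\rho_{N-1-i}$, and pass from $\Gamma_{F_{w(v)}}$-cohomology to $\Gamma_{F^+_v}$-cohomology of the correct hermitian submodule of the adjoint representation. I plan to follow the methods of \cite{Ger19} or adapt the proof of \cite{LTXZZ1}*{Proposition~3.2.7}, from which the dimension count follows directly.
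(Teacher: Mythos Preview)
Your argument for (1) is essentially correct and matches the paper's use of the filtered condition to force $\chi_v$ to be trivial on torsion.

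Your argument for (2) has a real gap. Specializing at the augmentation of $\Lambda_{\rI^{00}_v,\dZ_q}$ only forces $\chi_v$ to be trivial on $\rI^0_v$, not on all of $\sfT(F^+_v)$; the unramified part of each $(\gamma_v^\sharp)^\natural_i$ is still free, so you only get $(\gamma_v^\sharp)^\natural_i\res_{\rI_{F_{w(v)}}}=\epsilon_p^{-i}$, not on the whole $\Gamma_{F_{w(v)}}$. More seriously, ``inertia acts by $\epsilon_p^{-i}$ on $\Gr^i$'' combined with (D5) does \emph{not} immediately give the Fontaine--Laffaille condition: an ordinary representation can be semistable non-crystalline. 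One containment (augmentation fiber $\subseteq\sD_v^{\FL}$) requires the fact that in the FL range every class in $\rH^1(F_{w(v)},\psi\epsilon_p^k)$ for $\psi$ unramified and $1\leq k\leq N-1$ is crystalline, so successive extensions are automatically crystalline; the reverse containment (an FL lift is filtered with the correct graded characters) needs (D6) to lift the residual ordinary filtration. You mention neither.

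The paper does not attempt a direct identification in (2). Instead it proves (2) and (3) simultaneously: it first establishes the lower bound claim that every geometric fiber of $\sD_{v,\dQ_q}^{\r{fil}}\to\Sp\Lambda_{\rI^{00}_v,\dQ_q}$ has dimension at least $N^2+[F^+_v:\dQ_p]\tfrac{N(N-1)}{2}$ by an explicit analysis of extension classes (treating split and inert $v$ separately, the inert case via an involution $\iota$ on $\Ext^1(\rho^x_{N-1-i},\rho^x_i)$); then (D6) gives the \emph{containment} augmentation fiber $\subseteq\sD_v^{\FL}$, and comparing with the known dimension of $\sD_v^{\FL}$ forces equality, which is (2); finally (2) plus the claim plus Nakayama give formal smoothness, which is (3). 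Your cohomological $\rH^2$-vanishing route for (3) is a legitimate alternative in the spirit of \cite{Ger19}, but note that your decomposition (a)+(b)+(c) misattributes the unramified character directions to the base $\Lambda_{\rI^{00}_v,\dZ_q}$ (they belong to the fiber), and the inert bookkeeping you flag as ``the main obstacle'' is exactly what the paper carries out by hand rather than by appeal to a reference.
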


\begin{proof}
To ease notation, we suppress $v$ in the proof. In particular, $F\coloneqq F_{w(v)}$ is an extension of $F^+\coloneqq F^+_v$ of degree either $1$ or $2$. Put $\rho\coloneqq\gamma^\natural\colon\Gamma_F\to\GL_N(\dF_q)$. Fix a Frobenius element $\phi\in\Gamma_{F^+}$ satisfying $\epsilon_p(\phi)=1$.

Write $\sD^+$ for $\sD^{\r{fil}}$ and define $\sD$ to be the formal scheme over $\Spf\dZ_q$ classifying filtered liftings of $\rho$. Denote by $\sT$ the formal scheme over $\Spf\dZ_q$ classifying liftings of $(\rho_0,\dots,\rho_{N-1})$, and define $\sT^+$ to be $\sT$ when $[F:F^+]=1$ and to be the closed subscheme classifying liftings $(\rho^\sharp_0,\dots,\rho^\sharp_{N-1})$ satisfying $\rho^\sharp_i\cdot(\rho^\sharp_{N-1-i})^\phi=1$ for $0\leq i\leq N-1$ when $[F:F^+]=2$. Then we have a natural commutative diagram
\[
\xymatrix{
\sD^+ \ar[r]^-\nu\ar[d]_-{\tau^+} & \sD \ar[d]^-{\tau} \\
\sT^+ \ar[r] & \sT
}
\]
in which $\nu$ sends a lifting $\gamma^\sharp$ to $(\gamma^\sharp)^\natural$ and $\tau$ sends a lifting $\rho^\sharp$ to $(\rho^\sharp_0,\dots,\rho^\sharp_{N-1})$.

The morphism $\sD^+\to\Spf\Lambda_{\rI^0,\dZ_q}$ we want to study is the composition of $\tau^+$ with the morphism $\sigma\colon\sT^+\to\Spf\Lambda_{\rI^0,\dZ_q}$ that sends an $R$-point $(\rho^\sharp_0,\dots,\rho^\sharp_{N-1})$ to the unique character $\chi^\sharp\colon\rI^0\to R^\times$ such that for every $0\leq i\leq N-1$,
\[
\rI_F\xrightarrow{\rec}O_F^\times\xrightarrow{\Nm_{F/F^+}\circ\inc_i}\rI^0\xrightarrow{(\chi^\sharp)^{-1}\cdot\epsilon_p^{-i}} R^\times
\]
coincides with $\rho^\sharp_i\res_{\rI_F}$. By (D5), the image $\sigma$ is contained in $\Spf\Lambda_{\rI^{00},\dZ_q}$, that is, (1) follows.

Put
\[
\lambda\coloneqq\sigma\circ\tau^+\colon \sD^+\to\Spf\Lambda_{\rI^{00},\dZ_q},
\]
with the rigid analytic (generic) fiber $\lambda_{\dQ_q}\colon\sD^+_{\dQ_q}\to\Sp\Lambda_{\rI^{00},\dQ_q}$. We claim that every geometric fiber of $\lambda_{\dQ_q}$ has dimension at least $N^2+[F^+:\dQ_p]\cdot\frac{N(N-1)}{2}$.

Assume the claim. By (D6), $\sD^+\otimes_{\Lambda_{\rI^{00},\dZ_q}}\Lambda_{\rI^{00},\dZ_q}/\fa$ is a (necessarily closed) subscheme of $\sD^{\r{FL}}$. By \cite{LTXZZ1}*{Proposition~3.14}, $\sD^{\r{FL}}$ is formally smooth over $\Spf\dZ_q$ of pure relative dimension $N^2+[F^+:\dQ_p]\cdot\frac{N(N-1)}{2}$. It follows that by the claim that $\sD^+\otimes_{\Lambda_{\rI^{00},\dZ_q}}\Lambda_{\rI^{00},\dZ_q}/\fa=\sD^{\r{FL}}$, that is, (2) follows. In particular, $\lambda$ is formally smooth over the (unique) closed point of $\Spf\Lambda_{\rI^{00},\dZ_q}$. By the claim again and Nakayama's lemma, $\lambda$ is formally smooth, that is, (3) follows.

It remains to show the claim. Consider indices $0\leq i<j\leq N-1$. Denote by $\sE_{i,j}$ the formal scheme over $\sT$ such that the fiber over a point $(\rho^\sharp_0,\dots,\rho^\sharp_{N-1})$ of $\sT$ classifies extensions of $\rho^\sharp_j$ by $\rho^\sharp_i$. By (D6), $\sE_{i,j}$ is a vector bundle over $\sT$ of rank $[F:\dQ_p]$. Let $\varepsilon_{i,j}\colon\sD\to\sE_{i,j}$ be the morphism that sends a lifting $\rho^\sharp$ to the extension class given by $\rho^\sharp$, and denote by $\hat\sE_{i,j}$ the formal completion of $\sE_{i,j}$ along the image of the unique closed point of $\sD$ under $\varepsilon_{i,j}$ so that $\varepsilon_{i,j}$ factors through $\hat\sE_{i,j}$. Put
\[
\hat\sE\coloneqq\prod_{0\leq i<j\leq N-1}\hat\sE_{i,j},
\]
where the fiber product is taken over $\sT$, and put $\varepsilon\coloneqq(\varepsilon_{i,j})\colon\sD\to\hat\sE$. The morphism $\tau$ then factors as
\[
\sD\xrightarrow{\varepsilon}\hat\sE\xrightarrow{\pi}\sT,
\]
where $\pi$ is the natural projection.

On the other hand, if we denote by $\sF$ the formal scheme over $\Spf\dZ_q$ that classifies liftings of the filtration $\Fil^\bullet(\rho)$, which is formally smooth over $\Spf\dZ_q$ of pure relative dimension $\frac{N(N-1)}{2}$, then we have a natural morphism $\sD\to\sF$ sending a lifting $\rho^\sharp$ to $\Fil^\bullet(\rho^\sharp)$. Denote by $\sM$ the formal completion of $\GL_{N,\dZ_q}$ along the identity on the special fiber, which acts on $\sD$ by conjugation and on $\sF$ by the natural action. Denote by $\sD_0\subseteq\sD$ the fiber over the standard filtration, namely, the one whose stabilizer is the upper-triangular Borel (without loss of generality, we may assume that the original filtration $\Fil^\bullet(\rho)$ is the standard one), and put
\[
\tau_0\coloneqq\tau\res_{\sD_0},\qquad
\sD^+_0\coloneqq\nu^{-1}\sD^+,\qquad
\tau^+_0\coloneqq\tau^+\res_{\sD^+_0},\qquad
\lambda_0\coloneqq\sigma\circ\tau^+_0.
\]
Since $\sG$ acts transitively on $\sF$ and the morphism $\sD\to\sF$ is $\sG$-equivariant, it suffices to show that each geometric fiber of $\lambda_{0,\dQ_q}$ has dimension at least
\[
N^2+[F^+:\dQ_p]\cdot\frac{N(N-1)}{2}-\frac{N(N-1)}{2}=(1+[F^+:\dQ_p])\cdot\frac{N(N-1)}{2}+N.
\]
We now have two cases.

First, suppose that $[F:F^+]=1$. Then $\sD^+=\sD$ (and $\sT^+=\sT$). The morphism $\sigma\colon\sT\to\Spf\Lambda_{\rI^{00},\dZ_q}$ is formally smooth of relative dimension $N$. Thus, it suffices to show that each geometric fiber of the morphism $\tau_{0,\dQ_q}$ has dimension $(1+[F^+:\dQ_p])\cdot\frac{N(N-1)}{2}$. Take a geometric point $x$ of $\sT_{\dQ_q}$ and denote by $\sD_{0,x}$ its fiber. Let $\sN\subseteq\sM$ be the upper-triangular unipotent subgroup, which is formally smooth over $\Spf\dZ_q$ of relative dimension $\frac{N(N-1)}{2}$. By (D6), the conjugation action of $\sN_{\dQ_q}$ on $\sD_{0,x}$ is free, and the morphism $\tau_{0,\dQ_q}$ induces an isomorphism $\sD_{0,x}/\sN_{\dQ_q}\simeq\hat\sE_x$, where $\hat\sE_x$ denotes the fiber of $x$ under $\pi$. It follows that $\sD_{0,x}$ has dimension $(1+[F^+:\dQ_p])\cdot\frac{N(N-1)}{2}$. The claim is proved.

Next, suppose that $[F:F^+]=2$. Write $r\coloneqq\lfloor\frac{N}{2}\rfloor$. For $0\leq i<j\leq N-1$, denote by $\sE_{i,j}^\circ$ the Zariski open subset of $(\sE_{i,j})_{\dQ_q}$ that is the complement of the zero section (of the bundle $\sE_{i,j}\to\sT$), and put $\hat\sE_{i,j}^\circ\coloneqq(\hat\sE_{i,j})_{\dQ_q}\cap\sE_{i,j}^\circ$. Put $\hat\sE^\circ\coloneqq\prod_{0\leq i<j\leq N-1}\hat\sE_{i,j}^\circ$. Denote by $\sD_0^\circ\subseteq\sD_0$ the open subset whose image under $\varepsilon$ is contained $\hat\sE^\circ$, and by $\sD_0^{+\circ}$ the inverse image of $\sD_0^\circ$ under $\nu$, as rigid analytic spaces over $\dQ_q$. Note that by (D6), for every filtered homomorphism represented by $\sD_0^\circ$, its centralizer must be the scalar matrices. It follows by \cite{LTXZZ1}*{Lemma~2.3} that
\begin{enumerate}
  \item the induced morphism $\sD_0^{+\circ}\to\sD_0^\circ$ is a relative principal homogeneous space of $\sZ_{\dQ_q}$, where $\sZ$ denotes the center of $\sM$;

  \item if we denote by $\sD_0^{\circ+}$ the image of $\sD_0^{+\circ}$ under $\nu$, then it is the (Zariski closed) locus of liftings $\rho^\sharp$ such that $(\rho^\sharp)^\phi$ and $\pres\rt(\rho^\sharp)^{-1}$ are conjugate.
\end{enumerate}
On the other hand, now the morphism $\sigma\colon\sT^+\to\Spf\Lambda_{\rI^{00},\dZ_q}$ is formally smooth of relative dimension $r$. It suffices to show that each geometric fiber of the morphism $\tau_0\res_{\sD_0^{\circ+}}\colon\sD_0^{\circ+}\to\sT^+_{\dQ_q}$ has dimension $(1+[F^+:\dQ_p])\cdot\frac{N(N-1)}{2}+(N-r-1)$. Denote by $\hat\sE^{\circ+}$ the image of $\sD_0^{\circ+}$ under $\varepsilon$, which is a closed subspace of $\hat\sE^\circ\times_{\sT_{\dQ_q}}\sT^+_{\dQ_q}$, so that $\tau_0\res_{\sD_0^{\circ+}}$ factorises as
\[
\sD_0^{\circ+}\xrightarrow{\varepsilon^{\circ+}}\hat\sE^{\circ+}\xrightarrow{\pi^{\circ+}}\sT^+_{\dQ_q}.
\]
Once again, the morphism $\varepsilon^{\circ+}$ is a relative principal homogeneous space of $\sN_{\dQ_q}$. Thus, it suffices to show that the morphism $\pi^{\circ+}$ is of pure relative dimension $[F^+:\dQ_p]\cdot\frac{N(N-1)}{2}+(N-r-1)$. Fix a geometric point $x$ of $\sT^+_{\dQ_q}$ represented by characters $\rho^x_0,\dots,\rho^x_{N-1}\colon\Gamma_F\to\dK^\times$. For every $0\leq i<j\leq N-1$, we have an isomorphism
\[
\iota\colon\Ext^1(\rho^x_j,\rho^x_i)\to\Ext^1(\rho^x_{N-1-i},\rho^x_{N-1-j})
\]
of $\dK$-linear spaces of dimension $[F:\dQ_p]$ that is the composition of the isomorphism
\[
\Ext^1(\rho^x_j,\rho^x_i)\to\Ext^1((\rho^x_j)^\phi,(\rho^x_i)^\phi)
=\Ext^1((\rho^x_{N-i-j})^{-1},(\rho^x_{N-1-i})^{-1})
\]
induced by taking $\phi$-conjugation, and the natural isomorphism
\[
\Ext^1((\rho^x_{N-i-j})^{-1},(\rho^x_{N-1-i})^{-1})
\simeq\Ext^1(\rho^x_{N-i-i},\rho^x_{N-1-j}).
\]
In particular, when $j=N-1-i$, $\iota$ is an involution of $\Ext^1(\rho^x_{N-1-i},\rho^x_i)$, satisfying that
\[
\Ext^1(\rho^x_{N-1-i},\rho^x_i)=\Ext^1(\rho^x_{N-1-i},\rho^x_i)^{\iota=1}\oplus
\Ext^1(\rho^x_{N-1-i},\rho^x_i)^{\iota=-1}
\]
in which both eigenspaces have dimension $[F^+:\dQ_p]$. By (2) and easy linear algebra, a $\dK$-point
\[
\(x,\(0\neq e_{i,j}\in\Ext^1(\rho^x_j,\rho^x_i)\)_{0\leq i<j\leq N-1}\)
\]
of $\hat\sE^\circ$ locates in $\hat\sE^{\circ+}$ if and only if there exist elements $\alpha_0,\dots,\alpha_{N-1}\in\dK^\times$ such that
\[
e_{N-i-i,N-1-j}=(\alpha_i/\alpha_j)\cdot\iota(e_{i,j})
\]
holds for every $0\leq i<j\leq N-1$ (which forces $\alpha_i/\alpha_{N-1-i}\in\{\pm1\}$ for $0\leq i\leq N-1$). It follows that the dimension of the fiber of $x$ in $\hat\sE^{\circ+}$ equals
\begin{align*}
\sum_{0\leq i<j< N-1-i}\dim\Ext^1(\rho^x_j,\rho^x_i) +\frac{1}{2}\sum_{i=0}^{r-1}\dim\Ext^1(\rho^x_{N-1-i},\rho^x_i)+(N-1-r)
=[F^+:\dQ_p]\cdot\frac{N(N-1)}{2}+(N-r-1).
\end{align*}
Thus, the claim follows.

The lemma is proved.
\end{proof}

\begin{proof}[Proof of Theorem \ref{th:deformation}]
Put $\rI^{00}\coloneqq\prod_{v\in\PP^+}\rI^{00}_v$. Similarly, $\sfR^\univ_\sS$ and $\sfT^\Box$ are indeed rings over $\Lambda_{\rI^{00},\dZ_q}$ -- a regular local ring of dimension $1+N\sum_{v\in\PP^+}[F^+_v:\dQ_p]$. We run the same argument in the proof of \cite{LTXZZ1}*{Theorem~3.38} but replace $\sO$ by $\Lambda_{\rI^{00},\dZ_q}$, and $1$ (when it stands for the dimension of $\sO$) by $1+N\sum_{v\in\PP^+}[F^+_v:\dQ_p]$ (the dimension of $\Lambda_{\rI^{00},\dZ_q}$). We conclude that
\begin{enumerate}
  \item there is a canonical isomorphism $\sfR^\univ_\sS\xrightarrow{\sim}\sfT^\Box$ of local complete intersection rings over $\Lambda_{\rI^0,\dZ_q}$;

  \item the $\sfT^\Box$-module $\cD(\xi,\rV,\rK)_\gamma$ is finite and free.
\end{enumerate}
The flatness of $\sfT^\Box$ over $\Lambda_{\rI^0,\dZ_q}$ follows from (2) and the fact that $\cD(\xi,\rV,\rK)_\gamma$ is finite and free over $\Lambda_{\rI^0,\dZ_q}$ by (the same argument for) Proposition \ref{pr:eigen_2}.

Theorem \ref{th:deformation} is proved.
\end{proof}

\begin{bibdiv}
\begin{biblist}

\bib{Bei87}{article}{
   author={Be\u{\i}linson, A.},
   title={Height pairing between algebraic cycles},
   conference={
      title={Current trends in arithmetical algebraic geometry},
      address={Arcata, Calif.},
      date={1985},
   },
   book={
      series={Contemp. Math.},
      volume={67},
      publisher={Amer. Math. Soc., Providence, RI},
   },
   date={1987},
   pages={1--24},
   review={\MR{902590}},
}

\bib{BC09}{article}{
   author={Bella\"iche, Jo\"el},
   author={Chenevier, Ga\"etan},
   title={Families of Galois representations and Selmer groups},
   language={English, with English and French summaries},
   journal={Ast\'erisque},
   number={324},
   date={2009},
   pages={xii+314},
   issn={0303-1179},
   isbn={978-2-85629-264-8},
   review={\MR{2656025}},
}

\bib{BP16}{article}{
   author={Beuzart-Plessis, Rapha\"el},
   title={La conjecture locale de Gross-Prasad pour les repr\'esentations
   temp\'er\'ees des groupes unitaires},
   language={French, with English and French summaries},
   journal={M\'em. Soc. Math. Fr. (N.S.)},
   date={2016},
   number={149},
   pages={vii+191},
   issn={0249-633X},
   isbn={978-2-85629-841-1},
   review={\MR{3676153}},
   doi={10.24033/msmf.457},
}

\bib{BPLZZ}{article}{
   author={Beuzart-Plessis, Rapha\"{e}l},
   author={Liu, Yifeng},
   author={Zhang, Wei},
   author={Zhu, Xinwen},
   title={Isolation of cuspidal spectrum, with application to the
   Gan-Gross-Prasad conjecture},
   journal={Ann. of Math. (2)},
   volume={194},
   date={2021},
   number={2},
   pages={519--584},
   issn={0003-486X},
   review={\MR{4298750}},
   doi={10.4007/annals.2021.194.2.5},
}

\bib{BK90}{article}{
   author={Bloch, Spencer},
   author={Kato, Kazuya},
   title={$L$-functions and Tamagawa numbers of motives},
   conference={
      title={The Grothendieck Festschrift, Vol.\ I},
   },
   book={
      series={Progr. Math.},
      volume={86},
      publisher={Birkh\"auser Boston},
      place={Boston, MA},
   },
   date={1990},
   pages={333--400},
   review={\MR{1086888 (92g:11063)}},
}

\bib{Bour}{book}{
   author={Bourbaki, Nicolas},
   title={Commutative algebra. Chapters 1--7},
   series={Elements of Mathematics (Berlin)},
   note={Translated from the French;
   Reprint of the 1989 English translation},
   publisher={Springer-Verlag, Berlin},
   date={1998},
   pages={xxiv+625},
   isbn={3-540-64239-0},
   review={\MR{1727221}},
}

\bib{Car14}{article}{
   author={Caraiani, Ana},
   title={Monodromy and local-global compatibility for $l=p$},
   journal={Algebra Number Theory},
   volume={8},
   date={2014},
   number={7},
   pages={1597--1646},
   issn={1937-0652},
   review={\MR{3272276}},
   doi={10.2140/ant.2014.8.1597},
}

\bib{CHT08}{article}{
   author={Clozel, Laurent},
   author={Harris, Michael},
   author={Taylor, Richard},
   title={Automorphy for some $l$-adic lifts of automorphic mod $l$ Galois
   representations},
   note={With Appendix A, summarizing unpublished work of Russ Mann, and
   Appendix B by Marie-France Vign\'{e}ras},
   journal={Publ. Math. Inst. Hautes \'{E}tudes Sci.},
   number={108},
   date={2008},
   pages={1--181},
   issn={0073-8301},
   review={\MR{2470687}},
   doi={10.1007/s10240-008-0016-1},
}

\bib{GGP12}{article}{
   author={Gan, Wee Teck},
   author={Gross, Benedict H.},
   author={Prasad, Dipendra},
   title={Symplectic local root numbers, central critical $L$ values, and
   restriction problems in the representation theory of classical groups},
   language={English, with English and French summaries},
   note={Sur les conjectures de Gross et Prasad. I},
   journal={Ast\'erisque},
   number={346},
   date={2012},
   pages={1--109},
   issn={0303-1179},
   isbn={978-2-85629-348-5},
   review={\MR{3202556}},
}

\bib{Ger19}{article}{
   author={Geraghty, David},
   title={Modularity lifting theorems for ordinary Galois representations},
   journal={Math. Ann.},
   volume={373},
   date={2019},
   number={3-4},
   pages={1341--1427},
   issn={0025-5831},
   review={\MR{3953131}},
   doi={10.1007/s00208-018-1742-4},
}

\bib{GS23}{article}{
   author={Graham, Andrew},
   author={Shah, Syed Waqar Ali},
   title={Anticyclotomic Euler systems for unitary groups},
   journal={Proc. Lond. Math. Soc. (3)},
   volume={127},
   date={2023},
   number={6},
   pages={1577--1680},
   issn={0024-6115},
   review={\MR{4673434}},
}

\bib{Hid98}{article}{
   author={Hida, Haruzo},
   title={Automorphic induction and Leopoldt type conjectures for $\mathrm{GL}(n)$},
   note={Mikio Sato: a great Japanese mathematician of the twentieth
   century},
   journal={Asian J. Math.},
   volume={2},
   date={1998},
   number={4},
   pages={667--710},
   issn={1093-6106},
   review={\MR{1734126}},
   doi={10.4310/AJM.1998.v2.n4.a5},
}

\bib{JNS}{article}{
   author={Jetchev, D.},
   author={Nekov\'{a}\v{r}, Jan},
   author={Skinner, C.},
   title={Split Euler systems for conjugate-dual Galois representations},
   note={in preparation},
}

\bib{LS24}{article}{
   author={Lai, S.},
   author={Skinner, C.},
   title={Anti-cyclotomic Euler system of diagonal cycles},
   note={\href{https://arxiv.org/abs/2408.01219}{arXiv:2408.01219}},
}

\bib{LS}{article}{
   author={Liu, Dongwen},
   author={Sun, Binyong},
   title={Relative completed cohomologies and modular symbols},
   note={in preparation},
}

\bib{LTX}{article}{
   author={Liu, Yifeng},
   author={Tian, Yichao},
   author={Xiao, Liang},
   title={Iwasawa's main conjecture for Rankin--Selberg motives in the anticyclotomic case},
   note={\href{https://arxiv.org/abs/2406.00624}{arXiv:2406.00624}},
}

\bib{LTXZZ}{article}{
   label={LTXZZ1},
   author={Liu, Yifeng},
   author={Tian, Yichao},
   author={Xiao, Liang},
   author={Zhang, Wei},
   author={Zhu, Xinwen},
   title={On the Beilinson-Bloch-Kato conjecture for Rankin-Selberg motives},
   journal={Invent. Math.},
   volume={228},
   date={2022},
   number={1},
   pages={107--375},
   issn={0020-9910},
   review={\MR{4392458}},
   doi={10.1007/s00222-021-01088-4},
}

\bib{LTXZZ1}{article}{
   label={LTXZZ2},
   author={Liu, Yi Feng},
   author={Tian, Yi Chao},
   author={Xiao, Liang},
   author={Zhang, Wei},
   author={Zhu, Xin Wen},
   title={Deformation of Rigid Conjugate Self-dual Galois Representations},
   journal={Acta Math. Sin. (Engl. Ser.)},
   volume={40},
   date={2024},
   number={7},
   pages={1599--1644},
   issn={1439-8516},
   review={\MR{4777059}},
   doi={10.1007/s10114-024-1409-x},
}

\bib{Mat89}{book}{
   author={Matsumura, Hideyuki},
   title={Commutative ring theory},
   series={Cambridge Studies in Advanced Mathematics},
   volume={8},
   edition={2},
   note={Translated from the Japanese by M. Reid},
   publisher={Cambridge University Press, Cambridge},
   date={1989},
   pages={xiv+320},
   isbn={0-521-36764-6},
   review={\MR{1011461}},
}

\bib{MR04}{article}{
   author={Mazur, Barry},
   author={Rubin, Karl},
   title={Kolyvagin systems},
   journal={Mem. Amer. Math. Soc.},
   volume={168},
   date={2004},
   number={799},
   pages={viii+96},
   issn={0065-9266},
   review={\MR{2031496}},
   doi={10.1090/memo/0799},
}

\bib{Nek93}{article}{
   author={Nekov\'{a}\v{r}, Jan},
   title={On $p$-adic height pairings},
   conference={
      title={S\'{e}minaire de Th\'{e}orie des Nombres, Paris, 1990--91},
   },
   book={
      series={Progr. Math.},
      volume={108},
      publisher={Birkh\"{a}user Boston, Boston, MA},
   },
   date={1993},
   pages={127--202},
   review={\MR{1263527}},
}

\bib{Sch18}{article}{
   author={Scholze, Peter},
   title={On the $p$-adic cohomology of the Lubin-Tate tower},
   language={English, with English and French summaries},
   note={With an appendix by Michael Rapoport},
   journal={Ann. Sci. \'{E}c. Norm. Sup\'{e}r. (4)},
   volume={51},
   date={2018},
   number={4},
   pages={811--863},
   issn={0012-9593},
   review={\MR{3861564}},
   doi={10.24033/asens.2367},
}

\bib{SP}{book}{
   label={SP},
   author={The Stacks Project Authors},
   title={Stacks Project},
   eprint={http://math.columbia.edu/algebraic_geometry/stacks-git/},
}

\end{biblist}
\end{bibdiv}

\end{document}